\title{Symplectic Divisorial Capping in Dimension 4}
\tikzstyle{startstop} = [rectangle, rounded corners, minimum width=0.5cm, minimum height=0.5cm,text centered, text width=2cm, draw=black, fill=white!30]
\tikzstyle{startstop2} = [rectangle, rounded corners, minimum width=0.5cm, minimum height=0.5cm,text centered, text width=2cm, draw=black, fill=white!30]
\tikzstyle{startstop3} = [rectangle, rounded corners, minimum width=0.5cm, minimum height=0.5cm,text centered, text width=2.5cm, draw=black, fill=white!30]
\tikzstyle{arrow} = [thick,->,>=stealth]
\newtheorem{thm}{Theorem}[section]
\newtheorem{prop}[thm]{Proposition}
\newtheorem{lemma}[thm]{Lemma}
\newtheorem{rmk}[thm]{Remark}
\newtheorem{defn}[thm]{Definition}
\newtheorem{corr}[thm]{Corollary}
\newtheorem{eg}[thm]{Example}
\newtheorem{fig}[thm]{Figure}
\begin{document}

\author{Tian-Jun Li and Cheuk Yu Mak\thanks{Both authors are supported by NSF-grant DMS 1065927.}}

\AtEndDocument{\bigskip{\footnotesize%
  \textsc{School of Mathematics, University of Minnesota, Minneapolis, MN 55455} \par  
  \textit{E-mail address}, Tian-Jun Li: \texttt{tjli@math.umn.edu} \par
  \addvspace{\medskipamount}
  \textsc{School of Mathematics, University of Minnesota, Minneapolis, MN 55455} \par  
  \textit{E-mail address}, Cheuk Yu Mak: \texttt{makxx041@math.umn.edu} \par

}}

\date{\today}

\maketitle

\begin{abstract}
We investigate the notion of symplectic divisorial compactification for symplectic 4-manifolds with either convex or concave type boundary.
This is motivated by the notion of compactifying divisors for open algebraic surfaces.
We give a sufficient and necessary criterion, which is simple and also works in higher dimensions, to determine whether an arbitrarily small concave/convex neighborhood exist for an $\omega$-orthogonal symplectic divisor (a symplectic plumbing).
If deformation of symplectic form is allowed, we show that a symplectic divisor has either a concave or convex neighborhood whenever the symplectic form is exact on the boundary of its plumbing. 
As an application, we classify symplectic compactifying divisors having finite boundary fundamental group.
We also obtain a finiteness result of fillings when the boundary can be capped by a symplectic divisor with finite boundary fundamental group.
\end{abstract}

\tableofcontents

\section{Introduction}
In this paper, a {\it symplectic divisor} refers to a connected configuration of finitely many closed embedded symplectic surfaces $D=C_1 \cup \dots \cup C_k$ 
in   a symplectic 4 dimensional manifold (possibly with boundary or non-compact) $(W, \omega)$. $D$ is further required to have the following properties:   $D$ has  empty intersection with $ \partial W$, no three $C_i$ intersect at a point, and any intersection between two surfaces is transversal and positive.
The orientation of each $C_i$ is chosen to be positive with respect to $\omega$.
Since we are interested in the germ of a symplectic divisor, $W$ is sometimes omitted in the writing and $(D,\omega)$, or simply $D$,  is used to denote a symplectic divisor.

A closed regular neighborhood of $D$ is called a plumbing of $D$. The plumbings are well defined up to orientation preserving diffeomorphism, so we can introduce
topological invariants of $D$ using any of its plumbings. In particular, $b_2^{\pm}(D)$ is  defined as $b_2^{\pm}$ of a plumbing. Similarly, we  
define  the {\it boundary} of the divisor $D$, and we call  the fundamental group of the boundary {\it boundary fundamental group} of $D$.
In the same vein,  when  $\omega$ is exact on the boundary of a plumbing, we say that $\omega$ is exact on the boundary of $D$.

A plumbing $P(D)$ of $D$ is called a {\it concave (resp. convex) neigborhood} if $P(D)$ is a strong concave (resp. convex) filling of its boundary.
A symplectic divisor $D$ is called {\it concave}  (resp. {\it convex}) if   for any neighborhood $N$ of $D$, there is a concave (resp. convex) neighborhood $P(D) \subset N$ for the divisor.
Through out this paper, all concave (resp. convex) fillings are symplectic strong concave (resp. strong convex) fillings and we simply call it cappings or concave fillings (resp. fillings or convex fillings).

\begin{defn}
Suppose that $D$ is a concave (resp. convex) divisor.  If a symplectic gluing (\cite{Et98}) can be performed for a concave (resp. convex) neighborhood of $D$ and a symplectic manifold $Y$ with
convex (resp. concave) boundary to obtain a closed symplecitc manifold, then
we call $D$ a {\bf capping} (resp. {\bf filling}) divisor.
In both cases, we call $D$ a {\bf compactifying} divisor of $Y$.

\end{defn}

\subsection{Motivation}

We provide some motivation from two typical families of examples in algebraic geometry together with some general symplectic compactification phenomena.

Suppose $Y$ is a smooth affine algebraic variety over $\mathbb{C}$.
Then $Y$ can be compactified by a  divisor $D$ to a projective variety $X$.
By Hironaka's resolution of singularities theorem,  we could assume that $X$ is smooth and $D$ is a simple normal crossing divisor.
In this case, $Y$ is a Stein manifold and $D$ has a concave neighborhood induced by a plurisubharmonic function on $Y$ (\cite{ElGr91}).
Moreover, $Y$ is symplectomorphic to the completion of a suitably chosen Stein domain $\overline{Y}\subset Y$ (See e.g. \cite{McL12}).
Therefore, compactifying $Y$ by $D$ in the algebro geometric  situation is analogous to gluing $\overline{Y}$ with a concave neighborhood of $D$ along their contact boundaries \cite{Et98}.

On the other hand, suppose we have a compact complex surface with an isolated normal singularity.
We can resolve the isolated normal singularity and obtain a pair $(W,D)$, where $W$ is a smooth compact complex surface and $D$ is a simple normal crossing resolution divisor.
In this case, we can define a K\"ahler form near $D$ such that $D$ has a convex neighborhood $P(D)$.
If the K\"ahler form can be extended to $W$, then the K\"ahler compactification of $W-D$ by $D$ is analogous to gluing the symplectic manifold $W-Int(P(D))$ with $P(D)$ along their contact boundaries.

From the symplectic point of view, there are both flexibility and constraints for capping a symplectic 4 manifold $Y$ with convex boundary. 
For flexibility, there are infinitely many ways to embed $Y$ in closed symplectic 4-manifolds (Theorem 1.3 of \cite{EtHo02}).
This still holds even when $Y$ has only weak convex boundary (See \cite{El04} and \cite{Et04}).
For constraints, it is well-known that (e.g. \cite{Hu13}) $Y$ does not have any exact capping.
From these perspectives, divisor cappings might provide a suitable capping model to study (See also \cite{Ga03} and \cite{Ga03c}).

On the other hand,  divisor fillings have been  studied by several authors.  For instance, 
it is known that  they are the maximal fillings for the canonical contact structures on Lens spaces (See \cite{Li08} and \cite{BhOz14}).

In this setting, the following questions are natural:
Suppose $D$ is a symplectic divisor.

(i) When is $D$ also a compactifying divisor?

(ii) What symplectic manifolds can be compactified by $D$?

\subsection{A Flowchart}\label{A Flowchart}

Regarding the first question, observe that a divisor is a capping (resp. filling) divisor if it is concave (resp. convex), and embeddable in the following sense:

\begin{defn}
If a symplectic divisor $D$ admits a symplectic embedding into a closed symplectic manifold $W$, then we call $D$ an {\bf embeddable} divisor. 
\end{defn}

We recall some results from the literature for the filling side.
It is proved in \cite{GaSt09} that when the graph of a symplectic divisor is negative definite, it can always be perturbed to be a convex divisor.
Moreover, a convex divisor is always embeddable, by \cite{EtHo02}, hence a filling divisor.

However, a concave divisor is not necessarily embeddable.
An obstruction is provided by \cite{Mc90} (See Theorem \ref{McDuff}).

Our first main result:

\begin{thm}\label{MAIN}
  Let $D \subset (W,\omega_0)$ be a symplectic divisor.
If the intersection form of $D$ is not negative definite and $\omega_0$ restricted to the boundary  of $D$ is exact, then $\omega_0$ can be deformed through a family of symplectic forms $\omega_t$ on $W$ keeping $D$  symplectic   and such that $(D,\omega_1)$ is a concave divisor.

In particular, if $D$ is also an embeddable divisor, then it is a capping divisor after a deformation.
\end{thm}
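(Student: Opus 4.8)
The plan is to reduce the claim to the main part of Theorem~\ref{MAIN} already established, combined with the definition of a capping divisor. The first step is to unwind embeddability: a symplectic embedding of (a plumbing of) $D$ into a closed symplectic manifold produces a closed symplectic $(V,\omega_V)$ containing $D$ with $\omega_V$ agreeing with $\omega_0$ on a neighborhood of $D$. In particular $\omega_V$ restricted to the boundary of $D$ is exact, since it coincides there with $\omega_0$, and the intersection form of $D$ is unchanged; hence the hypotheses of the main part of Theorem~\ref{MAIN} hold for $D\subset(V,\omega_V)$.

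Next I would apply the main part of Theorem~\ref{MAIN} with $(W,\omega_0)$ replaced by $(V,\omega_V)$ — legitimate because that statement permits an arbitrary symplectic $4$-manifold as ambient space — to deform $\omega_V$ through symplectic forms on $V$, keeping $D$ symplectic, to a form $\omega_V'$ for which $(D,\omega_V')$ is a concave divisor. By definition of a concave divisor, $D$ then has a concave neighborhood $P(D)\subset V$; that is, $P(D)$ is a strong concave filling of the contact manifold $(\partial P(D),\xi)$ induced by $\omega_V'$. Setting $Y:=V\setminus\operatorname{Int}P(D)$, the complement $Y$ is a compact symplectic manifold whose boundary, with the induced contact structure, is of convex type, so $Y$ is a strong convex filling. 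By construction $(V,\omega_V')=P(D)\cup_\partial Y$, and this decomposition is exactly the symplectic gluing of \cite{Et98} of a concave neighborhood of $D$ with a symplectic manifold with convex boundary; it yields the closed symplectic manifold $(V,\omega_V')$. Hence, after this deformation, $D$ is a capping divisor, as asserted. If one insists that the deformation here be literally the $\omega_t$ of the main part, it suffices to apply that part once and transport its output — which is produced near $D$ — across the identification of neighborhoods of $D$ in $W$ and in $V$; but for the conclusion any deformation making $D$ concave works.

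The substantive content lies entirely in the main part of Theorem~\ref{MAIN}; the steps above are essentially formal. The only points needing a little care are the reading of "embeddable" as providing a symplectic embedding of a plumbing of $D$ (so that $\omega_V=\omega_0$ near $D$ and the exactness hypothesis transfers), and the routine verification that the complement of a concave neighborhood inside a closed symplectic manifold is a strong convex filling of the separating contact hypersurface, so that the gluing of \cite{Et98} applies directly. I do not expect a genuine obstacle beyond what is already resolved in proving the main statement.
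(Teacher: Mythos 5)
There is a genuine gap: your proposal only addresses the final ``In particular'' sentence and explicitly treats the main assertion of Theorem~\ref{MAIN} --- that $\omega_0$ can be deformed, keeping $D$ symplectic, until $(D,\omega_1)$ is a concave divisor --- as ``already established.'' But that deformation statement \emph{is} the theorem; nothing in your argument produces it, so the substantive content is missing rather than reduced. The paper's proof has three essential ingredients that your proposal never touches: (1) first isotope $D$ to have $\omega_0$-orthogonal intersections (Gompf's Theorem 2.3 of \cite{Go95}), extend to an ambient isotopy and pull back the form, so that one may assume $D$ is $\omega_0$-orthogonal and later invoke Proposition~\ref{MAIN2}; (2) the linear-algebra Lemma~\ref{trichotomy}: since $\omega_0$ is exact on the boundary there is some solution $z$ of $Q_\Gamma z=a$, and since $Q_\Gamma$ is not negative definite one can find $\bar z\in(0,\infty)^k$ with $Q_\Gamma\bar z\in(0,\infty)^k$, i.e.\ an area vector satisfying the positive GS criterion; (3) the inflation Lemma~\ref{inflation}, applied along a piecewise-linear path $p^t$ from $z$ to $\bar z$ (each segment parallel to a coordinate axis, moving in the positive direction, chosen so that $Q_\Gamma p^t$ stays entrywise positive, which is where openness of the GS criterion is used), producing a family $\omega_t$ with $[\omega_t]=[\omega_0]+t\sum_i(\bar z_i-z_i)PD([C_i])$ that keeps each $C_i$ symplectic. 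Only after this does Proposition~\ref{MAIN2} yield concavity. None of these steps is formal, and without them your argument proves nothing beyond a tautology.

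For the record, your treatment of the ``In particular'' clause is consistent with how the paper regards it (Section~1.2: a divisor that is concave and embeddable is a capping divisor, since the complement of a concave neighborhood in the closed ambient manifold is a strong convex filling and the gluing of \cite{Et98} reproduces a closed manifold). One caveat even there: embeddability gives an embedding of $(D,\omega_0)$ into some closed $(V,\omega_V)$, and you must either re-run the deformation inside $V$ (as you suggest) or, as the paper does later via Proposition~\ref{reduce to graph} and the contactomorphism statement of Proposition~\ref{McLean}, match the concave boundary produced near $D$ with a cap cut out of $V$; your sentence about ``transporting the output across the identification of neighborhoods'' glosses over the fact that the deformation is an ambient inflation in the closed manifold, not a construction localized to a neighborhood of $D$. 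But the decisive defect is the absence of any proof of the deformation-to-concavity statement itself.
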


It is convenient to  associate  an augmented graph $(\Gamma,a)$  to a symplectic divisor $(D,\omega)$, 
where $\Gamma$ is the graph of $D$ and $a$ is the area vector for the embedded symplectic surfaces (See Section \ref{Preliminary} for details).
The intersection form of $\Gamma$  is denoted by $Q_{\Gamma}$.

\begin{defn}
 Suppose $(\Gamma,a)$ is an augmented graph with $k$ vertices.
Then, we say that $(\Gamma,a)$ satisfies the positive (resp. negative) {\bf GS criterion} if there exists
 $z \in (0,\infty)^k$ (resp $(-\infty,0]^k$) such that $Q_{\Gamma}z=a$.

A symplectic divisor is said to satisfy the  positive (resp. negative) GS criterion if its associated augmented graph does.
\end{defn}

One important ingredient for the proof of Theorem \ref{MAIN} is the following result.
\begin{prop}\label{MAIN2}
  Let $(D,\omega)$ be a symplectic divisor with $\omega$-orthogonal intersections.
Then, $(D,\omega)$ has a concave (resp. convex) neighborhood inside any regular neighborhood of $D$ if $(D,\omega)$ satisfies the positive (resp. negative) GS criterion. 
\end{prop}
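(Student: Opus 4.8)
The plan is to produce the required neighborhood by an explicit construction on a fixed model of an arbitrarily small plumbing of $D$, in which the solution vector $z$ of $Q_\Gamma z=a$ supplies the weights cutting out a Liouville vector field transverse to the boundary — pointing outward in the convex case, inward in the concave case. The sign conditions on $z$ turn out to be exactly what is needed for such a field to exist and to point uniformly in the correct direction.

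\emph{Step 1: a shrinkable standard model.} Since every intersection of $D$ is $\omega$-orthogonal, a Moser-type neighborhood theorem for the configuration shows that, after shrinking, any regular neighborhood of $D$ is symplectomorphic to a standard model $(P_\epsilon,\omega_{\Gamma,a})$ depending only on the augmented graph $(\Gamma,a)$ and a vector $\epsilon=(\epsilon_1,\dots,\epsilon_k)$ of small radii: over the complement in $C_i$ of small discs around the nodes one takes the radius-$\epsilon_i$ disc subbundle of the Hermitian line bundle of degree $C_i\cdot C_i$ over $C_i$, with a fiberwise-rotation-invariant symplectic form for which the zero section has area $a_i$; over each node $C_i\cap C_j$ one takes a bidisc with the split form; and these are glued in the obvious toric fashion. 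Each piece carries a radial function $\rho_i$ (a fixed multiple of the squared fiber distance to $C_i$) defined on a whole neighborhood of $C_i$, the fiberwise-rotation vector fields, and a connection $1$-form, and $\omega_{\Gamma,a}$ is exact on a collar of $\partial P_\epsilon$ whenever the relevant self-intersections are nonzero, so that a primitive is available there.

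\emph{Steps 2 and 3: the transverse Liouville field and the contact condition.} On a collar of $\partial P_\epsilon$ I seek a vector field $X=\sum_i z_iX_i$, where each $X_i$ is supported near $C_i$: over the interior of $C_i$ it has the shape $\bigl(\tfrac{r_i}{2}-\tfrac{z_i}{r_i}\bigr)\partial_{r_i}$ in the fiber radius $r_i$ (up to fixed positive normalizing constants), and near a node $C_i\cap C_j$ the two terms $z_iX_i$ and $z_jX_j$ contribute simultaneously in the two radial directions of the bidisc. Imposing $\mathcal L_X\omega_{\Gamma,a}=\omega_{\Gamma,a}$ on the collar, together with the requirement that $X$ — equivalently its primitive $\iota_X\omega_{\Gamma,a}$ — be globally defined across all the strata of the collar, unwinds, once the connection and clutching data are accounted for, into exactly the linear system $Q_\Gamma z=a$: the $i$-th equation records that the area $a_i$ of $C_i$ is balanced by $z_i(C_i\cdot C_i)$ together with the contributions $z_j(C_i\cdot C_j)$ coming from the nodes where $C_j$ meets $C_i$. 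Transversality of $X$ to $\partial P_\epsilon$ holds for all sufficiently small $\epsilon$; at the corners of $\bigcap_i\{\rho_i\le\epsilon_i\}$ lying over a node it forces $z_i$ and $z_j$ to have the same sign, so a uniform sign of all the $z_i$ — positive in the concave case, $\le 0$ in the convex case — lets one round those corners while keeping $X$ transverse and pointing uniformly inward, respectively outward. By the standard relation between a Liouville field transverse to a hypersurface and contact structures, $\partial P_\epsilon$ then carries a contact structure for which $P_\epsilon$ is a concave (resp.\ convex) filling; and since $\epsilon$ may be taken arbitrarily small, $P_\epsilon$ is contained in any prescribed regular neighborhood of $D$.

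\emph{Expected main obstacle.} The delicate part is the local analysis at the nodes. One has to check that the single vector field $X=\sum_i z_iX_i$ is genuinely smooth and transverse across the interfaces between the circle-bundle strata over the $C_i$ and the thickened-torus strata over the nodes; that the corners of the naive region $\bigcap_i\{\rho_i\le\epsilon_i\}$ along the $2$-tori $\{\rho_i=\epsilon_i,\ \rho_j=\epsilon_j\}$ can be rounded while preserving both transversality and the contact condition; and, above all, that the bookkeeping of connection $1$-forms and clutching functions is carried out carefully enough that the obstruction to a globally defined primitive is \emph{precisely} $Q_\Gamma z=a$ while the obstruction to transversality is \emph{precisely} the common-sign condition. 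Once these node models are pinned down, the remainder is the routine Liouville/contact dictionary together with the shrinking argument.
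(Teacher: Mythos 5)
Your plan is essentially the paper's proof: the paper likewise reduces to a standard model determined by the augmented graph $(\Gamma,a)$ (via the germ-uniqueness theorem for $\omega$-orthogonal divisors from \cite{McR05} and \cite{GaSt09}) and then invokes the Gay--Stipsicz construction, in which the Liouville field near each surface has exactly your radial form $\bigl(\tfrac{r}{2}+\tfrac{z'_i}{r}\bigr)\partial_r$ with weights coming from the solution of $Q_\Gamma z=a$, and the uniform sign of $z$ is what makes the field point coherently inward (concave) or outward (convex) once the corner regions over the nodes are rounded. The node-level gluing and corner-rounding that you flag as the main obstacle is precisely the content of the local models $N_{e_{\alpha\beta}}$ and $N_{v_\alpha}$ of \cite{GaSt09}, which the paper cites and summarizes rather than re-derives, so your proposal is a hands-on version of the same argument rather than a different route.
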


The construction  is essentially due to Gay and Stipsicz in \cite{GaSt09}, which we call the GS construction.
We remark that GS criteria can be verified easily.
They are conditions on wrapping numbers in disguise.
Therefore, by a recent result of Mark McLean \cite{McL14}, Proposition \ref{MAIN2} can be generalized to higher dimensions with GS criteria being replaced accordingly.
Moreover, using techniques in \cite{McL14}, we  establish the necessity of the GS criterion and answer the uniqueness  question in \cite{GaSt09}.

\begin{thm}\label{obstruction-GS}
 Let $D \subset (W,\omega)$ be an $\omega$-orthogonal symplectic divisor.
If $(D,\omega)$ does not satisfy the positive (resp. negative) GS criterion.
Then, there is a neighborhood $N$ of $D$ such that any plumbing $P(D) \subset N$ of $D$ is not a concave (resp. convex) neighborhood.
\end{thm}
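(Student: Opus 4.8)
The plan is to derive an obstruction from the contact-geometric data on the boundary $\partial P(D)$ and to show that the GS criterion is exactly the condition needed for that data to be compatible with a concave (resp.\ convex) filling by a plumbing. Recall that the boundary of a plumbing $P(D)$ of an $\omega$-orthogonal divisor is a graph 3-manifold carrying a contact structure supported by the Seifert-type data of $\Gamma$; the circle-bundle neighborhoods of the components $C_i$ contribute open book / Lefschetz-fibration pieces, and the monodromy around each $C_i$ is governed by a \emph{wrapping number} $w_i$. The observation emphasized in the text is that the GS criterion $Q_\Gamma z = a$ with $z\in(0,\infty)^k$ (resp.\ $(-\infty,0]^k$) is literally the statement that the wrapping numbers can be chosen with a prescribed sign pattern while still realizing the area vector $a$. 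So the first step is to make the dictionary precise: attach to any plumbing $P(D)\subset N$ a vector of wrapping numbers $w=w(P(D))\in\mathbb{R}^k$, and show that if $P(D)$ is a concave (resp.\ convex) neighborhood then its wrapping numbers satisfy $Q_\Gamma w = a$ together with $w\in(0,\infty)^k$ (resp.\ $w\in(-\infty,0]^k$). The linear relation $Q_\Gamma w=a$ comes from computing $\int_{C_i}\omega$ via the symplectic form near $D$ in terms of the normal-bundle data; the sign constraint is where concavity/convexity enters.

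The heart of the argument, and the step I expect to be the main obstacle, is establishing that sign constraint — i.e.\ that a concave (resp.\ convex) plumbing \emph{forces} strictly positive (resp.\ non-positive) wrapping numbers. Here I would invoke the machinery of \cite{McL14}: wrapping numbers are invariants of the filling that can be computed from the symplectic homology / Reeb dynamics of the contact boundary, or more elementarily from the behavior of the Liouville (resp.\ anti-Liouville) vector field near $D$. Concretely, for a strong concave filling the outward Liouville field on $\partial P(D)$ points \emph{into} $P(D)$, which when transported to a collar of each $C_i$ pins down the sign of the contribution of $C_i$'s normal direction to $\omega$, giving $w_i>0$; the convex case is the mirror statement with the weak inequality reflecting the boundary components $C_i$ of self-intersection zero. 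This is the part that genuinely uses dimension-$4$-flavored input (positivity of intersections, the structure of symplectic plumbings) together with McLean's computation, and getting the inequalities sharp — strict on one side, non-strict on the other — is the delicate point.

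Granting the dictionary, the theorem follows by contraposition. Suppose for contradiction that for every neighborhood $N$ of $D$ there is a plumbing $P(D)\subset N$ that is a concave (resp.\ convex) neighborhood. By the previous two steps its wrapping number vector $w$ satisfies $Q_\Gamma w = a$ with $w\in(0,\infty)^k$ (resp.\ $w\in(-\infty,0]^k$), i.e.\ $w$ is a witness that $(D,\omega)$ satisfies the positive (resp.\ negative) GS criterion, contradicting the hypothesis. Hence some $N$ admits no concave (resp.\ convex) plumbing, which is the assertion.

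A few remarks on what must be checked carefully. First, one should verify that the wrapping numbers are genuinely intrinsic to the plumbing $P(D)$ and independent of auxiliary choices (contact form, almost complex structure) — this is where the cited results of \cite{McL14} do the real work, and it is what makes the obstruction robust. Second, one must confirm that shrinking $N$ does not allow the wrapping numbers to escape the obstruction: since $Q_\Gamma w=a$ is forced regardless of how small $P(D)$ is (the areas $a_i=\int_{C_i}\omega$ are fixed by $(D,\omega)$), the linear system is the same for all plumbings, so the obstruction is uniform in $N$. Finally, for the convex case one should be attentive to components $C_i$ with $C_i\cdot C_i = 0$, which is precisely why the relevant cone is $(-\infty,0]^k$ rather than $(-\infty,0)^k$; the wrapping number is allowed to vanish there, matching a circle-bundle end that is symplectically a product. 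With these points in place the proof is complete.
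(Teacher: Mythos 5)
There is a genuine gap, and it sits exactly at what you yourself flag as ``the heart of the argument'': the claim that a concave (resp.\ convex) plumbing \emph{forces} strictly positive (resp.\ non-positive) wrapping numbers. You do not prove this; you assert that the boundary Liouville field ``transported to a collar of each $C_i$ pins down the sign,'' but the Liouville field of a strong filling is only defined near $\partial P(D)$, and there is no elementary local mechanism carrying that boundary condition across $P(D)-D$ to the normal direction of each $C_i$. The results of \cite{McL14} you invoke (Proposition \ref{McLean0}/Proposition \ref{McLean} in this paper) go in the \emph{existence} direction: given a primitive with wrapping numbers of a fixed sign, one builds a concave/convex neighborhood; they do not say that an arbitrary concave/convex plumbing must have wrapping numbers of that sign. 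Worse, your closing remark that ``shrinking $N$ does not allow the wrapping numbers to escape the obstruction,'' i.e.\ that the obstruction is uniform in $N$, proves too much and is false: McDuff's example \cite{Mc91} (recalled in Section \ref{Comments on the Induced Contact Structure}) exhibits a symplectic genus-$g$ surface of self-intersection $2g-2>0$ — whose unique solution of $Q_\Gamma z=a$ is positive, so the negative GS criterion fails — that nevertheless admits a convex neighborhood, just not an arbitrarily small one. So any argument that derives the sign constraint from concavity/convexity of the boundary alone, without using that $P(D)$ lies in a sufficiently small neighborhood of $D$, cannot be correct; the smallness hypothesis in Theorem \ref{obstruction-GS} is essential.

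For comparison, the paper's proof (Theorem \ref{obstruction}) supplies precisely the missing mechanism, and it is where the real work lies: one partially compactifies a fibered neighborhood of $D$ by a sphere-bundle piece, blows up at a point $q_\infty$ away from $D$, and uses the resulting class $[A]$ with $GW_0=1$ together with an energy threshold (Lemma \ref{energy lower bound}) — valid only when the plumbing sits inside $N^{\delta_{\min}}$ with $\delta_{\min}$ small — to obtain a GW triple (Proposition \ref{GW}). Stretching the neck along the putative contact boundary $\partial P(D)$ (Proposition \ref{SFT}) produces a punctured holomorphic curve inside $P(D)$ (or in the complement, in the convex case) meeting $C_1$ transversally once and asymptotic to Reeb orbits; restricting the primitive $\alpha$ to this curve, the sign of the wrapping number $-z_1$ at the puncture makes the induced Liouville field point outward there while it points inward along the boundary coming from the neck, contradicting $\int u^*\omega\ge 0$ via Stokes' theorem. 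If you want to salvage your outline, this pseudo-holomorphic input (or a substitute of comparable strength) is what must replace the ``pins down the sign'' step; the linear relation $Q_\Gamma w=a$ is the easy half of the dictionary, and on its own it yields no contradiction.
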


\begin{thm}\label{uniqueness-GS}
 Let $(D,\omega_i)$ be $\omega_i$-orthogonal  symplectic divisors for $i=0,1$ such that both
 satisfy the positive (resp. negative) GS criterion.
 Then the  concave (resp. convex) structures on the boundary of $(D,\omega_0)$ and $(D,\omega_1)$ via the GS construction are contactomorphic.
 
 In particular, when $\omega_0=\omega_1$, the contact structure constructed via GS construction is independent of choices, up to contactomorphism.
\end{thm}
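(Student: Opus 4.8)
The idea is to interpolate the two sets of input data of the GS construction by a one-parameter family, run the construction in this family to get a smooth path of contact forms on the (fixed) boundary $3$-manifold, and then apply Gray stability. Recall that the GS construction of Proposition \ref{MAIN2} depends on: (a) the graph $\Gamma$ of $D$; (b) the germ near $D$ of a symplectic form with $\omega$-orthogonal intersections; (c) the area vector $a$ together with a chosen witness $z \in (0,\infty)^k$ (resp. $(-\infty,0]^k$) with $Q_\Gamma z = a$; and (d) auxiliary smooth choices (radii of the plumbing pieces, cut-off functions, primitives) ranging over a contractible space. The underlying smooth $3$-manifold $M = \partial P(D)$ depends on $\Gamma$ alone, so every contact structure produced lives on one fixed closed oriented $3$-manifold, and it is enough to join $\xi_0$ to $\xi_1$ through contact structures on $M$.

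I would first produce a path of admissible data. The graph $\Gamma$ is fixed. Let $z_0, z_1$ be the witnesses provided by the two hypotheses, set $z_t = (1-t)z_0 + t z_1$ and $a_t = Q_\Gamma z_t$; since $(0,\infty)^k$ (resp. $(-\infty,0]^k$) is convex, $z_t$ stays in it, so $(\Gamma, a_t)$ satisfies the positive (resp. negative) GS criterion for all $t$, with witness $z_t$. By the symplectic neighborhood theorem for $\omega$-orthogonal divisors, the germ near $D$ of a symplectic form with orthogonal intersections is determined up to symplectomorphism by $\Gamma$ and the area vector, so the germs over $a_t$ form a smooth family interpolating those of $(D,\omega_0)$ and $(D,\omega_1)$. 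The auxiliary data (d) can be chosen in a family since their parameter space is contractible. Note that when $\omega_0 = \omega_1$ one has $a_0 = a_1$, hence $a_t$ is constant and only the choices of $z$ and of (d) are being varied; this is the special case in the statement.

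Feeding the family $\big(\Gamma,\ \mathrm{germ}_t,\ a_t,\ z_t,\ \mathrm{aux}_t\big)$ into the GS construction yields a smooth family of contact forms $\alpha_t$ on $M$, hence a smooth family of contact structures $\xi_t = \ker\alpha_t$ with $\xi_0,\xi_1$ the two GS structures in question. By Gray's stability theorem for closed contact $3$-manifolds there is an ambient isotopy $\psi_t$ of $M$ with $\psi_0 = \mathrm{id}$ and $(\psi_t)_*\xi_0 = \xi_t$; then $\psi_1\colon (M,\xi_0)\to (M,\xi_1)$ is the desired contactomorphism. Taking $\omega_0 \neq \omega_1$ gives the first assertion and $\omega_0 = \omega_1$ gives the independence of the GS contact structure from all choices.

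The main difficulty is technical rather than conceptual, and has two parts. First, one must check that the GS construction can be carried out smoothly in a parameter: that the symplectic forms on the plumbing, the induced Liouville/contact structures near the boundary, and all the interpolations and cut-offs used in \cite{GaSt09} depend smoothly on the input data, so that $\alpha_t$ is a genuine smooth path (which, after reparametrization, suffices for Gray stability). Second, one needs the parametric form of the symplectic neighborhood theorem for $\omega$-orthogonal divisors invoked above, connecting the germs along $a_t$; this should follow from a relative Moser argument on a plumbing, using that a symplectic form on a plumbing which restricts to prescribed symplectic forms on the $C_i$ and induces the prescribed normal data is pinned down near $D$ by its cohomology class. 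Alternatively, since \cite{McL14} is already in play, one could sidestep the family argument by reading off the GS contact structure from the wrapping numbers encoded in $z$ and proving directly that this data is a complete invariant; but the Gray-stability route looks the most self-contained.
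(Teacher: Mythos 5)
Your Gray-stability strategy founders on one concrete point: the auxiliary data of the GS construction is \emph{not} a contractible parameter space. Besides the continuous choices ($\epsilon$, $\delta$, the surface pieces $\Sigma_{v_\alpha}$ with their Liouville structures), the construction requires choosing integers $s_{v,e}$ with $\sum_{e \ni v} s_{v,e}=s_v$ at every vertex, and these enter the output in an essential way: they fix the slopes of the parallelograms $R_{e_{\alpha\beta},v_\alpha}$, the collar parameters $x_{v,e}=s_{v,e}z'_v+z'_{v'}$ on $\Sigma_{v_\alpha}$, and the gluing maps (which involve $-s\vartheta_1+\vartheta_2$ and hence only make sense for integer $s$). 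Two runs of the construction with different distributions $\{s_{v,e}\}$ cannot be joined by a path of admissible data, so your interpolation $z_t=(1-t)z_0+tz_1$ plus ``contractible auxiliary choices'' does not prove independence of all choices --- which is exactly the ``in particular'' clause of the theorem, and also arises in the general statement since the two constructions for $\omega_0$ and $\omega_1$ may use different integer data. A second, smaller issue is that the smooth dependence of the glued contact form on $t$, together with a coherent family of identifications of the boundaries $\partial P_t(D)$ with one fixed $M$, is asserted rather than established; this is genuine work, not bookkeeping, since the plumbing pieces and gluing regions themselves move with $z'_t$.

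The paper takes the route you only sketch in your last sentence, and it is precisely what disposes of the discrete choices. In Lemma \ref{canonical contact structure_tech} an explicit symplectic disc fibration with $U(1)$ structure group (fibres $\omega$-orthogonal to the base) is built on the GS local models; this exhibits the GS boundary as an instance of McLean's construction with compatible function $f=\sum_i\log(\rho(r_i))$, $g=0$, and Liouville form $\theta$ whose wrapping numbers are $2\pi z'_{v_\alpha}$, all of one sign. McLean's uniqueness (Proposition \ref{McLean}, i.e. Corollary 4.3 and Lemma 4.12 of \cite{McL14}) then says the resulting contact manifold depends, up to contactomorphism, only on $(W,D)$ and the \emph{signs} of the wrapping numbers; in particular it is blind to $\{s_{v,e}\}$, to the choice of $z$ when $Q_\Gamma$ is degenerate, and to the area vector $a$, which yields Theorem \ref{uniqueness-GS} (and the stronger Proposition \ref{canonical contact structure_proof}). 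The deformation argument you want to do by hand is thus performed once and for all inside McLean's framework, where one interpolates $\theta_t$ and $f_t$ on the fixed open manifold $W-D$ instead of interpolating the entire plumbing construction. If you wish to salvage the direct Gray-stability approach, you would additionally have to prove that changing the integers $s_{v,e}$ (for fixed $z$) yields contactomorphic boundaries, e.g.\ by an explicit diffeomorphism of plumbings intertwining the two Liouville structures --- at which point you have essentially reproduced the reduction to wrapping-number data.
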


Summarizing Theorem \ref{MAIN} and Proposition \ref{MAIN2}, we have

\begin{corr}\label{QHS}
 Let $(D,\omega)$ be a   symplectic divisor with $\omega$ exact on the boundary of $D$.
Then $D$ is either a concave divisor or a convex divisor, possibly after a symplectic deformation. 
\end{corr}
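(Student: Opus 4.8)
The plan is to show that the hypothesis "$\omega$ is exact on the boundary of $D$" forces the augmented graph $(\Gamma,a)$ of $(D,\omega)$ to satisfy exactly one of the two GS criteria (possibly after deformation), so that Proposition \ref{MAIN2} and Theorem \ref{MAIN} apply. First I would dichotomize on the intersection form $Q_\Gamma$. If $Q_\Gamma$ is negative definite, then it is invertible, so the equation $Q_\Gamma z = a$ has a unique solution $z \in \mathbb{R}^k$; I would argue that this $z$ in fact lies in $(-\infty,0]^k$. The mechanism here is the standard one: negative definiteness of $Q_\Gamma$ together with positivity of the area vector $a$ (each entry $a_i = \omega([C_i]) > 0$ since the $C_i$ are symplectic) forces each $z_i \le 0$ — one can see this by a maximum-principle type argument on the graph, or by invoking the known fact (used implicitly in \cite{GaSt09}) that for a negative definite plumbing graph the "dual" vector to a positive class is nonpositive. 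Hence $(D,\omega)$ satisfies the negative GS criterion with no deformation needed, and Proposition \ref{MAIN2} gives a convex neighborhood, so $D$ is a convex divisor. Note this case does not even need the exactness hypothesis, which is consistent with \cite{GaSt09}.

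Next, if $Q_\Gamma$ is not negative definite, I would invoke Theorem \ref{MAIN} directly: its hypotheses are precisely "$Q_\Gamma$ not negative definite" and "$\omega$ exact on the boundary of $D$," and its conclusion is that after a deformation $\omega_t$ keeping $D$ symplectic, $(D,\omega_1)$ is a concave divisor. So in this case $D$ is a concave divisor after deformation. Combining the two cases, $D$ is either convex (if $Q_\Gamma$ negative definite) or concave after deformation (otherwise), which is the assertion of the corollary. The only subtlety to address carefully is the borderline overlap — whether a divisor could be forced to be both — but the statement only claims "either...or," so no disjointness is required; I would simply remark that the two cases are exhaustive by the trivial dichotomy on definiteness of $Q_\Gamma$.

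The main obstacle is the negative-definite case, i.e. verifying that $Q_\Gamma z = a$ with $a$ having strictly positive entries and $Q_\Gamma$ negative definite actually yields $z \in (-\infty,0]^k$ rather than merely $z \in \mathbb{R}^k$. The cleanest route is probably the following: suppose some $z_i > 0$, let $S = \{ i : z_i > 0\}$ be nonempty, and consider the restriction of $Q_\Gamma$ to $S$ paired with the subvector $z_S$; using that off-diagonal entries of $Q_\Gamma$ are $\ge 0$ (they count intersection points) and the $S$-rows of $Q_\Gamma z = a$ have positive right-hand side, derive $\langle Q_\Gamma|_S\, z_S, z_S\rangle \ge \langle a_S, z_S \rangle > 0$, contradicting negative definiteness of the principal submatrix $Q_\Gamma|_S$. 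This is a short computation but it is the one place where the combinatorics of the plumbing graph genuinely enters, so I would isolate it as a lemma. Everything else is bookkeeping: unwinding the definitions of augmented graph, area vector, and the two GS criteria, and citing Proposition \ref{MAIN2} and Theorem \ref{MAIN} as black boxes.
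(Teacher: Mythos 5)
Your proposal is correct and follows essentially the same route as the paper: split on whether $Q_\Gamma$ is negative definite, apply Theorem \ref{MAIN} in the non-definite case, and in the definite case verify the negative GS criterion (so Proposition \ref{MAIN2}, or Proposition \ref{McLean0} if one wishes to avoid the $\omega$-orthogonality hypothesis, yields convexity). The only difference is that you prove the linear-algebra fact that negative definiteness forces $z\in(-\infty,0]^k$ directly via the principal-submatrix argument, whereas the paper cites the discussion after Lemma \ref{trichotomy} (i.e.\ Lemma 3.3 of \cite{GaSt09}); your argument is a correct substitute for that citation.
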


More precise information is illustrated by the following  schematic flowchart.

\begin{tikzpicture}[node distance=3cm]

\node (exact) [startstop] {$\omega|_{\partial P(D)}$ exact?};
\node (not exact) [startstop2, below of=exact] {No concave nor convex neighborhood};
\node (definite) [startstop, right of=exact] {$Q_D$ negative definite?};
\node (convex) [startstop2, below of=definite] {Admits a convex neighborhood};
\node (GS criterion) [startstop, right of=definite] {$(D,\omega)$ satisfies positive GS criterion?};
\node (concave) [startstop2, right of=GS criterion] {Admits a concave neighborhood};
\node (deformation) [startstop3, below of=GS criterion] {No small concave neighborhood, but admits one after a deformation};

\draw [arrow] (exact) -- node[right]{no}(not exact);
\draw [arrow] (exact) -- node[above]{yes}(definite);
\draw [arrow] (definite) -- node[right]{yes}(convex);
\draw [arrow] (definite) -- node[above]{no}(GS criterion);
\draw [arrow] (GS criterion) -- node[above]{yes}(concave);
\draw [arrow] (GS criterion) -- node[right]{no}(deformation);

\end{tikzpicture}

For a general divisor $(D,\omega)$, which is not necessarily $\omega$-orthogonal,
the corresponding results for Proposition \ref{MAIN2} and Theorem \ref{uniqueness-GS} are still valid 
(See Proposition \ref{McLean0} and Proposition \ref{McLean}), by McLean's construction.
The generalization of Theorem \ref{obstruction-GS} is a bit subtle.
For an embeddable divisor $(D,\omega)$, we obtain in Theorem \ref{obstruction-closed case} that if it does not satisfy the positive GS criterion
then there is a neighborhood $N$ of $D$ such that any plumbing $P(D) \subset N$ is not a concave neighborhood.

\subsection{Divisors with Finite Boundary $\pi_1$}

 Using Theorem \ref{MAIN}, Theorem \ref{obstruction-closed case}, Proposition \ref{McLean0} and Proposition \ref{McLean},
 we classify, in our second main results Theorem \ref{main classification theorem} and Theorem \ref{complete classification}, 
 capping divisors (not necessarily $\omega$-orthogonal)
 with finite boundary fundamental group.
This allows us to answer the second question if $D$ is a capping divisor with finite boundary fundamental group.
As a consequence, we show that only finitely many minimal symplectic manifolds can be compactified by $D$, up to diffeomorphism. 
More details are described in Section \ref{Finiteness}.

Moreover, we also investigate special kinds of symplectic filling.
In Section \ref{Non-Conjugate Phenomena}, we study pairs of symplectic divisors that compactify each other.

\begin{defn}\label{Conjugate Definition_Divisor}
For symplectic divisors $D_1$ and $D_2$, we say that they are {\bf conjugate} to each other if there exists plumbings $P(D_1)$ and $P(D_2)$ for $D_1$ and $D_2$, respectively, such that
$D_1$ is a capping divisor of $P(D_2)$ and $D_2$ is a filling divisor of $P(D_1)$. 
\end{defn}

On the other hand, it is also interesting to investigate  the category of symplectic manifolds having  symplectic divisorial compactifications. 
Affine surfaces are certainly in this category. 
In this regard,  symplectic cohomology could play an important role.
Growth rate of symplectic cohomology has been used in \cite{Se08} and \cite{McL12} 
to distinguish a family of cotangent bundles from affine varieties.
The proof actually applies to any Liouville domain which admits a divisor cap, so certain boundedness  on the growth rate is  necessary  for 
a Liouville domain to be in this category.
Finally, 
for  symplectic manifolds in this category we would like to  define invariants in  terms of the divisorial compactifications 
(See \cite{LiZh11} for a related invariant).

The remaining of this article is organized as follows.
In Section~\ref{Preliminary}, we give the proof of Proposition \ref{MAIN2}, Theorem \ref{obstruction-GS} and Theorem \ref{uniqueness-GS}.
Section \ref{Operation on Divisors} is mainly devoted to the proof of Theorem \ref{MAIN}.
We give the statement and proof of the classification of compactifying divisors with finite boundary fundamental group in Section~\ref{Classification of Symplectic Divisors Having Finite Boundary Fundamental Group}.

\subsection*{Acknowledgements}
The authors would like to thank Mark McLean for many helpful discussions, in particular for explaining the canonical contact structure in \cite{McL14}.
They would also like to thank David Gay, Ko Honda, Burak Ozbagci, Andras Stipsicz, Weiwei Wu and Weiyi Zhang for their interest in this work. 
They are also grateful to Laura Starkston for stimulating discussions.

\section{Contact Structures on the Boundary}\label{Preliminary}

Essential topological information of a symplectic divisor can be encoded by its {\it graph}.
The graph is a weighted finite graph with vertices representing the surfaces and each edge joining two vertices representing an intersection between the two surfaces corresponding to the two vertices.
Moreover, each vertex is weighted by its genus (a non-negative integer) and its self-intersection number (an integer).

If each vertex is also weighted by its symplectic area (a positive real number), then we call it an {\it augmented graph}.
Sometimes, the genera (and the symplectic area) are not explicitly stated.
For simplicity, we would like to assume the symplectic divisors are connected.

In what follow, we call a finite graph weighted by its self-intersection number and its genus (resp. and its area) with no edge coming from a vertex back to itself a graph (resp. an augmented graph).
For a graph (resp. an augmented graph) $\Gamma$ (resp. $(\Gamma,a)$), we use $Q_{\Gamma}$ to denote the intersection matrix for $\Gamma$ (resp. and $a$ to denote the area weights for $\Gamma$).
We denote the determinant of $Q_{\Gamma}$ as $\delta_{\Gamma}$.
Moreover, $v_1,\dots,v_k$ are used to denote the vertices of $\Gamma$ and $s_i$, $g_i$ and $a_i$ are self-intersection, genus and area of $v_i$, respectively.

Notice that, $\omega$ being exact on the boundary of a plumbing is equivalent to $[\omega]$ being able to be lifted to a relative cohomological class.
Using Lefschetz duality, this is in turn equivalent to $[\omega]$ being able to be expressed as a linear combination $\sum\limits_{i=1}^k z_i[C_i]$, where $z_i \in \mathbb{R}$ and $D=C_1 \cup \dots \cup C_k$.
As a result, $\omega$ is exact on the boundary of a plumbing if and only if there exist a solution $z$ for the equation $Q_{\Gamma}z=a$ (See Subsection \ref{wrapping numbers} for a more detailed discussion).

We also remark that the germ  of a symplectic divisor $(D,\omega)$ with $\omega$-orthogonal intersections is uniquely determined by its augmented graph $(\Gamma,a)$ (See \cite{McR05} and Theorem 3.1 of \cite{GaSt09}) and a symplectic divisor can always be made $\omega$-orthogonal after a perturbation (See \cite{Go95}).

\begin{eg} \label{2-1} The graph 
\begin{displaymath}
\xymatrix{
        \bullet^{2}_{v_1} \ar@{-}[r] & \bullet^{1}_{v_2}\\
	}
\end{displaymath} 
where both vertices are of genus zero, 
represents a symplectic divisor of two spheres with self-intersection $2$ and $1$ and intersecting  positively transversally at a point.
\end{eg}

\begin{defn}\label{realizable definition}
A graph $\Gamma$ is called {\bf realizable} (resp. {\bf strongly realizable}) if there is an embeddable (resp. compactifying) symplectic divisor $D$ such that its graph is the same as $\Gamma$.
In this case, $D$ is called a realization (resp. strongly realization) of $\Gamma$.
\end{defn}

Similar to Definition \ref{realizable definition}, we can define realizability and strongly realizability for an augmented graph.
If the area weights attached to $\Gamma$ is too arbitrary, it is possible that $(\Gamma,a)$ is not strongly realizable but $\Gamma$ is strongly realizable. 

\subsection{Existence}

In this subsection, Proposition \ref{MAIN2} is given via two different approaches, namely, GS construction and McLean's construction.

\subsubsection{Existence via the GS construction}

\begin{defn}\cite{GaSt09}
 $(X,\omega,D,f,V)$ is said to be an {\bf orthogonal neighborhood 5-tuple} if $(X,\omega)$ is a symplectic 4-manifold  with
$D$ being a collection of closed symplectic surfaces in $X$ intersecting $\omega$-orthogonally such that
$f:X \to [0,\infty)$ is a smooth function with no critical value in $(0,\infty)$ and with $f^{-1}(0)=D$, and
$V$ is a Liouville vector field on $X-D$.

Moreover, if $df(V)>0$ (resp $<0$), then $(X,\omega,D,f,V)$ is called a convex (resp concave) neighborhood 5-tuple. 
\end{defn}

\begin{figure}\label{fig GS construction}
\begin{tikzpicture}[scale=1.5]
    % Draw axes
    \draw [<->,thick] (0,4.5) node (yaxis) [above] {$y$}
        |- (6,0) node (xaxis) [right] {$x$};
    % Draw two intersecting lines
    \draw (1.5,0.9) coordinate (a_1) -- (4.5,0.9) coordinate (a_2);
    \draw (1.5,0.9) coordinate (b_1) -- (1.5,3.9) coordinate (b_2);
    \draw (2.4,1.8) coordinate (d_1)-- (5.4,1.8) coordinate (d_2);   
    \draw (2.4,1.8) coordinate (e_1)-- (2.4,3) coordinate (e_2);
    \draw (2.5,3.3) node [left] {$R_{e_{\alpha \beta},v_{\alpha}} $};
    \draw (2.4,3) coordinate (j_1)-- (2.4,4.3) coordinate (j_2);

    \draw (1.5,3.9) coordinate (f_1)-- (2.4,4.3) coordinate (f_2); 
    \draw (1.5,2.4) coordinate (g_1)-- (2.4,2.8) coordinate (g_2);
    \draw (3,0.9) coordinate (h_1)-- (3.9,1.8) coordinate (h_2);
    \draw (3.8,1.3) node [right] {$R_{e_{\alpha \beta},v_{\beta}} $};
    \draw (4.5,0.9) coordinate (i_1)-- (5.4,1.8) coordinate (i_2); 
    % Calculate the intersection of the lines a_1 -- a_2 and b_1 -- b_2
    % and store the coordinate in c.
    \coordinate (c) at (intersection of a_1--a_2 and b_1--b_2);
    % Draw lines indicating intersection with y and x axis. Here we use
    % the perpendicular coordinate system
    \draw[dashed] (yaxis |- c) node[left] {$z_{\beta}'$}
        -| (xaxis -| c) node[below] {$z_{\alpha}'$};
    \draw[dashed] (1.5,2.4) -- (0,2.4) node[left] {$z_{\beta}'+\epsilon$};
    \draw[dashed] (1.5,3.9) -- (0,3.9) node[left] {$z_{\beta}'+2\epsilon$};
    \draw[dashed] (3,0.9) -- (3,0) node[below] {$z_{\alpha}'+\epsilon$};
    \draw[dashed] (4.5,0.9) -- (4.5,0) node[below] {$z_{\alpha}'+2\epsilon$};        
        
\end{tikzpicture}

Figure \ref{fig GS construction}
\end{figure}

In \cite{GaSt09}, Gay and Stipsicz constructed a convex orthogonal neighborhood 5-tuple $(X,\omega,D,f,V)$ when the augmented graph $(\Gamma,a)$ of $D$ satisfies the negative GS criterion. 
We first review their construction and an immediate consequence will be Proposition \ref{MAIN2}.  

Let $z$ be a vector solving $Q_{\Gamma}z=a$ with $z \in (-\infty,0]^k$.
Then $z'=(z_1',\dots,z_n')^T=\frac{-1}{2\pi}z$ has all entries being non-negative.
We remark that the $z'$ we use corresponds to the $z$ in \cite{GaSt09}.

For each vertex $v$ and each edge $e$ meeting the chosen $v$, we set $s_{v,e}$ to be an integer.
These integers $s_{v,e}$ are chosen such that $\sum\limits_{\text{e meeting v}} s_{v,e}=s_v$ for all $v$, where $s_v$ is the self-intersection number of the vertex $v$.
Also, set $x_{v,e}=s_{v,e}z_v'+z_{v'}'$, where $v'$ is the other vertex of the edge $e$.

For each edge $e_{\alpha \beta}$ of $\Gamma$ joining vertices $v_\alpha$ and $v_\beta$, we construct a local model $N_{e_{\alpha \beta}}$ as follows.
Let $\mu:\mathbb{S}^2 \times \mathbb{S}^2 \to [z_{\alpha}',z_{\alpha}'+1] \times [z_{\beta}',z_{\beta}'+1]$ be the moment map of $\mathbb{S}^2 \times \mathbb{S}^2$ onto its image.
We use $p_1$ for coordinate in $[z_{\alpha}',z_{\alpha}'+1]$, $p_2$ for coordinate in $[z_{\beta}',z_{\beta}'+1]$ and $q_i \in \mathbb{R}/2\pi$ be the corresponding fibre coordinates so $\theta=p_1dq_1+p_2dq_2$ gives a primitive of the symplectic form $dp_1 \wedge dq_1 + dp_2 \wedge dq_2$ on the preimage of the interior of the moment image.

Fix a small $\epsilon >0$ and let $D_1=\mu^{-1}(\{ z_{\alpha}'\} \times [z_{\beta}', z_{\beta}' + 2\epsilon])$ be a symplectic disc.
Let also $D_2=\mu^{-1}([z_{\alpha}', z_{\alpha}' + 2\epsilon] \times \{ z_{\beta}' \})$ be another symplectic disc meeting $D_1$ $\omega$-orthogonal at the point $\mu^{-1}(\{z_{\alpha}'\} \times \{z_{\beta}' \})$.

Our local model $N_{e_{\alpha \beta}}$ is going to be the preimage under $\mu$ of a region containing $\{ z_{\alpha}'\} \times [z_{\beta}', z_{\beta}' + 2\epsilon] \cup [z_{\alpha}', z_{\alpha}' + 2\epsilon] \times \{ z_{\beta}' \}$.

A sufficiently small $\delta$ will be chosen.
For this $\delta$, let $R_{e_{\alpha \beta},v_{\alpha}}$  be the closed parallelogram with vertices $(z_{\alpha}',z_{\beta}'+\epsilon), (z_{\alpha}',z_{\beta}'+2\epsilon), (z_{\alpha}'+\delta,z_{\beta}'+2\epsilon-s_{v_{\alpha},e_{\alpha \beta}} \delta), (z_{\alpha}'+\delta,z_{\beta}'+\epsilon-s_{v_{\alpha},e_{\alpha \beta}} \delta)$.
Also, $R_{e_{\alpha \beta},v_{\beta}}$ is defined similarly as the closed parallelogram with vertices $(z_{\alpha}'+\epsilon,z_{\beta}'), (z_{\alpha}'+2 \epsilon,z_{\beta}'), (z_{\alpha}'+2\epsilon-s_{v_{\beta},e_{\alpha \beta}} \delta,z_{\beta}'+\delta), (z_{\alpha}'+\epsilon-s_{v_{\beta},e_{\alpha \beta}} \delta,z_{\beta}'+\delta)$.
We extend the right vertical edge of $R_{e_{\alpha \beta},v_{\alpha}}$ downward and extend the top horizontal edge of $R_{e_{\alpha \beta},v_{\beta}}$ to the left until they meet at the point $(z_{\alpha}'+\delta,z_{\beta}'+\delta)$.
Then, the top edge of $R_{e_{\alpha \beta},v_{\alpha}}$, the right edge of $R_{e_{\alpha \beta},v_{\beta}}$, the extension of right edge of $R_{e_{\alpha \beta},v_{\alpha}}$, the extension of top edge of $R_{e_{\alpha \beta},v_{\beta}}$, $\{ z_{\alpha}'\} \times [z_{\beta}', z_{\beta}' + 2\epsilon]$ and $[z_{\alpha}', z_{\alpha}' + 2\epsilon] \times \{ z_{\beta}' \}$
enclose a region.
After rounding the corner symmetrically at $(z_{\alpha}'+\delta,z_{\beta}'+\delta)$, we call this closed region $R$.
Now, we set $N_{e_{\alpha \beta}}$ to be the preimage of $R$ under $\mu$.
See Figure \ref{fig GS construction}.

On the other hand, for each vertex $v_{\alpha}$, we also need to construct a local model $N_{v_{\alpha}}$.
Let $g_{\alpha}$ be the genus of $v_{\alpha}$.
We can form a genus $g_{\alpha}$ compact Riemann surface $\Sigma_{v_{\alpha}}$ such that the boundary components  one to one correspond to the edges meeting $v_{\alpha}$.  
We denote the boundary component corresponding to $e_{\alpha \beta}$ by $\partial_{e_{\alpha \beta}} \Sigma_{v_{\alpha}}$.
There exists a symplectic form $\bar{\omega}_{v_{\alpha}}$ and a Liouville vector field $\bar{X}_{v_{\alpha}}$ on $\Sigma_{v_{\alpha}}$ such that 
when we give the local coordinates $(t,\vartheta_1) \in (x_{v_{\alpha}, e_{\alpha \beta}}-2\epsilon,x_{v_{\alpha}, e_{\alpha \beta}}-\epsilon] \times \mathbb{R}/2\pi \mathbb{Z}$ to the neighborhood of the boundary component $\partial_{e_{\alpha \beta}} \Sigma_{v_{\alpha}}$, we have that 
$\bar{\omega}_{v_{\alpha}}=dt \wedge d\vartheta_1$ and $\bar{X}_{v_{\alpha}}=t \partial_t$.
Now, we form the local model $N_{v_{\alpha}}=\Sigma_{v_{\alpha}} \times \mathbb{D}^2_{\sqrt{2\delta}}$ with product symplectic form $\omega_{v_{\alpha}}=\bar{\omega}_{v_{\alpha}}+rdr \wedge d\vartheta_2$ and Liouville vector field $X_{v_{\alpha}}=\bar{X}_{v_{\alpha}}+(\frac{r}{2}+\frac{z_{v_{\alpha}}'}{r})\partial_r$, where $(r,\vartheta_2)$ is the standard polar coordinates on $\mathbb{D}^2_{\sqrt{2\delta}}$.

Finally, the GS construction is done by gluing these local models appropriately.
To be more precise, the preimage of $R_{e_{\alpha \beta},v_{\alpha}}$ of $N_{e_{\alpha \beta}}$ is glued via a symplectomorphism preserving the Liouville vector field to $[x_{v_{\alpha}, e_{\alpha \beta}}-2\epsilon,x_{v_{\alpha}, e_{\alpha \beta}}-\epsilon] \times \mathbb{R}/2\pi \mathbb{Z} \times \mathbb{D}^2_{\sqrt{2\delta}}$ of $N_{v_{\alpha}}$ and other matching pieces are glued similarly.
When $\delta >0$ is chosen  sufficiently small, this glued manifold give our desired convex orthogonal neighborhood 5-tuple with the symplectic divisor having graph $\Gamma$.

We remark  the whole construction works exactly the same if all entries of $z'$ are negative.
In this case, all entries of $z$ are positive and we get the desired concave orthogonal neighborhood 5-tuple if $(\Gamma,a)$ satisfies the positive GS criterion.
Now, if we have an $\omega'$-orthogonal divisor $(D',\omega')$ with augmented graph $(\Gamma,a)$, which 
is the same as that of the concave orthogonal neighborhood 5-tuple $(X,\omega,D,f,V)$,
then there exist neighborhood $N'$ of $D'$ symplectomorphic to a neighborhood of $D$ and sending $D'$ to $D$ (See \cite{McR05} and \cite{GaSt09}).
Therefore, a concave neighborhood of $D$ in $N$ give rise to a concave neighborhood of $D'$ in $N'$.
This finishes the proof of Proposition \ref{MAIN2}.

\subsubsection{Existence in Higher Dimensions via Wrapping Numbers}\label{wrapping numbers}
To understand the geometrical meaning of the GS criteria, we  recall wrapping numbers  from \cite{McL14} and \cite{McL12}.
Then, another construction for Proposition \ref{MAIN2} is given.

Let $(D,P(D),\omega)$ be a plumbing of a symplectic divisor.
If $\omega$ is not exact on the boundary of $D$, then there is no Liouville flow $X$ near $\partial P(D)$ such that $\alpha=i_X \omega$ and $d\alpha=\omega$.
Therefore, $D$ does not have concave nor convex neighborhood.

When $\omega$ is exact on the boundary, let $\alpha$ be a $1$-form on $P(D)-D$ such that $d \alpha=\omega$.
Let $\alpha_c$ be a $1$-form on $P(D)$ such that it is $0$ near $D$ and it equals $\alpha$ near $\partial P(D)$.
Note that $[\omega-d\alpha_c] \in H^2(P(D), \partial P(D);\mathbb{R})$.
Let its Lefschetz dual be $-\sum\limits_{i=1}^k \lambda_i [C_i] \in H_2(P(D); \mathbb{R})$.
We call $\lambda_i$ the wrapping number of $\alpha$ around $C_i$.

Also, there is another equivalent interpretation of wrapping numbers.
If we symplectically embed a small disc to $P(D)$ meeting $C_i$ positively transversally at the origin of the disc, then the pull-back of $\alpha$ equals $ \frac{r^2}{2}d\vartheta + \frac{\lambda_i}{2\pi} d\vartheta +df$, where $(r,\vartheta)$ is the polar coordinates of the disc and $f$ is some function defined on the punctured disc. (See the paragraph before Lemma 5.17 of \cite{McL12}).

From this point of view, we can see that the $z_i$'s in the GS criteria are minus of the wrapping numbers $-\lambda_i$'s for a lift of the symplectic class $[\omega] \in H^2(P(D);\mathbb{R})$ to $H^2(P(D),\partial P(D);\mathbb{R})$.
In particular, $Q$ being non-degenerate is equivalent to lifting of symplectic class being unique, which is in turn equivalent to the connecting homomorphism $H^1(\partial P(D); \mathbb{R}) \to H^2(P(D),\partial P(D);\mathbb{R})$ is zero.
When $Q$ is degenerate and for a fixed $\omega$, the equation $Qz=a$ having no solution for $z$ is equivalent to $\omega|_{\partial P(D)}$ being not exact.
Similarly, when $Qz=a$ has a solution for $z$, then the solution is unique up to the kernel of $Q$, which corresponds to the unique lift of $\omega$ up to the image of the connecting homomorphism $H^1(\partial P(D); \mathbb{R}) \to H^2(P(D),\partial P(D);\mathbb{R})$.

To summarize, we have

\begin{lemma}
 Let $(D,\omega)$ be a symplectic divisor.
 Then, lifts $[\omega-d\alpha_c] \in H^2(P(D),\partial P(D);\mathbb{R})$ of the symplectic class $[\omega]$ are in one-to-one correspondence to the solution $z$ of $Q_Dz=a$ via the minus of Lefschetz dual $PD([\omega-d\alpha_c])= -\sum\limits_{i=1}^k \lambda_i [C_i]$ and $z_i=-\lambda_i$.
\end{lemma}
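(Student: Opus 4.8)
The plan is to prove the lemma by unwinding the definitions recalled in this subsection and invoking Poincar\'e--Lefschetz duality for the compact oriented $4$--manifold with boundary $M:=P(D)$; the statement is essentially a bookkeeping summary, so the work lies in tracking the duality isomorphisms and the sign conventions.

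First I would record two topological facts about the plumbing $M$. Since $M$ deformation retracts onto $D=C_1\cup\dots\cup C_k$, a Mayer--Vietoris computation (the $C_i$ are closed and meet only in finitely many points, so the intersection points and any cycles of the graph contribute to $H_0$ and $H_1$ but not to $H_2$) shows that $H_2(M;\mathbb{Z})$ is the free abelian group on $[C_1],\dots,[C_k]$; in particular these classes form a basis of $H_2(M;\mathbb{R})$, so the coefficients $\lambda_i$ in $PD(\Lambda)=-\sum_i\lambda_i[C_i]$ are uniquely determined by a relative class $\Lambda\in H^2(M,\partial M;\mathbb{R})$. Second, under the Lefschetz isomorphisms $H^2(M,\partial M;\mathbb{R})\cong H_2(M;\mathbb{R})$ and $H^2(M;\mathbb{R})\cong H_2(M,\partial M;\mathbb{R})$, the restriction map $j^*\colon H^2(M,\partial M;\mathbb{R})\to H^2(M;\mathbb{R})$ corresponds to the natural map $i_*\colon H_2(M;\mathbb{R})\to H_2(M,\partial M;\mathbb{R})$ (naturality of the cap product), and composing $i_*$ with $H_2(M,\partial M;\mathbb{R})\cong H^2(M;\mathbb{R})\cong\operatorname{Hom}(H_2(M;\mathbb{R}),\mathbb{R})$ recovers the intersection form; in the basis $\{[C_i]\}$ this is exactly the matrix $Q_D=Q_\Gamma$, whose $(i,j)$ entry is $[C_i]\cdot[C_j]$ (the self-intersection $s_i$ when $i=j$, the number of intersection points otherwise).

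Next I would set up the correspondence. Given a lift of $[\omega]$, i.e.\ a class $\Lambda\in H^2(M,\partial M;\mathbb{R})$ with $j^*\Lambda=[\omega]$ (for instance $\Lambda=[\omega-d\alpha_c]$, which is a well-defined relative class because $\omega-d\alpha_c$ is closed and vanishes near $\partial M$, and satisfies $j^*\Lambda=[\omega]$ since $d\alpha_c$ is exact), write $PD(\Lambda)=-\sum_i\lambda_i[C_i]$ and set $z_i:=-\lambda_i$. Pairing with $[C_j]\in H_2(M;\mathbb{R})$ in two ways: on one hand $\langle j^*\Lambda,[C_j]\rangle=\langle[\omega],[C_j]\rangle=\int_{C_j}\omega=a_j$; on the other hand the commutative square above gives $\langle j^*\Lambda,[C_j]\rangle=PD(\Lambda)\cdot[C_j]=-\sum_i\lambda_i\,([C_i]\cdot[C_j])=\sum_i z_i\,(Q_D)_{ij}$. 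Comparing and using the symmetry of $Q_D$ yields $Q_Dz=a$, so $\Lambda\mapsto z$ lands in the solution set. Conversely, given a solution $z$ of $Q_Dz=a$, put $\beta:=\sum_i z_i[C_i]\in H_2(M;\mathbb{R})$ and $\Lambda:=PD^{-1}(\beta)$; the same computation gives $\langle j^*\Lambda,[C_j]\rangle=\beta\cdot[C_j]=a_j=\langle[\omega],[C_j]\rangle$ for all $j$, and since $\{[C_j]\}$ spans $H_2(M;\mathbb{R})$ and $H^2(M;\mathbb{R})\cong\operatorname{Hom}(H_2(M;\mathbb{R}),\mathbb{R})$ over a field, this forces $j^*\Lambda=[\omega]$, so $\Lambda$ is a lift. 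These two assignments are visibly inverse to each other (both governed by $PD$ and $\lambda_i=-z_i$), giving the bijection; as a consistency check, both sides are torsors over $\ker Q_D$ --- the lifts because, by exactness, they form a torsor over $\operatorname{im}\bigl(\partial\colon H^1(\partial M;\mathbb{R})\to H^2(M,\partial M;\mathbb{R})\bigr)=\ker j^*\cong\ker Q_D$.

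The only steps needing genuine care are the conventions: checking that $\{[C_i]\}$ really spans $H_2(P(D);\mathbb{R})$ when the graph has cycles (the short Mayer--Vietoris argument), and pinning down the signs so that the Lefschetz dual of a lift is $-\sum\lambda_i[C_i]$ with $z_i=-\lambda_i$ rather than some other sign. There is no analytic or geometric obstacle here; the lemma merely repackages the wrapping-number discussion and the long exact sequence of the pair $(P(D),\partial P(D))$.
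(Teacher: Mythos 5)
Your proposal is correct and follows essentially the same route as the paper: the paper obtains the lemma as a summary of its wrapping-number discussion, i.e.\ Lefschetz duality for $(P(D),\partial P(D))$, the long exact sequence of the pair, and the fact that in the basis $\{[C_i]\}$ of $H_2(P(D);\mathbb{R})$ the map $j^*$ (equivalently $i_*$) is the intersection matrix $Q_D$, so pairing a lift against $[C_j]$ yields $Q_Dz=a$. You merely make explicit the standard details (that $\{[C_i]\}$ spans $H_2$ of the plumbing, the naturality of the duality square, and the kernel/torsor remark) that the paper leaves implicit, so there is nothing to add.
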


Proposition  \ref{MAIN2} can be  generalized to arbitrary  dimension if we apply the constructions in the recent paper of McLean \cite{McL14}.
We first recall an appropriate definition of a symplectic divisor in higher dimension (See \cite{McL14} or \cite{McL12}).

Let $(W^{2n},\omega)$ be a symplectic manifold with or without boundary.
Let $C_1, \dots, C_k$ be real codimension $2$ symplectic submanifolds of $W$ that intersect $\partial W$ trivially (if any).
Assume all intersections among $C_i$ are transversal and positive, where positive is defined in the following sense.

(i) For each $I \subset \{1, \dots,k \}$, $C_I=\cap_{i \in I}C_i$ is a symplectic submanifold.

(ii) For each $I, J \subset \{1, \dots, k\}$ with $C_{I \cup J} \neq \emptyset$, we let $N_1$ be the symplectic normal bundle of $C_{I \cup J}$ in $C_I$ and $N_2$ be the symplectic normal bundle of $C_{I \cup J}$ in $C_J$.
Then, it is required that the orientation of $N_1 \oplus N_2 \oplus TC_{I \cup J}$ is compatible with the orientation of $TW|_{C_{I \cup J}}$.

We remark that the condition (ii) above guarantees that no three distinct $C_i$ intersect at a common point when $W$ is four dimensions.
Therefore, this higher dimension definition coincides with the one we use in four dimension. 
To make our paper more consistent, in higher dimension, we call $D= C_1 \cup \dots \cup C_k$ a symplectic divisor 
if $D$ is moreover connected and the orientation of each $C_i$ is induced from $\omega^{n-1}|_{C_i}$.

Now, for each $i$, let $N_i$ be a neighborhood of $C_i$ such that we have a smooth projection $p_i: N_i \to C_i$ with a connection rotating the disc fibers.
Hence, for each $i$, we have a well-defined radial coordinate $r_i$ with respect to the fibration $p_i$ such that $C_i$ corresponds to $r_i=0$.

Let $\bar{\rho}: [0,\delta) \to [0,1]$ be a smooth function such that $\bar{\rho}(x)=x^2$ near $x=0$ and $\bar{\rho}(x)=1$ when $x$ is close to $\delta$.
Moreover, we require $\bar{\rho}'(x) \ge 0$.

A smooth function $f: W-D \to \mathbb{R}$ is called compatible with $D$ if $f=\sum\limits_{i=1}^k \log(\bar{\rho}(r_i))+\bar{\tau}$ for some smooth $\bar{\tau}: W \to \mathbb{R}$ and choice of $\bar{\rho}(r_i)$ as above.

Here is the analogue of Proposition \ref{MAIN2} in arbitrary even dimension. 

\begin{prop}[cf. Propositon 4.1 of \cite{McL14}] \label{McLean0}
 Suppose $f: W^{2n}-D \to \mathbb{R}$ is compatible with $D$ and $D$ is a symplectic divisor with respect to $\omega$.
Suppose $\theta \in \Omega^1(W^{2n}-D)$ is a primitive of $\omega$ on $W^{2n}-D$ such that it has positive (resp. negative) wrapping numbers for all $i=1, \dots, k$.
Then, there exist $g: W^{2n}-D \to \mathbb{R}$ such that
$df(X_{\theta+dg}) > 0$ (resp. $df(-X_{\theta+dg}) > 0$) near $D$, where $X_{\theta+dg}$ is the dual of $\theta+dg$ with respect to $\omega$.

In particular, $D$ is a convex (resp. concave) divisor.
\end{prop}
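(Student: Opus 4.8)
The plan is to reduce the statement to an explicit local computation in a neighborhood of $D$ after using the freedom in $g$ to normalize $\theta$ there. Since $f$ is compatible with $D$, near $D$ one has $f=\sum_i\log\bar\rho(r_i)+\bar\tau$ with $\bar\tau\in C^\infty(W)$ and $\bar\rho(x)=x^2$ near $x=0$, where $r_i$ is the radial coordinate of the chosen disc-bundle $p_i\colon N_i\to C_i$; in particular $f=\sum_i 2\log r_i+\bar\tau$ near $D$. The strategy is to choose $g$ so that $\theta+dg$ has a prescribed fiberwise-angular component around each $C_i$, and then to compute $df$ on the $\omega$-dual $X_{\theta+dg}$.

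The technical heart is the choice of $g$. Realize a product-type symplectic normal form $\omega=\sum_i\bigl(r_i\,dr_i\wedge\alpha_i+\tfrac{r_i^2}{2}\,d\alpha_i\bigr)+p^*(\text{base form})$ near $D$, with $\alpha_i$ a connection $1$-form on the circle bundle of $N_i$, and let $\partial_{\vartheta_i}$ be the generator of the fiber rotation (so $\alpha_i(\partial_{\vartheta_i})=1$). A direct contraction shows that the $dr_i$-component of the $\omega$-dual of any $1$-form $\beta$ equals $\tfrac1{r_i}\beta(\partial_{\vartheta_i})$, so the quantity that matters is $(\theta+dg)(\partial_{\vartheta_i})$. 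Now the hypothesis that the wrapping number of $\theta$ around $C_i$ is some $\lambda_i>0$ (resp.\ $<0$) says precisely that $\oint_{\{r_i=\text{const}\}}\theta(\partial_{\vartheta_i})\,d\vartheta_i=\pi r_i^2+\lambda_i$ on each fiber circle; hence the defect $\theta(\partial_{\vartheta_i})-\tfrac{r_i^2}{2}-\tfrac{\lambda_i}{2\pi}$ has zero average on every fiber circle, so it equals $\partial_{\vartheta_i}$ of a function, and one can choose $g$ (first locally near each intersection stratum $C_I=\bigcap_{i\in I}C_i$, then patched over $W-D$ by a cutoff that is $1$ near $D$) so that $(\theta+dg)(\partial_{\vartheta_i})=\tfrac{r_i^2}{2}+\tfrac{\lambda_i}{2\pi}$ near $C_i$. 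I expect this to be the main obstacle: realizing the normal form and making the choice of $g$ consistently across all strata, while keeping the remaining (radial, horizontal) components of $\theta+dg$ and hence $X_{\theta+dg}$ under control, and pinning down the sign conventions so that positive wrapping numbers yield the convex, not the concave, conclusion. This is essentially Proposition~4.1 of \cite{McL14}, whose argument I would adapt.

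Granting the normalization, the conclusion is a short computation. Near a point of $D$ on the stratum $C_I$ all the $r_i$ with $i\in I$ are small, $f=\sum_{i\in I}2\log r_i+\bar\tau$, and therefore
\[
df(X_{\theta+dg})=\sum_{i\in I}\frac{2}{r_i^2}\,(\theta+dg)(\partial_{\vartheta_i})+d\bar\tau(X_{\theta+dg})=|I|+\sum_{i\in I}\frac{\lambda_i}{\pi r_i^2}+O\Bigl(\sum_{i\in I}\frac{1}{r_i}\Bigr),
\]
where the first equality is the contraction identity above, the second uses the normalization, and the error bounds $d\bar\tau(X_{\theta+dg})$ since $\bar\tau$ is smooth on $W$. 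Because the $\lambda_i$ all have the same sign, each term $\lambda_i/(\pi r_i^2)$ dominates the corresponding $O(1/r_i)$ as one approaches $D$, so $df(X_{\theta+dg})>0$ (resp.\ $<0$) on a neighborhood of $D$, and by compactness of $D$ this neighborhood may be taken uniform.

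For the last assertion, for $c\ll 0$ the set $P_c:=\{f\le c\}\cup D$ is a regular neighborhood of $D$ which shrinks to $D$ as $c\to-\infty$ (since $\bar\tau$ is bounded on the relevant compact piece), and for generic such $c$ the level $\{f=c\}$ is a smooth hypersurface transverse to $X_{\theta+dg}$, which points out of $P_c$ in the convex case and into $P_c$ in the concave case. Hence $(P_c,\theta+dg)$ is a strong convex (resp.\ concave) filling of its boundary; taking $c$ so small that $P_c$ is contained in a prescribed regular neighborhood of $D$, we conclude that $D$ is a convex (resp.\ concave) divisor.
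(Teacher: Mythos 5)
Your reduction hinges on a product-type symplectic normal form $\omega=\sum_i\bigl(r_i\,dr_i\wedge\alpha_i+\tfrac{r_i^2}{2}\,d\alpha_i\bigr)+p^*(\text{base form})$ near $D$, and everything downstream (the contraction identity $dr_i(X_\beta)=\tfrac1{r_i}\beta(\partial_{\vartheta_i})$, the fiberwise zero-average construction of $g$, the $O(1/r_i)$ error control) lives inside that model. But such a normal form forces the components $C_i$ meeting along a stratum $C_I$, $|I|\ge 2$, to intersect $\omega$-orthogonally, whereas Proposition \ref{McLean0} only assumes transversal positive intersections --- and this generality is exactly what the proposition is for in this paper: it is invoked later (e.g.\ in the flowchart discussion and in Proposition \ref{reduce to graph}) precisely for divisors that are \emph{not} assumed $\omega$-orthogonal, the orthogonal case being already covered by the GS construction of Proposition \ref{MAIN2}. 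So the main technical difficulty is not ``realizing the normal form and patching $g$,'' as you frame it; in the non-orthogonal case the normal form simply does not exist, and near an intersection point the cross terms of $\omega$ destroy both your contraction identity and the claim that the non-angular components of $\theta+dg$ stay under control. Saying you would ``adapt Proposition~4.1 of \cite{McL14}'' does not close this, because McLean's argument does not proceed through your normalization at all.

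What the paper does instead (following \cite{McL14}, and spelling out the negative-wrapping, dimension-four case in the lemma modelled on Lemma~4.5 there) is: near each intersection point, pass to the universal cover of the complement of one branch, enlarge it across $\{r=0\}$, and construct closed $1$-forms $q_1,q_2$ whose wrapping numbers match those of $\theta$ and whose $\omega$-dual vector fields have controlled directions as one approaches the intersection; then write $\theta+dg=\theta_1+q_1+q_2$ with $\theta_1$ bounded and prove the normalized estimate $df(-X_{\theta+dg})>c_f\|\theta+dg\|\,\|df\|$ together with $c_1\|db\|<\|\theta+dg\|<c_2\|db\|$. The normalization by $\|\theta+dg\|\,\|df\|$ is also what makes the statement uniform over \emph{all} compatible functions $f$ --- a point your computation glosses over, since the given $f$ may be built from fibrations and radial coordinates different from the ones adapted to your chosen model. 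Your last paragraph (sublevel sets $\{f\le c\}$ shrinking to $D$ with $X_{\theta+dg}$ transverse to generic levels), which is the only part of the proposition not already in \cite{McL14}, is correct and agrees with the paper; but the core existence of $g$ in the stated generality is not established by your argument.
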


This is essentially contained in  Propositon 4.1 of \cite{McL14}--the  only new statement is the last sentence.  And 
 Proposition  4.1 in \cite{McL14} is stated only for the case in which wrapping numbers are all positive,
however, the proof there goes through without additional difficulty for the other case.
We give here the most technical lemma  adapted to the case of negative wrapping numbers and ambient manifold being dimension four for the sake of completeness. 

We remark that the $\omega$-orthogonal intersection condition is not required in his construction.

\begin{lemma}[cf. Lemma 4.5 of \cite{McL14}]
 Given $D=D_1 \cup D_2 \subset (U,\omega)$, where $D_1$ and $D_2$ are symplectic $2$-discs intersecting each other positively and transversally at a point $p$.
Suppose $\theta \in \Omega^1(U-D)$ is a primitive of $\omega$ on $U-D$ such that it has negative wrapping numbers with respect to both $D_1$ and $D_2$.
Then there exists $g$ such that for all smooth functions $f:U-D \to \mathbb{R}$ compatible wtih $D$, we have that $df(-X_{\theta+dg}) > c_f \|\theta+dg\| \| df\|$ near $D$,
where $c_f>0$ is a constant depending on $f$.

Also $c_1 \| db\| < \|  \theta+dg \| < c_2 \| db\|$ near $D$ for some smooth function $b$ compatible with $D$, where $c_1$ and $c_2$ are some constants.
\end{lemma}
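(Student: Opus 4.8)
The plan is to work in the explicit toric local model for a transverse pair of symplectic discs and to exhibit $g$ by hand, following the scheme of Lemma 4.5 of \cite{McL14} but tracking signs carefully for the negative-wrapping-number case. First I would fix convenient coordinates on $U$: use the fibration structure near $D_1$ and $D_2$ to write $U$ (shrunk) as a neighbourhood of $\{p_1=0\}\cup\{p_2=0\}$ inside $\mathbb{C}^2$ with symplectic form $\omega=\tfrac12\sum d(r_i^2)\wedge d\vartheta_i$, so that $D_1=\{r_2=0\}$, $D_2=\{r_1=0\}$, and the radial coordinates $r_1,r_2$ agree (up to positive smooth factors) with those appearing in the definition of compatibility with $D$. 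By the wrapping-number interpretation recalled in Subsection \ref{wrapping numbers}, a primitive $\theta$ with wrapping numbers $\lambda_1,\lambda_2<0$ can be written, near $D$, as $\theta=\sum_i\big(\tfrac{r_i^2}{2}+\tfrac{\lambda_i}{2\pi}\big)d\vartheta_i+dh$ for some smooth $h$ defined off $D$; absorbing $dh$ into the sought-after $g$, it suffices to treat the model primitive $\theta_0=\sum_i\big(\tfrac{r_i^2}{2}+\tfrac{\lambda_i}{2\pi}\big)d\vartheta_i$.

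Next I would choose $g$ of the form $g=\sum_i c_i\log r_i^2$ (or, away from $D$ and after the standard smoothing with $\bar\rho$, $g=\sum_i c_i\log\bar\rho(r_i)$) with constants $c_i>0$ to be pinned down, and compute the $\omega$-dual vector field $X_{\theta_0+dg}$. Because $d\log r_i^2 = \tfrac{2}{r_i^2}\,r_i\,dr_i$ pairs under $\omega$ with $\partial_{\vartheta_i}$, while $d\vartheta_i$ pairs with a radial field, one gets $-X_{\theta_0+dg}$ having a radial component proportional to $\big(\tfrac{r_i^2}{2}+\tfrac{\lambda_i}{2\pi}\big)$ in the $\partial_{r_i}$-direction and an angular component proportional to $c_i$. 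A function $f$ compatible with $D$ has $df=\sum_i \tfrac{\bar\rho'(r_i)}{\bar\rho(r_i)}dr_i + d\bar\tau$, and near $D$ the dominant term is $\sum_i \tfrac{2}{r_i}\partial_{r_i}$-type (since $\bar\rho(x)=x^2$ there), which is a \emph{positive} multiple of the radial directions. Pairing, $df(-X_{\theta_0+dg})$ picks up $\sum_i \big(-\tfrac{1}{r_i}\big)\cdot\big(\tfrac{r_i^2}{2}+\tfrac{\lambda_i}{2\pi}\big)\cdot(\text{positive})$: since $\lambda_i<0$ and $r_i$ is small, each bracket $\tfrac{r_i^2}{2}+\tfrac{\lambda_i}{2\pi}$ is negative, so each summand is positive, giving $df(-X_{\theta_0+dg})>0$ near $D$. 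To upgrade this to the quantitative estimate $df(-X_{\theta_0+dg})>c_f\|\theta_0+dg\|\,\|df\|$ I would estimate all three quantities in the same coordinates: $\|df\|\sim (\sum_i r_i^{-2})^{1/2}$, $\|\theta_0+dg\|\sim (\sum_i(c_i^2+|\lambda_i|^2) r_i^{-2})^{1/2}$ (the $c_i r_i\,dr_i/r_i^2$ and $\lambda_i d\vartheta_i/r_i^2$ terms dominating, using $\|dr_i\|\sim 1$, $\|d\vartheta_i\|\sim r_i^{-1}$), and the inner product is bounded below by $\sum_i |\lambda_i|\, r_i^{-2}$ up to positive constants; a Cauchy–Schwarz comparison of these homogeneous expressions yields a uniform $c_f>0$ (depending on $f$ through $\bar\tau$ and the bounded geometry, and on the fixed $\lambda_i$). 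The last assertion, $c_1\|db\|<\|\theta_0+dg\|<c_2\|db\|$ for a compatible $b$, follows by taking $b=\sum_i \log\bar\rho(r_i)$, whose differential has the same $r_i^{-2}$ radial blow-up rate, and observing the two norms are comparable term-by-term near $D$.

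The main obstacle I expect is not the model computation but the patching: passing from the clean toric model to a genuine $\theta$, $f$, $g$ on $U-D$ requires controlling the error terms $dh$ (from writing $\theta$ in model form) and $d\bar\tau$ (from $f$ being merely compatible, not equal to $\sum\log\bar\rho(r_i)$) and showing they are lower-order relative to the $r_i^{-2}$-blowup — i.e. $\|dh\|,\|d\bar\tau\|$ stay bounded (or at worst $o(r_i^{-2})$) near $D$, so they can be absorbed without destroying the strict inequality or the norm comparisons. This is exactly the technical heart of Lemma 4.5 of \cite{McL14}, and for the negative case it goes through verbatim once the single sign flip (each bracket $\tfrac{r_i^2}{2}+\tfrac{\lambda_i}{2\pi}$ being negative rather than positive, compensated by taking $-X$ rather than $X$) is accounted for; I would state this and refer to \cite{McL14} for the parts of the estimate that are sign-insensitive, giving full detail only where the sign of $\lambda_i$ actually enters.
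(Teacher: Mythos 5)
Your reduction to the standard toric model is where the argument breaks down. Writing $U$ as a neighbourhood of the union of the two coordinate planes in $\mathbb{C}^2$ with $\omega=\sum_i r_i\,dr_i\wedge d\vartheta_i$ and $D_1,D_2$ equal to those planes is possible only when $D_1$ and $D_2$ meet $\omega$-orthogonally at $p$: whether $T_pD_1$ and $T_pD_2$ are $\omega$-orthogonal is a symplectic invariant, so no Darboux chart puts a merely transversal pair into that normal form. The lemma assumes only positive transversal intersection, and this is exactly its point in the paper (the GS construction already covers the $\omega$-orthogonal case; the text stresses that McLean's construction is used precisely because it does not require orthogonality). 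Moreover, the defect is not a lower-order ``patching'' error that can be absorbed as you propose. Put $D_1=\{x_1=y_1=0\}$ and let $(r,\vartheta)$ be polar coordinates in the $(x_1,y_1)$-plane; the $\omega$-dual of the naive model term $\frac{\lambda_1}{2\pi}d\vartheta$ is $\frac{\lambda_1}{2\pi r}\partial_r$, whose limiting directions at angle $\vartheta_0$ differ from the tangent directions of $D_2$ by the nonzero vectors $(0,0,a_{\vartheta_0},b_{\vartheta_0})\in T_pD_1$ whenever the intersection is not $\omega$-orthogonal. Pairing it with $d\log$ of the squared distance to $D_2$ therefore produces a term of size comparable to $\frac{1}{r\cdot\mathrm{dist}(\cdot,D_2)}$, which in the region $r\sim\mathrm{dist}(\cdot,D_2)$ is of the same order as the main terms $r^{-2}$ and has angle-dependent, uncontrolled sign; it cannot be absorbed, and the claimed inequality can fail for the naive choice of $g$.

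This is precisely what the paper's proof (following Lemma 4.5 of \cite{McL14}) is engineered to avoid. There one fixes Darboux coordinates adapted to $D_1$ only, passes to the universal cover of $U-D_1$ in the coordinates $(\tau,\vartheta,x_2,y_2)$, records for each angle $\vartheta_0$ the tangent direction $v_{\vartheta_0}=(1,0,a_{\vartheta_0},b_{\vartheta_0})$ of $D_2$, and constructs a closed $1$-form $q_1$ (and symmetrically $q_2$) with wrapping numbers $(\lambda_1,0)$ whose $\omega$-dual vector field, after rescaling by its norm, converges along sequences approaching $p$ to directions transverse to $D_1$ but \emph{tangent} to $D_2$. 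With that tangency the dangerous cross terms vanish in the limit, giving $df(-X_{q_1+q_2})>c_f\|q_1+q_2\|\,\|df\|$ for every compatible $f$; one then writes $\theta+dg=\theta_1+q_1+q_2$ with $\theta_1$ a bounded primitive, so the bounded part is harmless, and the comparison with $\|db\|$ follows as you say. Your computation is essentially correct in the $\omega$-orthogonal model, but it does not prove the stated lemma; the missing idea is the construction of $q_1,q_2$ adapted to the actual tangent directions of the other disc.
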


\begin{proof}
 By possibly shrinking $U$, we give a symplectic coordinate system at the intersection point $p$ such that $D_1=\{x_1=y_1=0\}$ and $0$ corresponds to $p$.
Let $\pi_1$ be the projection to the $x_2,y_2$ coordinates.
Write $x_1=r \cos \vartheta$ and $y_1=r \sin \vartheta$ and let $\tau=\frac{r^2}{2}$.
Let $U_1=U-D_1$ and $\tilde{U_1}'$ be the universal cover of $U_1$ with covering map $\alpha$.
Give $\tilde{U_1}'$ the coordinates $(\tilde{x_1},\tilde{y_1},\tilde{x_2},\tilde{y_2})$ coming from pulling back the coordinates of $(\tau,\vartheta,x_2,y_2)$ by the covering map.
Then, the pulled back symplectic form on $\tilde{U_1}'$ is given by $d\tilde{x_1} \wedge d\tilde{y_1} + d\tilde{x_2} \wedge d\tilde{y_2}$. 
Hence, we can enlarge $\tilde{U_1}'$ across $\{ \alpha^*\tau=\tilde{x_1}=0 \}$ to $\tilde{U_1}$ by identifying $\tilde{U_1}'$ as an open subset of $\mathbb{R}^4$ with standard symplectic form.

Let $L_{\vartheta_0}=\{(\tau,\vartheta_0,x_2,y_2) \in \tilde{U_1}| \tau,x_2,y_2 \in \mathbb{R}\}$, which is a $3$-manifold depending on the choice of $\vartheta_0$.
Let $T$ be the tangent space of $D_2$ at $0$ and identify it as a $2$ dimensional linear subspace in $(x_1,y_1,x_2,y_2)$ coordinates. 
Then, $l_{\vartheta_0}=\alpha(L_{\vartheta_0} \cap \tilde{U_1}') \cap T$ is an open ray starting from $0$ in $U$ because $D_1$ and $D_2$ are assumed to be transversal.
If we pull back the tangent space of $l_{\vartheta_0}$ to the $(\tilde{x_1},\tilde{y_1},\tilde{x_2},\tilde{y_2})$ coordinates in $\tilde{U_1}'$, it is spanned by a vector of the form $(1,0,a_{\vartheta_0},b_{\vartheta_0})$ for some $a_{\vartheta_0},b_{\vartheta_0}$.
We identify this vector as a vector at $(0,\vartheta_0,0,0)$ and call it $v_{\vartheta_0}$.
Notice that the $\omega$-dual of $v_{\vartheta_0}$ is $d\tilde{y_1}-b_{\vartheta_0}d\tilde{x_2}+a_{\vartheta_0}d\tilde{y_2}$, for all $\vartheta_0 \in [0,2\pi]$.

Let $X_1$ be a vector field on $\tilde{U_1}$ such that $X_1=\frac{\lambda_1}{2\pi}v_{\vartheta_0}$ at $(\tilde{x_1},\tilde{y_1},\tilde{x_2},\tilde{y_2})=(0,\vartheta_0,0,0)$ for all $\vartheta_0 \in [0,2\pi]$, where $\lambda_1$ is the wrapping number of $\theta$ with respect to $D_1$.
We also require the $\omega$-dual of $X_1$ to be  a closed form on $\tilde{U_1}$.
This can be done because the $\omega$-dual of $X_1$ restricted to $\{\tilde{x_1}=\tilde{x_2}=\tilde{y_2}=0\}$ is closed.
Furthermore, we can also assume $X_1$ is invariant under the $2\pi \mathbb{Z}$ action on $\tilde{y_1}$ coordinate.
Note that $d\tilde{x_1}(X_1)=\frac{\lambda_1}{2\pi} < 0$ at $(0,\vartheta_0,0,0)$ for all $\vartheta_0$ so we have $d\tilde{x_1}(X_1) < 0$ near $\{\tilde{x_1}=\tilde{x_2}=\tilde{y_2}=0\}$.

Let the $\omega$ dual of $X_1$ be $\tilde{q_1}$, which is exact as it is closed in $\tilde{U_1}$.
Now, $\tilde{q_1}$ can be descended to a closed 1-form $q_1$ in $U_1$ under $\alpha$ with wrapping numbers $\lambda_1$ and $0$ with respect to $D_1$ and $D_2$, respectively.
We can construct another closed 1-form $q_2$ in $U_2$ in the same way as $q_1$ with $D_1$ and $D_2$ swapped around.
Notice that $q_1+q_2$ is a well-defined closed 1-form in $U-D$ with same wrapping numbers as that of $\theta$.
Let $\theta'=\theta_1+q_1+q_2$ be such that $d(\theta')=\omega$ and $\theta_1$ has bounded norm.
Since $\theta'$ has the same wrapping numbers as that of $\theta$, we can find a function $g: U-D \to \mathbb{R}$ such that $\theta'=\theta+dg=\theta_1+q_1+q_2$.

We want to show that $df(-X_{\theta+dg}) > c_f \|\theta+dg\| \| df\|$ near $D$.
It suffices to show that $df(-X_{q_1+q_2}) > c_f \|q_1+q_2\| \| df\|$ near $D$ as $\| \theta_1 \|$ is bounded.
Since $f=\sum\limits_{i=1}^n \log(\rho(r_i))+\bar{\tau}$ for some smooth $\bar{\tau}: M \to \mathbb{R}$, it suffices to show that $\sum\limits_{i=1}^2 (d\log(x_i'^2+y_i'^2)) (-X_{q_1+q_2}) > c_f \|q_1+q_2\| \| \sum_{i=1}^2 (d\log(x_i'^2+y_i'^2))\|$, where $(x_1',y_1',x_2',y_2')$ are smooth coordinates adapted to the fibrations used to define compatibility.

To do this, we pick a sequence of points $p_k \in U-D$ converging to $0$.
Then $\frac{X_{q_1}}{\| q_1 \|}$ at $p_k$ converges (after passing to a subsequence) to a vector transversal to $D_1$ but tangential to $D_2$.
The analogous statement is true for $\frac{X_{q_2}}{\| q_2 \|}$.
Hence we have $\sum\limits_{i=1}^2 (d\log(x_i'^2+y_i'^2)) (-X_{q_1+q_2}) > c_f \sum\limits_{i=1}^2 \|q_i\| \| (d\log(x_i'^2+y_i'^2))\|$ and thus get the desired estimate (See \cite{McL14} for details).

On the other hand, $c_1 \| db\| < \|  \theta+dg \| < c_2 \| db\|$ near $D$ for some smooth function $b$ compatible with $D$ is easy to achieve by taking $b=C\sum\limits_{i=1}^2 (d\log(x_i'^2+y_i'^2))$ near $D$.
\end{proof}

Careful readers will find that when constructing a convex neighborhood, the  GS construction works when wrapping numbers are all non-negative while McLean's constructions work only when wrapping numbers are all positive.
We end this subsection with a lemma saying that the GS construction is not really more powerful than McLean's construction in dimension four.

\begin{lemma}\label{non-negative wrapping numbers}
   Let $(D^{2n-2},\omega)$ be a symplectic divisor with $n>1$.
Suppose $\omega$ is exact on the boundary with $\alpha$ being a primitive on $P(D)-D$.
If the wrapping numbers of $\alpha$ are all non-negative, then all are positive.
\end{lemma}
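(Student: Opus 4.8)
The plan is to argue by contradiction: suppose all wrapping numbers $\lambda_i$ of $\alpha$ are non-negative but some $\lambda_{i_0}=0$. The key is that a wrapping number of zero records, via the Lefschetz-dual description recalled just before this lemma, that the class $[\omega-d\alpha_c]\in H^2(P(D),\partial P(D);\mathbb{R})$ has no $[C_{i_0}]$-component, i.e.\ the coefficient vector $z=-\lambda$ solving $Q_D z = a$ has $z_{i_0}=0$. First I would extract the arithmetic consequence: writing out the $i_0$-th equation of $Q_D z = a$, namely $\sum_j (Q_D)_{i_0 j} z_j = a_{i_0}$, and using $z_{i_0}=0$, $z_j\le 0$ for all $j$, $(Q_D)_{i_0 j}\ge 0$ for $j\ne i_0$ (these are intersection numbers between distinct components, hence non-negative since all intersections are positive), and $a_{i_0}>0$ (symplectic area of a non-empty symplectic submanifold), we get $0 < a_{i_0} = \sum_{j\ne i_0}(Q_D)_{i_0 j} z_j \le 0$, a contradiction. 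Wait — I need to be careful about the sign convention: in the GS/wrapping-number normalization used in the excerpt one has $z_i=-\lambda_i$, so all $\lambda_i\ge 0$ means $z\in(-\infty,0]^k$, which is exactly the regime above; so the inequality chain does close.

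Since $D$ is connected, $v_{i_0}$ has at least one neighbor whenever $k>1$, so the sum $\sum_{j\ne i_0}(Q_D)_{i_0 j} z_j$ genuinely involves at least one edge; if that neighbor $v_j$ has $z_j<0$ strictly we are done immediately, and if all neighbors also have vanishing $z_j$ we propagate: let $S=\{i : z_i = 0\}$; the same computation shows that for every $i\in S$ the equation forces $a_i = \sum_{j\notin S,\ j\sim i}(Q_D)_{ij} z_j \le 0$, contradicting $a_i>0$, unless no vertex of $S$ has a neighbor outside $S$. But by connectedness of $D$, if $S\ne\emptyset$ and $S\ne\{1,\dots,k\}$ there must be an edge from $S$ to its complement; and $S=\{1,\dots,k\}$ is impossible since then $a=Q_D\cdot 0 = 0\ne a$. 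Hence $S=\emptyset$, i.e.\ all $\lambda_i>0$. The hypothesis $n>1$ is used only to ensure $D$ is genuinely a divisor of positive-dimensional symplectic submanifolds so that "positive intersection" gives $(Q_D)_{ij}\ge 0$ and areas are positive; in fact the argument is dimension-independent beyond that.

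The main obstacle — really the only subtlety — is bookkeeping the sign conventions correctly: matching McLean's wrapping-number sign to the GS vector $z$, and confirming that the off-diagonal entries of the intersection matrix are non-negative (which holds because all pairwise intersections of the $C_i$ are positive and transverse, by the standing hypotheses on a symplectic divisor) while the diagonal entries $s_i$ play no role in the $i_0$-th equation once $z_{i_0}=0$. I do not expect to need the $\omega$-orthogonality hypothesis, nor any hard symplectic input — the statement is essentially the linear-algebraic observation that a non-negative linear combination $-\sum z_j[C_j]$ (with $z_j\le 0$) cannot have a zero coefficient on a vertex that has positive self-paired area unless the divisor disconnects along $S=\{z_i=0\}$, which connectedness forbids. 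I would present it in exactly that order: reduce to $Q_D z = a$ with $z\le 0$, assume $z_{i_0}=0$, run the one-line inequality on the $i_0$-th row, then promote to the set $S$ via connectedness.
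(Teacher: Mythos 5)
Your argument is correct in dimension four, where it is essentially the homological shadow of the paper's proof: the $i_0$-th row of $Q_Dz=a$ with $z_{i_0}=0$ gives $0<a_{i_0}=\sum_{j\ne i_0}(Q_D)_{i_0j}z_j\le 0$ at once. (Incidentally, the propagation through the set $S$ is superfluous: the contradiction in that single row does not require any neighbour to have $z_j<0$ strictly, since each summand is already $\le 0$, so the connectedness discussion can be deleted.)

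The genuine gap is your closing claim that the argument is dimension-independent. The lemma is stated for a divisor $D^{2n-2}$ in a $2n$-dimensional plumbing for every $n>1$, but the identity $Q_Dz=a$ with $z_i=-\lambda_i$ — the correspondence between wrapping numbers and lifts of $[\omega]$ recalled just before the lemma — is a four-dimensional statement. For $n>2$ the quantities you use are not even defined as such: $\omega[C_i]$ makes no sense since $\dim C_i=2n-2$, the product $[C_i]\cdot[C_j]$ is a homology class of dimension $2n-4$ rather than an integer, and evaluating the relative class $[\omega-d\alpha_c]$ against $H_{2n-2}$ is not governed by an integer intersection matrix. The correct substitute for your $i_0$-th row is the identity $\int_{C_{i_0}}\omega^{n-1}=-\sum_{j\ne i_0}\lambda_j\int_{C_{i_0}\cap C_j}\omega^{n-2}$ (the diagonal term being absent because $\lambda_{i_0}=0$), and this has to be \emph{proved}; that is exactly what the paper's proof does: since $\lambda_{i_0}=0$, the primitive $\alpha$ extends over $C_{i_0}$ minus the intersection strata, and Stokes' theorem on $C_{i_0}$, combined with the normal form of $\alpha$ on small normal discs to $C_{i_0}\cap C_j$, namely $\frac{r^2}{2}d\vartheta+\frac{\lambda_j}{2\pi}d\vartheta+df$, yields $\int_{C_{i_0}}\omega^{n-1}\le 0$ when all $\lambda_j\ge 0$, contradicting the fact that $C_{i_0}$ is symplectic. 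So to complete your proof you must either restrict the statement to $n=2$ or supply this Stokes-type computation (equivalently, a higher-dimensional version of the correspondence lemma with $a_i$ replaced by $\int_{C_i}\omega^{n-1}$ and the off-diagonal entries by $\int_{C_i\cap C_j}\omega^{n-2}$); the non-negativity of those replacement entries then plays the role that $(Q_D)_{ij}\ge 0$ plays in your four-dimensional argument.
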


\begin{proof}
Suppose the wrapping numbers $\lambda_i$ of $\alpha$ are all non-negative and $\lambda_1=0$.
Then, $\alpha$ can be extend over $C_1-\cup_{1 \in I, |I| \ge 2}C_I$, where we recall $C_I$ with $1 \in I$ are the symplectic submanifold of $C_1$ induced from intersection with other $C_i$.
Therefore, 
$$\int_{C_1} \omega^{n-1} = \int_{P(\cup_{1 \in I, |I| \ge 2} C_I)} \omega^{n-1} - \int_{\partial P(\cup_{1 \in I, |I| \ge 2} C_I)} \alpha \wedge \omega^{n-2},$$
where $P(\cup_{1 \in I, |I| \ge 2} C_I)$ is a regular neighborhood of $\cup_{1 \in I, |I| \ge 2} C_I$ in $C_1$.
We claim that $\int_{P(\cup_{1 \in I, |I| \ge 2} C_I)} \omega^{n-1} - \int_{\partial P(\cup_{1 \in I, |I| \ge 2} C_I)} \alpha \wedge \omega^{n-2} \le 0$ so we will arrive at a contradiction.

We first assume that if $1 \in I$, then $C_I = \emptyset$ except $C_1$ and $C_{ \{1,2 \}}$.
As a submanifold of $C_1$, $P(\cup_{1 \in I, |I| \ge 2} C_I)=P(C_{\{1,2 \}})$ can be symplectically identified with a closed $2$-disc bundle over $C_{ \{1,2 \}}$.
For each fibre, $\alpha|_{\text{fibre}}= \frac{r^2}{2}d\vartheta + \frac{\lambda_2}{2\pi} d\vartheta +df$, where $(r,\vartheta)$ is the polar coordinates of the disc and $f$ is a smooth function defined on the punctured disc.
Without loss of generality, we can assume $P(C_{\{1,2\}})$ is taken such that symplectic connection rotates the fibre and we have a well defined one form $\frac{\lambda_2}{2\pi} d\vartheta$ on $P(C_{\{1,2\}})-C_{\{1,2\}}$.
Then, $\alpha- \frac{\lambda_2}{2\pi} d\vartheta -df$ can be defined over $P(C_{\{1,2\}})$ for some $f$ defined on $P(C_{\{1,2\}})-C_{\{1,2\}}$ and 

\begin{eqnarray*}
 \int_{\partial P(C_{\{1,2\}})} (\alpha -\frac{\lambda_2}{2\pi} d\vartheta)  \wedge \omega^{n-2}
 &=&\int_{\partial P(C_{\{1,2\}})} (\alpha -\frac{\lambda_2}{2\pi} d\vartheta-df)  \wedge \omega^{n-2}\\
 &=&\int_{P(C_{\{1,2\}})} rdr \wedge d\vartheta \wedge \omega^{n-2}\\
 &=&\int_{P(C_{\{1,2\}})} \omega^{n-1}.
\end{eqnarray*}

Therefore, 
\begin{eqnarray*}
\int_{P(C_{\{1,2\}})} \omega^{n-1}- \int_{\partial P(C_{\{1,2\}})} \alpha \wedge \omega^{n-2}
&=& -\int_{\partial P(C_{\{1,2\}})} \frac{\lambda_2}{2\pi} d\vartheta  \wedge \omega^{n-2} \\
&=& -\lambda_2 \int_{C_{\{1,2\}}} \omega^{n-2} \le 0
\end{eqnarray*}

It is not hard to see that this argument can be generalized to more than two $C_I$ being non-empty, where $1 \in I$.
This completes the proof.
\end{proof}

\subsection{Obstruction}
In this subsection we prove Theorem \ref{obstruction-GS} and Theorem \ref{obstruction-closed case}.
We first prove Theorem \ref{obstruction-GS}, in which $(D,\omega)$ is assumed to be $\omega$-orthogonal. 
Then the proof for Theorem \ref{obstruction-closed case}, which is similar, is sketched.

\subsubsection{Energy Lower Bound}

Given an $\omega$-orthogonal symplectic divisor $D =C_1 \cup \dots \cup C_k$ in a 4-manifold $(W,\omega)$, 
for each $i$, let $N_i$ be a neighborhood of $C_i$ together with  a symplectic open disk fibration $p_i: N_i \to C_i$ 
 such that
the symplectic connection induced by $\omega$-orthogonal subspace of the fibers rotates the symplectic disc fibres.
Hence, for each $i$, we have a well-defined radial coordinate $r_i$ with respect to the fibration $p_i$ such that $C_i$ corresponds to $r_i=0$.
Also, $N_i$ are chosen such that the disk fibers are symplectomorphic to the standard open symplectic disk with radius $\epsilon_i$.
We also assume $\min_{i=1}^k {r_i}=r_1$ (or simply $r_1=r_2=\dots=r_k$).
Moreover, we require  
$p_{ij}: N_i \cap N_j \to C_{ij}$ to be a  symplectic $\mathbb{D}^2 \times \mathbb{D}^2$ fibration 
such  that $p_i|_{N_i \cap N_j}$ is the projection to the first factor
and $p_j|_{N_i \cap N_j}$ is the projection to the  second factor.
Such choice of $p_i$ and $N_i$ exist (See Lemma 5.14 of \cite{McL12}).

\begin{lemma}
 Let $(D=C_1 \cup \dots \cup C_k ,\omega)$ be a symplectic divisor with $p_i$ and $N_i$ as above.
 There exist an $\omega$-compatible almost complex structure $J_N$ on $N=\cup_{i=1}^k N_i$ such that $C_i$ are $J_N$-holomorphic,
 the projections $p_i$ are $J_N$-holomorphic and the fibers are $J_N$-holomorphic.
\end{lemma}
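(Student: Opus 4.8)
The plan is to construct $J_N$ piece by piece over each $N_i$, using at each step the $\omega$-orthogonal splitting of $TN_i$ into the fibre directions of $p_i$ and their complement, and then to arrange the local choices so that the pieces agree on the overlaps $N_i\cap N_j$ and glue to a global structure.

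First, over each $N_i$ I would let $V_i\subset TN_i$ denote the vertical subbundle of $p_i$, i.e.\ the bundle tangent to the disc fibres. It is a symplectic subbundle (the fibres are symplectic discs), so its $\omega$-orthogonal complement $H_i\subset TN_i$ is again symplectic, $dp_i\colon H_i\xrightarrow{\ \sim\ } p_i^{*}TC_i$ is an isomorphism, and along $C_i$ one has $H_i|_{C_i}=TC_i$ because $C_i$ is the zero section and is $\omega$-orthogonal to the fibres. The three requirements on $N_i$ — that $C_i$, the fibres of $p_i$, and $p_i$ itself be holomorphic — all follow once $J_N|_{N_i}$ preserves the splitting $TN_i=V_i\oplus H_i$ and $J_N|_{H_i}=(dp_i)^{-1}\circ j_i\circ dp_i$ for some $\omega|_{C_i}$-compatible almost complex structure $j_i$ on the surface $C_i$: such a $J_N$, being a direct sum of $\omega$-compatible structures on $\omega$-orthogonal symplectic subbundles (once a fibrewise $\omega|_{V_i}$-compatible complex structure on $V_i$ is also fixed), is $\omega$-compatible; it has $V_i$ invariant, $H_i$ invariant with $H_i|_{C_i}=TC_i$, and it intertwines with $j_i$ under $dp_i$.

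The next step is to make the local data consistent on overlaps. By the simultaneous choice of fibrations in Lemma 5.14 of \cite{McL12}, each $N_i\cap N_j$ is identified with a $\mathbb{D}^{2}\times\mathbb{D}^{2}$-bundle over the finite set $C_i\cap C_j$ so that $p_i$ and $p_j$ are the two factor projections and $\omega$ is split along the factors; on this overlap I would simply take $J_N$ to be the standard split structure, which makes $C_i$, $C_j$, all their $p_i$- and $p_j$-fibres, and both $p_i$ and $p_j$ holomorphic there. Then I would choose $j_i$ on each $C_i$ to equal the standard structure near every intersection point of $C_i$ with some $C_j$, and choose the fibrewise structure on $V_i$ to be the standard one on the disc fibres; both choices are possible because the spaces of compatible complex structures on a symplectic surface and on the symplectic bundle $V_i$ have contractible fibres, so sections with prescribed germs over a finite subset exist. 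With these choices $J_N|_{N_i}$ restricts on each $N_i\cap N_j$ to the split model, hence agrees with $J_N|_{N_j}$, and the pieces glue to the desired $\omega$-compatible $J_N$ on $N=\bigcup_i N_i$.

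The point needing care — and where I expect the bookkeeping to sit — is precisely the overlap compatibility: one must know the $p_i$, their $\omega$-orthogonal connections, and the product charts on the $N_i\cap N_j$ can all be chosen at once so that the ``$C_i$-horizontal'', ``$C_j$-horizontal'', and two ``vertical'' normalizations near an intersection point assemble into a single split chart. In dimension four there are no triple intersections, so only pairwise overlaps occur, and this simultaneous choice is what the cited lemma provides; granting it, the gluing is formal. Alternatively, one could avoid normalizing the local data and instead patch by a partition of unity, using that the set of $\omega$-compatible $J$ that preserve a fixed symplectic splitting and are pulled back from a surface is contractible and closed under convex combinations in the appropriate model.
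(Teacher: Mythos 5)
Your proposal is correct and follows essentially the same route as the paper: take the split (product) structure on the overlaps $N_i\cap N_j\cong\mathbb{D}^2\times\mathbb{D}^2$ coming from $p_{ij}$, and extend over each $N_i$ by an almost complex structure preserving the $\omega$-orthogonal splitting into fibre and horizontal directions, so that $C_i$, the fibres, and $p_i$ are holomorphic and compatibility follows from the product-like form. Your version just makes explicit the vertical/horizontal bookkeeping and the contractibility argument that the paper leaves implicit in the phrase ``we can extend this almost complex structure.''
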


\begin{proof}
 Using $p_{ij}$, we can define a product complex structure on $\mathbb{D}^2 \times \mathbb{D}^2 = N_i \cap N_j$.
Since $p_{ij}$ are compatible with $p_i$ and $p_j$, we can extend this almost complex structure such that $J|_{C_i}$ and $J|_{C_j}$ are complex structures,
$(p_l)_*J=J|_{C_l}$ 
and $J(r_l\partial_{r_l})=\partial_{\vartheta_l}$ for $l=i,j$, where $(r_i,\vartheta_i)$ and $(r_j,\vartheta_j)$ are polar coordinates 
of the disk fiber for $p_i$ and $p_j$, respectively.

Although  $\vartheta_i$ and $\vartheta_j$ are not well-defined if the disk bundle has non-trivial Euler class, $\partial_{\vartheta_l}$ are well-defined for $k=i,j$.
Since the almost complex structure $J$ is `product-like', $J$ is compatible with the symplectic form $\omega$.
We call this desired almost complex structure $J_N$.
\end{proof}

Now, we consider a partial compactification of $N=\cup_{i=1}^k N_i$ in the following sense.
Consider a local symplectic trivialization of the symplectic disk bundle induced by $p_1$, $B_1 \times \mathbb{D}^2$,
where $B_1 \subset C_1$ is symplectomorphic to the standard symplectic closed disk with radius $\tau$.
We assume that $4\epsilon_1 < \tau$.
We recall that $\mathbb{D}^2$ is equipped with a standard symplectic form with radius $\epsilon_1$.
Choose a symplectic embedding of $\mathbb{D}^2_{\epsilon_1}$ to $S^2_{\epsilon}$ with $\epsilon$ slightly large than $\epsilon_1$, where $S^2_{\epsilon}$ is a 
symplectic sphere of area $\pi \epsilon^2$.
We glue $\cup_{i=1}^k N_i$ with $B_1 \times S^2_{\epsilon}$ along $B_1 \times \mathbb{D}^2_{\epsilon_1}$ with the identification above.
This glued manifold is called $\bar{N}$ and the compatible almost complex structure constructed above can be extended to $\bar{N}$, which we denote as $J_{\bar{N}}$.
We further require that $\{ q \} \times S^2_{\epsilon}$ is $J_{\bar{N}}$-holomorphic for every $q \in B_1$. 

We want to get an energy uniform lower bound for $J$-holomorphic curves representing certain fixed homology class, for those $J$ that are equal to $J_{\bar{N}}$ away from a neighborhood of the divisor $D$.
Let $N^{\delta} = \cup_{i=1}^k \{ r_i \le \delta \} \subset \bar{N}$, where $r_i$ are the radial coordinates for the disk fibration $p_i$.

\begin{lemma}\label{energy lower bound}
 Let $\delta_{\min} >0$ be small and $\delta_{\max}>0$ be slightly less than $\epsilon_1$.
 Let $q_{\infty} \in B_1 \times S^2_{\epsilon}$ be a point in $\bar{N}-N$ and the first coordinate of which is the center of $B_1$.
 Let $J$ be an $\omega$-compatible almost complex structure such that $J=J_{\bar{N}}$ on $\bar{N}-N^{\delta_{\min}}$.
 If $u:\mathbb{C}P^1 \to \bar{N}$ is a non-constant $J$ holomorphic curve passing through $q_{\infty}$, then 
 either $u^*\omega([\mathbb{C}P^1]) > 1.9\pi (\delta_{\max}^2 - \delta_{\max}\delta_{\min}) $ or the image of $u$ stays inside 
 $N^{\delta_{\max}} \cup B_1^{\frac{\tau}{2}} \times S^2_{\epsilon}$, where $B_1^{\frac{\tau}{2}}$ is a closed sub-disk of $B_1$
 with the same center but radius $\frac{\tau}{2}$.
\end{lemma}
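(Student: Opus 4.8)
The plan is to prove the contrapositive: writing $\mathcal{K} := N^{\delta_{\max}} \cup B_1^{\frac{\tau}{2}} \times S^2_{\epsilon}$, I assume the image of $u$ is not contained in $\mathcal{K}$ and deduce $\int_{\mathbb{C}P^1} u^*\omega > 1.9\pi(\delta_{\max}^2 - \delta_{\max}\delta_{\min})$. The mechanism is a monotonicity (``no escape'') estimate: $J$ agrees with the split model $J_{\bar{N}}$ away from $N^{\delta_{\min}}$, and on $B_1 \times S^2_{\epsilon}$ this $J_{\bar{N}}$ is the product of the complex structures on $B_1$ and $S^2_{\epsilon}$, with $\omega = \pi_1^*\omega_{B_1} + \pi_2^*\omega_{S^2_{\epsilon}}$, where $\pi_1,\pi_2$ denote the two projections (both $J_{\bar{N}}$-holomorphic, since $p_1$, the fibres $\{q\}\times S^2_{\epsilon}$, and $C_1$ are). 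I would use this to force $u$ to sweep across a long symplectic annulus lying in $\bar{N}\setminus N^{\delta_{\min}}$, where the classical monotonicity inequality for holomorphic curves applies.

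First I would isolate the trivial sub-case: if the image of $u$ lies in $B_1 \times S^2_{\epsilon}$ and avoids $B_1 \times \mathbb{D}^2_{\delta_{\min}}$, then $J = J_{\bar{N}}$ along $u$, so $\pi_1\circ u : \mathbb{C}P^1 \to B_1$ is holomorphic into a disc, hence constant equal to the center $c$ of $B_1$ (its value at $u^{-1}(q_{\infty})$), whence the image lies in $\{c\}\times S^2_{\epsilon} \subset B_1^{\frac{\tau}{2}}\times S^2_{\epsilon} \subset \mathcal{K}$, contradicting escape. Thus either (a) $u$ leaves $B_1\times S^2_{\epsilon}$, or (b) $u$ stays in $B_1\times S^2_{\epsilon}$ and meets $B_1 \times \mathbb{D}^2_{\delta_{\min}}$. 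In case (a), since the topological frontier of $B_1\times S^2_{\epsilon}$ in $\bar{N}$ lies in $\partial B_1 \times S^2_{\epsilon}$, the holomorphic map $\pi_1\circ u$ must sweep from $c$ out to $\partial B_1$; a monotonicity/degree estimate for this sweep across the annulus $\{\tfrac{\tau}{4} \le |q - c| \le \tfrac{\tau}{2}\}\subset B_1$ (at fibre-radius $>\delta_{\min}$ along the sweep, so in $\bar{N}\setminus N^{\delta_{\min}}$) gives $\int u^*\omega \geq \tfrac{3}{16}\pi\tau^2$. In case (b), as $q_{\infty}$ has fibre-coordinate in the cap $S^2_{\epsilon}\setminus\mathbb{D}^2_{\epsilon_1}$, the fibre-radius $r$ along $u$ runs from a value $>\epsilon_1$ down past $\delta_{\max}$ and $\delta_{\min}$, so $u$ traverses the fibre collar $B_1\times\{\delta_{\min}\le r\le\delta_{\max}\}$ --- necessarily with total multiplicity at least two, $u$ being a closed sphere that both enters the cap and plunges into $\{r\le\delta_{\min}\}$ --- and the crossing-annulus monotonicity lemma on this collar (where $J = J_{\bar{N}}$ is split and the fibre carries the standard area form) gives $\int u^*\omega \geq 2\pi(\delta_{\max}^2 - \delta_{\min}^2)$. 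In case (a), $\tfrac{3}{16}\pi\tau^2 > 3\pi\epsilon_1^2 > 1.9\pi\delta_{\max}^2$ since $4\epsilon_1 < \tau$ and $\delta_{\max} < \epsilon_1$; in case (b), $2\pi(\delta_{\max}^2-\delta_{\min}^2) = 2\pi(\delta_{\max}-\delta_{\min})(\delta_{\max}+\delta_{\min}) > 1.9\pi\delta_{\max}(\delta_{\max}-\delta_{\min})$; in either case the claimed inequality holds.

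I expect the main obstacle to be exactly the domain bookkeeping that makes these two crossings rigorous: pinning down the frontier of the balloon $B_1\times S^2_{\epsilon}$ inside $\bar{N}$ so that ``leaving $B_1\times S^2_{\epsilon}$'' genuinely forces $\pi_1\circ u$ onto $\partial B_1$; checking that the annular collar at issue (base or fibre) lies in $\bar{N}\setminus N^{\delta_{\min}}$, so that the projections are truly $J$-holomorphic there and the relevant component of the domain is a proper preimage on which the area comparison $\int u^*\omega \ge \int(\pi_i\circ u)^*\omega_i$ and the monotonicity inequality can be invoked; justifying the multiplicity-at-least-two count for the fibre crossing of a closed holomorphic sphere; and confirming that the possibility of $u$ touching $\partial\bar{N}$ over $\partial B_1$ does not disturb any estimate. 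Granting these, the lemma follows by combining the product identity $u^*\omega = u^*\pi_1^*\omega_{B_1} + u^*\pi_2^*\omega_{S^2_{\epsilon}}$ on $B_1\times S^2_{\epsilon}$ with the monotonicity lemma for $J_{\bar{N}}$-holomorphic curves.
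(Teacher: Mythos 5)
There is a genuine gap, and it is the same one in both of your cases: you quietly assume that the curve's ``lateral'' transitions happen in the region where $J=J_{\bar{N}}$, but nothing prevents them from happening inside $N^{\delta_{\min}}$, where $J$ is an arbitrary compatible structure and no projection or monotonicity estimate is available. In case (a), leaving $B_1\times S^2_{\epsilon}$ only forces the image to contain points over $\partial B_1$; it does not force the base coordinate to sweep the annulus $\{\tfrac{\tau}{4}\le |q-c|\le\tfrac{\tau}{2}\}$ at fibre radius $>\delta_{\min}$ --- the curve can dive into $N^{\delta_{\min}}$ over the centre and re-emerge over $\partial B_1$, so the bound $\tfrac{3}{16}\pi\tau^2$ is not derivable (note also that leaving $B_1\times S^2_{\epsilon}$ through $N^{\delta_{\max}}$ is perfectly compatible with staying inside $\mathcal{K}$, and you never use the escape from $\mathcal{K}$ in this case). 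In case (b), the ``multiplicity at least two'' claim is false as justified: reaching the cap and meeting $\{r\le\delta_{\min}\}$ forces only one crossing of the collar. Indeed $J=J_{\bar{N}}$ everywhere is an allowed choice, and then the central fibre $\{c\}\times S^2_{\epsilon}$ is a $J$-holomorphic sphere through $q_{\infty}$ which enters the cap, plunges below $\delta_{\min}$, and crosses the collar with fibre-degree exactly $1$; its energy is $\pi\epsilon^2$, which is less than $2\pi(\delta_{\max}^2-\delta_{\min}^2)$ when $\delta_{\min}$ is small. With degree $1$ the best you get from your mechanism is about $\pi(\epsilon^2-\delta_{\min}^2)\approx\pi\delta_{\max}^2$, short of $1.9\pi\delta_{\max}(\delta_{\max}-\delta_{\min})$ by roughly a factor of two.

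The missing factor has to come from a second energy quantum supported on a \emph{different part of the domain}, localized near an actual escape point $q_*$ of the image from $\mathcal{K}=N^{\delta_{\max}}\cup B_1^{\frac{\tau}{2}}\times S^2_{\epsilon}$ --- a point your argument never invokes. This is exactly how the paper proceeds: first, on $u^{-1}(Int(B_1^{\frac{\tau}{2}})\times S^2_{\epsilon}-N^{\delta_{\min}})$ the base projection is proper and holomorphic, hence either surjective onto $Int(B_1^{\frac{\tau}{2}})$ (energy $>\pi(\tfrac{\tau}{2})^2$, done) or constant, in which case that whole piece lies in the central fibre and contributes at least $\pi(\epsilon^2-\delta_{\min}^2)$; second, choosing $q_*$ in $N^{\epsilon_1}-N^{\delta_{\max}}$ outside $\mathcal{K}$ and a split product chart $Int(\mathbb{D}^2_{\delta_{\max}-\delta_{\min}})\times(Int(\mathbb{D}^2_{\epsilon_1})-\mathbb{D}^2_{\delta_{\min}})$ around it, the same surjective-or-constant dichotomy applied to the component $\Sigma$ of $u^{-1}(\bar{N}-N^{\delta_{\min}})$ through $q_*$ yields an additional $\min\{\pi(\delta_{\max}-\delta_{\min})^2,\pi(\epsilon_1^2-\delta_{\min}^2)\}$. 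Since in the constant case the component through $u^{-1}(q_{\infty})$ lies in the central fibre, which is inside $\mathcal{K}$, the point $q_*$ lies on a different component, so the two contributions are over disjoint subsets of $\mathbb{C}P^1$ and add; their sum exceeds $1.9\pi(\delta_{\max}^2-\delta_{\max}\delta_{\min})$ because $\delta_{\min}$ is small. To repair your proposal you would need to reorganize the case analysis around escaping $\mathcal{K}$ (not around escaping $B_1\times S^2_{\epsilon}$ or touching $B_1\times\mathbb{D}^2_{\delta_{\min}}$) and extract this second, disjoint quantum near $q_*$.
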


\begin{proof}
 Let us assume $u^*\omega([\mathbb{C}P^1]) \le 1.9\pi (\delta_{\max}^2 - \delta_{\max}\delta_{\min}) $.
Otherwise, we have nothing to prove.
Also, we can assume $u$ intersect $\partial N^{\delta_{\min}}$  and $\partial N^{\delta_{\max}}$ transversally, by slightly adjusting $\delta_{\min}$ and
$\delta_{\max}$.
Passing to the underlying curve if necessary, we can also assume $u$ is somewhere injective.

Consider the portion of $u$ inside $Int(B_1^{\frac{\tau}{2}}) \times S^2_{\epsilon}-N^{\delta_{\min}}$.
Let $\bar{p_1}$ be the projection to the first factor.
We have $\bar{p_1} \circ u|_{u^{-1}(Int(B_1^{\frac{\tau}{2}}) \times S^2_{\epsilon}-N^{\delta_{\min}})}$ is a holomorphic map because $J=J_{\bar{N}}$ in
$Int(B_1^{\frac{\tau}{2}}) \times S^2_{\epsilon}-N^{\delta_{\min}}$ and $J_{\bar{N}}$ splits as a product.
This map is also proper.
Therefore, the map is either a surjection or a constant map.
If it is a surjection, then $u^*\omega([\mathbb{C}P^1]) > \pi (\frac{\tau}{2})^2 >1.9\pi (\delta_{\max}^2 - \delta_{\max}\delta_{\min})$.
Contradiction.
If it is a constant map, then the image of $u|_{u^{-1}(Int(B_1^{\frac{\tau}{2}}) \times S^2_{\epsilon}-N^{\delta_{\min}})}$ is the fiber.
Hence, we get $u^*\omega([\mathbb{C}P^1]) \ge \pi (\epsilon^2 - \delta_{\min}^2) > \pi (\delta_{\max}^2 - \delta_{\min}^2)$.

If the image of $u$ does not stay inside $N^{\delta_{\max}} \cup B_1^{\frac{\tau}{2}} \times S^2_{\epsilon}$,
then there is a point $q_*$ outside this region, lying inside the image of $u$ and $N^{\epsilon_1}-N^{\delta_{\max}}$.
We can assume $q_*$ is an injectivity point of $u$.
In particular, it also means that $u^{-1}(\bar{N}-N^{\delta_{\min}})$ is disconnected.
Consider the connected component $\Sigma$ of $u^{-1}(\bar{N}-N^{\delta_{\min}})$, which contains the preimage of $q_*$ under $u$.

Using one of the projections $p_i$, depending on the position of $q_*$, we can identify a neighborhood of $q_*$ as $Int(\mathbb{D}^2_{\delta_{\max}-\delta_{\min}}) \times (Int(\mathbb{D}^2_{\epsilon_1}) -\mathbb{D}^2_{\delta_{\min}})$,
where $\mathbb{D}^2_{\delta_{\min}}$ has the same center as $\mathbb{D}^2_{\epsilon_1}$ and they are closed disks with radii $\delta_{\min}$ and $\epsilon_1$, respectively.
We call this neighborhood $N_{q_*}$.
Also, we still have $J=J_{\bar{N}}$ and $J_{\bar{N}}$ splits as a product in $N_{q_*}$.
Similar as before, by projection to the factors, we see that 
$\int_{\Sigma \cap u^{-1}(N_{q_*})} u^*\omega \ge \min \{ \pi(\delta_{\max}-\delta_{\min})^2, \pi \epsilon_1^2-\pi \delta_{\min}^2 \}$.
Therefore, we have
$$u^*\omega([\mathbb{C}P^1]) \ge
\int_{u^{-1}(Int(B_1^{\frac{\tau}{2}}) \times S^2_{\epsilon}-N^{\delta_{\min}})} u^*\omega+\int_{\Sigma \cap u^{-1}(N_{q_*})} u^*\omega > 1.9\pi (\delta_{\max}^2 - \delta_{\max}\delta_{\min})$$
Contradiction.
\end{proof}

\subsubsection{Theorem \ref{obstruction-GS} and Theorem \ref{obstruction-closed case}}

We recall the terminology {\it GW triple} used in \cite{McL14}.
For a symplectic manifold $(W,\omega)$ (possibly non-compact), a homology class $[A] \in H_2(W;\mathbb{Z})$ and a family of compatible almost complex structures $\mathcal{J}$ such that

(1) $\mathcal{J}$ is non-empty and path connected.

(2) there is a relative compact open subset $U$ of $W$ such that for any $J \in \mathcal{J}$, any compact genus $0$ nodal $J$-holomorphic curve representing the class $[A]$ lies inside $U$.

(3) $c_1(TW)([A])+n-3=0$.

$GW_0(W,[A],\mathcal{J})$ is called a GW triple.
The key property of a GW triple is the following.

\begin{prop}[cf \cite{McL14} and the references there-in]  \label{GW}
Suppose $GW_0(W,[A],\mathcal{J})$ is a GW triple.
Then, the GW invariants $GW_0(W,[A],J_0)$ and $GW_0(W,[A],J_1)$ are the same for any $J_0,J_1 \in \mathcal{J}$.
In particular, if $GW_0(W,[A],J_0) \neq 0$, then for any $J \in \mathcal{J}$, there is a nodal closed genus $0$ $J$-holomorphic curve representing the class $[A]$.
\end{prop}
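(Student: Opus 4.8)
The plan is to reduce this to the standard proof of deformation invariance of genus zero Gromov--Witten invariants, with conditions (1)--(3) supplying exactly the three inputs that the non-compact setting would otherwise lack. Condition (3), $c_1(TW)([A]) + n - 3 = 0$, says that the moduli space $\overline{\mathcal{M}}_{0,0}(W,[A],J)$ of stable genus zero $J$-holomorphic maps in class $[A]$ with no marked points has expected (virtual) dimension zero, so that a suitably perturbed signed count of its points is a rational number, which is by definition $GW_0(W,[A],J)$. Condition (2) provides, for every $J \in \mathcal{J}$, a fixed relatively compact open set $U$ containing the image of every such nodal curve; since the energy of a curve in class $[A]$ is the fixed number $\omega([A])$ and all images lie in the compact set $\overline{U}$, Gromov compactness applies verbatim and $\overline{\mathcal{M}}_{0,0}(W,[A],J)$ is compact. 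All perturbations needed to define the count (of $J$ itself, or abstract/virtual perturbations dealing with multiply covered components) can be taken supported in a neighborhood of $\overline{U}$, so the non-compactness of $W$ never enters. Thus $GW_0(W,[A],J)$ is well defined for each $J \in \mathcal{J}$.

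For invariance I would use condition (1) to pick a path $\{J_t\}_{t \in [0,1]}$ in $\mathcal{J}$ joining $J_0$ to $J_1$. Because the same $U$ from condition (2) works for every member of this path, the parametrized moduli space $\bigcup_{t \in [0,1]} \{t\} \times \overline{\mathcal{M}}_{0,0}(W,[A],J_t)$ has all of its curves contained in $\overline{U}$, hence is compact by Gromov compactness; for a generic path (again perturbed near $\overline{U}$) it is a compact oriented one-dimensional cobordism from $\overline{\mathcal{M}}_{0,0}(W,[A],J_0)$ to $\overline{\mathcal{M}}_{0,0}(W,[A],J_1)$. Counting its boundary points with signs gives $GW_0(W,[A],J_0) = GW_0(W,[A],J_1)$. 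When multiple covers force the use of virtual fundamental cycles, the same path produces a cobordism of virtual cycles and the conclusion is unchanged.

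It remains to upgrade the equality of invariants to the existence of an actual holomorphic curve for every $J \in \mathcal{J}$, not merely the generic ones. Suppose $GW_0(W,[A],J_0) \neq 0$ but $\overline{\mathcal{M}}_{0,0}(W,[A],J) = \emptyset$ for some $J \in \mathcal{J}$. Emptiness is an open condition: if $J_k \to J$ in $C^\infty$ with $u_k$ a nodal $J_k$-holomorphic curve in class $[A]$, then, all $u_k$ lying in $\overline{U}$ with energy $\omega([A])$, Gromov compactness extracts a limiting nodal $J$-holomorphic curve in class $[A]$, contradicting emptiness. Hence some generic $J' \in \mathcal{J}$ close to $J$ also has empty moduli space, so $GW_0(W,[A],J') = 0$, contradicting the invariance just established together with $GW_0(W,[A],J_0) \neq 0$. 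Therefore $\overline{\mathcal{M}}_{0,0}(W,[A],J) \neq \emptyset$ for every $J \in \mathcal{J}$.

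The only genuine obstacle is compactness of the moduli spaces, and that is precisely what condition (2) is engineered to hand us: it converts a potentially non-compact problem into the classical one by fixing, in advance, a relatively compact region that every relevant curve is forced to inhabit. The secondary issue---defining the virtual count rigorously in the presence of multiple covers---is orthogonal to non-compactness and is handled by any of the standard virtual perturbation frameworks applied inside a neighborhood of $\overline{U}$; accordingly I would simply cite \cite{McL14} and the standard references for those points.
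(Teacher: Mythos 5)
The paper does not actually prove this proposition---it is quoted from \cite{McL14} and the references therein---and your argument is precisely the standard one that the citation stands in for: condition (2) together with the fixed energy $\omega([A])$ restores Gromov compactness for the (parametrized) moduli spaces, so the usual cobordism/virtual-cycle proof of deformation invariance goes through, and nontriviality of the invariant forces the moduli space to be nonempty for every $J\in\mathcal{J}$. The one small wrinkle is your detour through a ``generic $J'\in\mathcal{J}$ close to $J$'': the abstract definition of a GW triple does not guarantee that such regular elements exist inside $\mathcal{J}$, and it is cleaner to observe that in any virtual-perturbation framework an empty compact moduli space has vanishing invariant directly; this does not affect the correctness of your outline.
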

   
One more technique that we need to use is usually called neck-stretching (See \cite{BEHWZ03} and the references there-in).
Given a contact hypersurface $Y \subset W$ separating $W$ with Liouville flow $X$ defined near $Y$.
We call the two components of $W-Y$ as $W^-$ and $W^+$, where $W^-$ is the one containing $D$.
Then, $Y$ has a tubular neighborhood of the form $(-\delta,\delta)\times Y$ induced by $X$, which can be identified as part of the symplectization of $Y$.
By this identification, we can talk about what it means for an almost complex structure to be translation invariant and cylindrical in this neighborhood.
If one choose a sequence of almost complex structures $J_i$ that "stretch the neck" along $Y$ and a sequence of closed $J_i$-holomorphic curve $u_i$ with the same domain such that there is a uniform energy bound, then $u_i$ will have a subseguence 'converge' to a $J_{\infty}$-holomorphic building.
The fact that we need to use is the following.

\begin{prop}[cf \cite{BEHWZ03}, \cite{McL14} and the references there-in] \label{SFT}
 Suppose we have a sequence of $\omega$-compatible almost complex structure $J_i$ and a sequence of nodal closed genus $0$ $J_i$-holomorphic maps $u_i$ to $W$ representing the same homology class in $W$
 such that the image of $u_i$ stays inside a fixed relative compact open subset of $W$.
 Assume $J_i$ stretch the neck along a separating contact hypersurface $Y \subset W$ with respect to a Liouville flow $X$ defined near $Y$.
 Assume that the image of $u_i$ has non-empty intersection with $W^-$ and $W^+$, respectively, for all $i$. 
 Then, there are proper genus $0$ $J_{\infty}$-holomorphic maps (domains are not compact) $u_{\infty}^-:\Sigma^- \to W^-$ and $u_{\infty}^+:\Sigma^+ \to W^+$ such that $u_{\infty}^-$ and $u_{\infty}^+$ are asymptotic to Reeb orbits on $Y$ with respect to the contact form $\iota_X \omega$.
\end{prop}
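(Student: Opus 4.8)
The plan is to produce $u_\infty^-$ and $u_\infty^+$ as the two outermost levels of the holomorphic building that the SFT compactness theorem of \cite{BEHWZ03} extracts from the sequence $u_i$; the real content is to verify the hypotheses of that theorem and to see that the resulting building is non-trivial on both sides of $Y$. \emph{First, uniform energy and extraction of a limit building.} Since all the $u_i$ represent one fixed homology class and their images lie in a single relatively compact open subset of $W$, the $\omega$-area $\int u_i^*\omega$ is independent of $i$. Because the neck-stretching is carried out inside a fixed collar $(-\delta,\delta)\times Y$ identified with a piece of the symplectization of $(Y,\iota_X\omega)$, the full Hofer energy of $u_i$ is bounded in terms of this fixed $\omega$-area together with the areas on the two caps (the $d\lambda$-part is literally an $\omega$-area, and the $\lambda$-part picks up a uniformly bounded contribution from the fixed collar); this is the standard energy bookkeeping for neck-stretching. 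Hence SFT compactness applies and, after passing to a subsequence, the $u_i$ converge to a $J_\infty$-holomorphic building $\mathbf{u}_\infty$ with a bottom level $v^-$ mapping to the completion $\widehat{W^-}=W^-\cup_Y([0,\infty)\times Y)$, finitely many symplectization levels mapping to $\mathbb{R}\times Y$, and a top level $v^+$ mapping to $\widehat{W^+}$, with every puncture of every component asymptotic to a closed Reeb orbit of $\iota_X\omega$ on $Y$.

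\emph{Next, genus and the maps $u_\infty^{\pm}$.} Each $u_i$ has nodal domain of arithmetic genus $0$, and arithmetic genus is preserved in the SFT limit; since the arithmetic genus of $\mathbf{u}_\infty$ is the sum of the genera of its components plus the first Betti number of its dual graph, and all of these are non-negative, they all vanish: every component of $\mathbf{u}_\infty$ has genus $0$ and its combinatorial structure is a tree. Taking $\Sigma^-$ to be the (nodal, punctured) domain of $v^-$ and $u_\infty^- := v^-$ gives a proper genus-$0$ $J_\infty$-holomorphic map into $\widehat{W^-}$ whose punctures are asymptotic to Reeb orbits on $Y$, and symmetrically for $\Sigma^+$ and $u_\infty^+$. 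Reparametrising the cylindrical ends $[0,\infty)\times Y$ back onto collars of $Y$ in $W^-$, resp. $W^+$, turns these into the proper maps with image in $W^-$, resp. $W^+$, asymptotic to the same Reeb orbits, which is the form in which the statement is phrased.

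\emph{The main obstacle} is to see that $\Sigma^-$ and $\Sigma^+$ are both non-empty, i.e. that $\mathbf{u}_\infty$ genuinely has non-trivial components on each side of $Y$; this is exactly where the hypothesis that $\operatorname{im}(u_i)$ meets both $W^-$ and $W^+$ is used. In the situations where this proposition is applied one in fact has a fixed point $q^-$ in the interior of $W^-$ at a definite distance from $Y$ and lying in $\operatorname{im}(u_i)$ for all $i$ — for instance the fixed point $q_\infty\in\bar N-N$ supplied by Lemma \ref{energy lower bound} — and likewise a fixed interior point $q^+$ of $W^+$. Near such points all the $J_i$ agree with a single fixed almost complex structure (the stretching only alters $J_i$ inside the neck, and $J_{\bar N}$ near the divisor is fixed), so the monotonicity lemma gives a uniform lower bound $\hbar>0$ for the $\omega$-area of $u_i$ in a fixed small ball about $q^{\pm}$. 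Under SFT convergence this area cannot all escape into the neck, so it is captured by a component of $\mathbf{u}_\infty$ lying over a neighborhood of $q^{\pm}$, forcing $v^-$ (resp.\ $v^+$) to be non-trivial. (If one only knows that $\operatorname{im}(u_i)$ meets the open set $W^{\pm}$ with no control on where, the hypothesis should be read in this persistent sense: a curve that bulges into an ever-shrinking neighborhood of $Y$ need not produce an outermost level.) I expect this energy-localisation step, together with the purely combinatorial bookkeeping of which components of $\mathbf{u}_\infty$ land in $\widehat{W^-}$ versus $\widehat{W^+}$, to be the only substantive part of the argument; the rest is a direct appeal to \cite{BEHWZ03} and the treatment of neck-stretching in \cite{McL14}.
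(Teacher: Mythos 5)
Your proposal is correct and is essentially the intended argument: the paper does not prove Proposition \ref{SFT} at all but quotes it from \cite{BEHWZ03} and \cite{McL14}, and your outline (uniform Hofer energy from the fixed homology class and the fixed collar, SFT compactness producing a genus-zero building, identification of the outermost levels with $u_\infty^\pm$ via their cylindrical ends) is the standard neck-stretching proof supplied by those references. Your caveat that non-triviality of both outermost levels requires the intersection with $W^\pm$ to persist away from the neck is well taken and consistent with how the paper actually applies the statement: in Theorem \ref{obstruction} the curves are constrained to meet $C_1\subset W^-$ homologically and to pass through the exceptional divisor over $q_\infty\in W^+$, and the remark following the proposition already treats $u_\infty^\pm$ only as suitable irreducible components of the outermost levels rather than the whole levels.
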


In our notations, $u_{\infty}^-$ and $u_{\infty}^+$ are certain irreducible components of the top/bottom buildings but not necessary the whole top/bottom buildings.
Also, $u_{\infty}^-$ does not necessarily refer to the bottom building because we do not declare the direction of the Liouville flow near $Y$.
We are finally ready to prove Theorem \ref{obstruction-GS}.
The following which we are going to prove implies Theorem \ref{obstruction-GS}.

\begin{thm}\label{obstruction}
 Let $D \subset (W,\omega)$ be an $\omega$-orthogonal symplectic divisor with area vector $a=(\omega[C_1], \dots, \omega[C_k])$.
Let $z=(z_1,\dots,z_k)$ be a solution of $Q_Dz=a$.
If one of the $z_i$ is non-positive (resp. positive),
there is a small neighborhood $N^{\delta_{\min}} \subset W$ of $D$ such that there is no plumbing $(P(D),\omega|_{P(D)}) \subset (N^{\delta_{\min}},\omega|_{N^{\delta_{\min}}})$
of $D$ being a capping (resp. filling) of its boundary $(\partial P(D),\alpha)$ with $\alpha$ being the contact form, where $\alpha$ is any primitive of $\omega$ defined near $\partial P(D)$ with wrapping numbers $-z$. 
\end{thm}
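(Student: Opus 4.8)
The plan is to argue by contradiction using a Gromov–Witten non-vanishing statement together with a neck-stretching/energy argument, exactly along the lines of the GW-triple and SFT machinery recalled in Proposition \ref{GW} and Proposition \ref{SFT}. Suppose $z_i$ is non-positive for some $i$ (say $i=1$, after relabelling), and suppose that some plumbing $P(D)$ inside the small neighborhood $N^{\delta_{\min}}$ were a concave filling of its boundary with contact form $\alpha$ a primitive of $\omega$ with wrapping numbers $-z$. I would first build, inside the partial compactification $\bar N$ constructed before Lemma \ref{energy lower bound}, a convenient reference almost complex structure $J_{\bar N}$ and identify a homology class $[A]$ for which $GW_0(\bar N,[A],J_{\bar N})\neq 0$: the natural candidate is the class of the fiber sphere $\{q\}\times S^2_\epsilon$ of the added $B_1\times S^2_\epsilon$ piece (or a section class), passing through the fixed point $q_\infty\in\bar N\setminus N$. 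One checks that $c_1+n-3=0$ for this class in the $4$-dimensional setting (here $n=2$), and that the relevant moduli space stays in a relatively compact set, so that $GW_0(\bar N,[A],\cdot)$ is a genuine GW triple in the sense of \cite{McL14}; the non-vanishing for $J_{\bar N}$ is immediate because the fiber spheres are $J_{\bar N}$-holomorphic and rigid.

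Next I would stretch the neck along $\partial P(D)$, viewed as a separating contact hypersurface in $\bar N$ with the Liouville flow dual to $\alpha$. For a neck-stretching sequence $J_t$ (all agreeing with $J_{\bar N}$ outside $N^{\delta_{\min}}$, which is legitimate since $\partial P(D)\subset N^{\delta_{\min}}$), Proposition \ref{GW} produces $J_t$-holomorphic genus-$0$ curves $u_t$ in class $[A]$ through $q_\infty$. Since $q_\infty\in\bar N\setminus N$ lies on the $W^+$ side while $D\subset P(D)=W^-$ and the curves in class $[A]$ must meet $D$ (their homological intersection with the $C_i$, computed from $[A]$, is positive — this is where $[A]$ must be chosen to intersect the divisor), the hypotheses of Proposition \ref{SFT} are met: $u_t$ has pieces on both sides. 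Passing to the SFT limit yields a proper genus-$0$ $J_\infty$-holomorphic curve $u_\infty^-:\Sigma^-\to P(D)-\partial P(D)$ (more precisely into the completion on the $W^-$ side) asymptotic to Reeb orbits of $\alpha$ on $\partial P(D)$. Lemma \ref{energy lower bound} furnishes the uniform energy bound needed for this compactness (choosing $\delta_{\min}$ small and $\delta_{\max}$ just under $\epsilon_1$ so the bound $1.9\pi(\delta_{\max}^2-\delta_{\max}\delta_{\min})$ beats the $\omega$-area of $[A]$, and observing that the alternative in that lemma — the curve staying in $N^{\delta_{\max}}\cup B_1^{\tau/2}\times S^2_\epsilon$ — does not destroy the argument because it forces the curve to be entirely near $D$ and thus still to have a nontrivial $W^-$ part).

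The heart of the proof, and the step I expect to be the main obstacle, is deriving a contradiction from the existence of $u_\infty^-$. The idea is the standard one: the $\omega$-energy of a proper holomorphic curve $u_\infty^-$ in $P(D)-D$ asymptotic to Reeb orbits is controlled by the $\alpha$-areas (periods) of those asymptotic orbits, and those periods are governed by the wrapping numbers; more precisely, using the description $\alpha|_{\mathrm{fibre}}=\tfrac{r^2}{2}d\vartheta+\tfrac{\lambda_j}{2\pi}d\vartheta+df$ near each $C_j$ (with $\lambda_j=-z_j$), a curve asymptotic to orbits linking $C_1$ has a Stokes-type energy identity whose boundary term is a positive multiple of $z_1\le 0$, forcing the energy to be non-positive, hence the curve constant — contradicting that $u_\infty^-$ is non-constant and passes near $q_\infty$'s descendants. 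Making this quantitative requires carefully relating the intersection multiplicities of $u_\infty^-$ with each $C_j$, the homology/linking data of the asymptotic Reeb orbits, and the wrapping numbers $z_j$; this bookkeeping (done in \cite{McL14} in the positive-wrapping case) is where all the real work lies, and it is exactly where the sign hypothesis ``$z_1\le 0$'' (resp. ``$z_1>0$'' in the convex case) enters decisively. The concave-versus-convex dichotomy is handled by the same argument after reversing the sign of the Liouville vector field, which swaps the roles of $W^-$ and $W^+$ and the inequality directions, so once the concave case is done the filling case follows by the same reasoning with all signs flipped.
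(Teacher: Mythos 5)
Your overall strategy (a GW non-vanishing statement, neck-stretching along $\partial P(D)$, and a Stokes-type contradiction driven by the sign of $z_1$) is the same as the paper's, but there are two genuine gaps. First, your GW setup does not actually satisfy the GW-triple axioms: for the fiber class $[F]=[\{q\}\times S^2_\epsilon]$ one has $c_1([F])+n-3=2+2-3=1\neq 0$, and the framework of Proposition \ref{GW} has no marked points, so you cannot simply ask for curves "passing through $q_\infty$." The paper resolves both issues at once by performing a small symplectic blow-up at $q_\infty$ and working with the proper transform class $[A]=[F]-[E]$, which has $c_1([A])+n-3=0$, still satisfies $GW_0=1$, and has $\omega$-area strictly less than $\pi\epsilon^2$ — the last point is what makes Lemma \ref{energy lower bound} applicable and yields the relative compactness required for the triple. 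Without the blow-up your index count is wrong and your compactness/constraint bookkeeping has no home.

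Second, and more seriously, the step you yourself flag as "the heart of the proof" is left undone, and the mechanism you sketch is not the right one: $\alpha$ is an arbitrary primitive near $\partial P(D)$ with prescribed wrapping numbers, its Reeb orbits are not localized near the $C_j$ and their periods are not expressible in terms of the $z_j$, so "energy controlled by periods governed by wrapping numbers" does not obviously produce the sign contradiction. The paper's argument is different and more local. One first shows the limit building in $Int(P(D))$ consists of a single irreducible component $u_\infty^-$ meeting $C_1$ transversally exactly once (using $[A]\cdot[C_1]=1$, $[A]\cdot[C_i]=0$ for $i\ge 2$, holomorphicity of $D$, and ruling out components in $Int(P(D))-D$ by exactness plus the direction of the Reeb flow and Stokes). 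Then, at the unique intersection point $q_0$, the pulled-back primitive has the normal form $i^*\alpha=\frac{r^2}{2}d\theta+\frac{-z_1}{2\pi}d\theta$, so the Liouville vector field $X_{\Sigma^-}$ on $\Sigma^--q_0$ dual to $(u_\infty^-)^*\alpha$ points \emph{outward} near $q_0$ precisely because $z_1\le 0$, while it points \emph{inward} near the punctures because the boundary is concave; applying Stokes to $\omega=d\alpha$ on the resulting compact subdomain with everywhere inward-pointing Liouville field gives the contradiction. This is exactly where the hypothesis enters, and it is the content your proposal defers to unspecified "bookkeeping." (For the positive case the paper does not merely "flip signs": it uses the component $u_\infty^+$ through the exceptional divisor, blows down, and runs the analogous Liouville-field argument on $\Sigma^+-q_0$.)
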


\begin{figure}\label{fig Liouville flow}
\begin{center}
\begin{tikzpicture}[scale=1.5]
    % Draw axes
    \draw [<->,thick] (0,2) node (yaxis) [above] {$y$}
        |- (2,0) node (xaxis) [right] {$x$};
    \draw [<->,thick] (0,-2) |- (-2,0);

    %\draw (0,0) circle (1.5);
    \draw (1,0) .. controls (2,1) and (-0.3,0.2) .. (-1.3,-0.1);
    \draw (-1.3,-0.1) .. controls (-1.5,-0.3) and (-0.5,-2) .. (1,0);
    
    \draw (-0.5,-1.2) node {$(u_{\infty}^-)^{-1}(\partial_{\eta} P(D))$};
    
    \draw [->] (0.5,0.5) -- (0.4,0.3);
    \draw [->] (-1,0.2) -- (-0.8,-0.2);
    \draw [->] (0.5,-0.8) -- (0.4,-0.5);
    \draw [->] (0.05,0.05) -- (0.15,0.15);
    \draw [->] (0.05,-0.05)-- (0.15,-0.15);
    \draw [->] (-0.05,-0.05) -- (-0.15,-0.15);
    \draw [->] (-0.05,0.05) -- (-0.15,0.15);
\end{tikzpicture}
\end{center}
Figure \ref{fig Liouville flow}

The direction of the arrows indicates the direction of the Liouville flow $X_{\Sigma^-}$ and $q_0$ is identified with the origin.
This is a schematic picture and we do not claim that $(u_{\infty}^-)^{-1}(\partial_{\eta} P(D))$ is connected.
\end{figure}

\begin{proof}
 We first prove the case that one of the $z_i$ is non-positive.
Without loss of generality, assume $z_1 \le 0$.
We use the notation in Lemma \ref{energy lower bound}.
In particular, we have symplectic disk fibration $p_i:N_i \to C_i$, the partial compactification $\bar{N}$ and its $\omega$-compatible almost complex structure $J_{\bar{N}}$.
We also have $N^{\delta} = \cup_{i=1}^k \{ r_i \le \delta \} \subset \bar{N}$ and so on.
We want to prove the statement with $N^{\delta_{\min}}$ being a small neighborhood of $D$, where $\delta_{\min}$ is so small such that $1.9\pi (\delta_{\max}^2 - \delta_{\max}\delta_{\min}) > \pi \epsilon^2$.
We recall that we have $\epsilon$ slightly larger than $\epsilon_1$ and $\epsilon_1$ is slightly larger than $\delta_{\max}$.
We also recall that we have $B_1 \times S^2_{\epsilon} \subset \bar{N}$ and $\{q\} \times S^2_{\epsilon}$ has symplecitc area $\pi \epsilon^2$ for any $q \in B_1$. 

Suppose the contrary, assume $(P(D),\omega|_{P(D)}) \subset (N^{\delta_{\min}},\omega|_{N^{\delta_{\min}}})$ caps its boundary $(\partial P(D),\alpha)$.
Let $X$ be the corresponding Liouville flow near $\partial P(D)$.
We do a small symplectic blow-up centered at $q_{\infty}$ and this blow-up is so small that it is done in $\bar{N}-N$.
We call this blown-up manifold $(\bar{N}',\omega_{\bar{N}'})$ and pick a $\omega_{\bar{N}'}$-compatible almost complex structure $J_{\bar{N}'}$ such that the blow-down map is $(J_{\bar{N}'},J_{\bar{N}})$-holomorphic, the exceptional divisor is $J_{\bar{N}'}$-holomorphic  and  $J_{\bar{N}'}=J_{\bar{N}}$ in $N$.
Let the exceptional divisor be $E$ and the proper transform of the sphere fiber in $B_1 \times S^2_{\epsilon}$ containing $q_{\infty}$ be $A$.
We have $GW_0(\bar{N}',[A],J_{\bar{N}'})=1$ by automatic transversality or argue as in the end of Step $4$ of the proof of Theorem 6.1 in \cite{McL14} for higher dimensions.
Since blow-up decreases the area, $\omega_{\bar{N}'}([A])<\omega_{\bar{N}}(Bl_*[A])=\pi \epsilon^2$, where $Bl$ is the blow-down map.
This gives us the energy upper bound.
By the same argument as in Lemma \ref{energy lower bound}, we have that for any $\omega_{\bar{N}'}$-compatible almost complex structure $J$ such that $J=J_{\bar{N}'}$ on $\bar{N}'-N^{\delta_{\min}}$, any (nodal) $J$-holomorphic curve representing the class $[A]$ stays inside a fixed relative compact open subset of $\bar{N}'$.
As a result, we have a GW triple $GW_0(\bar{N}',[A],\mathcal{J})$, where $\mathcal{J}$ is the family of compatible almost complex structure that equals $J_{\bar{N}'}$ on $\bar{N}'-N^{\delta_{\min}}$.
By Proposition \ref{GW}, we have a nodal closed genus $0$ $J$-holomorphic curve for any $J \in \mathcal{J}$.

Now, since $P(D) \subset N^{\delta_{\min}}$, we can choose a sequence $J_i$ in $\mathcal{J}$ such that it stretches the neck along $(\partial P(D),\alpha)$ and $J_i=J_{\bar{N}'}$ very close to $D$.
We have a corresponding sequence of nodal closed genus $0$ $J_i$-holomorphic curve $u_i$ to $\bar{N}'$.
By Proposition \ref{SFT}, we have a proper genus $0$ $J_{\infty}$-holomorphic maps $u_{\infty}^-:\Sigma^- \to Int(P(D))$ such that $u_{\infty}^-$ is asymptotic to Reeb orbits on $\partial P(D)$ with respect to the contact form $\alpha$.
By the direction of the flow, $u_{\infty}^-$ corresponds to the top building.
In general, the top building can be reducible.
In our case, since $[A]\cdot [C_1]=1$ and $ [A] \cdot [C_i]=0$ for $i=2,\dots,k$, if the top building is reducible, there is some irreducible component lying inside $Int(P(D))-D$, by positivity of intersection and $D$ being $J_{\infty}$-holomorphic.
Since $\omega_{\bar{N}'}$ is exact on $Int(P(D))-D$, any irreducible component lying inside $Int(P(D))-D$ most have non-compact domain and converge asymptotically to Reeb orbits on 
$Y$.
By the direction of the Reeb flow, we get a contradiction by Stoke's theorem.
(cf. Proposition 8.1 of \cite{McL14} or Step 3 of proof of Theorem 6.1 in \cite{McL14} or Lemma 7.2 of \cite{AbSe10})
Therefore, we conclude that there is only one irreducible component which is exactly $u_{\infty}^-$ and the image of $u_{\infty}^-$ intersect $C_1$ transversally once.
Let $q_0 \in \Sigma^-$ be the point that maps to the intersection.
Let also $\mathbb{D}^2_{q_0}$ be a Darboux disk around $q_0$ and $i=u_{\infty}^-|_{\mathbb{D}^2_{q_0}}$.
Now, we want to draw contradiction using the existence of $u_{\infty}^-$.

Since $\omega$ is exact on $\partial P(D)$, it is exact in $N^{\delta_{\min}}-D$.
Extend $\alpha$ to be a primitive of $\omega$ in $N^{\delta_{\min}}-D$ and we still denote it as $\alpha$.
By assumption, $[\omega-d \alpha_c]$ is Lefschetz dual to $\sum\limits_{i=1}^k z_i[C_i]$.
In particular, $i^* \alpha$ has wrapping number $-z_1$ around $q_0$ on $\mathbb{D}^2_{q_0}-q_0$.
In other words, $[i^*\alpha-\frac{r^2}{2}d\theta]$ is cohomologous to $\frac{-z_1}{2\pi} d\theta$ in $H^1(\mathbb{D}^2_{q_0}-q_0,\mathbb{R})$, where $(r,\theta)$ are the polar coordinates.
We have $$ i^*\alpha=\frac{r^2}{2}d\theta+\frac{-z_1}{2\pi} d\theta+df$$ for some function $f$ on $\mathbb{D}^2_{q_0}-q_0$.
When we choose the extension of $\alpha$ to $N^{\delta_{\min}}-D$, we can choose in a way that $i^*\alpha=\frac{r^2}{2}d\theta+\frac{-z_1}{2\pi} d\theta$ because the image of $i$ is away from $\partial P(D)$.

Notice that the $(u_{\infty}^-)^*\omega$ dual of $(u_{\infty}^-)^*\alpha$ defines a Liouville vector field $X_{\Sigma^-}$ on $\Sigma^- -q_0$ away from critical points of $u_{\infty}^-$ (if any).
This Liouville flow equals to the component of $X$ in $T(u_{\infty}^-(\Sigma^-))$ near $\partial P(D)$, when we write down the decomposition of $X$ into  $T(u_{\infty}^-(\Sigma^-))$-component and its $\omega_{\bar{N}'}$-orthogonal complement component.
Here, $T(u_{\infty}^-(\Sigma^-))$ denotes the tangent bundle of the image of $u_{\infty}^-$, which is well-defined near $\partial P(D)$.
Since $u_{\infty}^-$ is $J_{\infty}$-holomorphic and it asymptotic converges to Reeb orbits of $(\partial P(D),\alpha)$, $X=-J_{\infty}R_{\alpha}$ has non-zero $T(u_{\infty}^-(\Sigma^-))$-component near $\partial P(D)$.
Moreover, since $X$ points inward with respect to $P(D)$, so is $X_{\Sigma^-}$ on $\Sigma^-$ near infinity.
In particular, if we take $\partial_{\eta}P(D)$ to be the flow of $\partial P(D)$ with respect to $X$ for a sufficiently small time, then $X_{\Sigma^-}$ is pointing inward along $(u_{\infty}^-)^{-1}(\partial_{\eta} P(D))$.

On the other hand, $i^*\alpha=\frac{r^2}{2}d\theta+\frac{-z_1}{2\pi} d\theta$.
Therefore, the Liouville vector field $X_{\Sigma^-}$ near $q_0$ equals $(\frac{r}{2}+\frac{-z_1}{2\pi r}) \partial_r$ and hence points outward with respect to $\mathbb{D}^2_{q_0}$ (This is where we use $z_1 \le 0$).  
As a result, we get a compact codimension $0$ submanifold $\Sigma^-_0$ with boundary in $\Sigma^--q_0$ that has Liouville flow pointing inward along the boundaries and $(u_{\infty}^-|_{\Sigma^-_0})^*\omega$ has a globally defined primitive $(u_{\infty}^-|_{\Sigma^-_0})^*\alpha$.
It gives a contradiction by Stoks's theorem and $\int_{\Sigma^-_0} (u_{\infty}^-)^*\omega \ge 0$.
See Figure \ref{fig Liouville flow}.

For the other case, we assume $z_1$ is positive.
In this case, the argument is basically the same but we need to use $u_{\infty}^+$ instead of $u_{\infty}^-$.
We have $u_{\infty}^+$ intersecting the exceptional divisor $E$ transversally exactly once.
By blowing down, we get a corresponding pseudo-holomorphic map $Bl \circ u_{\infty}^+ $.
Let the point on $\Sigma^+$ that maps to the exceptional divisor be $q_0$ as before.
Then, the $(u_{\infty}^+)^*\omega$ dual of $(u_{\infty}^+)^*\alpha$ defines a Liouville vector field on $\Sigma^+ -q_0$ away from critical points of $u_{\infty}^+$ (if any).
Similar as before, we get a contradiction by Stoke's theorem and the fact that $u_{\infty}^+$ restricted to any subdomian has non-negative energy.
This completes the proof.

\end{proof}

We remark that the same proof can be generalized to higher dimensional $\omega$-orthogonal divisors.
The only thing that need to be changed is to use Monotonicity lemma to get the energy lower bound in Lemma \ref{energy lower bound} instead of using surjectivity.
We leave it to interested readers.

On the other hand, this is not obvious to the authors that how one can remove the $\omega$-orthogonal assumption in Theorem \ref{obstruction}.
Therefore, Theorem \ref{obstruction} is not enough for our application.
To deal with this issue, we have the following Theorem \ref{obstruction-closed case}.

\begin{thm}\label{obstruction-closed case}
Let $(D,\omega)$ be a symplectic divisor in a closed symplectic manifold $(W,\omega)$.
There exists a neighborhood $N$ of $D$ such that there is no concave neighborhood $P(D)$ inside $N$ with the Liouville form $\alpha$ on $\partial P(D)$ having a non-negative wrapping number among its wrapping numbers.   
\end{thm}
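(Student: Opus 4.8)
The plan is to follow the template of the proof of Theorem~\ref{obstruction} (equivalently Theorem~\ref{obstruction-GS}), using that the ambient manifold is now closed to bypass the two places where $\omega$-orthogonality entered there: the product almost complex structure near $D$ and the energy confinement of Lemma~\ref{energy lower bound}. First I would reduce to a single component. Since $D=C_1\cup\dots\cup C_k$ has finitely many components, it suffices to find, for each $j$, a neighborhood $N_j$ of $D$ such that no concave plumbing $P(D)\subset N_j$ has a Liouville form with non-negative wrapping number around $C_j$; then $N=\bigcap_j N_j$ works. So fix $j=1$, take $N_1$ to be a small tubular neighborhood of $D$, and suppose toward a contradiction that $P(D)\subset N_1$ is a concave neighborhood whose Liouville form $\alpha$ on $\partial P(D)$ has wrapping numbers $-z$ with $z_1\le 0$. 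Because $\omega$ is exact on $\partial P(D)$, it is exact on $Int(P(D))-D$ (which deformation retracts onto $\partial P(D)$), so we may extend $\alpha$ to a primitive of $\omega$ on $Int(P(D))-D$ with unchanged wrapping numbers, and write $W=W^+\cup_{\partial P(D)}W^-$ with $W^-=P(D)\supset D$.

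Next I would set up the relevant Gromov--Witten triple, as in Theorem~6.1 of \cite{McL14}. Blow up $W$ at a point $q_\infty$ chosen outside $N_1$; call the result $\tilde W$ (still closed), the blow-down $\pi\colon\tilde W\to W$, and the exceptional sphere $E_0$, so that $E_0\subset\tilde W\setminus\pi^{-1}(N_1)\subset W^+$ while $P(D)$ and $D$ are untouched. Choose an $\tilde\omega$-compatible $J_0$ making $D$ and $E_0$ holomorphic; this uses only transversality and positivity of the intersections in $D$, not $\omega$-orthogonality, which is precisely why McLean's local construction (Proposition~\ref{McLean0}) rather than the GS construction is the relevant existence input. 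As in \cite{McL14} one obtains a class $[A]\in H_2(\tilde W;\mathbb{Z})$ with $[A]\cdot[C_1]=1$, $[A]\cdot[C_i]=0$ for $i>1$, $[A]\cdot[E_0]=1$, $c_1(T\tilde W)([A])=1$, and $GW_0(\tilde W,[A],J_0)\neq 0$, the non-vanishing being obtained in dimension four by a deformation to a local model together with automatic transversality. Here closedness of $\tilde W$ makes the compactness requirement in the definition of a GW triple automatic --- every nodal $[A]$-curve lies in the compact set $\tilde W$ --- which is what replaces Lemma~\ref{energy lower bound}; thus $GW_0(\tilde W,[A],\mathcal J)$ is a GW triple, for $\mathcal J$ the path component of $J_0$ among compatible structures making $D$ and $E_0$ holomorphic, and by Proposition~\ref{GW} every member of $\mathcal J$ carries a nodal genus-$0$ curve in class $[A]$.

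The conclusion is then obtained by neck-stretching exactly as in Theorem~\ref{obstruction}. Pick $J_m\in\mathcal J$ stretching the neck along $(\partial P(D),\alpha)$ and equal to $J_0$ near $D$ and near $E_0$; the resulting nodal $[A]$-curves meet $C_1\subset Int(P(D))$ and $E_0\subset W^+$ by positivity of intersections, hence cross $\partial P(D)$, and by Proposition~\ref{SFT} a subsequence limits onto a building with a proper genus-$0$ component $u_\infty^-\colon\Sigma^-\to Int(P(D))$ asymptotic to Reeb orbits of $\alpha$. Since $[A]\cdot[C_1]=1$ and $[A]\cdot[C_i]=0$, positivity of intersections together with exactness of $\omega$ on $Int(P(D))-D$ forces (by Stokes, as in the proof of Theorem~\ref{obstruction}: a component inside $Int(P(D))-D$ must have non-compact domain with Reeb asymptotics and then contradicts Stokes) that $u_\infty^-$ is the unique such component and meets $C_1$ transversally once, at a point $q_0\in\Sigma^-$. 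On $\Sigma^-\setminus q_0$ the $(u_\infty^-)^*\omega$-dual of $(u_\infty^-)^*\alpha$ is a Liouville field that points inward along the boundary of any compact subsurface $\Sigma^-_0\subset\Sigma^-\setminus q_0$ exhausting the Reeb ends: toward $D$ near those ends because the concave cap's Liouville field does, and away from $q_0$ because $z_1\le 0$ and $\alpha$ pulled back to a meridian disc of $C_1$ has the form $\frac{r^2}{2}d\theta+\frac{-z_1}{2\pi}d\theta$, whose $\omega$-dual near $q_0$ is $(\frac{r}{2}+\frac{-z_1}{2\pi r})\partial_r$. Applying Stokes to $(u_\infty^-)^*\omega=d(u_\infty^-)^*\alpha$ then gives $0\le\int_{\Sigma^-_0}(u_\infty^-)^*\omega=\int_{\partial\Sigma^-_0}(u_\infty^-)^*\alpha<0$, a contradiction. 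Since $z_1\le 0$ is exactly the statement that the wrapping number $-z_1$ around $C_1$ is non-negative, this proves the theorem.

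The step I expect to be the genuine obstacle is the construction of the class $[A]$ with non-vanishing Gromov--Witten invariant once $\omega$-orthogonality is unavailable. In the orthogonal case one literally produces $[A]$ as the proper transform of a fibre of the auxiliary $\mathbb{S}^2$-bundle glued into $\bar N$ and invokes automatic transversality inside the explicit model $\bar N'$; here $\bar N'$ is no longer at hand and one must instead extract $[A]$ from the closed Gromov--Witten theory of $W$ after the blow-up at $q_\infty$, which is the content we are importing from Theorem~6.1 of \cite{McL14} (and where the deformation-to-a-local-model argument of its Step~4 is used). Everything downstream --- the neck-stretch, the Stokes removal of spurious building components, and the final Liouville-flow contradiction --- is formally identical to the $\omega$-orthogonal argument and never uses the orthogonality hypothesis; only the confinement of curves and the source of $[A]$ have to be re-argued, and both are supplied by working in the closed $W$.
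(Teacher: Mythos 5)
Your downstream machinery (GW triple, neck-stretching along $\partial P(D)$, and the final Stokes/Liouville-flow contradiction at the point $q_0$ over $C_1$) matches the paper's argument, but the two steps you flag as the inputs to be "imported" are exactly where your proposal breaks down, and the paper does something different at both. The main gap is the source of the class $[A]$. You blow up the closed $W$ at a point $q_\infty$ far from $D$ and ask for $[A]$ with $[A]\cdot[C_1]=1$, $[A]\cdot[C_i]=0$, $[A]\cdot E_0=1$, $c_1([A])=1$ and $GW_0(\tilde W,[A],J_0)\neq 0$, to be extracted "from the closed Gromov--Witten theory of $W$ after the blow-up". No such class exists for a general closed $(W,\omega)$: by the blow-up formula this would require a class $F$ in $W$ with $c_1(F)=2$, $F\cdot[C_1]=1$, $F\cdot[C_i]=0$ and a nonvanishing genus-zero invariant through a point, which fails, e.g., whenever $b_2^+(W)>1$ (K3, general type), where all relevant genus-zero GW invariants vanish; you cannot assume $W$ is rational, since that is downstream of the very contradiction you are trying to reach. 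The paper does not look for $[A]$ inside $W$ at all: it performs a local cut-and-paste near a point $p\in C_1$ -- removing the annular region $\mathbb{D}^2_{2\tau}\times(\mathbb{D}^2_{\epsilon_1}-Int(\mathbb{D}^2_{\epsilon_3}))$ of the normal disk bundle and gluing in $Int(\mathbb{D}^2_\tau)\times S^2_{\epsilon_2}$ -- producing a \emph{non-compact} manifold $\overline{W}$ in which a small sphere fiber meets $C_1$ once; $q_\infty$ is placed on that glued-in fiber (near $C_1$, not far from $D$), $[A]$ is its proper transform after a small blow-up, and the nonvanishing of $GW_0$ comes from the explicit local model by automatic transversality. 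Closedness of $W$ is then used not to make condition (2) of the GW triple trivial, but to make $\overline{W}-N_2$ relatively compact so that the local energy estimate (the analogue of Lemma \ref{energy lower bound}, run in the chart around $p$) confines all $[A]$-curves; your observation that "compactness makes condition (2) automatic" is true for a closed $\tilde W$ but is moot because the class you need does not live there.

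The second gap is your claim that a compatible $J_0$ making all components of $D$ (and $E_0$) holomorphic exists "using only transversality and positivity of the intersections, not $\omega$-orthogonality", with Proposition \ref{McLean0} cited as the reason. Proposition \ref{McLean0} produces Liouville structures, not almost complex structures preserving $D$, and the paper explicitly notes in this very proof that there exist symplectic divisors admitting no compatible almost complex structure making all irreducible components pseudo-holomorphic simultaneously. This is why the paper inserts an extra step you omit: assuming the hypothetical concave neighborhood $P(D)\subset W-N_1$ exists, it $C^0$-perturbs $D$ near its intersection points to an $\omega$-orthogonal divisor still contained in $P(D)$ (leaving $\partial P(D)$, $\alpha$ and the wrapping numbers untouched), and only then chooses $J\in\mathcal{J}$ making the perturbed $D$ holomorphic before stretching the neck. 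Without either the local model for $[A]$ or this perturbation step, the contradiction cannot be launched, so as written the proposal does not go through.
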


The proof is in the same vein of that of Theorem \ref{obstruction}.
However, we need $W$ to be closed to help us to run the argument this time.

\begin{figure}\label{fig obstruction closed}
\begin{tikzpicture}[scale=1.5]
    % Draw axes
    \draw [<->,thick] (-3.5,0) -- (3.5,0) node (xaxis) [right] {$C_1$};
    % Draw two intersecting lines
    \draw  (-2,0) node [below] {$4\tau$} -- (-2,2);
    \draw  (2,0) node [below] {$4\tau$} -- (2,2);
    \draw [-,line width=1.5pt] (-2,0.2) -- (-2,2);
    \draw [-,line width=1.5pt] (2,0.2) -- (2,2);
    \draw [-,line width=1.5pt] (-2,2) -- (2,2);
    \draw [-,line width=1.5pt] (-2,0.2) -- (2,0.2);

    \draw (-1.5,1.3) node [left] {$N_1$};
    
    \draw [-,line width=2pt](-1.5,0.8) -- (-1.5,1.7);
    \draw [-,line width=2pt](1.5,0.8) -- (1.5,1.7);
    \draw [-,line width=2pt](-1.5,1.7) -- (1.5,1.7);
    \draw [-,line width=2pt](-1.5,0.8) -- (-0.25,0.8);
    \draw [-,line width=2pt](0.25,0.8) -- (1.5,0.8);
    \draw (-0.25,0.8) -- (-0.25,0.8);
    
    \draw [-,line width=2pt](-1,1) -- (-1,1.5);
    \draw [-,line width=2pt](1,1) -- (1,1.5);
    \draw [-,line width=2pt](-1,1.5) -- (1,1.5);
    \draw (-1,1) -- (1,1);
    \draw [-,line width=2pt] (-1,1) -- (-0.5,1);
    \draw [-,line width=2pt] (0.5,1) -- (1,1);
    \draw (-1,1.3) node [left] {$N_2$};
    
    \draw [-,line width=2pt](-0.5,1) -- (-0.5,1.2);
    \draw [-,line width=2pt](0.5,1) -- (0.5,1.2);
    \draw (-0.5,1.2) -- (0.5,1.2);
    
    \draw [-,line width=2pt](-0.25,0.8) -- (-0.25,1.2);
    \draw [-,line width=2pt](0.25,0.8) -- (0.25,1.2);
    \draw [-,line width=2pt] (-0.5,1.2) -- (-0.25,1.2);
    \draw [-,line width=2pt] (0.25,1.2) -- (0.5,1.2);
    \draw (0,1.35) node {$q_{\infty}$};
    \draw (0,1.2) node {$\ast$};

    \draw [dashed] (-1.5,0.8) --(-1.5,0) node [below] {$3\tau$};
    \draw [dashed] (1.5,0.8) --(1.5,0) node [below] {$3\tau$};
    \draw [dashed] (-1,1) --(-1,0) node [below] {$2\tau$};
    \draw [dashed] (1,1) --(1,0) node [below] {$2\tau$};
    \draw [dashed] (-0.5,1) --(-0.5,0) node [below] {$\tau$};
    \draw [dashed] (0.5,1) --(0.5,0) node [below] {$\tau$};
    \draw [dashed] (-0.25,0.8) --(-0.25,0) node [below] {$\frac{\tau}{2}$};
    \draw [dashed] (0.25,0.8) --(0.25,0) node [below] {$\frac{\tau}{2}$};

    \draw [dashed] (2,0.2) --(2.5,0.2) node [right] {$\epsilon_5$};
    \draw [dashed] (1.5,0.8) --(2.5,0.8) node [right] {$\epsilon_4$};
    \draw [dashed] (1,1) --(2.5,1) node [right] {$\epsilon_3$};
    \draw [dashed] (0.5,1.2) --(2.5,1.2) node [right] {$\epsilon_2$};
    \draw [dashed] (1,1.5) --(2.5,1.5) node [right] {$\epsilon_1$};
    \draw [dashed] (1,1.7) --(2.5,1.7) node [right] {$\epsilon_0$};
    \draw [dashed] (2,2) --(2.5,2) node [right] {$\epsilon$};

\end{tikzpicture}

Figure \ref{fig obstruction closed}:
The bottom line represents $C_1$.
The region enclosed by thickest solid lines represents $N_2$.
The region enclosed by less thick solid lines represents $N_1$
For any $J=J_{\overline{W}}$ on $N_1$,
every closed genus $0$ $J$-holomorphic curve $u$ passing through $q_{\infty}$ stays away from $N_2$ if energy of $u$ is sufficiently small.
\end{figure}

\begin{proof}
Same as before, we start with an energy estimate.
Suppose $\alpha$ is a primitive of $\omega$ defined near $D$ but not defined on $D$.
Let the wrapping numbers of $\alpha$ be $-z_i$.
Suppose $z_1$ is non-positive.

Identify a neighborhood $M_1$ of $C_1$ with a symplectic disk bundle over $C_1$ with symplectic connection rotating the fibers.
Assume the fibers are symplecticomorphic to standard symplectic disk of radius $\epsilon$.
Pick a point $p$ on $C_1$ such that there is a Darboux disk of radius $4\tau$ and $\tau > c\epsilon$ for some $c>0$ to be determined (we can achieve this by choosing a small $\epsilon$ in advance).
Identify the neighborhood of $p$ as a product of closed disks $\mathbb{D}^2_{4\tau} \times \mathbb{D}^2_{\epsilon}$.
Choose $\epsilon_i$ for $i=0,1,\dots,5$ to be determined such that $\epsilon > \epsilon_0 > \epsilon_1 > \dots > \epsilon_5 > 0$.
Now, cut out the closed region $\mathbb{D}^2_{2\tau} \times (\mathbb{D}^2_{\epsilon_1}-Int(\mathbb{D}^2_{\epsilon_3}))$ from $W$ and call it $W_0$.
We partially compactify $W_0$ to be $\overline{W}$ by gluing $W_0$ and $Int(\mathbb{D}^2_{\tau}) \times S^2_{\epsilon_2}$ along $Int(\mathbb{D}^2_{\tau}) \times Int(\mathbb{D}^2_{\epsilon_3})$ by identifying $Int(\mathbb{D}^2_{\epsilon_3})$ with a choose of symplectic embedding to $S^2_{\epsilon_2}$, where $S^2_{\epsilon_2}$ is a symplectic sphere of symplectic area $\pi \epsilon_2^2$.
We define $N_3 \subset N_2 \subset N_1 \subset N \subset \overline{W}$ to be the following subset of $\overline{W}$.
Notice that $N$, $N_1$ and $N_2$ are closed (but not compact) and $N_3$ is open.

$$N=(\mathbb{D}^2_{4\tau} \times \mathbb{D}^2_{\epsilon}-\mathbb{D}^2_{2\tau} \times (\mathbb{D}^2_{\epsilon_1}-Int(\mathbb{D}^2_{\epsilon_3})))\cup Int(\mathbb{D}^2_{\tau}) \times S^2_{\epsilon_2} $$

$$N_1=(\mathbb{D}^2_{4\tau} \times (\mathbb{D}^2_{\epsilon}-Int(\mathbb{D}^2_{\epsilon_5}))-\mathbb{D}^2_{2\tau} \times (\mathbb{D}^2_{\epsilon_1}-Int(\mathbb{D}^2_{\epsilon_3})))\cup Int(\mathbb{D}^2_{\tau}) \times S^2_{\epsilon_2}$$

\begin{eqnarray*}
N_2&=&((\mathbb{D}^2_{3\tau} \times (\mathbb{D}^2_{\epsilon_0}-Int(\mathbb{D}^2_{\epsilon_4}))-\mathbb{D}^2_{2\tau} \times (\mathbb{D}^2_{\epsilon_1}-Int(\mathbb{D}^2_{\epsilon_3}))-Int(\mathbb{D}^2_{\frac{\tau}{2}}) \times Int(\mathbb{D}^2_{\epsilon_3}))    \\
&&\cup (Int(\mathbb{D}^2_{\tau})-Int(\mathbb{D}^2_{\frac{\tau}{2}})) \times S^2_{\epsilon_2}
\end{eqnarray*}

$$N_3=Int(\mathbb{D}^2_{\tau}) \times S^2_{\epsilon_2}-Int(\mathbb{D}^2_{\tau}) \times \mathbb{D}^2_{\epsilon_3}$$
Then, $\overline{W}$ is our desired manifold to run the argument above (See Figure \ref{fig obstruction closed}).

Let $J_{\overline{W}}$ be an $\omega$-compatible almost complex structure on $\overline{W}-N_3$ such that $J_{\overline{W}}$ is split as a product in $N-N_3$.
Then, extend $J_{\overline{W}}$ naturally over $N_3$ such that it is 'product-like' making the $S^2_{\epsilon_2}$ sphere fibers $J_{\overline{W}}$-holomorphic.
We still call this $J_{\overline{W}}$.
Let $q_{\infty} \in N_3$ be a point in $N$ such that it lies in the $S^2_{\epsilon_2}$ sphere fiber at $p$.
Similar as before, for any $\omega$-compatible almost complex structure $J$ such that $J=J_{\overline{W}}$ on $N_1$ 
and any closed genus $0$ $J$-holomorphic curve $u$ to $\overline{W}$ passing through $q_{\infty}$, we must have the image of $u$ stays inside $\overline{W}-N_2$ or 
the energy of $u$, $\int_{\mathbb{C}P^1}u^*\omega$, greater than a lower bound depending on $\epsilon_5$ (once $c$ and $\epsilon_i$ for $i=0,\dots,4$ are determined).
It should be convincing that one can choose a choice of $c$ and $\epsilon_i$ such that
any $J$-holomorphic curve representing the class $[S^2_{\epsilon_2}]$, the spherical fiber class at $p$, and passing through $q_{\infty}$ has to stay inside $\overline{W}-N_2$.
Since $W$ is closed, $\overline{W}-N_2$ is a relative compact open subset which we can use to define the GW triple below.

We claim that $D$ does not have a concave neighborhood $P(D)$ lying inside $W-N_1$ with the Liouville contact form $\alpha'$ defined near $\partial P(D)$ having the same wrapping numbers as that of $\alpha$.
Suppose on the contrary, there were such a $P(D)$.
Then, by a $C^0$ perturbation near the intersection points of $C_i$ in $D$, we can assume that $D$ is $\omega$-orthogonal and it still lies inside $P(D)$.
We do a sufficiently small blow-up at $q_{\infty}$ as before and let the proper transform of the $S^2_{\epsilon_2}$ fiber containing $q_{\infty}$ be $A$.
We have a GW triple $GW_0(\overline{W},[A],\mathcal{J})$, where $\mathcal{J}$ is the family of $\omega$-compatible almost complex structure $J$ such that $J=J_{\overline{W}}$ on $N_1$.
Since $D$ is now $\omega$-orthogonal, we can find $J\in \mathcal{J}$ such that $D$ is $J$-holomorphic (notice that, there exists symplectic divisor with no almost complex structure making all irreducible components pseudo-holomorphic simultaneously).
Then, we find a sequence $J_i \in \mathcal{J}$ making $D$ $J_i$-holomorphic for all $i$ and stretch the neck along $\partial P(D)$ as before to draw contradiction.

\end{proof}

\subsection{Uniqueness}
In this subsection we 
show  that any contact structure obtained from the GS construction is contactomorphic to one from the McLean's construction.
Then  Theorem \ref{uniqueness-GS} follows from the uniqueness of McLean's construction.

In fact, we are going to prove the following   more precise version  of Theorem \ref{uniqueness-GS}.

\begin{prop}\label{canonical contact structure_proof}
 Suppose $D=\cup_{i=1}^k C_i$ is a symplectic divisor with each intersection point being $\omega$-orthogonal such that the augmented graph $(\Gamma,a)$ satisfies the positive (resp. negative) GS criterion.
Then, the contact structures induced by the positive (resp. negative) GS criterion are contactomorphic, independent of choices made in the construction 
and independent of $a$ as long as $(\Gamma,a)$ satisfies positive GS criterion.

Moreover, if $D$  arises from resolving an isolated normal surface singularity, then the contact structure induced by the negative GS criterion is contactomorphic to the contact structure induced by the complex structure. 

On the other hand, if $D$ is the support of an effective ample line bundle, then the contact structure induced by the positive GS criterion is contactomorphic to that induced by a positive hermitian metric on the ample line bundle.
\end{prop}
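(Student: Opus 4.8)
The plan is to reduce the whole statement to McLean's construction in \cite{McL14} and its uniqueness, along the scheme announced at the start of this subsection: first exhibit every GS contact structure as one produced by McLean's construction, then quote McLean's uniqueness. Concretely, given the $\omega$-orthogonal divisor $D$ with $(\Gamma,a)$ satisfying the positive (resp.\ negative) GS criterion, let $(X,\omega,D,f,V)$ be the concave (resp.\ convex) orthogonal neighbourhood $5$-tuple produced by the GS construction from a solution $z$ of $Q_\Gamma z=a$, and set $\theta=\iota_V\omega$. Reading off the local models --- in $N_{v_\alpha}=\Sigma_{v_\alpha}\times\mathbb{D}^2$ one has $\iota_{X_{v_\alpha}}\omega_{v_\alpha}=\frac{r^2}{2}\,d\vartheta_2+z'_{v_\alpha}\,d\vartheta_2+\iota_{\bar X_{v_\alpha}}\bar\omega_{v_\alpha}$ with $z'=-\frac{1}{2\pi}z$ --- I would check that the wrapping numbers of $\theta$ around the $C_i$ are $-z_i$, so that they are all negative in the concave case and all non-negative, hence (by Lemma~\ref{non-negative wrapping numbers}) all positive, in the convex case. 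Thus $\theta$, together with a reparametrisation of $f$ into the normal form $\sum_i\log\bar\rho(r_i)+\bar\tau$ of Section~\ref{wrapping numbers}, is legitimate input for McLean's construction, and Proposition~\ref{McLean0} identifies the GS contact boundary $\xi_{GS}$ with a McLean contact structure.

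The uniqueness part of McLean's construction in \cite{McL14} then supplies the rest of the first assertion: the contact structure McLean attaches to $(D,\omega)$ near $D$ depends only on the common sign of the wrapping numbers --- not on the compatible function $f$, not on the primitive chosen within that sign class, and hence not on the internal choices $s_{v,e},\delta,\epsilon,\Sigma_{v_\alpha}$ and the gluing maps of the GS construction. Independence of $a$ is a Gray-stability argument: the set of area vectors $a$ with $Q_\Gamma z=a$ solvable for $z\in(0,\infty)^k$ (resp.\ $z\in(-\infty,0]^k$) is the image of a convex set under $Q_\Gamma$, hence convex and path-connected, and running the GS construction along a path $a_t$ with the discrete data held fixed --- or, equivalently, invoking the smooth dependence of the $\omega$-orthogonal germ on $(\Gamma,a)$ (\cite{McR05}, \cite{GaSt09}) --- yields a smooth family of contact structures on a single fixed plumbing boundary, which Gray stability makes mutually contactomorphic.

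For the two algebro-geometric cases I would run the same comparison with the complex-geometric contact structure in place of $\xi_{GS}$. If $D$ is the exceptional divisor of a resolution of an isolated normal surface singularity then $Q_\Gamma$ is negative definite, so $-Q_\Gamma$ is an irreducible Stieltjes matrix, its inverse is entrywise positive, and $z=Q_\Gamma^{-1}a$ has all entries negative for every positive $a$ --- the negative GS criterion is automatic. The contact structure on the link induced by the complex structure is $\ker(-d^c\varphi)$ on a regular level set of $\varphi=\log(\pi^*\|z\|^2)$, a strictly plurisubharmonic function with a logarithmic pole along $D$ whose $dd^c$ extends to a K\"ahler form near $D$; the logarithmic pole forces $-d^c\varphi$ to have nonzero wrapping numbers around the $C_i$, of the sign prescribed by the negative GS criterion, so this contact structure is again an output of McLean's construction (this is essentially the content of \cite{McL14}, whose construction needs neither $\omega$-orthogonality nor a metric of a special form) and, by the uniqueness just used, is contactomorphic to $\xi_{GS}$ for the negative GS criterion. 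The ample case is parallel: if $D=\sum m_iC_i$ is the support of an effective ample line bundle $L$ on a smooth projective surface $X$, take $\omega\in c_1(L)$ realised by the curvature of a positive Hermitian metric $h$; then $Q_\Gamma m=a$ with every $m_i\ge 1$, so the positive GS criterion holds, and the contact structure determined by $h$ on the boundary of a tubular neighbourhood of $D$ (equivalently, on the boundary of the Stein complement $X\setminus D$) is $\ker\bigl(d^c\log|s|^2_h\bigr)$ for $s$ the defining section of $D$, whose wrapping numbers around the $C_i$ are proportional to $m_i$ and of the sign prescribed by the positive GS criterion; the same argument identifies it with $\xi_{GS}$.

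The step I expect to be the main obstacle is the first one: verifying that the Liouville form of the GS $5$-tuple has wrapping numbers exactly $-z$ and that a GS level set can be put into the normal form of \cite{McL14} --- i.e.\ reconciling the toric local models of \cite{GaSt09} with McLean's framework --- and, equally, citing the uniqueness statement of \cite{McL14} in precisely the generality we need: independence of the compatible function, independence of the primitive within a fixed sign class of wrapping numbers, and stability under deforming $\omega$ through $\omega$-orthogonal divisors. Once these are secured, the two algebro-geometric identifications are routine, the only real point being that the logarithmic pole of the relevant potential pins down the sign of the wrapping numbers.
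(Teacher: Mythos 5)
Your overall strategy coincides with the paper's: compute that the GS Liouville form $\theta=\iota_V\omega$ has wrapping numbers $-z$, feed it into McLean's framework, and then let Proposition \ref{McLean} do all the uniqueness work, including independence of $a$, the resolution case (Lemma 4.12 of \cite{McL14}) and the ample case via $f=-\log\|s\|$, $\theta=-d^cf$ (Lemma 5.19 of \cite{McL12}). The sign bookkeeping in your first paragraph is correct, and your Gray-stability variant for independence of $a$ is an acceptable substitute for the paper's "pull back along a diffeomorphism of pairs and apply Proposition \ref{McLean} again".

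The genuine gap is exactly the step you defer with "I would check" and flag as the main obstacle: exhibiting the GS boundary as a McLean level set is not a reparametrisation issue, and Proposition \ref{McLean0} does not do it for you. McLean's notion of a function compatible with $D$ presupposes, near every component and in particular near the intersection points, disc fibrations $p_i:N_i\to C_i$ whose symplectic connection rotates the fibres, so that the radial coordinates $r_i$ are defined and $f=\sum_i\log\bar\rho(r_i)+\bar\tau$ makes sense; one then needs the GS hypersurface to be a level set of such an $f$ with $g=0$, so that $(f^{-1}(l),\theta|_{f^{-1}(l)})$ is literally the GS contact boundary rather than some other McLean-type hypersurface for the same $\theta$. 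On the vertex models $N_{v_\alpha}=\Sigma_{v_\alpha}\times\mathbb{D}^2$ the product projection works, but on the toric edge models $N_{e_{\alpha\beta}}$ there is no obvious such fibration, and producing one compatible with the GS gluing maps is the actual content of the paper's proof: Lemma \ref{canonical contact structure_tech} constructs an explicit projection $\pi:\mu^{-1}(Q)\to D_1$ (defined through the family of lines $L_t$ interpolating between the two edges of the moment-polytope region), and verifies by a direct computation of $\Phi^*\omega$ in a trivialisation that it is a symplectic disc fibration with structure group in $U(1)$, fibres $\omega$-orthogonal to the base, and agreeing with the trivial fibration on $N_{v_\alpha}$ where the pieces are glued. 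Without this construction (or an equivalent one), the identification of $\xi_{GS}$ with McLean's canonical contact structure — on which every subsequent assertion of the proposition rests — is unproven; the wrapping-number computation alone does not suffice. Your sketches of the two algebro-geometric cases are fine once this is in place, since they amount to the inputs the paper cites from \cite{McL14} and \cite{McL12}, together with the observation (needed for Lemma 4.12 of \cite{McL14}) that every resolution of a normal surface singularity is obtained by blowing up the minimal resolution.
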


\subsubsection{Uniqueness of McLean's construction}

We recall the uniqueness part of McLean's construction, which can be regarded as a more complete version of Proposition \ref{McLean0}.

\begin{prop}\label{McLean}[cf. Corollary 4.3 and Lemma 4.12 of \cite{McL14}]
 Suppose $f_0,f_1: W-D \to \mathbb{R}$ are compatible with $D$ and $D$ is a symplectic divisor with respect to both $\omega_0$ and $\omega_1$ having positive transversal intersections.
Suppose $\theta_j \in \Omega^1(W-D)$ is a primitive of $\omega_j$ on $W-D$ such that it has positive (resp. negative) wrapping numbers for all $i=1, \dots, k$ and for both $j=0,1$.
Suppose, for both $j=0,1$, there exist $g_j: W-D \to \mathbb{R}$ such that
$df(X^j_{\theta_j+dg_j}) > 0$ (resp. $df(-X^j_{\theta_j+dg_j}) > 0$) near $D$, where $X^j_{\theta_j+dg_j}$ is the dual of $\theta_j+dg_j$ with respect to $\omega_j$.
Then, for sufficiently negative $l$, we have that $(f_0^{-1}(l),\theta_0+dg_0|_{f_0^{-1}(l)})$ is contactomorphic to $(f_1^{-1}(l),\theta_1+dg_1|_{f_1^{-1}(l)})$.

Moreover, when $(W,D,\omega)$ arises from resolving a normal isolated surface singularity, then the link with contact structure induced from complex line of tangency is contactomorphic to this canonical contact structure.
\end{prop}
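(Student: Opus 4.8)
The plan is to deduce the first assertion from a homotopy argument that, stage by stage, reduces to Gray's stability theorem on a fixed hypersurface, and to obtain the ``moreover'' clause by recognising the complex-geometric link as a special case of the construction. Since for $l$ sufficiently negative $f_j^{-1}(l)$ is the boundary of an arbitrarily small regular neighbourhood of $D$, and the contact form $(\theta_j+dg_j)|_{f_j^{-1}(l)}$ depends only on the germ along $D$ of the data, I would first replace $W$ by an arbitrarily small tubular neighbourhood of $D$, fixing once and for all the projections $p_i\colon N_i\to C_i$ used to measure compatibility. Then I would connect the two data sets through a smooth family $(\omega_t,f_t,\theta_t,g_t)$, $t\in[0,1]$, along which $\omega_t$ is symplectic with $D$ a symplectic divisor of positive transversal intersections, $f_t$ is compatible with $D$, $\theta_t$ is a primitive of $\omega_t$ whose wrapping numbers all keep the prescribed sign, and $df_t(\pm X^t_{\theta_t+dg_t})>0$ near $D$ (with $X^t$ the $\omega_t$-dual). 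Arranging $f_t\equiv f$ to be independent of $t$, $M:=f^{-1}(l)$ is a fixed closed manifold, transversality of $X^t_{\theta_t+dg_t}$ to $M$ makes $\lambda_t:=(\theta_t+dg_t)|_M$ a smooth family of contact forms, and Gray stability conjugates $\ker\lambda_0$ to $\ker\lambda_1$; if $f$ is ever allowed to vary, one first flows by $X^0_{\theta_0+dg_0}$, which being affine in $t$ stays transverse to every $f_t^{-1}(l)$, and uses its Liouville return map to reduce to the fixed case.

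To build the family I would concatenate three blocks. First, with $(\omega,f,\theta)$ fixed, $X_{\theta+dg}$ and hence $df(X_{\theta+dg})$ are affine in $g$, so admissible corrections form a convex set; straight-line interpolation lets me replace any $g$ by the explicit $g^\star$ of the technical lemma above (cf.\ Lemma 4.5 of \cite{McL14}), which in addition satisfies the robust estimate $df(\mp X_{\theta+dg^\star})>c_f\|\theta+dg^\star\|\,\|df\|$. Second, with $\omega$ fixed, for two primitives the path $\theta_t=(1-t)\theta_0+t\theta_1$ is again a primitive of $\omega$; applying the minus-Lefschetz-dual description to the correspondingly interpolated cut-off primitives, its wrapping numbers are $(1-t)\lambda^0+t\lambda^1$, hence remain in the open cone of positive (resp.\ negative, recalling Lemma \ref{non-negative wrapping numbers}) wrapping numbers, and I take $g_t=g^\star_t$ from the technical lemma, smoothly varying and still obeying the robust bound, grafting block-one interpolations from $g^\star_0$ to $g_0$ and from $g^\star_1$ to $g_1$ at the two ends.

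The third block varies $\omega$, and this is where the real work lies. I would connect $\omega_0$ to $\omega_1$ through symplectic forms $\omega_t$ on the neighbourhood of $D$, all keeping $D$ a symplectic divisor with positive transversal intersections and with area vectors $a_t=(\omega_t[C_i])_i$ confined to the convex cone $Q_\Gamma\cdot((-\infty,0]^k)$ on which the negative GS criterion — equivalently positivity of all wrapping numbers — holds (symmetrically in the concave case). Such a path exists because the space of germs of symplectic-divisor structures on the fixed configuration $D$ is connected: by a Moser argument together with orthogonalisation \cite{Go95} every germ retracts onto an $\omega$-orthogonal normal form, and the latter are classified by the augmented graph $(\Gamma,a)$ \cite{McR05, GaSt09} with $a$ free to move over a connected set. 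Along the path I keep $f$ fixed, choose $\theta_t$ a smoothly varying primitive of $\omega_t$ with positive wrapping numbers and $g_t=g^\star_t$; since $d\lambda_t=\omega_t$ the $\lambda_t=(\theta_t+dg_t)|_M$ are still contact and Gray stability on the fixed $M$ finishes. I expect this block to be the main obstacle: naive linear interpolation of $\omega$ is unavailable — two symplectic forms inducing the same orientation on a $4$-manifold need not wedge positively — so one must invoke the normal-form theory of symplectic divisor germs, the substantive input carried over from \cite{McL14}.

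For the final statement, suppose $(W,D,\omega)$ resolves a normal isolated surface singularity, so that $Q_\Gamma$ is negative definite. Near $D$ there is a $J$-plurisubharmonic function $\phi$ — from the pullback of $\log$ of the distance to the singular point, smoothed and normalised near $D$ — which is compatible with $D$, tends to $-\infty$ along $D$, and has the links as regular level sets. Its Liouville form $\lambda_\phi$ (with $d\lambda_\phi=\omega$) has $\omega$-dual $\nabla\phi$, whence $d\phi(X_{\lambda_\phi})=|\nabla\phi|^2>0$ near $D$, and its wrapping numbers are positive (they record the vanishing multiplicities of the components of $D$ in the pulled-back defining function; equivalently, negative-definiteness of $Q_\Gamma$ forces the solution of $Q_\Gamma z=a$ to be non-positive, so the wrapping numbers are non-negative, hence positive by Lemma \ref{non-negative wrapping numbers}). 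Thus $(\omega,\phi,\lambda_\phi,0)$ is admissible data in the sense of the proposition, its contact boundary is $(\phi^{-1}(l),\lambda_\phi|_{\phi^{-1}(l)})$, and $\ker(\lambda_\phi|_{\phi^{-1}(l)})$ is exactly the field of complex tangencies $TY\cap J(TY)$ of the link $Y=\phi^{-1}(l)$; by the first part this is contactomorphic to the canonical contact structure, as claimed.
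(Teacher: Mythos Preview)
Your overall architecture is sound and close to what McLean actually does, but you have overcomplicated the key step and, in doing so, missed the simple observation the paper uses.

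The paper does not give a self-contained proof; it reduces the statement to Corollary 4.3 and Lemma 4.12 of \cite{McL14} by a single linear interpolation. Concretely: set $\theta_t=(1-t)\theta_0+t\theta_1$, $f_t=(1-t)f_0+tf_1$, and $\omega_t=d\theta_t=(1-t)\omega_0+t\omega_1$. The wrapping numbers of $\theta_t$ are the convex combination of those of $\theta_0,\theta_1$, hence keep the prescribed sign; $f_t$ is compatible with $D$; and since $\omega_0|_{C_i}$ and $\omega_1|_{C_i}$ are both positive area forms (same orientation, by the standing convention on symplectic divisors), $\omega_t|_{C_i}>0$ for every $t$, so $D$ remains a symplectic divisor for $\omega_t$. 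This places the whole one-parameter family squarely in the setting of McLean's Corollary 4.3, and the conclusion follows. Your three-block decomposition is not wrong, but your Block~3 is unnecessary: the very linear interpolation of $\omega$ you dismiss as ``unavailable'' is exactly what is used, because on a neighbourhood of $D$ the positivity of $\omega_t$ on each $C_i$ (together with preservation of positive transversality at the nodes) is the input McLean's machinery needs. Invoking Moser and the $\omega$-orthogonal normal form of \cite{McR05,GaSt09} works, but is heavier than required, and your side condition that the area vectors $a_t$ stay in $Q_\Gamma\cdot(-\infty,0]^k$ conflates the negative GS criterion with the existence of a primitive of prescribed wrapping sign --- the latter is already guaranteed by the interpolated $\theta_t$, regardless of whether $Q_\Gamma$ is negative definite.

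For the ``moreover'' clause your plurisubharmonic-function argument is correct and is essentially the content of Lemma 4.12 of \cite{McL14}. The paper's only additional remark is that this lemma is stated for resolutions obtained by blow-ups, and in complex dimension two every resolution of an isolated normal singularity factors through the minimal resolution via blow-ups, so the hypothesis is automatically met. You do not address this point, but since your argument reconstructs the admissible data $(\omega,\phi,\lambda_\phi,0)$ directly, it sidesteps the issue rather than leaving a gap.
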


The first thing to note is that the choice of $g_j$ for $j=0,1$ always exist (cf. Proposition \ref{McLean0} above, Proposition 4.1 and Proposition 4.2 in \cite{McL14}).
Moreover, by the definition of compatible function, it also always exist.
In other words, Proposition \ref{McLean} implies that in dimension four, for any symplectic form $\omega_0$ and $\omega_1$ making $D$ a divisor such that they have primitives
$\theta_0$ and $\theta_1$ on $W-D$ with positive (resp. negative) wrapping numbers, the contact structures constructed by McLean's construction with respect to $\theta_0$
and $\theta_1$ are contactomorphic.

Proposition \ref{McLean} is literally not exactly the same as Corollary 4.3 and Lemma 4.12 in \cite{McL14}
so we want to make clear why it is still valid after we have made the changes.
We remark that if $\theta_0 $ and $\theta_1$ have positive wrapping numbers, then $\theta_t=(1-t)\theta_0+t\theta_1$ has positive wrapping numbers for all $t$ and $f_t=(1-t)f_0+tf_1$ is compatible with $D$ for all $t$.
As a symplectic divisor, we always assume $C_i$ have positive orientations with respect to the symplectic form for all $i$.
In other words, both $\omega_0|_{C_i}$ and $\omega_1|_{C_i}$ are positive and hence $D$ is a symplectic divisor with respect to $d\theta_t$ for all $t$. 
Therefore, we get a deformation of $\omega_t$ and the first half of Proposition \ref{McLean} with $\theta_0 $ and $\theta_1$ having positive wrapping numbers follows from Corollary 4.3 of \cite{McL14}.

The analogous statement for the first half of Proposition \ref{McLean} with $\theta_0 $ and $\theta_1$ having negative wrapping numbers follows similarly as in the case where $\theta_0 $ and $\theta_1$ have positive wrapping numbers.

On the other hand, Lemma 4.12 of \cite{McL14} requires that the resolution is obtained from blowing up.
Although there exist a resolution such that it is not obtained from blowing up in complex dimension three or higher, every resolution for an isolated normal surface singularity can be obtained by blowing up the unique minimal model, where the minimal model is obtained from blowing up the singularity.
Therefore, the second half of Proposition \ref{McLean} follows.

\subsubsection{Proof of Theorem \ref{uniqueness-GS}}

To prove Proposition \ref{canonical contact structure_proof} using Proposition \ref{McLean}, the remaining task is to construct an appropriate disc fibration having a connection rotating fibers for the local models in the GS-construction.
Then, the constructions of $\theta$, $f$ and $g$ will be automatic.
We give the fibration in the following Lemma.

\begin{lemma}\label{canonical contact structure_tech}
Let $z_1'$ and $z_2'$ be two positive numbers.
Let $\mu: \mathbb{S}^2 \times \mathbb{S}^2 \to [z_1',z_1'+1] \times [z_2',z_2'+1]$ be the moment map of $\mathbb{S}^2 \times \mathbb{S}^2$ onto its image.

Fix a small $\epsilon >0$ and let $D_1=\mu^{-1}(\{ z_1'\} \times [z_2', z_2' + 2\epsilon])$ be a symplectic disc.
Fix a number $s \in \mathbb{R}$ first and then let $\delta>0$ be sufficiently small.
Let $Q$ be the closed polygon with vertices $(z_1',z_2'), (z_1'+\delta,z_2'), (z_1'+\delta,z_2'+2\epsilon-s\delta), (z_1',z_2'+2\epsilon)$.
Using the $(p_1,q_1,p_2,q_2)$ coordinates described in the GS-construction above, we define a map $\pi: \mu^{-1}(Q) \to D_1$ by sending
$(p_1,q_1,p_2,q_2)$ to $(z_1',*,p_2+\frac{(2\epsilon-t(p_1,p_2)-\rho(t(p_1,p_2)))p_1}{\delta}, q_2)$, where $\rho: [0, 2\epsilon] \to [0,2\epsilon-s\delta]$ is a smooth strictly monotonic decreasing function with $\rho(0)=2\epsilon-s\delta$ and $\rho(2\epsilon)=0$ such that $\rho'(t)=-1$ for $t\in [0, \epsilon]$ and near $t=2\epsilon$.
This can be done as $\delta$ is sufficiently small. 
Moreover, $t(p_1,p_2)$ is the unique $t$ solving $p_2-(z_2'+2\epsilon-t)=\frac{(\rho(t)-(2\epsilon-t))(p_1-z_1')}{\delta}$ and $*$ means that there is no $q_1$ coordinate above $(z_1',x)$ for any $x$ so $q_1$ coordinate is not relevant.

Then, we have that $\pi$ gives a symplectic fibration with each fibre symplectomorphic to $(\mathbb{D}^2_{\sqrt{2\delta}},\omega_{std})$ and the symplectic connection of $\pi$ has structural group lies inside $U(1)$. 
Moroever, fibres are symplectic orthogonal to the base.

\end{lemma}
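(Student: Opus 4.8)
The plan is to make the formula for $\pi$ transparent by a linear change of variables, isolate one clean identity that trivialises the whole picture, and then read the connection off by hand. First I would pass to the variables $u=\tfrac{p_1-z_1'}{\delta}\in[0,1]$ and $w=p_2-z_2'$. In these coordinates the relation defining $t(p_1,p_2)$ becomes $w=(2\epsilon-t)(1-u)+u\rho(t)$, whose right-hand side is strictly decreasing in $t$ once $\rho$ is chosen strictly decreasing with $\rho'<0$ throughout (keeping $\rho'=-1$ near the endpoints, as in the statement); so for $\delta$ small the implicit function theorem produces a unique smooth $t=t(u,w)\in[0,2\epsilon]$. Substituting this into the second component $P$ of $\pi$, which reads $P=z_2'+w+(2\epsilon-t-\rho(t))u$, and using the relation yields the key identity $P=z_2'+2\epsilon-t$. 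Consequently $\pi$ is a well-defined smooth surjection onto $D_1$ ($P$ ranges over exactly $[z_2',z_2'+2\epsilon]$ as $t$ ranges over $[0,2\epsilon]$, and $q_2\mapsto q_2$), each level set $\{P=\mathrm{const}\}$ coincides with a level set $\{t=t_0\}$, and by $w=(2\epsilon-t_0)(1-u)+u\rho(t_0)$ this level set is a \emph{straight} segment in $Q$ of slope $m(t_0)=\tfrac{\rho(t_0)-2\epsilon+t_0}{\delta}$ joining the two vertical edges of $Q$.

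Next I would identify the fibres. The fibre $\pi^{-1}(z_1',p_2'_0)$ is $\{q_2=\mathrm{const}\}\cap\{t=t_0\}$ with $t_0=z_2'+2\epsilon-p_2'_0$: it lies over a segment whose $p_1$-coordinate sweeps an interval of length $\delta$ and carries the full $q_1$-circle, which collapses at $p_1=z_1'$ (the minimum of the first moment coordinate, i.e.\ the pole of the first $\mathbb{S}^2$-factor), so it is a disc. Along it $dq_2=0$ and $dp_2\wedge dq_2=0$, hence $\omega$ restricts to $dp_1\wedge dq_1$; writing $p_1=z_1'+\tfrac{r^2}{2}$ with $r\in[0,\sqrt{2\delta}]$ and $q_1$ the angular coordinate, this is $r\,dr\wedge dq_1=\omega_{std}$ on $\mathbb{D}^2_{\sqrt{2\delta}}$. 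In particular each fibre is a symplectic submanifold, so $\ker d\pi$ is a symplectic subbundle and its $\omega$-orthogonal complement $H:=(\ker d\pi)^{\omega}$ is a genuine Ehresmann connection — the symplectic connection of $\pi$ — which is by construction $\omega$-orthogonal to the fibres; this already yields the last sentence of the lemma.

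Finally I would compute $H$ and its parallel transport. From the previous step $\ker d\pi=\mathrm{span}(\partial_{q_1},\ \partial_{p_1}+m\,\partial_{p_2})$, and a one-line computation with $\omega=dp_1\wedge dq_1+dp_2\wedge dq_2$ gives $H=\mathrm{span}(\partial_{p_2},\ \partial_{q_2}-m\,\partial_{q_1})$. Hence the horizontal lift of $\partial_{p_2'}$ is a positive multiple of $\partial_{p_2}$ — no $\partial_{p_1}$ and no $\partial_{q_1}$ component — so parallel transport in the $p_2'$-direction is the identity in the fibre coordinates $(p_1,q_1)$; and the horizontal lift of $\partial_{q_2'}$ is $\partial_{q_2}-m\,\partial_{q_1}$ with $m=m(t_0)$ \emph{constant along each fibre}, so parallel transport in the $q_2'$-direction sends $q_1\mapsto q_1-m(t_0)\,\Delta q_2'$, a rigid rotation of $\mathbb{D}^2_{\sqrt{2\delta}}$. (Equivalently one may invoke the circle action $q_1\mapsto q_1+c$: it preserves $\omega$ and every fibre of $\pi$, hence preserves $H$ and commutes with all parallel transports, and an $S^1$-equivariant symplectomorphism of $(\mathbb{D}^2_{\sqrt{2\delta}},\omega_{std})$ fixing the centre is a rotation.) Thus the structural group of the symplectic connection lies in $U(1)$, which is the remaining claim.

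The step I expect to be the real obstacle is the identity $P=z_2'+2\epsilon-t$ together with its corollary that $t$, and hence the fibre slope $m$, is constant along the fibres: this is precisely what upgrades ``$\pi$ carries a symplectic connection'' to ``that connection has structural group $U(1)$'' — a priori the holonomy of a symplectic connection could twist different concentric circles of a fibre by different amounts — and it is also where the smallness of $\delta$, the precise profile of $\rho$, and the usual care at $p_1=z_1'$ (where the $q_1$-circle degenerates and $\partial_{q_1}$ vanishes) all have to be handled.
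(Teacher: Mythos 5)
Your main line is correct and is, at heart, the same computation as the paper's. The paper's proof also starts from exactly your key observation: the level sets of $\pi$ are the preimages of the straight segments $L_t$ joining $(z_1',z_2'+2\epsilon-t)$ to $(z_1'+\delta,z_2'+\rho(t))$ (your identity $P=z_2'+2\epsilon-t$ is precisely the statement that the second component of $\pi$ is the $p_2$-coordinate where $L_{t(p_1,p_2)}$ meets the left edge). The difference is in packaging: the paper writes an explicit trivialization $\Phi(t,\vartheta_1,\tau,\vartheta_2)=(z_1'+\tau,\,-s\vartheta_1+\vartheta_2,\,(z_2'+2\epsilon-t)+\tfrac{(\rho(t)-(2\epsilon-t))\tau}{\delta},\,-\vartheta_1)$ over the portion of $D_1$ with $t\in[0,\kappa]$, computes $\Phi^*\omega=(1-\tfrac{\tau}{\delta}-\tfrac{\rho'(t)\tau}{\delta})\,dt\wedge d\vartheta_1+d\tau\wedge d\vartheta_2+(\tfrac{2\epsilon-t-\rho(t)}{\delta}-s)\,d\tau\wedge d\vartheta_1$, and reads off all three claims, whereas you compute $\ker d\pi$ and its $\omega$-orthogonal complement directly. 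In both arguments the $U(1)$ statement rests on the same fact you isolate: the angular coefficient of the horizontal lift ($m(t_0)$ for you, $\tfrac{2\epsilon-t-\rho(t)}{\delta}-s$ in the paper's frame) depends only on the base coordinate $t$, hence parallel transport is a rigid rotation. (The extra twist by $s$ in $\Phi$ is there so that near $t=0$ the trivialization matches the gluing maps of the GS construction, which the paper needs afterwards but which is not part of this lemma.)

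Three smaller points. First, your parenthetical shortcut is not valid as stated: an $S^1$-equivariant symplectomorphism of $(\mathbb{D}^2_{\sqrt{2\delta}},\omega_{std})$ fixing the centre need not be a rotation, since any map $(r,\vartheta)\mapsto(r,\vartheta+\theta(r))$ is equivariant and symplectic; this is exactly the radius-dependent twisting you yourself warn about in your last paragraph, so the fibrewise constancy of $m$ is not an optional refinement but the whole point --- fortunately your main argument does prove it. Second, the final sentence of the lemma is not the tautology that $(\ker d\pi)^{\omega}$ is $\omega$-orthogonal to the fibres; it asserts that the fibres are $\omega$-orthogonal to the embedded base $D_1\subset\mu^{-1}(Q)$ (the locus $p_1=z_1'$, which $\pi$ fixes). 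In the paper this is the check at $\tau=0$; in your set-up it follows in one line, because at a point of $D_1$ the fibre is tangent to the first $\mathbb{S}^2$-factor while $TD_1$ is tangent to the second and $\omega$ splits --- but you should state it. Third, the coordinate degeneration needing care is not only at $p_1=z_1'$ (centres of the fibres) but also along the bottom edge $p_2=z_2'$ (the pole of the second factor, lying over the centre of $D_1$), and this is exactly where the hypothesis $\rho'\equiv-1$ near $t=2\epsilon$ is used: there $\rho(t)=2\epsilon-t$, so $m\equiv 0$ and $\pi$ is the trivial projection in the second factor, which gives smoothness of $\pi$ across that edge and triviality of the connection near the centre of the base. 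The paper handles this by splitting $\mu^{-1}(Q)$ into the region $t\le\kappa$, where it uses $\Phi$, and the region $t\ge\kappa$, where $\pi$ is visibly the product projection. With these points added, your proof is complete.
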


\begin{figure}\label{fig disk fibration}
\begin{tikzpicture}[scale=1.5]
    % Draw axes
    \draw [<->,thick] (0,4.5) node (yaxis) [above] {$y$}
        |- (6,0) node (xaxis) [right] {$x$};
    % Draw two intersecting lines
    \draw (1.5,0.9) coordinate (a_1) -- (4.5,0.9) coordinate (a_2);
    \draw (1.5,0.9) coordinate (b_1) -- (1.5,3.9) coordinate (b_2);
    \draw (2.4,1.8) coordinate (d_1)-- (5.4,1.8) coordinate (d_2);   
    \draw (2.4,0.9) coordinate (e_1)-- (2.4,3) coordinate (e_2);
    \draw (2.2,2.5) node [left] {$Q$};
    \draw (2.4,3) coordinate (j_1)-- (2.4,4.3) coordinate (j_2);

    \draw (1.5,3.9) coordinate (f_1)-- (2.4,4.3) coordinate (f_2); 
    \draw (1.5,2.4) coordinate (g_1)-- (2.4,2.8) coordinate (g_2);
    \draw [<-,thick] (1.5,3.6) -- (2.4,4); 
    \draw [<-,thick] (1.5,3.4) -- (2.4,3.8);
    \draw [<-,thick] (1.5,3.2) -- (2.4,3.6); 
    \draw [<-,thick] (1.5,3.0) -- (2.4,3.4); 
    \draw [<-,thick] (1.5,2.8) -- (2.4,3.2);     
    \draw [<-,thick] (1.5,2.6) -- (2.4,3);    

    \draw [<-,thick] (1.5,1.1) -- (2.4,1.1); 
    \draw [<-,thick] (1.5,1.3) -- (2.4,1.3);    
    \draw [<-,thick] (1.5,1.5) -- (2.4,1.6);     
    \draw [<-,thick] (1.5,1.7) -- (2.4,1.9);
    \draw [<-,thick] (1.5,1.9) -- (2.4,2.2); 
    \draw [<-,thick] (1.5,2.1) -- (2.4,2.5);

    \draw (3,0.9) coordinate (h_1)-- (3.9,1.8) coordinate (h_2);
    \draw (4.5,0.9) coordinate (i_1)-- (5.4,1.8) coordinate (i_2); 
    % Calculate the intersection of the lines a_1 -- a_2 and b_1 -- b_2
    % and store the coordinate in c.
    \coordinate (c) at (intersection of a_1--a_2 and b_1--b_2);
    % Draw lines indicating intersection with y and x axis. Here we use
    % the perpendicular coordinate system
    \draw[dashed] (yaxis |- c) node[left] {$z_{2}'$}
        -| (xaxis -| c) node[below] {$z_{1}'$};
    \draw[dashed] (1.5,2.4) -- (0,2.4) node[left] {$z_{2}'+\epsilon$};
    \draw[dashed] (1.5,3.9) -- (0,3.9) node[left] {$z_{2}'+2\epsilon$};
    \draw[dashed] (3,0.9) -- (3,0) node[below] {$z_{1}'+\epsilon$};
    \draw[dashed] (4.5,0.9) -- (4.5,0) node[below] {$z_{1}'+2\epsilon$};        
        
\end{tikzpicture}

Figure \ref{fig disk fibration}.

The arrows give the schematic picture for the projection $\pi$.
\end{figure}

\begin{proof}
 First, we want to explain what $t(p_1,p_2)$ means geometrically.
$\rho(2 \epsilon -t)$ is an oriented diffeomorphism from $[0, 2\epsilon] \to [0,2\epsilon-s\delta]$ so it can be viewed as a diffeomorphism from the left edge of $Q$ to the right edge of $Q$.
$p_2-(z_2'+2\epsilon-t)=\frac{(\rho(t)-(2\epsilon-t))(p_1-z_1')}{\delta}$, which we call $L_t$, is the equation of line joining the point $(z_1',z_2'+2\epsilon-t)$ and $(z_1'+\delta,z_2'+\rho(t))$.
Therefore, for a point $(p_1,p_2)$, $t(p_1,p_2)$ is such that $L_{t(p_1,p_2)}$ contains the point $(p_1,p_2)$.
Moreover, $p_2+\frac{(2\epsilon-t(p_1,p_2)-\rho(t(p_1,p_2)))p_1}{\delta}$ is the $p_2$-coordinate of the intersection between line $L_{t(p_1,p_2)}$ and the left edge of $Q$, $\{ p_1=z_1' \}$.
See Figure \ref{fig disk fibration}.

To prove the Lemma, we pick $\kappa$ close to $2 \epsilon$ from below such that $\rho'(t)=-1$ for all $t \in [\kappa, 2\epsilon]$.
Let $\Delta$ be $\pi^{-1}(\mu^{-1}(\{ z_1' \} \times [z_2'+2\epsilon-\kappa,z_2'+2\epsilon]))$
We give a smooth trivialization of $\pi|_{\Delta}$ as follows.

Let $\Phi: [0,\kappa] \times \mathbb{R}/2\pi \mathbb{Z} \times \mathbb{D}^2_{\sqrt{2\delta}} \to \Delta$ be given by sending $(t,\vartheta_1,\tau,\vartheta_2)$ to
$(p_1,q_1,p_2,q_2)=(z_1'+\tau,-s \vartheta_1+\vartheta_2,(z_2'+2\epsilon-t)+\frac{(\rho(t)-(2\epsilon-t))\tau}{\delta},-\vartheta_1)$, where $t,\vartheta_1$ are the coordinates of $[0,\kappa]$ and $\mathbb{R}/2\pi$, respectively, and $(\tau=\frac{r^2}{2},\vartheta_2)$ is such that $(r,\vartheta_2)$ is the standard polar coordinates of $\mathbb{D}^2_{\sqrt{2\delta}}$.
In particular, $\tau \in [0,\delta]$.
Note that, $\Phi$ is well-defined and it is a diffeomorphism.

Let $\pi_{\Phi}:[0,\kappa] \times \mathbb{R}/2\pi \mathbb{Z} \times \mathbb{D}^2_{\sqrt{2\delta}} \to [0,\kappa] \times \mathbb{R}/2\pi \mathbb{Z} \times \mathbb{D}^2_{\sqrt{2\delta}}$ be the projection to the first two factors.
Then, we have $\pi \circ \Phi= \Phi \circ \pi_{\Phi}$.
Notice that, when $\tau=0$, the $\vartheta_2$-coordinate degenerates and it corresponds to $p_1=z_1'$ and the $q_1$-coordinate degenerates.

To investigate this fibration under the trivialization, we have
\begin{align*}
 \Phi^{*}\omega & = \Phi^{*}(dp_1 \wedge dq_1+dp_2 \wedge dq_2) \\
                & = d\tau \wedge (-sd\vartheta_1+d\vartheta_2) \\
                & \quad + (-dt+\frac{\tau}{\delta}dt+\frac{\rho'(t)\tau}{\delta}dt+ \frac{\rho(t)-(2\epsilon-t)}{\delta}d\tau) \wedge (-d\vartheta_1)\\
                & = (1-\frac{\tau}{\delta}-\frac{\rho'(t)\tau}{\delta})dt \wedge d\vartheta_1 + d\tau \wedge d\vartheta_2 + (\frac{2\epsilon-t-\rho(t)}{\delta}-s) d\tau \wedge d\vartheta_1
\end{align*}

For a fibre, we have $t$ and $\vartheta_1$ being contant so the the symplectic form restricted on the fibre is $ d\tau \wedge d\vartheta_2$, which is the standard one.
Hence, each fibre is symplectomorphic to $(\mathbb{D}^2_{\sqrt{2\delta}},\omega_{std})$.
When $\tau=0$, the symplectic form equals $dt \wedge d\vartheta_1$ so the base is symplectic and fibres are symplectic orthogonal to the base.
Moreover, the vector space that is symplectic orthogonal to the fibre at a point is spanned by $\partial_{t}$ and $\partial_{\vartheta_1}-(\frac{2\epsilon-t-\rho(t)}{\delta}-s)\partial_{\vartheta_2}$ so the symplectic connection has structural group inside $U(1)$.

Finally, we remark that $\rho(0)=2\epsilon-s\delta$ and $\rho'(t)=-1$ when $t$ is close to $0$, hence $\Phi^{*}\omega=dt \wedge d\vartheta_1 + d\tau \wedge d\vartheta_2$.
Therefore, when $t$ is close to $0$, the trivialization $\Phi$ actually coincide with the gluing symplectomorphism in the GS construction from preimage of $R_{e_{\alpha \beta},v_{\alpha}}$ to $[x_{v_{\alpha}, e_{\alpha \beta}}-2\epsilon,x_{v_{\alpha}, e_{\alpha \beta}}-\epsilon] \times \mathbb{R}/2\pi \mathbb{Z} \times \mathbb{D}^2_{\sqrt{2\delta}}$ , up to a translation in $t$-coordinate.
On the other hand, $\pi|_{\mu^{-1}(Q)-\Delta}$ is clearly a symplectic fibration with all the desired properties described in the Lemma as it corresponds to the trivial projection by sending $(p_1,q_1,p_2,q_2)$ to $(z_1', *, p_2,q_2)$.
This finishes the proof of this Lemma.

\end{proof}

We remark that the disc fibration above gives a fibration on the local models $N_{e_{\alpha \beta}}$ and it is compatible with the trivial fibration on the local models $N_{v_{\alpha}}$ so they give a well-defined fibration after gluing all the local models $N_{v_{\alpha}}$ and $N_{e_{\alpha \beta}}$.
Now, we are ready to prove Proposition \ref{canonical contact structure_proof}.

\begin{proof}[Proof of  Proposition \ref{canonical contact structure_proof}]
 Let $(D=\cup_{i=1}^k C_i,W, \omega)$ be a symplectic plumbing.
First, we assume the intersection form of $D$ is negative definite (or equivalently, the augmented graph satisfies the negative GS criterion). 
By \cite{GaSt09}, $D$ satisfies the negative GS criterion.
Therfore, by possibly shrinking $W$, we can assume $W$ is a symplectic plumbing constructed from the negative GS criterion.
A byproduct of the construction is the existence of a primitive of $\omega$ on $W-D$, $\theta$, given by contracting $\omega$ by the Liouville vector field.
From the construction, in the $N_{v_{\alpha}}$ local model, we have $\theta=\iota_{\bar{X}_{v_{\alpha}}+(\frac{r}{2}+\frac{z_{v_{\alpha}}'}{r})\partial_r} (\bar{\omega}_{v_{\alpha}}+rdr \wedge d\vartheta_2)=\iota_{\bar{X}_{v_{\alpha}}}\bar{\omega}_{v_{\alpha}}+(\frac{r^2}{2}+z_{v_{\alpha}}')d\vartheta_2$.
When we restrict it to a fibre, we can see that the wrapping numbers of $\theta$ with respect to $C_{v_{\alpha}}$ is $2 \pi z_{v_{\alpha}}'$, which is positive.
Here, $C_{v_{\alpha}}$ is the smooth symplectic submanifold corresponding to the vertex $v_{\alpha}$.

Note that we have $\lambda=-z=2\pi z'$ in our convention above.
By tracing back the negative GS construction, we see that Lemma \ref{canonical contact structure_tech} provides a desired symplectic fibrations we needed to apply Proposition \ref{McLean}.
In particular, this symplectic fibrations give us well-defined $r_i$-coordinates near the divisor.
As a result, one can set $f=\sum\limits_{i=1}^k \log(\rho(r_i))$ and $g=0$ and get that $df(X_\theta) > 0$ near $D$ and $(f^{-1}(l),\theta|_{f^{-1}(l)})$ is precisely the contact manifold obtained from the negative GS criterion.
In particular, $(f^{-1}(l),\theta|_{f^{-1}(l)})$ is the canonical one with respect to $(W,D)$.

If we have made another set of choices in the construction, we get that $(\bar{f}^{-1}(l),\theta|_{\bar{f}^{-1}(l)})$ is the canonical one with respect to $(\bar{W},\bar{D})$.
Then, since $(W,D)$ is diffeomorphic to $(\bar{W},\bar{D})$, we can pull back the compatible function and the $1$-form on $(\bar{W},\bar{D})$ to $(W,D)$.
By Proposition \ref{McLean}, the two contact manifolds are contactomorphic.
Same argument works to show that this contact structure is independent of symplectic area $a$ as long as $(\Gamma,a)$ still satisfies GS criteria.
Also, when $D$ is arising from isolated normal surface singularity, contact structure of its link is contactomorphic that induced by GS-criterion, by Proposition \ref{McLean}, again.
This finishes the case when $D$ is negative definite.

Now, we assume that $(D,\omega)$ satisfies the positive GS criterion and $D$ is $\omega$-orthogonal.
By the same reasoning as bove, the contact structure induced by the positive GS criterion is independent of choices.

Suppose $D$ is also the support of an effective ample line bundle.
Pick a hermitian metric $\| . \|$ and a section $s$ with zero being $\sum\limits_{i=1}^k z_i C_i$, where $z_i > 0$.
Let $f=-\log \| s\|$, $\theta= -d^cf$ and $\omega= d\theta$, where $d^cf=df \circ J_{std}$.
Then, $\theta$ induces a contact structure on the boundary of plumbing of $(D,\omega)$ with negative wrapping numbers (See Lemma 5.19 of \cite{McL12}).
Moreover, $f$ is compatible with $D$ and $df(-X_{\theta}) > 0$ near $D$ (See Lemma 2.1 of \cite{McL12} or Lemma 4.12 of \cite{McL14}).
Hence the contact structure induced by $\theta$ is contactomorphic to the canonical one by Proposition \ref{McLean}, which is contactomorphic to the one induced by the positive GS criterion.

\end{proof}

\subsection{Examples of Concave Divisors}\label{Comments on the Induced Contact Structure}

In this subsection, we are going to see five illuminating examples.
The first one is the simplest kind of symplectic divisor.
The second one illustrates Theorem \ref{obstruction-GS} is no longer valid if the plumbing chosen is not close to the divisor.
In particular, there is a concave divisor which admits a convex neighborhood but it is not a convex divisor.
The third one is a frequently used example when studying Lefschetz fibration.
The forth one is a concave divisor with non-fillable contact structure on the boundary.
The last one shows that the constructed contact structure on the boundary is not necessarily contactomorphic to the standard one that one might expect if the divisor is concave.

\begin{eg}\label{single vertex}
A symplectic surface with self-intersection $n$ admits a concave (resp convex) boundary when $n>0$ (resp $n<0$).
When $n=0$, a symplectic form cannot make both the surface symplectic and the restriction to boundary be exact so it has no convex or concave neighborhood.
In fact, more is true, by a result of Eliasberg \cite{El90}, $\mathbb{S}^1 \times \mathbb{S}^2$ cannot be a convex boundary of any symplecyic form on $\mathbb{D}^2 \times \mathbb{S}^2$.
In contrast, althought a symplectic torus with self-intersection zero has no concave nor convex neighborhood, a Lagrangian torus has self-intersection zero and has a convex neigborhood.
\end{eg}

\begin{eg}(\cite{Mc91})
In \cite{Mc91}, McDuff constructed a symplectic form on $(S\Sigma_g \times [0,1],\omega)$ such that it has disconnected convex boundary, where $S\Sigma_g$ is a circle boundle of a genus $g$ surface and $g>1$.
The contact structure near $S\Sigma_g \times \{0\}$ is contactomorphic to the concave boudary near a self-intersection $2g-2$ symplectic genus g surface.
The contact structure near $S\Sigma_g \times \{1\}$ is contactomorphic to the convex boundary near a Lagrangian genus g surface.
If one glues a symplectic closed disc bundle $P(D)$ over a symplectic genus g surface $D$ with $(S\Sigma_g \times [0,1],\omega)$ along $S\Sigma_g \times \{0\}$.
One gets a plumbing of the surface with convex boundary.
This suggests that a symplectic genus $g$ ($g > 1$) surface can have both concave and convex neighborhood, depending on the symplectic form and the neighborhood.
Notice that $D$ is trivially $\omega$-orthogonal.
It illustrates that the assumption on $P(D)$ being sufficiently close to $D$ in Theorem \ref{obstruction-GS} cannot be dropped.
Moreover, by Theorem \ref{obstruction-GS}, $D$ is a concave divisor but not a convex divisor although it admits convex neighborhood.
\end{eg}

\begin{eg}\label{section fiber}
Suppose there is a symplectic Lefschetz fibration $(X,\omega)$ over $\mathbb{CP}^1$ with generic fibre $F$ and a symplectic section $S$ of self-intersection $-n$ ($n \ge 0$).
Let $D=F \cup S$, then the augmented graph of $D$ always satisfies the positive GS criterion regardless the area weights of the surfaces.
Suppose also that $S$ is perturbed to be $\omega$-orthogonal to $F$.
Then, Proposition \ref{MAIN2} (or, Proposition \ref{McLean0} if one does not want to perturb) shows that $D$ is a concave divisor.
In other words, the complement of a concave neighborhood of $D$ is a convex filling of its boundary.

This fits well to the well-known fact that the complement of a regular neighborhood of $D$ is a Stein domain.
Moreover, this construction has been successfully used to find exotic Stein fillings \cite{AkEtMaSm08}. 
\end{eg}

\begin{lemma}\label{b_2^+}
 Let $(\Gamma,a)$ be an augmented graph satisfying the positive GS criterion and $D$ be a realization.
Suppose there are two genera zero vertices with self-intersection $s_1,s_2$ such that either

(i) they are adjacent to each other and $s_1 >s_2\ge 1$, or

(ii) they are not adjacent to each other with $s_1 \ge 1$ and $s_2 \ge 0$.

Then, $D$ is a concave divisor but not a capping divisor.
\end{lemma}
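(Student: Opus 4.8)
The plan is to treat the two assertions in turn, and for the second one to prove the stronger statement that $D$ is not even embeddable (which suffices, since every capping divisor is embeddable). That $D$ is a concave divisor is immediate from the existence results: since $(\Gamma,a)$ satisfies the positive GS criterion, the equation $Q_{\Gamma}z=a$ has a solution $z\in(0,\infty)^{k}$, so on a punctured plumbing neighbourhood of $D$ there is a primitive of $\omega$ whose wrapping numbers are $-z$, all strictly negative; combined with any function compatible with $D$, Proposition \ref{McLean0} exhibits $D$ as a concave divisor. (Alternatively one perturbs $D$ to be $\omega$-orthogonal via \cite{Go95}, which leaves $(\Gamma,a)$ unchanged, and invokes Proposition \ref{MAIN2}.)

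For non-embeddability, suppose to the contrary that $D$ embeds symplectically into a closed symplectic $4$-manifold; replacing it by the connected component containing the connected set $D$, call this $(X,\omega)$. Then $X$ contains genus-zero symplectic surfaces $C_{1},C_{2}$ with $C_{i}\cdot C_{i}=s_{i}$, meeting transversally and positively at a single point in case (i) and disjoint in case (ii). In either case $X$ contains a symplectically embedded sphere of non-negative self-intersection --- namely $C_{2}$ in case (i) (recall $s_{2}\ge 1$) and $C_{1}$ in case (ii) (recall $s_{1}\ge 1$) --- so by Theorem \ref{McDuff} (the structure theorem of McDuff, \cite{Mc90}) $X$ is a blow-up of a rational or ruled surface; in particular $b_{2}^{+}(X)=1$, so the intersection form on $H_{2}(X;\mathbb{R})$ is non-degenerate of signature $(1,b_{2}^{-}(X))$.

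It then remains to contradict the position of $[C_{1}],[C_{2}]$ in this Lorentzian space; both classes are non-zero because $\omega$ pairs positively with each $C_{i}$. In case (i) the Gram matrix of $[C_{1}],[C_{2}]$ is $\bigl(\begin{smallmatrix} s_{1} & 1\\ 1 & s_{2}\end{smallmatrix}\bigr)$, whose trace $s_{1}+s_{2}$ and determinant $s_{1}s_{2}-1$ are both positive (since $s_{1}>s_{2}\ge 1$ gives $s_{1}s_{2}\ge 2$), hence it is positive definite; so $[C_{1}]$ and $[C_{2}]$ span a positive-definite $2$-plane, which is impossible once $b_{2}^{+}(X)=1$. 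In case (ii), $[C_{1}]^{2}=s_{1}>0$ and $[C_{1}]\cdot[C_{2}]=0$, so $[C_{2}]$ lies in the orthogonal complement $[C_{1}]^{\perp}$; since $b_{2}^{+}(X)=1$ and $[C_{1}]^{2}>0$, the form is negative definite on $[C_{1}]^{\perp}$, which contradicts $[C_{2}]\neq 0$ together with $[C_{2}]^{2}=s_{2}\ge 0$. Either way no such $X$ exists, so $D$ is not embeddable, and therefore not a capping divisor.

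The points that will require care are the correct invocation of Theorem \ref{McDuff} --- one must apply it to an honest symplectically embedded sphere of self-intersection $\ge 0$ inside the closed (and, after passing to a component, connected) manifold $X$, and then read off $b_{2}^{+}(X)=1$ from ``rational or ruled'' --- and keeping the case split straight, especially the borderline subcase $s_{2}=0$ of (ii), where the obstruction is the negative-definiteness of $[C_{1}]^{\perp}$ rather than the mere absence of a positive-definite $2$-plane. The concavity half and the two numerical estimates are routine.
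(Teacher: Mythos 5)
Your proof is correct, and its engine is the same as the paper's: exhibit a closed symplectic $4$-manifold containing the two spheres, apply Theorem \ref{McDuff} to get $b_2^+=1$, and contradict this homologically; the concavity half via negative wrapping numbers and Proposition \ref{McLean0} (or perturbation plus Proposition \ref{MAIN2}) is exactly how the paper implicitly disposes of that part. The genuine difference is where the closed manifold comes from. The paper supposes the boundary of a concave neighborhood admits a convex filling $Y$ and glues $P(D)\cup Y$; this never mentions embeddability and directly yields the sharper statement that the induced contact structure on $\partial P(D)$ is not fillable, which is what the subsequent $S^3$ example actually uses. You instead contradict embeddability of $D$ and then use ``capping $\Rightarrow$ embeddable'', which is logically valid (and, combined with concavity, also rules out fillings of the boundary, since a filling could be glued on to produce an embedding). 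One caution: read literally, the hypothesis ``$D$ be a realization'' means $D$ is embeddable (Definition \ref{realizable definition}), so your argument shows that hypothesis can never be satisfied and the lemma would be vacuous under that reading; the paper's gluing formulation makes clear the intended meaning is merely a symplectic plumbing realizing $(\Gamma,a)$, and your proof works for that reading as well, but it is worth being aware of the mismatch. Finally, in case (ii) you replace the paper's reduction to $(s_1,s_2)=(1,0)$ and the $\{H,E_1,\dots,E_N\}$ bookkeeping by the cleaner observation that $[C_1]^{\perp}$ is negative definite once $b_2^+=1$ and $[C_1]^2>0$; case (i) is identical to the paper's (and both you and the paper tacitly take the two adjacent spheres to meet in exactly one point, i.e.\ a single edge).
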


\begin{proof}
Suppose on the contrary, the boundary has a convex fillings $Y$.
Then, we can glue $D$ with $Y$ to obtain a closed symplectic 4 manifold $W$.
By McDuff's theorem \cite{Mc90} (see also Theorem \ref{McDuff} below), $W$ is rational or ruled and hence have $b_2^+ =1$.
For (i), the two spheres generates a positive two dimensional subspace of $H_2(W)$ with respect to the intersection form.
Thus, we get a contradiction.
For (ii), it suffices to consider tha case $s_1=1$ and $s_2=0$.
By the Theorem in \cite{Mc90}, one can assume the sphere with self-intersection $1$ represent the hyperplane class $H$, with respect to an orthonormal basis $\{H,E_1,\dots,E_n \}$ for $H_2(W)$.
The two spheres being disjoint implies the one with self-intersection $0$ has homology class being a linear combination of exceptional classes.
Since the sphere is symplectic, the linear combination is non-trivial.
Thus, we get a contradiction.
\end{proof}

\begin{eg}
 Let $\Gamma$ be the graph in Example \ref{2-1}.
\[ Q_{\Gamma}= \left( \begin{array}{cc}
2 & 1 \\
1 & 1 \end{array} \right),\]
Then the boundary fundamental group of $\Gamma$ is the free group generated by $e_1$ and $e_2$ modulo the relations $e_1e_2^1=e_2^1e_1$, $1=e_1^2e_2$ and $1=e_1e_2$ (See Lemma \ref{representation} below).
Therefore, the boundary of the plumbing according to $\Gamma$ has trivial fundamental group and hence diffeomorphic to a sphere.
It is easily see that the corresponding augmented graph $(\Gamma,a)$ satisfies the positive GS criterion if and only if the area weights satisfy $a_1 < a_2 < 2a_1$, where $a_i$ is the area weight of $v_i$.
In other words, if $a_1 < a_2 < 2a_1$, by Proposition \ref{MAIN2} and Lemma \ref{b_2^+}, we get an overtwisted contact structure on $S^3$ ($S^3$ has only one tight contact structure which is fillable).
\end{eg}

\begin{eg}\label{non-standard example}

There is a capping divisor $D$ with graph as in the following Figure, by Theorem \ref{main classification theorem}.
However, $D$ is not conjugate to any other divisor (See Definition \ref{Conjugate Definition_Divisor}), by Lemma \ref{first chern class}.
$\partial P(D)$ is diffeomorphic to the boundary of the plumbing of the resolution of a tetrahedral singularity and the latter one which is equipped with a standard contact structure has a conjugate.
Therefore, the fillable contact strucutre on $\partial P(D)$ is not the standard one.
Applying the method in \cite{Li08},\cite{BhOn12} and \cite{St13}, one can obtain a finiteness result on the number of minimal symplectic manifolds that can be compactified by $D$, up to diffeomorphism (See Proposition \ref{bounds}).

\begin{fig}
$$    \xymatrix{
        \bullet^{-3} \ar@{-}[r]& \bullet^{-2} \ar@{-}[r] \ar@{-}[d]& \bullet^{-2} \ar@{-}[r] & \bullet^{1} \\
			                              &\bullet^{-2} \\
	}
$$ 
\end{fig}
 
\end{eg}

\section{Operations on Divisors}\label{Operation on Divisors}
In this section we first apply the inflation operation to establish Theorem \ref{MAIN}.
Then, we explain in details the resulting flowchart and illustrate how to reduce the classification problem into
a problem of graphs. 
Finally, we introduce blow up and the {\bf dual} blow up operations, which are essential to the next section. 

\subsection{Theorem \ref{MAIN}}

The proof of Theorem \ref{MAIN} involes two inputs.
The first one is a linear algebraic lemma, which is simple but important.
The second one which is called inflation lemma allows us to deform the symplectic form to our desired one 
so as to apply Propositon \ref{MAIN2}.

\subsubsection{A key lemma}

The following  linear algebraic Lemma related to the positive GS criterion will be crucial.
\begin{lemma}\label{trichotomy}
 Let $Q$ be a k by k symmetric matrix with off-diagonal entries being all non-negative.
Assume that there exist $a \in (0,\infty)^k$ such that there exist $z \in \mathbb{R}^k$ with $Qz=a$.
Suppose also that $Q$ is not negative definite.
Then, there exists $z \in (0,\infty)^k$ such that $Qz \in (0,\infty)^k$.
\end{lemma}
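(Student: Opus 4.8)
The plan is to deduce the lemma from the Perron--Frobenius theorem after a harmless shift. First I would observe that, since $Q$ has non-negative off-diagonal entries, for $c$ sufficiently large the symmetric matrix $M := Q + cI$ has all entries non-negative; moreover, since the divisors (hence their graphs, hence the relevant matrices) are connected, $M$ is irreducible, and I will prove the lemma under this irreducibility assumption. Perron--Frobenius then yields a strictly positive eigenvector $v \in (0,\infty)^k$ of $M$ whose eigenvalue is the spectral radius $\rho(M)$; and for a symmetric non-negative matrix the Perron root is the largest eigenvalue, so $\rho(M) = \lambda_{\max}(M) = \lambda_{\max}(Q) + c$. Hence $Qv = \lambda_{\max}(Q)\,v$.

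The second step is to invoke the hypothesis ``$Q$ is not negative definite'', which says precisely that $\lambda_{\max}(Q) \ge 0$, and to split into two cases. If $\lambda_{\max}(Q) > 0$, then $z := v$ works immediately: $z \in (0,\infty)^k$ and $Qz = \lambda_{\max}(Q)\, v \in (0,\infty)^k$ (this case does not even use the solvability hypothesis). If $\lambda_{\max}(Q) = 0$, then $Qv = 0$, and here I would use the remaining hypothesis: let $z_0$ be any solution of $Q z_0 = a$ with $a \in (0,\infty)^k$, and set $z := z_0 + t v$. Then $Qz = Q z_0 + t\, Qv = a \in (0,\infty)^k$ for every $t$, while $z \in (0,\infty)^k$ once $t$ is large enough, since $v$ is strictly positive. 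In both cases we obtain the desired vector, which finishes the proof.

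I expect the only genuine subtlety to be the degenerate case $\lambda_{\max}(Q) = 0$: there the Perron eigenvector lies in the kernel of $Q$ and is useless on its own, so one really must combine it with a given solution of $Q z_0 = a$ and exploit the freedom to slide that solution along the null direction $v$ until all coordinates become positive. It is also worth recording explicitly why irreducibility (connectedness of $D$) is essential and not merely convenient: without it the statement fails, as $Q = \mathrm{diag}(1,-1)$ shows --- it is symmetric with non-negative off-diagonal entries and is not negative definite, $Qz = a$ is solvable for $a = (1,1)$, yet no strictly positive $z$ satisfies $Qz \in (0,\infty)^2$.
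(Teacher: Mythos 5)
Your proof is correct (for the statement you actually prove) and it takes a genuinely different route from the paper. The paper argues by induction on $k$: after disposing of the case where all diagonal entries are non-negative, it assumes $q_{kk}<0$, performs the congruence $M=B^TQB$ that eliminates the last row and column, applies the inductive hypothesis to the $(k-1)\times(k-1)$ block $M'$ (still symmetric with non-negative off-diagonal entries, still admitting a positive vector in its image, and not negative definite by Sylvester's law of inertia), and then reinstates the last coordinate by choosing $y_k>0$ with $q_{kk}\bigl(y_k-\sum_i l_iy_i\bigr)>0$ small. Your argument replaces this elimination scheme by a spectral one: Perron--Frobenius applied to the shift $Q+cI$ produces a strictly positive eigenvector $v$ for $\lambda_{\max}(Q)$, which settles the case $\lambda_{\max}(Q)>0$ at once and reduces the case $\lambda_{\max}(Q)=0$ to translating a given solution of $Qz_0=a$ along $v$. (That second case is in fact vacuous for irreducible $Q$: pairing against $v$ gives $v^Ta=v^TQz_0=(Qv)^Tz_0=0$, contradicting $a>0$ and $v>0$; your argument there is valid, just never needed.) What your approach buys is brevity and a sharper picture---for irreducible $Q$ the conclusion is equivalent to $\lambda_{\max}(Q)>0$---at the cost of invoking Perron--Frobenius, whereas the paper's induction is elementary matrix manipulation.

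The one real discrepancy is the hypothesis: you prove an amended statement with irreducibility added, and your counterexample $Q=\mathrm{diag}(1,-1)$ is valid---it satisfies every hypothesis of the lemma as printed and violates the conclusion, so some connectedness assumption is genuinely needed and is not merely convenient. The paper's own induction breaks at exactly the corresponding point: if every off-diagonal entry $q_{kj}$ of the eliminated row vanishes, then all $l_j=0$ and no $y_k>0$ with $q_{kk}y_k>0$ exists. Connectivity is what rescues that step (some $l_j>0$ when $k\ge2$), and it is preserved under the elimination since $m_{ij}=q_{ij}-q_{ik}q_{kj}/q_{kk}\ge q_{ij}$, so adding connectedness to the statement repairs the paper's induction as well. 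Since the lemma is only ever applied to $Q_\Gamma$ for a connected divisor, your extra hypothesis is exactly the right fix and costs nothing in the applications; just be aware that what you have proved is a corrected version of the lemma rather than the literal statement.
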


\begin{proof}
When $k=1$, it is trivial.
Suppose the statement is true for (k-1) by (k-1) matrix and now we consider a k by k matrix $Q$.
Let $q_{i,j}$ be the $(i,j)^{th}$-entry of $Q$.
First observe that if $q_{i,i} \ge 0$, for all $i=1,\dots,k$, then the statement is true.
The reason is that if each row has a positive entry, then $z=(1,\dots,1)$ works.
If there exist a row with all $0$, then there is no $a \in (0,\infty)^k$ such that there exist $z \in \mathbb{R}^k$ with $Qz=a$.

Therefore, we might assume $q_{k,k}<0$.
Let $l_j=-\frac{q_{k,j}}{q_{k,k}} \ge 0$, for $j <k$, and let $B$ be the lower triangular matrix given by 
\[ b_{i,j} = \left \{ 
  \begin{array}{l l}
    \delta_{i,j} & \quad \text{if $i \neq k$ or $(i,j)=(k,k)$}\\
    l_j & \quad \text{if $i = k$ and $(i,j) \neq (k,k)$}
  \end{array} \right.\]
Let $M=B^TQB$.
Then, 
\[ m_{i,j} = \left \{ 
  \begin{array}{l l}
    q_{i,j}-\frac{q_{i,k}q_{k,j}}{q_{k,k}} & \quad \text{if $(i,j) \neq (k,k)$}\\
    q_{k,k} & \quad \text{if $(i,j) = (k,k)$}
  \end{array} \right.\]
In particular, $m_{i,k}=m_{k,j}=0$, for all $i$ and $j$ less than $k$.
We can write $M$ as a direct sum of a k-1 by k-1 matrix $M'$ with the 1 by 1 matrix $q_{k,k}$ in the obvious way.
Notice that the off diagonal entries of $M'$ are all non-negative.

Let $a=(a_1, \dots,a_k)^T$ and $z=(z_1,\dots,z_k)^T$ such that $Qz=a$.
Let also $\overline{z}=(\overline{z}_1,\dots,\overline{z}_k)^T=B^{-1}z$ and $\overline{a}=(\overline{a}_1, \dots,\overline{a}_k)^T=B^Ta$.
Then, $Qz=a$ is equivalent to $M\overline{z}=\overline{a}$.
Here, $\overline{z}_i=z_i$, for $i <k$, and $\overline{z}_k=z_k-\sum\limits_{i=1}^{k-1}l_iz_i$.
On the other hand,  $\overline{a}_i=a_i+l_ia_k$, for all $i < k$, and  $\overline{a}_k=a_k$.

By assumption, there exist $a \in (0,\infty)^k$ such that there exist $z \in \mathbb{R}^k$ with $Qz=a$.
So we have $(\overline{a}_1,\dots, \overline{a}_{k-1})^T \in (0,\infty)^k$ and $M' (z_1,\dots,z_{k-1})^T=(\overline{a}_1,\dots, \overline{a}_{k-1})^T$.
Apply induction hypothesis, we can find $y \in (0,\infty)^{k-1}$ such that $M'y \in (0,\infty)^{k-1}$.
Pick $y_k >0$ such that $q_{k,k}(y_k-\sum\limits_{i=1}^{k-1}l_iy_i) >0$ but sufficient close to zero.
Then, let $\overline{z}=(y_1,\dots,y_{k-1},y_k-\sum\limits_{i=1}^{k-1}l_iy_i)^T$ and tracing it back.
We have $Q(y_1,\dots,y_k)^T \in (0,\infty)^k$.
\end{proof}

Regarding the negative GS criterion, we remark that one can show the following. (It is mentioned in \cite{GaMa11} with additional assumption but the additional assumption can be removed.)
Suppose $Q$ is a symmetric matrix with off-diagonal entries being non-negative.
Then, the following statements are equivalent.

(a) For any $a \in (0,\infty)^n$, there exist $z \in (-\infty,0)^n$ satisfying $Qz=a$.

(a2) For any $a \in (0,\infty)^n$, there exist $z \in (-\infty,0]^n$ satisfying $Qz=a$.

(b) There exist $a \in (0,\infty)^n$ such that there exist $z \in (-\infty,0)^n$ satisfying $Qz=a$.

(b2) There exist $a \in (0,\infty)^n$ such that there exist $z \in (-\infty,0]^n$ satisfying $Qz=a$.

(c) $Q$ is negative definite.

The implication from (a) to (b), (a2) to (b2), (a) to (a2), (b) to (b2) are trivial.
(c) implying (a2) is Lemma 3.3 of \cite{GaSt09} and a moment thought will justify (c)+(a2) implying (a), which is hiddenly used in \cite{GaSt09}.
(b) implying (c) is similar to the proof of Lemma \ref{trichotomy}.
To be more precise, one again use induction on the size of $Q$ and change the basis using $B$.  
Therefore, an augmented graph $(\Gamma,a)$ satisfies the negative GS criterion if and only if $Q_{\Gamma}$ is negative definite.
In particular, when a graph $\Gamma$ is negative definite, the negative GS criterion is always satisfied, independent of the area weights.

\subsubsection{Inflation}
Now, it comes the second input.

\begin{lemma}\label{inflation}(Inflation, See \cite{LaMc96} and \cite{LiUs06})
 Let $C$ be a smooth symplectic surface inside $(W,\omega)$.
If $[C]^2 \ge 0$, then there exists a family of symplectic form $\omega_t$ on $W$ such that $[\omega_t]=[\omega]+tPD(C)$ for all $t \ge 0$.
If $[C]^2 < 0$, then there exists a family of symplectic form $\omega_t$ on $W$ such that $[\omega_t]=[\omega]+tPD(C)$ for all $0 \le t < -\frac{\omega[C]}{[C]^2}$.
Also, $C$ is symplectic with respect to $\omega_t$ for all $t$ in the range above.
Moreover, if there is another smooth symplectic surface $C'$ intersect $C$ positively and $\omega$-orthogonally, then $C'$ is also symplectic with respect to $\omega_t$ for all $t$ in the range above. 
Here, $PD(C)$ denotes the Poincare dual of $C$.
\end{lemma}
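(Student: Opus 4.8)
The plan is to realize the deformation by the standard \emph{inflation} procedure: add to $\omega$ a positive multiple of a closed $2$-form $\eta$ which is Poincar\'e dual to $C$ and supported in an arbitrarily small tubular neighborhood $N$ of $C$, and then verify non-degeneracy by an explicit computation in a symplectic normal coordinate model around $C$.

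First I would choose a convenient model near $C$. Equip the normal bundle $L\to C$ with a Hermitian metric whose curvature form is a constant multiple of $\omega|_C$ — this is possible because the total curvature of a Hermitian connection on $L$ is constrained only by $c_1(L)=[C]^2$. Weinstein's symplectic neighborhood theorem then identifies $N$ with a disk bundle in which
\[
\omega=\pi^*(\omega|_C)+d\!\left(\tfrac{r^2}{2}\,\alpha\right),
\]
where $r$ is the fiberwise radial coordinate, $\pi$ the projection, and $\alpha$ a connection $1$-form on $N\setminus C$ with $d\alpha=\pi^*\Omega$, $\Omega$ a fixed multiple of $\omega|_C$.

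Next I would build $\eta$ and carry out the estimate. Put $\eta=d\big(h(r)\,\alpha\big)$ on $N\setminus C$, extended by $0$, where $h$ is smooth in $r^2$, equal to the negative constant $-\tfrac{1}{2\pi}$ near $r=0$, monotone non-decreasing, and identically $0$ for $r\ge R_0$, with $R_0$ small. Normalizing $h(0)$ so that $\int_{\mathrm{fiber}}\eta=1$ makes $\eta$ a Thom form of $L$; hence $[\eta]=PD(C)$ in $H^2(W)$, and $\omega_t:=\omega+t\eta$ has $[\omega_t]=[\omega]+t\,PD(C)$ and agrees with $\omega$ outside $N$. Writing $G_t(r)=\tfrac{r^2}{2}+t\,h(r)$, inside $N\setminus C$ one gets
\[
\omega_t=\pi^*(\omega|_C)+G_t'(r)\,dr\wedge\alpha+G_t(r)\,\pi^*\Omega,\qquad
\omega_t\wedge\omega_t=2\,G_t'(r)\,dr\wedge\alpha\wedge\pi^*\!\big(\omega|_C+G_t(r)\,\Omega\big),
\]
so $\omega_t$ is symplectic near $C$ precisely when $G_t'(r)>0$ for $r>0$ and $\omega|_C+G_t(r)\,\Omega>0$ on $C$, i.e. (with the chosen normalization) $1-\tfrac{2\pi[C]^2}{\omega[C]}\,G_t(r)>0$. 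Since $h'\ge 0$ we get $G_t'(r)=r+t\,h'(r)>0$ for all $t\ge 0$; and since $h\le 0$ we have $-\tfrac{t}{2\pi}\le G_t(r)\le \tfrac{r^2}{2}\le\tfrac{R_0^2}{2}$. Hence, after shrinking $R_0$, the scalar inequality holds for \emph{all} $t\ge 0$ when $[C]^2\ge 0$ and for exactly $0\le t<-\omega[C]/[C]^2$ when $[C]^2<0$; the same numbers control $\omega_t|_C=\big(1+\tfrac{t[C]^2}{\omega[C]}\big)\,\omega|_C>0$, so $C$ is $\omega_t$-symplectic in exactly the asserted range.

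Finally, for the second surface $C'$ meeting $C$ positively, transversally and $\omega$-orthogonally: outside $N$ one has $\omega_t|_{C'}=\omega|_{C'}>0$, so only the behavior near the (finitely many) points $p\in C\cap C'$ matters. At such a $p$, $\omega$-orthogonality forces $T_pC'$ to be the symplectic complement of $T_pC$, which is the fiber direction, so by a Darboux normal form for the intersecting pair one may arrange the fibration $\pi$ of $N$ so that near $p$ the surface $C'$ coincides with the fiber $\pi^{-1}(p)$. Shrinking $N$ so that $C'\cap N$ lies inside these fiber neighborhoods, $\eta$ restricts on each such fiber to $h'(r)\,dr\wedge d\theta\ge 0$, whence $\omega_t|_{C'}\ge\omega|_{C'}>0$ and $C'$ stays symplectic in the same range of $t$. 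The step I expect to be the main obstacle is the case $[C]^2\ge 0$, where non-degeneracy must be maintained for \emph{every} $t\ge 0$ at once: this is what forces the global sign conditions $h'\ge 0$ and $h\le 0$, and the tension between these, the Thom-class normalization of $\eta$, and the need to keep the curvature term $G_t(r)\,\Omega$ harmless by shrinking $N$ is the delicate bookkeeping — precisely the content of the inflation arguments of Lalonde--McDuff and Li--Usher that are cited.
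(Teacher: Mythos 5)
Your construction is correct, and it is essentially the argument of the sources the paper itself relies on: the paper gives no proof of this lemma, quoting it from \cite{LaMc96} and \cite{LiUs06}, and your Thom-form inflation $\omega_t=\omega+t\,d(h(r)\alpha)$ with $h\le 0$, $h'\ge 0$ reproduces that proof, including the uniform-in-$t$ bound $G_t(r)\le R_0^2/2$ that handles all $t\ge 0$ when $[C]^2\ge 0$ and the threshold $t<-\omega[C]/[C]^2$, which, as the paper remarks right after the lemma, is exactly the condition $\omega_t[C]>0$. The one step you should attribute rather than wave at is arranging the tubular fibration so that $C'$ coincides with fibers of $\pi$ near the $\omega$-orthogonal intersection points (a relative Weinstein/Gompf normal form for an orthogonally intersecting pair, as in \cite{Go95}, \cite{McR05} and Theorem 3.1 of \cite{GaSt09}); with that in hand your estimate $\eta|_{C'}=h'(r)\,dr\wedge d\theta\ge 0$ closes the argument, which is precisely how \cite{LiUs06} treats the orthogonal surfaces.
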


When $[C]^2 < 0$, one can see that $([\omega]+tPD(C))[C] > 0$ if and only if $t < -\frac{\omega[C]}{[C]^2}$.
Therefore, the upper bound of $t$ in this case comes directly from $\omega_t[C] > 0$.
We remarked that one can actually do inflation for a larger $t$ but one cannot hope for $C$ being symplectic anymore when $t$ goes beyond $-\frac{\omega[C]}{[C]^2}$.

\subsubsection{Proof}
\begin{proof} [Proof of Theorem \ref{MAIN}]

First of all, we can isotope $D$ to $D'$ such that every intersection of $D'$ is $\omega_0$-orthogonal, using Theorem 2.3 of \cite{Go95}.
Since every intersection of $D$ is transversal and no three of $C_i$ intersect at a common point, such an isotopy can be extended to an ambient isotopy.
Now, instead of isotoping $D$, we can deform $\omega_0$ through the pull back of $\omega_0$ along the isotopy.
As a result, we can assume $D$ is $\omega_0$-orthogonal.

Now, we want to construct a family of realizations $D_t$ of $\Gamma$, by deforming the symplectic form, such that the augmented graph of $D_1$ satisfies the positive GS criterion.

Let $D=D_0=C_1 \cup \dots \cup C_k$ and let also the area weights of $D_0$ with respect to $\omega_0$ be $a$.
Since $\omega$ is exact on $\partial P(D)$, there exists $z$ such that $Q_{\Gamma}z=a$.
Also, by assumption and Lemma \ref{trichotomy}, there exists $\overline{z} \in (0, \infty)^k$ such that $Q_{\Gamma}\overline{z}=\overline{a} \in (0, \infty)^k$.
Let $z^t=z+t(\overline{z}-z)$ and $a^t=a+t(\overline{a}-a)=Q_{\Gamma}z^t \in (0, \infty)^k$.
We want to construct a realization $D_1$ of $\Gamma$ with area weights $a^1$.
If this can be done, then the augmented graph of $D_1$ will satisfy the positive GS criterion. 

Observe that, it suffices to find a family of symplectic forms $\omega_t$ such that $[\omega_t]=[\omega_0]+t \sum\limits_i (\overline{z_i}-z_i)PD([C_i])$ 
and a corresponding family of $\omega_t$-symplectic divisor $D_t=C_1 \cup \dots \cup C_k$.
The reason is that $C_i$ has symplectic area equal the $i^{th}$ entry of $a^t$ under the symplectic form $[\omega_t]=[\omega_0]+t \sum\limits_i (\overline{z_i}-z_i)PD([C_i])$.
However, we need to modify this natural choice of family a little bit.
Without loss of generality, we can assume $\overline{z_i}>z_i$ for all $1 \le i \le k$. 
We can choose a piecewise linear path $p^t$ arbitrarily close to $z^t$ such that each piece is parallel to a coordinate axis and moving in the positive axis direction.
Since satisfying the positive GS criterion is an open condition, we can choose $p^t$ such that $Q_{\Gamma}p^t \in (0, \infty)^k$.
The fact that $p^t$ is chosen such that $Q_{\Gamma}p^t$ is entrywise greater then zero allows us to do inflation along $p^t$ to get out desired family of $\omega_t$ and $D_t$, by Lemma \ref{inflation}.
Therefore, we arrive at a symplectic form $\omega_1$ such that the augmented graph of $(D,\omega_1)$, denoted by $(\Gamma,a)$, satisfies the positive GS criterion.  
We finish the proof by applying Propositon \ref{MAIN2}.

\end{proof}

\begin{rmk}\label{symplecitc cone}
 The proof of Theorem \ref{MAIN} implies that for any $a \in (0,\infty)^k \cap Q_D (0,\infty)^k$,
 there is a symplectic deformation making the augmented graph of $(D,\omega_1)$ to be $(\Gamma,a)$.
\end{rmk}

\begin{proof}[Proof of Corollary \ref{QHS}]
First suppose $D$ is not negative definite.
By Theorem \ref{MAIN}, $\omega$ being exact on the boundary implies $D$ is a concave divisor after a symplectic deformation.
If $D$ is negative definite, then $\omega$ is necessarily exact on the boundary with unique lift of $[\omega]$ to a relative second cohomology class.
Moroever, the discussion after the proof of Lemma \ref{trichotomy} implies that $D$ satisifes negative GS criterion and hence $D$ is a convex divisor.
\end{proof}

\subsection{Flowchart and Reducing Classification Problem to Graph}

We offer a detailed explanation of  the flowchart.

Given a divisor $(D,\omega)$ (not necessarily $\omega$-orthogonal, see Proposition \ref{McLean0}),
the first obstruction of whether $D$ admits a concave or convex neighborhood comes
from $\omega$ being not exact on the boundary of $D$.
In this case, $[\omega]$ cannot be lifted to a relative second cohomology class and $Q_{D}z=a$ has no solution for $z$.

If $\omega$ is exact on the boundary, we look at the solutions $z$ for the equation $Q_{D}z=a$.
When $Q_D$ is negative definite (in this case $\omega$ is necessarily exact on the boundary), there is a 
unique solution for $z$ and all the entries for this solution is negative.
Therefore, $(D,\omega)$ satisfies the negative GS criterion and $D$ is convex (Proposition \ref{MAIN2} or Proposition \ref{McLean0}).

If $\omega$ is exact on the boundary but $Q_D$ is not negative definite, the situation becomes a bit more complicated.
There might be more than one solution for $z$ (when $Q_D$ is degenerate).
If we are lucky that there is one solution $z$ with all entries being positive, then $D$ is concave (Proposition \ref{MAIN2} or Proposition \ref{McLean0}).

However, it is possible that all the solutions $z$ have at least one entry being non-positive.
In this case, if $D$ is $\omega$-orthogonal or $D$ lies inside a closed symplectic manifold, there is a small neigborhood $N$ of $D$ such that $D$ has no convex nor conave neighborhood inside $N$ (Theorem \ref{obstruction-GS} and Theorem \ref{obstruction-closed case}). 
However, we can choose an area vector $\bar{a}$ such that there is a solution $\bar{z}$ for $Q_D\bar{z}=\bar{a}$
with all entries of $\bar{z}$ being positive (Lemma \ref{trichotomy}).
Geometrically, we can do inflation (Lemma \ref{inflation}) to deform the symplectic form such that $(D,\bar{\omega})$
has area vector $\bar{a}$.
Then, $(D,\bar{\omega})$ is concave (Proposition \ref{MAIN2} or Proposition \ref{McLean0}).
This is exactly the proof of Theorem \ref{MAIN}.

\begin{tikzpicture}[node distance=3cm]

\node (exact) [startstop] {$\omega|_{\partial P(D)}$ exact?};
\node (not exact) [startstop2, below of=exact] {No concave nor convex neighborhood};
\node (definite) [startstop, right of=exact] {$Q_D$ negative definite?};
\node (convex) [startstop2, below of=definite] {Admits a convex neighborhood};
\node (GS criterion) [startstop, right of=definite] {$(D,\omega)$ satisfies positive GS criterion?};
\node (concave) [startstop2, right of=GS criterion] {Admits a concave neighborhood};
\node (deformation) [startstop3, below of=GS criterion] {No small concave neighborhood, but admits one after a deformation};

\draw [arrow] (exact) -- node[right]{no}(not exact);
\draw [arrow] (exact) -- node[above]{yes}(definite);
\draw [arrow] (definite) -- node[right]{yes}(convex);
\draw [arrow] (definite) -- node[above]{no}(GS criterion);
\draw [arrow] (GS criterion) -- node[above]{yes}(concave);
\draw [arrow] (GS criterion) -- node[right]{no}(deformation);

\end{tikzpicture}

In Section \ref{Classification of Symplectic Divisors Having Finite Boundary Fundamental Group}, we investigate capping (i.e. embeddable and concave) divisors with boudary fundamental group.
Before doing this, we want to see how we use Theorem \ref{MAIN}, Proposition \ref{McLean0}, Theorem \ref{obstruction-closed case} and Proposition \ref{McLean} to reduce the problem
to the realizability of its graph.

\begin{prop}\label{reduce to graph}
 Suppose $\Gamma$ is realizable.
 If the graph of $(D,\omega)$ is $\Gamma$, then $(D,\omega)$ is a capping divisor if and only if $(D,\omega)$ satisfies the positive GS criterion.
\end{prop}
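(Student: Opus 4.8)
The plan is to prove both implications after reducing to the $\omega$-orthogonal case. First I would invoke Gompf's perturbation \cite{Go95}, which is realized by an ambient isotopy supported near the intersection points and so preserves the augmented graph $(\Gamma,a)$ -- hence both the positive GS criterion (a condition on $(\Gamma,a)$ alone) and embeddability -- while carrying arbitrarily small concave neighborhoods of $D$ to concave neighborhoods of the perturbed divisor and back. So I may assume $D$ is $\omega$-orthogonal, and then $(D,\omega)$ is a capping divisor precisely when it is both embeddable and concave.

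For the ``only if'' direction I would argue as follows. If $(D,\omega)$ is a capping divisor it is in particular concave, so every neighborhood $N$ of $D$ contains a concave neighborhood $P(D)$. If the positive GS criterion failed, Theorem \ref{obstruction-GS} would produce a neighborhood of $D$ inside which no plumbing is a concave neighborhood, a contradiction; hence $(D,\omega)$ satisfies the positive GS criterion. (Equivalently, using that $D$, being a capping divisor, lies in a closed symplectic manifold and applying Theorem \ref{obstruction-closed case}: a concave neighborhood inside the neighborhood it supplies carries a Liouville form with all wrapping numbers $\lambda_i<0$, and then $z=-\lambda\in(0,\infty)^k$ solves $Q_\Gamma z=a$ by the correspondence between primitives and solutions of $Q_\Gamma z=a$ recorded in Section \ref{Preliminary}.) Note that this direction does not use realizability of $\Gamma$.

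For the ``if'' direction, assume the positive GS criterion. Then $D$ is concave by Proposition \ref{MAIN2} (or Proposition \ref{McLean0}), so I only need that $D$ is embeddable -- and this is where realizability of $\Gamma$ enters. I would fix an embeddable $D'\subset(W',\omega')$ with graph $\Gamma$ and area vector $a'$. The criterion gives $a\in Q_\Gamma(0,\infty)^k\subset\operatorname{im}Q_\Gamma\cap(0,\infty)^k$, so $Q_\Gamma$ is not negative definite. Provided $\omega'$ is exact on the boundary of a plumbing $P(D')$, i.e.\ $a'\in\operatorname{im}Q_\Gamma$, Theorem \ref{MAIN} together with Remark \ref{symplecitc cone} (the inflation argument of Section \ref{Operation on Divisors}, after rescaling $\omega'$ if necessary) deforms $\omega'$ through symplectic forms on $W'$, keeping $D'$ symplectic, to an $\omega_1'$ for which $(D',\omega_1')$ has augmented graph $(\Gamma,a)$. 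Thus $(D',\omega_1')$ is an embeddable divisor with augmented graph $(\Gamma,a)$; after $\omega_1'$-orthogonalizing it and $\omega$-orthogonalizing $(D,\omega)$, germ-uniqueness for $\omega$-orthogonal symplectic plumbings (\cite{McR05}, Theorem 3.1 of \cite{GaSt09}) makes their plumbings symplectomorphic, so replacing a plumbing of the orthogonalized $D'$ inside $(W',\omega_1')$ by a plumbing of the orthogonalized $D$, glued along the common contact boundary, exhibits $D$ inside a closed symplectic manifold. Hence $(D,\omega)$ is embeddable, and being also concave, it is a capping divisor.

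The hard part will be the hypothesis $a'\in\operatorname{im}Q_\Gamma$ in the last paragraph. It is automatic when $Q_\Gamma$ is nondegenerate (then $\operatorname{im}Q_\Gamma=\mathbb{R}^k$); when $Q_\Gamma$ is degenerate one must choose the realization $D'$ of $\Gamma$ so that $PD([\omega'])$ lies in $\langle[C_1'],\dots,[C_k']\rangle+\langle[C_1'],\dots,[C_k']\rangle^{\perp}$ in $H_2(W';\mathbb{R})$ -- equivalently so that $\omega'$ is exact on the boundary of the plumbing of $D'$ -- which one expects to be arrangeable precisely because the positive GS criterion is satisfiable for $\Gamma$ at all. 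Once this normalization of the realization is in place, the remainder is a direct assembly of Proposition \ref{MAIN2}, Theorem \ref{obstruction-GS}, Theorem \ref{MAIN}, and germ-uniqueness for $\omega$-orthogonal divisors.
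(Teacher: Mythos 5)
Your overall skeleton matches the paper's (obstruction theorem for the ``only if'' direction; deform a realization of $\Gamma$ and glue for the ``if'' direction), but the opening reduction is a genuine gap. The Gompf perturbation making $D$ $\omega$-orthogonal is an ambient isotopy $\phi$ that moves $D$ (equivalently, replaces $\omega$ by $\phi^*\omega$ near $D$); it is not a symplectomorphism of $(W,\omega)$, so it does \emph{not} carry concave neighborhoods of $(D,\omega)$ to concave neighborhoods of the perturbed divisor with respect to $\omega$ ``and back'' -- it only relates concave neighborhoods taken with respect to the pulled-back form. Since ``concave divisor'' and ``capping divisor'' are conditions on the actual germ of $\omega$ near $D$, your reduction to the $\omega$-orthogonal case is unjustified. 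This is precisely the subtlety the paper is organized around: the authors state they do not know how to remove the $\omega$-orthogonality hypothesis from Theorem \ref{obstruction-GS}, which is why they prove Theorem \ref{obstruction-closed case} (valid for embeddable, not necessarily orthogonal divisors) and why they use Proposition \ref{McLean0} and Proposition \ref{McLean}, neither of which needs orthogonality. Your parenthetical version of the ``only if'' direction -- $D$ capping $\Rightarrow$ $D$ embeds in a closed manifold, apply Theorem \ref{obstruction-closed case} and the wrapping-number/solution correspondence -- is exactly the paper's argument and stands on its own, so that direction is fine once you drop the orthogonal reduction and rely on it.

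The ``if'' direction has a second, related problem at the gluing step. After deforming the realization (as in Theorem \ref{MAIN} and Remark \ref{symplecitc cone}, which is the paper's move), you orthogonalize both $(D,\omega)$ and the deformed realization and invoke germ-uniqueness for $\omega$-orthogonal plumbings. But the closed manifold you build then contains a plumbing of the \emph{orthogonalized} divisor, and nothing in your argument relates the contact boundary of a concave neighborhood of the original, possibly non-orthogonal $(D,\omega)$ to the convex boundary of the complement; so you have not shown that $(D,\omega)$ itself admits the required gluing, only that its orthogonal perturbation does. The paper closes exactly this gap with Proposition \ref{McLean}: the contact structures produced by McLean's construction on the boundaries of concave neighborhoods of $(D,\omega)$ and of the deformed realization are contactomorphic (no orthogonality needed), after which one cuts out the plumbing in the closed manifold and glues in a concave neighborhood of $(D,\omega)$. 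Without Proposition \ref{McLean} (or a restriction of the statement to $\omega$-orthogonal divisors, which would not suffice for Theorem \ref{complete classification}), your argument does not reach the stated conclusion. Your flag about the realization's area vector lying in $\operatorname{im}Q_\Gamma$ is a fair observation -- the paper passes over it, and it is vacuous in the intended applications since $\delta_\Gamma\neq 0$ there -- but as written you leave it unresolved, so it remains a hole in your version as well.
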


\begin{proof}
 Let $(\overline{D},\overline{\omega}) \subset \overline{W}$ be a realization of $\Gamma$ (See Definition \ref{realizable definition}).
 In other words, $\overline{W}$ is a closed symplectic manifold and $\Gamma$ is the graph of $\overline{D}$.
 
 We first assume $(D,\omega)$ satisfies the positive GS criterion.
 By Theorem \ref{MAIN} (See Remark \ref{symplecitc cone}), we can find an $\overline{\omega}'$-orthogonal 
 capping divisor $(\overline{D},\overline{\omega}')$ by doing symplectic deformation in $\overline{W}$
 as long as its augmented graph $(\Gamma,\overline{a})$ satisfies the positive GS criterion (this is equivalent to $\overline{a} \in (0,\infty)^k \cap Q_{\Gamma} (0,\infty)^k$).
 Therefore, we can choose $\overline{a}$ such that $\overline{a}=a$, where $a$ is the area vector of $(D,\omega)$.
 Hence the augmented graphs of $(D,\omega)$  and $(\overline{D},\overline{\omega}')$ are 
 the same and satisfy the positive GS criterion.
 By Proposition \ref{McLean0}, $(D,\omega)$ is a concave divisor.
 Moreover, Proposition \ref{McLean} implies that the contact structures constructed on $P(D)$ and $P(\overline{D})$ are contactomorphic.
 Therefore, we can cut $Int(P(\overline{D}))$ from $\overline{W}$ and glue it with $P(D)$ along the boundary to get a closed symplectic manifold $W$.
 As a result, $(D,\omega)$ is a capping divisor.
 
 For the other direction, $(D,\omega)$ is a capping divisor.
 In particular, $(D,\omega)$ can be embedded into a closed symplectic manifold.
 By Theorem \ref{obstruction-closed case}, there is a neighborhood $N$ of $D$ such that there is no concave neighborhood $P(D)$ of $D$ lying inside $N$ if $(D,\omega)$ does not satisfies positive GS criterion.
 Therefore, $(D,\omega)$ is not a concave divisor.
 Contradiction.
\end{proof}

Having this, we are going to focus our study on graphs and Section \ref{Classification of Symplectic Divisors Having Finite Boundary Fundamental Group}
is solely the classification of realizable graphs with finite boundary fundamental group.

\subsection{Blow Up and Dual Blow Up}
The symplectic blow up and blow down operations have obvious analogues  in the category of graphs and  augmented graphs. We will
describe these operations for augmented graphs. Both of these operations will play an important role in the classification of capping divisors with
finite boundary $\pi$
in the next section. 

Let $(\Gamma,a)$ be an augmented graph.
In the following figure, $(\tilde{\Gamma},\tilde{a})$ is obtained by blowing up $(\Gamma,a)$ at $v_1$ and $(\tilde{\tilde{\Gamma}},\tilde{\tilde{a}})$ is obtained by blowing up $(\tilde{\Gamma},\tilde{a})$ at an edge between $v_1$ and $v_0$.
If the weight for the first blow up is $a_0$, then the area of $v_1$ and $v_0$ in $(\tilde{\Gamma},\tilde{a})$ is $a_1-a_0$ and $a_0$, respectively.
If the weight for the second blow up is $a_{-1}$, then the area of $v_1$, $v_{-1}$ and $v_0$ in $(\tilde{\tilde{\Gamma}},\tilde{\tilde{a}})$ is $a_1-a_0-a_{-1}$, $a_{-1}$ and $a_0-a_{-1}$, respectively.

\begin{fig}\label{blow up figure}
 \begin{displaymath}
    \xymatrix@R=1pc @C=1pc{
     \dots \ar@{-}[r]  & \bullet_{v_1}^{-y} \ar@{-}[r]  & \dots        &\dots \ar@{-}[r] &  \bullet_{v_1}^{-1-y} \ar@{-}[r] \ar@{-}[d] & \dots &          \dots \ar@{-}[r] & \bullet_{v_1}^{-2-y} \ar@{-}[r] \ar@{-}[d] & \dots \\
	           &                              &              &		   & \bullet_{v_0}^{-1}  	                   &       &         &                 \bullet_{v_{-1}}^{-1} \ar@{-}[d]  & \\            
(a)~(\Gamma, a)         &  	                          &              & (b)~(\tilde{\Gamma},\tilde{a})    &                    	                   &        &         (c)~(\tilde{\tilde{\Gamma}},\tilde{\tilde{a}})          &    \bullet_{v_0}^{-2} &
}
\end{displaymath} 
\end{fig}

\begin{defn}
 If one graph $\Gamma$ can be obtained from another graph $\Gamma'$  through blow ups and blow downs, then we call $\Gamma$ and $\Gamma'$  {\it equivalent}.
 
 A graph is called minimal if no blow down can be performed. 
\end{defn}

\begin{eg}\label{0-0>1}
$\xymatrix{ \bullet^{0} \ar@{-}[r] & \bullet^{0}\\ } $ is equivalent to $\xymatrix{ \bullet^{1}} $:

$\xymatrix@R=1pc @C=2.5pc{ \bullet^{0} \ar@{-}[r] & \bullet^{0} & \ar[r] & & \bullet^{-1} \ar@{-}[r] & \bullet^{-1} \ar@{-}[r] & \bullet^{-1}\\ 
\ar[r] &  & \bullet^{-1} \ar@{-}[r] & \bullet^{0} & \ar[r] & & \bullet^{1} \\} $
\end{eg}

We remark that both realizablility and strongly realizability (See Definition \ref{realizable definition}) are stable under blow ups and blow downs for graphs.
However, there is no obvious reason for (strong) realizability to be stable under blow ups for augmented graphs (See Lemma \ref{stability of criterion_blow up} below).

\begin{lemma}\label{stability of criterion_blow up}
 Suppose $(\Gamma,a)$ is an augmented graph and $(\tilde{\Gamma},\tilde{a})$ be obtained from a single blow up of $(\Gamma,a)$ with weight $a_0$.
If $(\Gamma,a)$ satisfies the negative GS criterion, then so does $(\tilde{\Gamma},\tilde{a})$.
If $(\Gamma,a)$ satisfies the positive GS criterion, then so does $(\tilde{\Gamma},\tilde{a})$ for $a_0$ being sufficiently small.
\end{lemma}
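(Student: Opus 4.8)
The plan is to reduce everything to a short linear-algebra computation comparing a solution of $Q_{\Gamma}z=a$ with a solution of $Q_{\tilde{\Gamma}}\tilde{z}=\tilde{a}$. First I would fix notation: suppose $(\tilde{\Gamma},\tilde{a})$ is obtained by blowing up $(\Gamma,a)$ at the vertex $v$, so the new exceptional vertex $v_0$ has self-intersection $-1$ and is joined to $v$ by a single edge, the self-intersection of $v$ drops by one, and the area weights become $a-a_0 e_v$ on the old vertices together with $a_0$ on $v_0$, where $e_v$ denotes the $v$-th standard basis vector. Listing $v_0$ last, the intersection matrix and the augmented area vector take the block form
\[
Q_{\tilde{\Gamma}}=\begin{pmatrix} Q_{\Gamma}-e_v e_v^{T} & e_v \\ e_v^{T} & -1 \end{pmatrix},\qquad \tilde{a}=\begin{pmatrix} a-a_0 e_v \\ a_0 \end{pmatrix}.
\]

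The key step is the following observation. Writing $\tilde{z}=(w;t)$ with $w\in\mathbb{R}^k$ and $t\in\mathbb{R}$, the equation $Q_{\tilde{\Gamma}}\tilde{z}=\tilde{a}$ is equivalent to the pair of conditions $Q_{\Gamma}w=a$ and $t=w_v-a_0$: the bottom row gives $w_v-t=a_0$, and substituting $t=w_v-a_0$ back into the top block cancels the $e_v$-terms and leaves exactly $Q_{\Gamma}w=a$. Consequently, if $z$ is any solution of $Q_{\Gamma}z=a$ with the sign pattern furnished by the hypothesis, then $\tilde{z}=(z;\,z_v-a_0)$ solves $Q_{\tilde{\Gamma}}\tilde{z}=\tilde{a}$, and it only remains to inspect its entries.

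In the negative case we are handed $z\in(-\infty,0]^k$; since $z_v\le 0$ and $a_0>0$ we get $z_v-a_0<0$, so $\tilde{z}\in(-\infty,0]^{k+1}$ and the negative GS criterion holds for $(\tilde{\Gamma},\tilde{a})$ with no smallness hypothesis on $a_0$. In the positive case we are handed some $z\in(0,\infty)^k$; then $z_v>0$, so $z_v-a_0>0$ whenever $0<a_0<z_v$, and for such $a_0$ we have $\tilde{z}\in(0,\infty)^{k+1}$, i.e.\ the positive GS criterion holds. I would then note that the edge blow up is treated identically: blowing up an edge joining $v$ and $v'$ introduces $v_0$ of self-intersection $-1$ joined to both $v$ and $v'$, deletes the $(v,v')$-edge, and the same computation (the bottom row now reading $w_v+w_{v'}-t=a_0$) produces the solution $\tilde{z}=(z;\,z_v+z_{v'}-a_0)$, after which the sign checks are verbatim with $z_v$ replaced by $z_v+z_{v'}$.

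There is no real obstacle here; the only point requiring care is the bookkeeping of how the intersection form, the area vector, and the vertex weights transform under a blow up — in particular getting the block form of $Q_{\tilde{\Gamma}}$ and the new weight vector $\tilde{a}$ exactly right — and, in the positive case, remembering that the admissible upper bound for $a_0$ (namely $z_v$, or $z_v+z_{v'}$ in the edge case) depends on the chosen solution $z$, which is harmless since the statement only asserts the existence of such an $a_0$.
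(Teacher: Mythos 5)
Your proposal is correct and follows essentially the same route as the paper: given a solution $z$ of $Q_{\Gamma}z=a$, append the new coordinate $z_v-a_0$ (vertex blow up) or $z_v+z_{v'}-a_0$ (edge blow up) to get a solution of $Q_{\tilde{\Gamma}}\tilde{z}=\tilde{a}$, and read off the signs. The only difference is that you spell out the block form of $Q_{\tilde{\Gamma}}$ and verify the identity, which the paper simply asserts.
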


\begin{proof}
Suppose that $Q_{\Gamma}z=a$ for a vector $z=(z_1, \dots, z_k)^{T}$.
We need to consider two cases.
First, if $\tilde{\Gamma}$ is obtained from blowing up at the vertex of $v_1$ of $\Gamma$,
then 
$\tilde{z}=(\tilde{z_0},\tilde{z_1},\dots,\tilde{z_k})^T=(z_1-a_0,z_1,z_2, \dots, z_k)^{T}$ 
satisfies $Q_{\tilde{\Gamma}}\tilde{z}=\tilde{a}$.
Secondly, if $(\tilde{\Gamma},\tilde{a})$ is obtained from blowing up an edge between $v_1$ and $v_2$,
then
$\tilde{z}=(\tilde{z_0},\tilde{z_1},\dots,\tilde{z_k})^T=(z_1+z_2-a_0,z_1,z_2, \dots, z_k)^{T}$
satisfies $Q_{\tilde{\Gamma}}\tilde{z}=\tilde{a}$.
In any of the above two cases, the Lemma follows.

\end{proof}

This illustrates the difference between convex and concave boundary.
After blowing up a concave neighborhood 5-tuple that is obtained from the positive GS criterion, 
we might no longer  be able to apply the GS criterion to get a concave neighborhood 5-tuple if the weight of blow up is too large.
However, if we blow up a convex neighborhood 5-tuple that is obtained from the negative GS criterion, we can still get a convex neighborhood 5-tuple by the criterion again.

For a graph $\Gamma$ and a vertex $v_1$ of $\Gamma$, we use $\Gamma^{(v_1)}$ to denote the graph that is obtained by adding two genera zero and self-intersection number zero vertices to the vertex $v_1$ of $\Gamma$ as illustrated in the following Figure.
It is clear that $\Gamma^{(v_1)}$ is equivalent to attaching a single vertex of genus $0$ and self-intersection $1$ to $v_1$ and adding the self-intersection of $v_1$ by $1$ as in the following Figure (See Example \ref{0-0>1}).
We denote it by $\overline{\Gamma^{(v_1)}}$ and call it the {\bf dual} blow up of $\Gamma$ at $v_1$.

\begin{fig}\label{dual blow up example}

\begin{displaymath}
    \xymatrix @R=1pc @C=1pc {
     \dots \ar@{-}[r] & \bullet_{v_1}^{-y} \ar@{-}[r]     & \dots        &\dots \ar@{-}[r] & \bullet_{v_1}^{-y} \ar@{-}[r] \ar@{-}[d] & \dots &          \dots \ar@{-}[r] & \bullet_{v_1}^{1-y} \ar@{-}[r] \ar@{-}[d] & \dots \\
			 &                              &              &		    & \bullet_{v_0}^{0}  	  \ar@{-}[d]      &        &                            &    \bullet_{v_0}^{1}      \\
	(a)~\Gamma       &  	                        &              & (b)~\Gamma^{(v_1)}     & \bullet_{v_{-1}}^{0} 	                  &        &      (c)~\overline{\Gamma^{(v_1)}}    &    
}
\end{displaymath}  

By comparing $\overline{\Gamma^{(v_1)}}$ and the blown-up graph of $\Gamma$ at $v_1$ (See Figure \ref{blow up figure}(b)), we can regard the dual blow up as a dual operation of blow up.

\end{fig}

We remark that  in \cite{Ne81} the  dual blow up operation is also called blow up, and 
it  is mentioned in \cite{Ne81} that the blow up and dual blow up operations do not change the oriented diffeomorphism type of the boundary of the plumbing.

\section{Capping Divisors with Finite Boundary $\pi_1$}\label{Classification of Symplectic Divisors Having Finite Boundary Fundamental Group}
In this section, we classify capping divisors with finite boundary fundamental group.
For completion and illustration, the classification of filling divisors with finite boundary fundamental group is given in subsection \ref{Filling classification}. 
Different from the proof of filling divisors, the study for capping divisors requires essential symplectic input.
Then, we illustrate the (strong) realizability of the graphs in type (P) and thus finish the proof of Theorem \ref{main classification theorem}.
Moreover, we sketch the proof of finiteness of fillings result (Proposition \ref{bounds}) and study a conjugate phenomenon in subsection \ref{Fillings}.

\subsection{Statement of Classification}

We use $<n,\lambda>$ to denote the following linear graph, where $\lambda$ and $n$ are both positive integers and $\lambda < n$,
\begin{displaymath}
    \xymatrix{
        \bullet^{-d_1} \ar@{-}[r] & \bullet^{-d_2} \ar@{-}[r] & \dots \ar@{-}[r] & \bullet^{-d_k}\\
	}
\end{displaymath}
where each vertex has genus zero and $d_i \ge 2$ are the minus of 
the self-intersection numbers such that 
$$\frac{n}{\lambda}=d_1-\frac{1}{d_2-\frac{1}{\dots-\frac{1}{d_k}}}. $$
In what follows, we use $[d_1,\dots,d_k]$ to denotes the continuous fraction 
so the condition above is just $\frac{n}{\lambda}=[d_1,\dots,d_k]$.

Moreover, we use $<y;n_1,\lambda_1;n_2,\lambda_2;n_3,\lambda_3>$ to denote the following graph with exactly one branching point.
\begin{displaymath}
    \xymatrix{
        \bullet^{-d_k} \ar@{-}[r] & \dots \ar@{-}[r] & \bullet^{-d_1} \ar@{-}[r]& \bullet^{-y} \ar@{-}[r] \ar@{-}[d]& \bullet^{-b_1} \ar@{-}[r] & \dots \ar@{-}[r] & \bullet^{-b_l}\\
				  &			&		&\bullet^{-c_1} \ar@{-}[d] \\
				  &			&		&\vdots \\
				  &			&		&\bullet^{-c_m}\\
	}
\end{displaymath}
where all vertices have genera zero and we require the self intersection numbers satisfies $\frac{n_1}{\lambda_1}=[d_1,\dots,d_k] $, $\frac{n_2}{\lambda_2}=[b_1,\dots,b_l] $ and $\frac{n_3}{\lambda_3}=[c_1,\dots,c_m]$.
We call the vertex with self-intersection $-y$ to be the central vertex.

\begin{defn}\label{Five Types}
We define eight special types of graphs as follows.

Type(N1): empty graph,

Type(N2): linear graph $<n,\lambda>$, for $0 < \lambda < n$, $(n,\lambda)=1$,

Type(N3): one branching point  graph $<y;2,1;n_2,\lambda_2;n_3,\lambda_3>$, where $(n_2, n_3)$ is one of the pairs $(3, 3)$, $(3, 4)$, $(3, 5)$, or $(2, n)$, for some  $n \ge 2$ and $0 < \lambda_i < n_i$, $(n_i,\lambda_i)=1$, and $y \ge 2$,

Type(P1): linear graph $\xymatrix{
        \bullet^{0} \ar@{-}[r] & \bullet^{0}\\
	} $,

Type(P2):  (linear) dual blown up graph $\overline{\Gamma^{(v)}}$ where $\Gamma=<n,n-\lambda>$ is of type (N2) and $v$ is the left-end vertex,

Type(P3): one branching point graph $<3-y;2,1;n_2,n_2-\lambda_2;n_3,n_3-\lambda_3>$, where  $<y;2,1;n_2,\lambda_2;n_3,\lambda_3>$ is of type (N3),

Type(P4):  (one branch point)  dual blown up graph $\overline{\Gamma^{(v)}}$ where $\Gamma$ is of type (N2) and $v$ is not an end vertex, 

Type(P5):  (one or two branch points)  dual blown up graphs $\overline{\Gamma^{(v)}}$ where $\Gamma$ is of type (N3) and $v$ is any vertex in $T$.

\end{defn}

\begin{rmk}
  Type (N) graphs have  $b_2^+=0$ and  type (P) graphs have  $b_2^+=1$.
\end{rmk}

These graphs are going to be our focus for the remaining of the paper.
We remark that the set of type (N2) graphs is the same as the set of linear graphs with all self-intersection less than $-1$.
Therefore, the set of type (P2) graphs is the same as the set of dual blow up at the right-end vertex of (N2) graphs, by symmetry.

\begin{thm}\label{main classification theorem}
Let $\Gamma$ be a graph with finite boundary fundamental group.
If $Q_{\Gamma}$ is not negative definite, then $\Gamma$ is realizable if and only if $\Gamma$ satisfies one of the following conditions.

(A) $\Gamma$ is equivalent to a graph in types (P1), (P2),  (P3), (P4), or

(B) $\Gamma$ is equivalent to a graph $T^{(v)}$ in type (P5) such that

(B)(i) $y \neq 2$, or

(B)(ii) $y=2$ and $v$ is a vertex labeled by a subscript $Y$ in a graph from Figure \ref{Tetrahedral} to Figure \ref{Dihedral},
where $-y$ is the self-intersection of the central vertex of $T$ for (B)(i) and (B)(ii).

In particular, if $\Gamma$ is realizable, then we have $b_2^+(Q_{\Gamma})=1$ ,  $\delta_{\Gamma} \neq 0 $ and $\Gamma$ is strongly realizable. 
\end{thm}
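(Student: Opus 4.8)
The plan is to separate the statement into topological bookkeeping, a constructive half, and an obstructive half, with the last carrying the real content. First I would record the easy structural facts: finite boundary $\pi_1$ forces $\Gamma$ to be a tree with all genera zero, and $H_1(\partial P(\Gamma);\mathbb Z)\cong\operatorname{coker}Q_\Gamma$, so finiteness already gives $\delta_\Gamma\ne 0$. Next, if $\Gamma$ is realizable by a symplectic divisor $D$ in a closed symplectic $W$, I would blow down the $(-1)$--vertices (changing neither realizability nor $b_2^+$) to reduce to $\Gamma$ minimal; since an all-weights-$\le -2$ finite-$\pi_1$ tree is the minimal resolution graph of a quotient singularity and hence negative definite, the hypothesis that $Q_\Gamma$ is not negative definite forces $D$ to contain a symplectic sphere of square $\ge 0$, so by McDuff's theorem \ref{McDuff} $W$ is rational or ruled and $b_2^+(W)=1$. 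As the $[C_i]$ span a non-degenerate sublattice with Gram matrix $Q_\Gamma$, this gives $b_2^+(Q_\Gamma)\le 1$, hence (with $\delta_\Gamma\ne0$ and $Q_\Gamma$ not negative definite) $b_2^+(Q_\Gamma)=1$. Feeding $b_2^+(Q_\Gamma)=1$ and finiteness of $\pi_1$ into Neumann's plumbing calculus \cite{Ne81} and the classical list of Seifert fibered rational homology spheres with finite $\pi_1$ (lens spaces and the prism/tetrahedral/octahedral/icosahedral links of quotient singularities), one concludes that $\Gamma$ is equivalent to exactly one of the type (P1)--(P5) graphs; this reduces the theorem to deciding realizability among type (P) graphs and already proves the $b_2^+=1$ assertion of the final sentence. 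Finally, since $Q_\Gamma$ not negative definite and $\delta_\Gamma\ne0$ let me choose an area vector for which the positive GS criterion holds (Lemma \ref{trichotomy}), Theorem \ref{MAIN} together with Proposition \ref{reduce to graph} upgrades any realization to a capping divisor after deformation, which gives the strong realizability claim.

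For the ``if'' direction I would exhibit realizations, which by stability of graph realizability under blow up and blow down needs only one representative of each equivalence class. Type (P1) is realized by two transverse rulings of $\mathbb P^1\times\mathbb P^1$; the infinite families (P2), (P3), (P4) and type (P5) with central weight $y\ne 2$ are realized by the standard procedure of blowing up a configuration of lines and fibers in $\mathbb{CP}^2$ or a Hirzebruch surface according to the continued fractions $[d_1,\dots]$, $[b_1,\dots]$, $[c_1,\dots]$ appearing in Definition \ref{Five Types}, with one further $(-1)$--blow up producing the dual-blow-up vertex; and the finitely many vertices marked with a subscript $Y$ in the $y=2$ Figures \ref{Tetrahedral}--\ref{Dihedral} are realized by explicit blow-up pictures of the same flavour. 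In each case I would then deform the area weights (Remark \ref{symplecitc cone}, Lemma \ref{inflation}) so that the positive GS criterion holds and apply Proposition \ref{MAIN2} (or Proposition \ref{McLean0}) together with Proposition \ref{reduce to graph}; this also yields strong realizability for these graphs, finishing the ``in particular'' clause for the realizable ones.

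The ``only if'' direction is the heart of the matter: I must rule out the type (P5) graphs $T^{(v)}$ with $y=2$ for which $v$ carries no subscript $Y$. Suppose such a $\Gamma$ were realized by $D=\bigcup_i C_i\subset W$. The vertex $C_0$ created by the dual blow up is a symplectic $+1$--sphere, so by McDuff's theorem \ref{McDuff} $W$ is a rational surface, necessarily $\mathbb{CP}^2\#N\overline{\mathbb{CP}^2}$ because $\operatorname{rank}Q_\Gamma\ge 5$. After a diffeomorphism of $W$ I may take $[C_0]$ to be the hyperplane class $H$; then, since $C_0$ meets $D$ only along $v$ and only once, writing $[v]=H+\xi$ with $\xi\in H^\perp\cong\bigoplus_j\langle-1\rangle$ the vectors $\xi$ and $[C_i]$ ($i\notin\{0,v\}$) realize the negative definite lattice $Q_T$ (resolved at the deleted leg) inside the odd diagonal lattice $\bigoplus_j\langle-1\rangle$, compatibly with the characteristic covector coming from the adjunction identities $K_W\cdot[C_i]=-2-[C_i]^2$, i.e.\ with $K_W=-3H+\sum_jE_j$. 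This is precisely the kind of lattice-embedding-with-characteristic-vector problem solved in \cite{Li08}, \cite{BhOn12}, \cite{St13}, and I would run it family by family over the tetrahedral, octahedral, icosahedral and dihedral $y=2$ graphs, using the $b_2^+(W)=1$ light-cone inequality to discard most configurations and the first-Chern-class/conjugacy obstruction of Lemma \ref{first chern class} to finish the borderline cases, reading off that the only surviving positions of $v$ are those recorded in Figures \ref{Tetrahedral}--\ref{Dihedral}. I expect this case analysis --- especially treating the infinite dihedral family $(n_2,n_3)=(2,n)$ by a single uniform argument rather than singularity by singularity --- to be the \emph{main obstacle}; by contrast the reduction to type (P) graphs, the explicit realizations, and the ``in particular'' clauses are comparatively routine.
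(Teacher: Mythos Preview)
Your reduction to ``there must be a symplectic sphere of square $\ge 0$'' has a genuine gap. Blowing down $(-1)$--vertices only removes those of valence $\le 2$; a minimal graph can still have $(-1)$--vertices at branch points. The (P3) graph $\langle 1;2,1;2,1;n,\lambda\rangle$ is minimal, has finite boundary $\pi_1$, is not negative definite, yet every vertex has self-intersection $\le -1$. So McDuff's theorem does not apply directly, and you have not established $b_2^+(W)=1$; without that, your appeal to Neumann's calculus to land in the (P1)--(P5) list is unjustified. The paper does \emph{not} get $b_2^+=1$ up front. Instead it follows Shastri's case analysis on the number of branch points: linear graphs and one--branch--point graphs are handled topologically (Lemmas \ref{simple branch}, \ref{T shape}), while for $\ge 2$ branch points the paper injects symplectic input exactly where two adjacent $(-1)$--vertices appear, blows one down to produce a $0$--sphere, and then invokes Corollary \ref{E} (see Lemma \ref{Tech2}). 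Separately, Lemmas \ref{R} and \ref{0} use McDuff and an explicit homology computation in a blown-up Hirzebruch surface to forbid certain local subtrees, which is what makes the ``normalize to $T'^{(v)}$'' step (Lemma \ref{00}) go through. So the reduction to type (P) is a hybrid topological/symplectic argument, not a consequence of $b_2^+=1$ plus plumbing calculus.

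A second, smaller issue: for the non-realizability of the $y=2$ cases you propose to use ``the first-Chern-class/conjugacy obstruction of Lemma \ref{first chern class}''. That lemma concerns whether a capping divisor admits a \emph{conjugate} divisor, not whether the graph is realizable at all; it plays no role in the proof of Theorem \ref{main classification theorem}. The paper's obstruction is entirely the Lisca-type homological bookkeeping: once $[C_0]=h$, Proposition \ref{combinaotric Lisca} forces each remaining $[C_i]$ into the shape $h-\sum e_\bullet$ or $e_\bullet-\sum e_\bullet$, and the contradiction in Lemma \ref{realizable} (and its analogues for the other vertices in Figures \ref{Tetrahedral}--\ref{Dihedral}) comes from comparing intersection numbers among the three neighbours of the central vertex. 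Your instinct that the case analysis is the main obstacle is correct, but the tool is Proposition \ref{combinaotric Lisca}, not Lemma \ref{first chern class} or a light-cone inequality.
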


\begin{prop}\label{main classification theorem_convex}
Suppose $\Gamma$ is a graph with finite boundary fundamental group.
If $Q_{\Gamma}$ is negative definite, which means that $b_2^+(Q_{\Gamma})=0$ here,  then $\Gamma$ is equivalent to a graph in type (N).
Moreover, any type (N) graph  can be realized as a resolution graph of an isolated quotient singularity.
\end{prop}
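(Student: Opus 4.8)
The plan is to identify $\Gamma$ with the minimal resolution graph of a quotient singularity, by combining the plumbing calculus with the classical classification of finite quotient singularities of surfaces.

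First I would pass to the Seifert--fibered picture. Let $M=\partial P(\Gamma)$ be the boundary of a plumbing of $\Gamma$, so $\pi_1(M)$ is finite. A preliminary observation is that every vertex of $\Gamma$ must have genus $0$: from a presentation of $\pi_1(M)$ read off $\Gamma$ (as in Lemma \ref{representation}), a vertex of genus $g\ge1$ contributes generators surviving in an infinite quotient of $\pi_1(M)$ --- equivalently, the Seifert base below it would have positive genus --- contradicting finiteness. So all genera are $0$. Since $M$ has finite fundamental group it is an elliptic (spherical) $3$--manifold, hence Seifert fibered; by Neumann's plumbing calculus \cite{Ne81}, the equivalence class of $\Gamma$ therefore contains a star-shaped normal form: a central genus-$0$ vertex with linear arms, each arm a Hirzebruch--Jung string $<n_i,\lambda_i>$ with all weights $\le-2$ and $0<\lambda_i<n_i$, $(n_i,\lambda_i)=1$. (With $\le2$ arms this is a linear graph, recovering the empty graph of type (N1) and the strings $<n,\lambda>$ of type (N2).)

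Next I would cut down the possible star shapes. For a star-shaped negative-definite graph, $M$ is Seifert fibered over the $2$--orbifold $S^2$ with cone points of orders $n_1,\dots,n_r$ (the numerators of the arms), and --- since negative-definiteness forces the Euler number of the fibration to be nonzero --- $\pi_1(M)$ is finite precisely when the base orbifold is spherical, i.e. $\chi^{orb}=2-\sum_{i=1}^r(1-1/n_i)>0$. This forces $r\le3$; for $r=3$ it forces $(n_1,n_2,n_3)$, up to order, to be $(1,*,*)$, $(2,2,n)$, $(2,3,3)$, $(2,3,4)$ or $(2,3,5)$, and any arm with $n_i=1$ is one blow-down away from being absorbed into a neighbour, reducing the number of genuine branch points. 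Matching this against Definition \ref{Five Types}: the cases $r\le2$ (including all cases with some $n_i=1$) give the linear graphs of types (N1), (N2), while $r=3$ with a platonic triple gives exactly $<y;2,1;n_2,\lambda_2;n_3,\lambda_3>$ with $(n_2,n_3)\in\{(3,3),(3,4),(3,5),(2,n)\}$, $0<\lambda_i<n_i$, $(n_i,\lambda_i)=1$, $y\ge2$, i.e. type (N3). Hence $\Gamma$ is equivalent to a type (N) graph, and $b_2^+(Q_\Gamma)=0$ since it is negative definite.

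For the \emph{moreover}, the realization of each type (N) graph by an isolated quotient singularity is classical. Type (N1) is the smooth point $0\in\mathbb{C}^2$. For type (N2), a suitable cyclic quotient singularity $\frac1n(1,q)$ with $\gcd(n,q)=1$ has minimal resolution the Hirzebruch--Jung string with continued-fraction data $[d_1,\dots,d_k]$, so every $<n,\lambda>$ occurs. For type (N3), the finite subgroups of $U(2)$ acting freely on $S^3$ --- the binary dihedral and binary tetrahedral/octahedral/icosahedral families and their $U(2)$--extensions --- have minimal resolution graphs that are exactly the star-shaped graphs $<y;2,1;n_2,\lambda_2;n_3,\lambda_3>$ with $(n_2,n_3)$ in the list above, and conversely each such graph arises; this is Brieskorn's classification, following Du Val, Klein and Prill. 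Alternatively one avoids re-deriving the list: any negative-definite genus-$0$ configuration can be realized by rational curves in a complex surface and contracted, by Grauert's criterion, to a normal surface singularity whose local fundamental group is $\pi_1(M)$, hence finite; by Mumford's theorem together with Prill's converse this is a quotient singularity, and passing to its minimal resolution exhibits $\Gamma$ in type (N). The hard part is the bookkeeping in the third paragraph: invoking Neumann's calculus correctly to land in a star-shaped normal form, and then matching the orbifold-Euler-characteristic dichotomy --- together with the collapse of $n_i=1$ arms --- precisely against the parametrization of types (N1)--(N3), keeping the continued-fraction and $(n_i,\lambda_i)$ normalizations consistent; everything else is either standard $3$--manifold topology or a citation to the quotient-singularity literature.
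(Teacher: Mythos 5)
Your argument is correct in outline, but it takes a genuinely different route from the paper. The paper never leaves the combinatorial level: following Shastri \cite{Sh85}, it reads off the presentation of $\pi_1$ from Lemma \ref{representation}, uses the free-product Lemma \ref{quotient} and the $\mathbb{Z}\oplus\mathbb{Z}$ obstruction of Lemma \ref{lens space analysis}, and then runs a case analysis/induction on the number of branch points of a negative definite minimal tree (Lemmas \ref{simple branch}, \ref{T shape}, \ref{Tech}), concluding the ``moreover'' by citing Brieskorn \cite{Br68} exactly as you do. You instead pass to the boundary $3$--manifold: finite $\pi_1$ gives a spherical, hence Seifert fibered, graph manifold; Neumann's normal form puts the graph in star-shaped shape with Hirzebruch--Jung arms; and the spherical-orbifold condition $\chi^{orb}>0$ produces precisely the cyclic and platonic triples of types (N1)--(N3). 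What your route buys is brevity and a conceptual explanation of why exactly the (N3) triples appear (they are the spherical triangle groups); what the paper's route buys is self-containedness (no appeal to elliptization/Waldhausen or to the uniqueness part of Neumann's calculus) and, more importantly, a combinatorial scheme that is reused almost verbatim, with symplectic input, for the positive-definite-direction classification in Lemma \ref{Tech2}, which your method does not set up.

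Two points in your sketch deserve explicit care. First, ``equivalent'' in the paper means related by blow-ups and blow-downs only, whereas Neumann's calculus \cite{Ne81} has additional moves; so when you say the equivalence class of $\Gamma$ contains a star-shaped normal form, you must use the fact that for a negative definite graph the normal form is just the minimal model, reached by blow-downs alone, and that this minimal negative definite graph is determined by the oriented boundary --- this is in \cite{Ne81}, but it is the load-bearing citation and should be stated as such. Second, ``finite $\pi_1$ implies spherical, hence Seifert fibered'' invokes elliptization; for plumbed (graph) manifolds the classical Waldhausen--Neumann theory suffices and is the more proportionate reference. With those citations made precise, your alternative (Grauert contraction plus Brieskorn/Prill, taking the minimal resolution) also gives a clean one-shot proof of both halves of the statement.
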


Notice that a graph with finite boundary fundamental group must have non-degenerate intersection form (See Lemma \ref{order}) except the empty graph.
Therefore, $b_2^+(Q_{\Gamma})=0$ is equivalent to $Q_{\Gamma}$ being negative definite.
To be consistent, the empty graph is considered to be negative definite in this paper.

Using Theorem \ref{main classification theorem} and Proposition \ref{main classification theorem_convex}, 
we give the classification of filling divisors and capping divisors with finite boundary fundamental group.

\begin{thm}\label{complete classification}
 Let $(D,\omega)$ be a divisor (not necessarily $\omega$-orthogonal) with finite boundary fundamental group.
 Then $(D,\omega)$ is a capping divisor if and only if it satisifies the positive GS
 criterion and its graph is equivalent to a realizable graph in type (P).
 On the other hand, $(D,\omega)$ is a filling divisor if and only if its graph is equivalent to a graph in type (N).
\end{thm}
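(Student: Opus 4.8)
The plan is to prove each of the two biconditionals by reducing to the classification of realizable graphs in Theorem \ref{main classification theorem} and Proposition \ref{main classification theorem_convex}. For the capping statement, I would argue as follows. If $(D,\omega)$ is a capping divisor, then $D$ is embeddable, so its graph $\Gamma$ is realizable, and the ``only if'' direction of Proposition \ref{reduce to graph} says that $(D,\omega)$ satisfies the positive GS criterion. This already rules out $Q_\Gamma$ being negative definite: by the chain of equivalences recorded after Lemma \ref{trichotomy}, a negative definite $Q_\Gamma$ forces the unique solution of $Q_\Gamma z = a$ to have non-positive entries. Hence $\Gamma$ is a realizable graph with finite boundary fundamental group and non-negative-definite intersection form, so Theorem \ref{main classification theorem} places it, up to equivalence, among the realizable type (P) graphs. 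For the converse, if $(D,\omega)$ satisfies the positive GS criterion and $\Gamma$ is equivalent to a realizable type (P) graph, then $\Gamma$ is itself realizable (realizability of graphs is preserved by blow-ups and blow-downs), and the ``if'' direction of Proposition \ref{reduce to graph} immediately gives that $(D,\omega)$ is a capping divisor.

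For the filling statement I would first handle the easy implication. If $\Gamma$ is equivalent to a type (N) graph, then $Q_\Gamma$ is negative definite (type (N) graphs are resolution graphs of quotient singularities, cf.\ Proposition \ref{main classification theorem_convex}, and blow-ups/blow-downs preserve negative definiteness). Then $\omega$ is automatically exact on the boundary, and by the equivalences after Lemma \ref{trichotomy} the divisor satisfies the negative GS criterion; Lemma \ref{non-negative wrapping numbers} makes the relevant wrapping numbers strictly positive, so Proposition \ref{McLean0} (or Proposition \ref{MAIN2} after an $\omega$-orthogonal perturbation) produces arbitrarily small convex neighborhoods, i.e.\ $D$ is a convex divisor, hence embeddable by \cite{EtHo02}, hence a filling divisor. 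For the converse, a filling divisor is embeddable, so $\Gamma$ is realizable, and is a convex divisor; the point I would need is that a convex divisor has negative definite intersection form. Granting that, Proposition \ref{main classification theorem_convex} finishes the argument: a realizable graph with finite boundary fundamental group and negative definite intersection form is equivalent to a type (N) graph.

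The hard part is precisely the implication \emph{convex divisor implies $Q_\Gamma$ negative definite} in the non-$\omega$-orthogonal generality of the statement. Theorem \ref{obstruction-GS} gives exactly this when $(D,\omega)$ is $\omega$-orthogonal (if the negative GS criterion fails there is a neighborhood of $D$ with no convex neighborhood inside it), but there is no recorded closed-manifold analogue for convex neighborhoods parallel to Theorem \ref{obstruction-closed case}. So I would need either to check that the property of being a convex divisor survives a perturbation of $(D,\omega)$ to an $\omega$-orthogonal form with the same augmented graph, thereby reducing to Theorem \ref{obstruction-GS}, or to run a direct positivity argument: if $b_2^+(Q_\Gamma)\ge 1$, a suitable effective integral combination of the $C_i$ can be smoothed, using positivity of intersections, to a closed symplectic surface of non-negative self-intersection lying in the symplectic $4$-manifold $P(D)$, which has convex contact-type boundary --- and this is impossible by McDuff's structure theorem (Theorem \ref{McDuff}). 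Everything else is bookkeeping with results already proved in the paper.
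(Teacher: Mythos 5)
Your treatment of the capping statement and of the implication ``type (N) $\Rightarrow$ filling divisor'' is essentially the paper's: Proposition \ref{reduce to graph} combined with Theorem \ref{main classification theorem} for cappings, and negative definiteness $\Rightarrow$ negative GS criterion $\Rightarrow$ convexity (Proposition \ref{MAIN2}/\ref{McLean0}) $\Rightarrow$ cap off by \cite{EtHo02} for fillings. The gap is in the remaining implication, which you reduce to ``convex divisor $\Rightarrow$ $Q_\Gamma$ negative definite'' in full non-$\omega$-orthogonal generality, and neither of your proposed routes closes it. The perturbation route is precisely the difficulty the authors flag right before Theorem \ref{obstruction-closed case}: being a convex divisor is a condition on arbitrarily small neighborhoods of $D$ itself, and a small neighborhood of the $\omega$-orthogonalized divisor $D'$ need not contain any of the convex neighborhoods you are given for $D$, so convexity does not obviously transfer and there is no convex analogue of Theorem \ref{obstruction-closed case} in the paper. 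The positivity route fails as stated: there is no general way to smooth an effective combination $\sum a_i C_i$ with multiplicities $a_i\ge 2$ into an embedded symplectic surface, and, more seriously, a symplectic $4$-manifold with convex contact-type boundary can contain a closed symplectic surface of positive self-intersection --- the example built from \cite{Mc91} in Section \ref{Comments on the Induced Contact Structure} gives a convex plumbing of a genus $g\ge 2$ surface of square $2g-2$ --- and Theorem \ref{McDuff} is a statement about closed manifolds, so it produces no contradiction here unless you first cap off and control the genus.

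The paper avoids the statement you are trying to prove by exploiting the full filling-divisor hypothesis rather than mere convexity. If $(D,\omega)$ is a filling divisor whose graph is not equivalent to a type (N) graph, then realizability together with Theorem \ref{main classification theorem} and Proposition \ref{main classification theorem_convex} forces the graph to be equivalent to a realizable type (P) graph with $b_2^+(P(D))=1$. By the definition of a filling divisor one glues a concave cap to $P(D)$ and obtains a closed symplectic manifold $(W,\omega)$, which is rational (via \cite{BhOn12} for the (P3) case and Theorem \ref{McDuff} otherwise), so $b_2^+(W)=1$. On the cap $W-P(D)$ the symplectic class admits a relative lift of positive square (a Stokes computation on the concave end), hence $b_2^+(W-P(D))\ge 1$, while $b_2^+(W)=b_2^+(P(D))+b_2^+(W-P(D))$ because $\partial P(D)$ is a rational homology sphere; this is the contradiction. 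Note that the closed manifold supplied by the filling hypothesis is used in an essential way --- exactly the datum your reduction to ``convex $\Rightarrow$ negative definite'' throws away.
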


\begin{proof}
 The statement for capping divisor follows directly from Proposition \ref{reduce to graph} and Theorem \ref{main classification theorem}.
 
 On the other hand, if $(D,\omega)$ is of type $N$, it is negative definite and hence satisfies the 
 negative GS criterion.
 By Proposition \ref{McLean}, $(D,\omega)$ is convex and we can close it up (by \cite{EtHo02}), thus is a filling divisor.
 
 If $(D,\omega)$ is a filling divisor but not in type (N), then $(D,\omega)$ is in type (P) and has $b_2^+(D)=1$, by Theorem \ref{main classification theorem} and Proposition \ref{main classification theorem_convex}.
 We can close it up to a closed symplectic manifold $(W,\omega)$, which must be rational since it contains the divisor $D$
 (for the reason why $W$ is rational if the graph of $D$ is equivalent to one in type (P3), see \cite{BhOn12}, for the other, see Theorem \ref{McDuff}).
 Therefore, $b_2^+(W)=1$.
 However, $\omega|_{W-P(D)}$ descends to a relative class in the cap, $W-P(D)$, and thus has positive square
 (i.e $[\omega|_{W-P(D)} -d\alpha_c]^2 >0$ for any choice of primitive $\alpha$ of $\omega$ defined near $P(D)$).
 Therefore, $b_2^+(W-P(D)) \ge 1$.
 However, $b_2^+(W)=b_2^+(P(D))+b_2^+(W-P(D))$ as $\partial P(D)$ is a rational homology sphere.
 Contradiction.
\end{proof}

\subsection{Filling Divisors with Finite Boundary $\pi_1$}\label{Filling classification}
We prove Proposition \ref{main classification theorem_convex} in this subsection.

\subsubsection{Topological Restrictions}
We first recall some topological constraints for a configuration to have  finite boundary fundamental group.

\begin{defn} \label{graph theoretic definition}
 Suppose we have a graph $\Gamma$.   The boundary fundamental group of $\Gamma$,  denoted  by $\pi_1(\Gamma)$,  is the fundamental group of the boundary of the plumbing of the configuration represented by $\Gamma$.
 We call $\Gamma$ spherical, cyclic, finite cyclic  if $\pi_1(\Gamma)$ is trivial, cyclic, finite cyclic.

A branch point (or branch vertex) of a graph is a vertex with at least three branches.
A branch at a vertex $v$ also refers to  the sub-graph $\Gamma$ obtained by deleting $v$ and all other branches linking to $v$.

A simple branch $\gamma$ is a branch that is linear.

An extremal branch point is a branch point with only one non-simple branch.

Finally, for a connected sub-graph $\gamma$, $\delta_{\gamma}$ denotes the determinant of the intersection form of $\gamma$.

\end{defn}

\begin{lemma}\label{tree}
 Let $T$ be a graph with finite  $\pi_1(T)$.
Then, all of its vertices have genera zero and $T$ is a finite tree.
\end{lemma}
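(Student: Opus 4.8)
\textbf{Proof plan for Lemma \ref{tree}.}

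The plan is to derive both conclusions from the finiteness of $\pi_1(T)$ by exhibiting, in each of the forbidden cases, a surjection onto an infinite group or an explicit infinite subgroup. First I would recall the standard presentation of $\pi_1(T)$ (the plumbing calculus / Neumann presentation): generators $e_1,\dots,e_k$ (one meridian per vertex $v_i$) and $g_i$-handles for the genus of $v_i$, with relations coming from each vertex — for a vertex $v_i$ of genus $g_i$ adjacent to vertices $v_{j_1},\dots,v_{j_m}$, a relation expressing that $e_i^{s_i}\prod_{t} e_{j_t}$ is killed together with the surface-group commutator words $[\text{a}_p,\text{b}_p]$ for $p=1,\dots,g_i$, and the relation that each $e_{j_t}$ is central along the $i$-th surface-group factor. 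This presentation makes the two desired statements local, vertex by vertex.

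\emph{Genus zero.} Suppose some vertex $v_i$ has $g_i\ge 1$. I would argue that the fundamental group of the boundary surjects onto the fundamental group of the circle bundle over $\Sigma_{g_i}$ obtained from the single vertex $v_i$ after capping off the boundary circles corresponding to the edges at $v_i$ (equivalently, collapse every branch at $v_i$ to a disc, which only adds relations). That circle bundle has $\pi_1$ a central extension $1\to\mathbb Z/(s_i)\to \pi_1\to \pi_1(\Sigma_{g_i})\to 1$ when $s_i\ne 0$ (and $\mathbb Z\times\pi_1(\Sigma_{g_i})$ when $s_i=0$), and $\pi_1(\Sigma_{g_i})$ is infinite for $g_i\ge1$; hence $\pi_1(T)$ surjects onto an infinite group, a contradiction. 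The one subtlety is justifying that collapsing the other branches to discs is realized by adding relations to $\pi_1(\partial P(T))$; this is where the plumbing description of $\partial P(T)$ as a graph manifold (gluing $S^1$-bundles over punctured surfaces along boundary tori) is used — filling a torus with a solid torus along the appropriate slope is exactly a Dehn filling, which kills a normal subgroup, so $\pi_1$ can only get a quotient.

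\emph{Tree.} Given genus zero everywhere, suppose $T$ contains a cycle. Then $H_1(\partial P(T);\mathbb Z)$ would have positive rank: indeed $H_1$ of the plumbed $4$-manifold $P(T)$ is free abelian of rank $b_1(T)$ (the first Betti number of the underlying graph), and from the long exact sequence of the pair $(P(T),\partial P(T))$ together with Poincaré–Lefschetz duality one reads off that $b_1(\partial P(T))\ge b_1(T)>0$ whenever the graph has a cycle — concretely, a loop in the graph gives a non-torsion class because the intersection form $Q_T$ restricted to the cycle subspace degenerates in the right way. A group with infinite abelianization is infinite, contradicting finiteness of $\pi_1(T)$. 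Connectedness of $T$ is already part of our standing hypothesis on divisors, and finiteness of the graph is built into the definition. The main obstacle I anticipate is the bookkeeping in the tree step: making the homological computation $b_1(\partial P(T)) \ge b_1(T)$ clean (one must be careful that $Q_T$ could be degenerate, but here $\pi_1$ finite forces $Q_T$ nondegenerate by Lemma \ref{order}, which actually simplifies matters — with $Q_T$ nondegenerate, $b_1(\partial P(T)) = b_1(T)$ exactly, so a cycle is immediately fatal). Once those two local reductions are in place the lemma follows quickly.
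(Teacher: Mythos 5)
The paper itself records this lemma without proof, as a standard plumbing-calculus fact (in the spirit of \cite{Hi66}, \cite{Ne81}), so your argument has to stand on its own. Your second step (no cycles) is correct: $P(T)$ deformation retracts onto the configuration, so $H_1(P(T),\partial P(T))\cong H^3(P(T))=0$ by Lefschetz duality, the long exact sequence of the pair gives a surjection $H_1(\partial P(T))\to H_1(P(T))$, and a cycle in the graph makes the target infinite. (The aside invoking Lemma \ref{order} is unnecessary and slightly circular, since that lemma is stated for trees, but as you note the inequality alone suffices.)

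The genus-zero step, however, has a genuine gap: ``collapse every branch at $v$ to a disc, which only adds relations'' is not justified by Dehn filling. $\partial P(T)$ is a closed manifold; to pass to the circle bundle over the closed surface $\Sigma_{g_i}$ you must first cut along interior tori and discard the branch pieces, and that operation does not induce a quotient on $\pi_1$ --- Dehn filling kills a normal subgroup only when the tori are boundary components of the manifold you already have. Concretely, for the chain of two genus-zero vertices of weight $0$ the boundary of the plumbing is $S^3$ (Example \ref{0-0>1}), while collapsing one branch produces a circle bundle over $S^2$ such as $S^1\times S^2$, whose $\pi_1=\mathbb{Z}$ is certainly not a quotient of the trivial group; so the general principle you invoke is false, even though your intended conclusion is true. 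The repair is within the tools you already set up: either kill \emph{all} the fiber generators $e_j$ in the Neumann presentation --- an honest addition of relations --- which leaves the free product of the closed surface groups $\pi_1(\Sigma_{g_j})$, infinite as soon as some $g_j\ge 1$; or, simpler still, observe that your homological argument proves both halves at once, since $H_1(P(T))\cong\mathbb{Z}^{b_1(\Gamma)}\oplus\bigoplus_j\mathbb{Z}^{2g_j}$ and the surjection $H_1(\partial P(T))\twoheadrightarrow H_1(P(T))$ shows $\pi_1(T)$ has infinite abelianization whenever some genus is positive or the graph has a cycle. With that replacement the proof is complete (finiteness and connectedness of the graph being part of the standing definitions).
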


Therefore, from now on, all vertices are assumed to have genera zero and the number above a vertex is the self-intersection number of the vertex.
Here, we give the concrete representation for boundary fundamental group.

\begin{lemma} \label{representation}(\cite{Hi66})
 Let $T$ be a finite tree such that the genera of all vertices are zero.
Label the vertices as $v_i$ for $i=1, \dots, n$ and let $q_{ij}=[v_i][v_j] \in \mathbb{Z}$ be the $(i,j)^{th}$-entry of the intersection form of $T$.
Then, $\pi_1(T)$ is isomorphic to the free group generated by $e_1,\dots,e_n$ modulo the relations 
$$e_ie_j^{q_{ij}}=e_j^{q_{ij}}e_i,\quad  \hbox{for any } i, j$$ and
$$1=\prod_{1 \le j\le n}e_j^{q_{ij}}, \quad  \hbox{for any } i.$$
\end{lemma}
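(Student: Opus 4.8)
The plan is to realize $Y:=\partial P(T)$ concretely as a graph $3$--manifold assembled from the plumbing data, and then to compute $\pi_1(Y)$ by van Kampen. Since every vertex of $T$ has genus $0$, the plumbing $P(T)$ is built from disc bundles $E_i\to S^2$ with Euler number $q_{ii}$, one for each vertex $v_i$, glued in pairs along $D^2\times D^2$--charts according to the edges of $T$, each gluing swapping base and fibre. Equivalently, $Y$ is integral Dehn surgery on the ``plumbing link'' $L=K_1\cup\dots\cup K_n\subset S^3$ read off from $T$: each $K_i$ is an unknot carrying framing $q_{ii}$, two components $K_i,K_j$ form a \emph{positive} Hopf link precisely when $v_iv_j$ is an edge (so $q_{ij}=1$), and $K_i,K_j$ are split when $v_iv_j$ is not an edge (so $q_{ij}=0$); because $T$ is a tree there are no further linkings. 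I would set up one of these two descriptions carefully, fixing orientations of the $C_i$ (positively oriented, meeting positively) so that the meridians and all the Hopf pairings come out positive --- this orientation choice is what will eventually produce the $+q_{ij}$ exponents in the final presentation.

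Next I would compute $\pi_1(S^3\setminus L)$. Let $e_i$ be a positively oriented meridian of $K_i$. Since the only linking in $L$ comes from the edges of $T$ and $T$ has no cycles, a van Kampen induction on the number of vertices --- at each step gluing in one further Hopf--linked unknot along a neighbourhood solid torus --- shows
\[
\pi_1(S^3\setminus L)\;=\;\bigl\langle\, e_1,\dots,e_n \ \bigm|\ [e_i,e_j]=1 \text{ whenever } v_iv_j\in E(T) \,\bigr\rangle ,
\]
and that the preferred longitude of $K_i$ is $\lambda_i=\prod_{j:\,v_iv_j\in E(T)}e_j$, the product of the meridians of the neighbouring components, taken in the cyclic order in which those neighbours puncture a spanning disc of the unknot $K_i$. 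Any two such orders give presentations of the final group that are visibly isomorphic (via the commutation relations), so the order is irrelevant; and positivity of the Hopf pairings is exactly what makes every exponent in $\lambda_i$ equal to $+1$.

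Finally, Dehn surgery on $K_i$ with framing $q_{ii}$ kills the curve $\mu_i^{\,q_{ii}}\lambda_i$, so it imposes the relation $e_i^{\,q_{ii}}\prod_{j:\,v_iv_j\in E(T)}e_j=1$. Using $q_{ii}=[v_i]^2$ and the fact that for $i\neq j$ one has $q_{ij}\in\{0,1\}$ with $q_{ij}=1$ iff $v_iv_j\in E(T)$, and using the commutation relations to move the $e_i^{\,q_{ii}}$ factor into position (it commutes with every $e_j$ occurring with a nonzero exponent), this relation becomes $1=\prod_{1\le j\le n}e_j^{\,q_{ij}}$; likewise the edge--commutation relation is exactly $e_ie_j^{\,q_{ij}}=e_j^{\,q_{ij}}e_i$ (vacuous when $q_{ij}=0$, automatic when $i=j$). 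Hence
\[
\pi_1(Y)\;=\;\bigl\langle\, e_1,\dots,e_n \ \bigm|\ e_ie_j^{\,q_{ij}}=e_j^{\,q_{ij}}e_i,\quad 1=\textstyle\prod_{1\le j\le n}e_j^{\,q_{ij}} \,\bigr\rangle ,
\]
which is the asserted presentation; this recovers the computation of Hirzebruch and Mumford. The step I expect to be the main obstacle is the middle one: rigorously pinning down both the full set of commutation relations in $\pi_1(S^3\setminus L)$ and, above all, the longitudes $\lambda_i$ with the correct $+1$ exponents. The tree induction is conceptually straightforward, but the orientation and framing bookkeeping --- ensuring that ``the $C_i$ meet positively'' really forces $\lambda_i=\prod_{j\sim i}e_j$ rather than a product involving inverses or extra powers of $e_i$ --- is where all the care lies; everything after that is a purely formal manipulation of group presentations.
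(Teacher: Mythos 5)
The paper does not actually prove this lemma: it is quoted from \cite{Hi66} (the computation going back to Mumford and Hirzebruch), so there is no in-paper argument to compare against. Your route --- describe $\partial P(T)$ as integral surgery on the tree-shaped link of $q_{ii}$-framed unknots with positive Hopf pairings along edges, compute the link group by van Kampen, then impose the surgery relations $e_i^{q_{ii}}\lambda_i=1$ --- is exactly the classical proof behind the citation (the classical sources phrase it via the decomposition of the graph manifold into circle bundles over punctured spheres glued along tori, which is the same van Kampen computation in different packaging), and the facts you rely on are all correct: the link group is generated by meridians with commutation relations exactly along edges, and the $0$-framed longitude of $K_i$ is represented by the product of the meridians of its neighbours with $+1$ exponents because all intersections are positive.

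One justification, however, is not right as stated: you claim that any two orderings of the factors of $\lambda_i$ give ``visibly isomorphic'' presentations ``via the commutation relations.'' In a tree, two distinct neighbours $v_j,v_k$ of $v_i$ are never adjacent, so $e_j$ and $e_k$ do \emph{not} commute in the link group, and the commutation relations alone only allow cyclic permutation and conjugation of the relator, not arbitrary reordering --- so matching the lemma's relator $\prod_{1\le j\le n}e_j^{q_{ij}}$ (increasing $j$) needs a different argument. The standard fix is geometric rather than algebraic: the plumbing $P(T)$, hence its boundary, depends only on the decorated tree and not on the cyclic order in which the edges at a vertex are plumbed, so you may realize any prescribed ordering of the neighbours as the cyclic order in which they puncture the spanning disc of $K_i$; with that observation (plus the freedom to conjugate and cyclically permute relators), your argument closes up and recovers the stated presentation.
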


\begin{lemma} \label{order}   (\cite{Hi66})
 Let $T$ be a finite tree such that the genera of all vertices are zero.
Then, the order of the abelianization of $\pi_1(T)$ is finite if and only if $\delta_T \neq 0$.
In this case, $\delta_T$ equals the order of the abelianization of $\pi_1(T)$. 
\end{lemma}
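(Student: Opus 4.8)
The plan is to reduce the statement to the standard computation of the first homology of the boundary of a plumbing, using the explicit presentation supplied by Lemma \ref{representation}. Since $T$ is a finite tree all of whose vertices have genus zero, Lemma \ref{representation} applies and presents $\pi_1(T)$ by generators $e_1,\dots,e_n$ subject to the commutation relations $e_ie_j^{q_{ij}}=e_j^{q_{ij}}e_i$ and the relations $1=\prod_{1\le j\le n}e_j^{q_{ij}}$, one for each vertex $v_i$, where $q_{ij}$ are the entries of $Q_T$.

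First I would abelianize this presentation. Writing the group additively with free generators $e_1,\dots,e_n$, every commutation relation becomes vacuous, while the relation associated to $v_i$ becomes $\sum_{1\le j\le n}q_{ij}e_j=0$. Collecting all $n$ of these, the abelianization of $\pi_1(T)$ — which is $H_1(\partial P(T);\mathbb{Z})$ — is isomorphic to $\mathbb{Z}^n/Q_T\mathbb{Z}^n=\mathrm{coker}\big(Q_T\colon\mathbb{Z}^n\to\mathbb{Z}^n\big)$; here $Q_T$ is symmetric so the distinction between acting by rows or columns is immaterial. The one point to check with a little care is simply that abelianizing produces \emph{exactly} the matrix $Q_T$, which is immediate since there is precisely one relation $1=\prod_j e_j^{q_{ij}}$ for each vertex $v_i$.

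Next I would invoke the Smith normal form: choose $U,V\in\mathrm{GL}_n(\mathbb{Z})$ with $UQ_TV=\mathrm{diag}(d_1,\dots,d_n)$, $d_i\in\mathbb{Z}_{\ge 0}$, so that $\mathrm{coker}(Q_T)\cong\bigoplus_{i=1}^n\mathbb{Z}/d_i\mathbb{Z}$. This group is finite if and only if every $d_i\neq 0$, i.e. if and only if $\prod_i d_i\neq 0$; and since $|\det Q_T|=\prod_i d_i$, finiteness is equivalent to $\det Q_T=\delta_T\neq 0$. When this holds, the order of the abelianization is $\prod_i d_i=|\delta_T|$, which is the asserted value (with the understanding that $\delta_T$ is taken up to sign, i.e. the order is $|\delta_T|$). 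There is no real obstacle here beyond this bookkeeping — this is the classical fact from \cite{Hi66}; the only mild subtlety worth flagging in the writeup is the sign of $\delta_T$ relative to the (positive) order of a finite group.
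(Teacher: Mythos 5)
Your proof is correct and is essentially the standard argument behind the cited result of Hirzebruch, which the paper quotes without giving its own proof: abelianizing the presentation of Lemma \ref{representation} identifies the abelianization of $\pi_1(T)$ with $\mathrm{coker}(Q_T\colon\mathbb{Z}^n\to\mathbb{Z}^n)$, and Smith normal form then gives finiteness precisely when $\delta_T\neq 0$, with order $|\delta_T|$. Your sign remark is also right: since the paper defines $\delta_T=\det Q_T$ (which can be negative), the lemma's conclusion should strictly read $|\delta_T|$, a harmless imprecision in the statement rather than a gap in your argument.
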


\begin{eg}\label{non-spherical}
For a type (N2) linear graph $T$, $\delta_T \neq 1$ and hence $\pi_1(T)$ is nontrivial by Lemma \ref{order}. 
\end{eg}

\begin{lemma} \label{key}
 Let $T$ be a minimal tree and $v$ a vertex in $T$.

(i) If $\pi_1(T)$ is cyclic, then there are at most two non-spherical branches at $v$.

(ii) If $\pi_1(T)$ is finite, then there are at most three non-spherical branches at $v$.
Moreover, if there are three non-spherical branches, then they are all finite cyclic.
\end{lemma}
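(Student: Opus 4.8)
The plan is to reduce Lemma \ref{key} to the classical theory of Seifert fibred spaces and their base $2$-orbifolds; the translation between the plumbing tree and Seifert data is Neumann's plumbing calculus \cite{Ne81}, and getting that translation in place is essentially the only substantial step. Write $M_T:=\partial P(T)$. The first ingredient is that, for a minimal tree $T$, if $\pi_1(M_T)$ contains no subgroup isomorphic to $\mathbb Z^2$ — in particular if it is finite, or cyclic — then $T$ is either a linear chain or a star with exactly one branch vertex all of whose branches are linear, and correspondingly $M_T$ is Seifert fibred over $S^2(\alpha_1,\dots,\alpha_m)$ with cone orders the continued-fraction numerators $\alpha_i=n_i$ of the linear branches $\gamma_1,\dots,\gamma_m$ at the branch vertex ($m\le 2$, a lens space, in the linear case). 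One sees this by the incompressibility of the edge tori separating non-linear subtrees: a connected plumbing subtree yields a piece of $M_T$ with torus boundary that is a solid torus precisely when the subtree is linear, so two non-linear subtrees are separated by an incompressible torus, giving $\mathbb Z^2\hookrightarrow\pi_1(M_T)$. The accompanying dictionary is that a branch $\gamma_i$ is spherical exactly when $n_i=1$, i.e. when it contributes no cone point (its plumbing boundary being the lens space $L(n_i,\lambda_i)$, which is $S^3$ iff $n_i=1$).

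Granting this, fix the vertex $v$ of the statement. If $v$ is not the branch vertex of $T$ it has at most two incident edges, hence at most two branches, and both conclusions are immediate; so assume $v$ is the branch vertex, of valence $m$, and let $\beta_1,\dots,\beta_r$ (each $\ge 2$) be the cone orders of the \emph{non-spherical} branches at $v$. Since the cone points of order $1$ are trivial, the central extension coming from the Seifert fibration produces a surjection $\pi_1(M_T)\twoheadrightarrow\pi_1^{orb}\big(S^2(\beta_1,\dots,\beta_r)\big)$. The orbifold $S^2(\beta_1,\dots,\beta_r)$ has orbifold Euler characteristic $2-\sum_{j=1}^r(1-1/\beta_j)\le 2-r/2$, which is $\le 0$ once $r\ge 4$; then the orbifold is Euclidean or hyperbolic and $\pi_1^{orb}$ is infinite. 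Hence $\pi_1(M_T)$ finite forces $r\le 3$, which is the count in (ii). For (i), a quotient of a cyclic group is cyclic, whereas $\pi_1^{orb}(S^2(\beta_1,\beta_2,\beta_3))$ with all $\beta_j\ge 2$ is never cyclic: when finite it is the Klein four-group, a dihedral group of order $2n$ ($n\ge 3$), $A_4$, $S_4$ or $A_5$ (the finite triangle groups $(2,2,n)$, $(2,3,3)$, $(2,3,4)$, $(2,3,5)$), and when infinite it has an element of finite order so is not $\mathbb Z$; and for $r\ge 4$ it is infinite with torsion, again not cyclic. Thus $\pi_1(M_T)$ cyclic forces $r\le 2$, which is (i).

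For the ``moreover'' clause of (ii): if $\pi_1(M_T)$ is finite and $v$ carries three non-spherical branches, then by the first step $v$ is the unique branch vertex and each branch $\gamma_j$ is a linear chain, so $\partial P(\gamma_j)=L(n_j,\lambda_j)$ and $\pi_1(\gamma_j)\cong\mathbb Z/n_j$ is finite cyclic, as asserted. I expect the real obstacle to be the first paragraph — making precise the Seifert piece at the branch vertex, the incompressibility of the relevant edge tori, and the dictionary between linear branches, their numerators $n_i$, and spherical-ness — since once that is available the Euler-characteristic estimate excluding $r\ge 4$, the list of finite triangle groups and their non-cyclicity, and the ``moreover'' clause are all standard facts about $2$-orbifold groups and lens spaces.
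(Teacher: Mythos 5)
Your first paragraph contains a false structural claim, and the rest of the argument rests on it. You assert that a minimal tree $T$ whose boundary group contains no $\mathbb{Z}^2$ (in particular is finite or cyclic) must be linear or star-shaped with a single branch vertex and linear branches, justified by the dictionary ``the plumbing piece of a subtree is a solid torus precisely when the subtree is linear, hence an edge torus between two non-linear subtrees is incompressible.'' The lemma, however, imposes no sign condition on the weights, and ``minimal'' only forbids blow-downs. Take a type (N3) star $T$ (all weights $\le -2$, finite non-cyclic boundary group) and attach a chain of two $0$-weight vertices at an interior vertex $v$ of one arm: this is the graph $T^{(v)}$ of the paper, it has no $-1$-vertices (so it is minimal), it has two branch points, and by Lemma \ref{0-0} its boundary group equals $\pi_1(T)$, hence is finite. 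So finiteness does not force your dichotomy, and in this example the edge torus separating the two branch points has non-linear subtrees on both sides while the ambient $3$-manifold, having finite fundamental group, contains no incompressible torus. The failure is in your dictionary: subtrees containing vertices of non-negative weight can yield compressible pieces even when non-linear, and ``not a solid torus'' does not imply incompressible since the pieces need not be irreducible ($0$-vertices produce $S^1\times S^2$-type summands); even in the honestly star-shaped case, reading the cone orders off as continued-fraction numerators needs the arms in normal form (all weights $\le -2$), which is not given. Note also that the paper invokes Lemma \ref{key} precisely at vertices of trees with several branch points (in the proofs of Lemma \ref{observation}, Lemma \ref{Tech} and Lemma \ref{Tech2}), so a proof valid only for linear or one-branch-point trees cannot serve; your route would in effect assume the much harder statements that those later lemmas establish under extra hypotheses (negative definiteness or realizability).

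The paper's own proof avoids $3$-manifold topology entirely: using the Hirzebruch presentation of Lemma \ref{representation}, killing the generator $e_v$ corresponding to $v$ exhibits a quotient of $\pi_1(T)$ isomorphic to the free product of the boundary groups of the branches at $v$ modulo the single relation that the product of the generators adjacent to $v$ is trivial (Lemmas 3.1 and 3.2 of \cite{Sh85}). Lemma \ref{quotient} then gives everything at once: such a quotient is non-cyclic with three or more non-trivial factors, infinite with four or more, and finite with exactly three only if each factor is one of the listed finite cyclic groups, which yields both counts and the ``moreover'' clause for arbitrary (not necessarily linear) branches. If you want to keep a Seifert/orbifold argument you would first have to prove a structure theorem replacing your first paragraph, and no such statement is available at this point of the paper.
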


\begin{proof}
 The proof is based on the representation in Lemma \ref{representation} and the group theoretical result in the following lemma. See Lemma 3.1 and 3.2 of~\cite{Sh85}
\end{proof}

\begin{lemma}\label{quotient}
 Let $G_1, \dots, G_n$ be non-trivial groups and let $t_i \in G_i$ be an arbitrary element. Then,

(i) for $n \ge 4$, $G_1 * \dots *G_n/(\prod_{i=1}^{n}t_i=id)$ is infinite.

(ii) for $n \ge 3$, $G_1 * \dots *G_n/(\prod_{i=1}^{n}t_i=id)$ is non-trivial and non-cyclic.

(iii) $G_1*G_2*G_3/(\prod_{i=1}^{3}t_i=id)$ is finite if and only if $G_i$ are all cyclic groups generated by $t_i$ with $(G_1,G_2,G_3)$ isomorphic to one of the following unordered triples 

$(\mathbb{Z}/2\mathbb{Z},\mathbb{Z}/2\mathbb{Z},\mathbb{Z}/k\mathbb{Z})$, ($k \ge 2$), 

$(\mathbb{Z}/2\mathbb{Z},\mathbb{Z}/3\mathbb{Z},\mathbb{Z}/3\mathbb{Z}) $, 
$(\mathbb{Z}/2\mathbb{Z},\mathbb{Z}/3\mathbb{Z},\mathbb{Z}/4\mathbb{Z})$,
or $(\mathbb{Z}/2\mathbb{Z},\mathbb{Z}/3\mathbb{Z},\mathbb{Z}/5\mathbb{Z})$.
\end{lemma}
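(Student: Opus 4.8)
The plan is to realise $G:=G_1*\cdots*G_n/\langle\langle t_1\cdots t_n\rangle\rangle$ as the fundamental group of a graph of groups and to extract all three statements from Bass--Serre theory together with the classical structure of von Dyck (ordinary triangle) groups. We may assume each $t_i\neq 1$: if some $t_i=1$ the relation does not involve $G_i$, which then splits off as a free factor, and the argument below applies to the remaining factors. Let $m_i\in\{2,3,\dots\}\cup\{\infty\}$ be the order of $t_i$ in $G_i$ and let $\Delta:=\langle x_1,\dots,x_n\mid x_i^{m_i}=1,\ x_1\cdots x_n=1\rangle$ be the corresponding von Dyck group. Build the star-shaped graph of groups $\mathcal G$ with a central vertex carrying $\Delta$, with $n$ leaf vertices carrying $G_1,\dots,G_n$, and with an edge $e_i$ from the centre to the $i$-th leaf whose edge group $\langle t_i\rangle\cong\mathbb{Z}/m_i$ is included into $\Delta$ by $1\mapsto x_i$ and into $G_i$ by $1\mapsto t_i$. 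Because $n\ge 3$, the element $x_i$ has order exactly $m_i$ in $\Delta$ (it is an honest rotation of that order for the action of $\Delta$ on the sphere, plane, or hyperbolic plane associated to $(m_1,m_2,m_3)$), so both edge inclusions are injective, $\mathcal G$ is a legitimate graph of groups, and $\pi_1(\mathcal G)\cong G$. In particular every vertex group embeds into $G$.

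For (i) and (ii), note that $\Delta$ embeds into $G$. When $n\ge 4$ (and all $m_i\ge 2$) the orbifold Euler characteristic $2-\sum_i(1-1/m_i)$ is $\le 0$, so $\Delta$ is an infinite Euclidean or hyperbolic $2$-orbifold group; hence $G$ is infinite, which is (i). For every $n\ge 3$ the group $\Delta$ is non-trivial (it contains $x_1$ of order $\ge 2$) and is never cyclic — in the finite cases it is a dihedral group of order $\ge 4$, or $A_4$, $S_4$, $A_5$, and in the infinite cases it has torsion and so is not infinite cyclic — so $G\supseteq\Delta$ is non-trivial and non-cyclic, which is (ii).

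For (iii), take $n=3$ and suppose $G$ is finite. If some $G_i$ were strictly larger than $\langle t_i\rangle$, then in $\mathcal G$ the edge group $\mathbb{Z}/m_i$ of $e_i$ would be a proper subgroup of $G_i$, and also a proper subgroup of $\Delta$ (the latter since $\Delta$ is non-cyclic) and hence of the vertex group on the other side of $e_i$. Splitting $G$ along $e_i$ would then present it as a non-trivial amalgamated product $A*_{\mathbb{Z}/m_i}B$ with $\mathbb{Z}/m_i$ proper in both factors; but such a group acts on its Bass--Serre tree without a global fixed point, whereas a finite group acting on a tree must fix a vertex, a contradiction. Hence $G_i=\langle t_i\rangle\cong\mathbb{Z}/m_i$ for each $i$, so $G\cong\Delta(m_1,m_2,m_3)$, which by the classical classification of finite triangle groups is finite exactly when $1/m_1+1/m_2+1/m_3>1$, i.e.\ when $\{m_1,m_2,m_3\}$ is one of $\{2,2,k\}$ with $k\ge 2$, $\{2,3,3\}$, $\{2,3,4\}$, or $\{2,3,5\}$, with $G$ then the dihedral group of order $2k$, or $A_4$, $S_4$, $A_5$ respectively. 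Reversing this reasoning gives the sufficiency direction, so these are precisely the finite cases.

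The routine steps are the construction of $\mathcal G$ and the amalgam argument. The parts requiring genuine input are the two facts cited as classical: that $x_i$ has order exactly $m_i$ in $\Delta$ — this is what makes $\mathcal G$ an honest graph of groups, and is cleanest via the geometric action of $\Delta$ on a constant-curvature surface — and the list of finite von Dyck groups used in (iii). One should also handle with some care the degenerate situations flagged above (some $t_i=1$, and the borderline case of $\Delta$ being cyclic, which for at least three cone points of order $\ge 2$ does not occur), and should record the elementary fact that a group admitting a non-trivial splitting cannot be finite, since that fact is the engine behind all three parts.
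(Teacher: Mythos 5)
The paper itself gives no argument for this lemma: in the proof of Lemma \ref{key} it simply cites Lemmas 3.1 and 3.2 of \cite{Sh85}, so there is no in-paper proof to compare against. Your Bass--Serre argument is a genuinely self-contained route, and its core is correct: the star-shaped graph of groups with central vertex the polygon/von Dyck group $\Delta=\langle x_1,\dots,x_n\mid x_i^{m_i},x_1\cdots x_n\rangle$ and leaves $G_i$, amalgamated over $\langle t_i\rangle$, does have fundamental group $G$, and the one nontrivial input --- that $x_i$ has order exactly $m_i$ in $\Delta$ when $n\ge 3$ --- is indeed classical (for $n\ge 3$ cone points the orbifold is good, so the local groups inject); note in passing that for $n\ge 4$ you should say ``associated to $(m_1,\dots,m_n)$'', not $(m_1,m_2,m_3)$. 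Granting that, the embedding of $\Delta$ gives (i) and (ii), and for (iii) the splitting along $e_i$ into a nontrivial amalgam (edge group proper in $G_i$ by hypothesis and proper in the other side because $\Delta$ is never cyclic) together with the fixed-point property of finite groups acting on trees forces $G_i=\langle t_i\rangle$, reducing to the classical list of finite triangle groups. This buys a conceptual, geometric proof where the paper only has a citation.

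The one genuine gap is your opening reduction to the case $t_i\neq 1$. The sentence ``if some $t_i=1$ \dots the argument below applies to the remaining factors'' fails for (ii) and (iii) when $n=3$: if $t_1=1$ then $G\cong G_1 * \bigl(G_2*G_3/\langle\langle t_2t_3\rangle\rangle\bigr)$, and the second factor can be trivial. Concretely, take $G_1=\mathbb{Z}/5\mathbb{Z}$ with $t_1=1$, $G_2=\mathbb{Z}/2\mathbb{Z}$, $G_3=\mathbb{Z}/3\mathbb{Z}$ with $t_2,t_3$ generators: then $G_2*G_3/\langle\langle t_2t_3\rangle\rangle$ is trivial ($b=a^{-1}=a$, $a^3=a=1$), so $G\cong\mathbb{Z}/5\mathbb{Z}$ is cyclic and $G_1\neq\langle t_1\rangle$, contradicting both (ii) and (iii) as you would need them for the remaining factors --- and in fact showing that the lemma as literally stated, with ``arbitrary element'', is false when some $t_i$ is allowed to be the identity. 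The correct repair is not a reduction but a hypothesis: the $t_i$ must be taken nontrivial (this is the intended reading, and it is the situation in which the lemma is applied via Lemma \ref{representation}); with that hypothesis your argument goes through as written. Part (i) alone survives trivial $t_i$'s, since splitting off free factors leaves either a nontrivial free product or a quotient of at least three factors, but that requires the small induction you only gesture at, so state it if you keep the reduction for (i).
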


\begin{lemma}\label{lens space analysis}
 Suppose $T$ is of the form
$\xymatrix@R=1pc @C=1pc{
        T_1 \ar@{.}[r] & \bullet_{v_0} \ar@{-}[r] \ar@{.}[d]& \dots \ar@{-}[r]  & \bullet_{v_r} \ar@{.}[d] \ar@{.}[r] & T_3 \\
		      &		T_2 		&				&	T_4
} 
$
with $r \ge 1$, where $T_i$ are non-spherical branches (not necessarily simple) such that $\delta_{T_1}\delta_{T_2} \neq 0$ and $\delta_{T_3}\delta_{T_4} \neq 0$.
Suppose also that the boundary of the plumbing of  $\xymatrix@R=1pc @C=1pc{
        T_1 \ar@{.}[r] & \bullet_{v_0} \ar@{.}[r] & T_2 \\}
$
 and $\xymatrix@R=1pc @C=1pc{
        T_3 \ar@{.}[r] & \bullet_{v_r} \ar@{.}[r] & T_4 \\}
$ are diffeomorphic to a lens space or $\mathbb{S}^2 \times \mathbb{S}^1$.
Then, $\pi_1(T)$ contains  $\mathbb{Z} \oplus \mathbb{Z} $ as a subgroup.
\end{lemma}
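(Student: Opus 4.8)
The plan is to exhibit an incompressible torus in the graph manifold $M=\partial P(T)$; since such a torus is $\pi_1$-injective, this immediately yields $\mathbb{Z}\oplus\mathbb{Z}\hookrightarrow\pi_1(M)=\pi_1(T)$. First I would recall, from Neumann's plumbing calculus, the standard description of $M$ as a union of Seifert pieces: for each vertex $v$ the disc bundle contributes the $S^1$-bundle over $S^2$ minus $\deg(v)$ open discs, and these are glued along the tori associated to the edges of $T$. Let $e$ be the edge of the central chain joining $v_0$ to $v_1$, let $\mathcal{T}_e\subset M$ be the corresponding torus, and let $K_L$ be the $S^1$-fibre of the $S^1$-bundle over the $v_0$-sphere at the point where $e$ was attached. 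Then $M$ is cut by $\mathcal{T}_e$ into $A\cup_{\mathcal{T}_e}B$, where $A=\partial P(T_L)\setminus\nu(K_L)$ with $T_L=T_1\cup\{v_0\}\cup T_2$, and symmetrically $B=\partial P(T_R')\setminus\nu(K_R)$ with $T_R'$ the subtree carrying $v_1,\dots,v_r,T_3,T_4$ (the chain $v_1,\dots,v_{r-1}$, if present, only contributes $T^2\times I$ collars and does not affect what follows). It then suffices to show that $\mathcal{T}_e$ is incompressible in both $A$ and $B$.

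The core step is the analysis of $A$ (that of $B$ being identical with $T_3,T_4$ in place of $T_1,T_2$). By hypothesis $\partial P(T_L)$ is a lens space or $S^2\times S^1$, hence Seifert fibred over $S^2$ with at most two exceptional fibres. I claim that, because $T_1$ and $T_2$ are non-spherical with $\delta_{T_1},\delta_{T_2}\neq 0$, the $S^1$-bundle structure over the $v_0$-sphere extends to a Seifert fibration of $\partial P(T_L)$ over $S^2$ with exactly two exceptional fibres, one produced by each of $T_1,T_2$, both of multiplicity $\ge 2$, and for which $K_L$ is a regular fibre. Granting this, $A$ is Seifert fibred over the disc with two exceptional fibres of multiplicity $\ge 2$; its fundamental group is a central extension $1\to\mathbb{Z}\langle h\rangle\to\pi_1(A)\to \mathbb{Z}/a_1*\mathbb{Z}/a_2\to 1$ with $a_1,a_2\ge 2$, so the base quotient is infinite, $h$ has infinite order, and $\langle h,\,\text{section class}\rangle\cong\mathbb{Z}\oplus\mathbb{Z}$ lies in $\pi_1(A)$; in particular $\partial A=\mathcal{T}_e$ is incompressible in $A$, and $A$ (like $B$) is irreducible. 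Since $\mathcal{T}_e$ is then incompressible on both sides, $M$ is irreducible and $\mathcal{T}_e$ is incompressible in $M$, which completes the argument.

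I expect the main obstacle to be justifying the claim just used, since the branches $T_i$ are allowed to be non-simple. One has to rule out that a branch contributes extra exceptional fibres or a regular (multiplicity one) fibre. I would argue by a short dichotomy applied to the rooted complement $\partial P(T_i)\setminus\nu(K_i)$, which is a tree plumbing with single torus boundary, hence irreducible: either it already has incompressible boundary — in which case that torus is incompressible in $M$ and we are done at once — or it is a solid torus. In the latter case the filling slope coming from the plumbing edge-matrix produces an exceptional fibre whose multiplicity is $|\delta_{T_i}|$-related and is forced to be $\ge 2$, for if it were $1$ then $\partial P(T_i)$ would be $S^3$ or $S^2\times S^1$, contradicting non-sphericity together with $\delta_{T_i}\neq 0$; and the hypothesis that $\partial P(T_L)$ is a lens space or $S^2\times S^1$ (so at most two exceptional fibres over $S^2$) rules out both branch pieces being non-solid-torus Seifert pieces or contributing three or more exceptional fibres in total. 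The remaining verifications — that the edge-gluing matrix of the plumbing realizes the asserted Seifert structure, and that essential tori of sub-plumbings remain essential in $M$ — are routine facts of plumbing and graph-manifold calculus that I would simply cite.
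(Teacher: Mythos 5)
Your strategy is sound and is essentially the standard one for this statement (note the paper itself gives no argument here but simply cites Lemmas 3.3 and 3.4 of Shastri): cut $M=\partial P(T)$ along the edge torus $\mathcal{T}_e$, identify the two sides $A,B$ as fibre complements in $\partial P(T_1\cup\{v_0\}\cup T_2)$ and in the right-hand subtree, and feed the lens-space hypothesis into a Seifert-fibred description of each side. Your central arithmetic point is also correct: if a branch piece $P_i=\partial P(T_i)\setminus\nu(K_i)$ is a solid torus, the slope whose filling recovers $\partial P(T_i)$ is exactly the $v_0$-fibre slope under the plumbing gluing, so the induced multiplicity equals $|H_1(\partial P(T_i))|=|\delta_{T_i}|$; this is nonzero since $\delta_{T_i}\neq 0$ and cannot be $1$, since then $\partial P(T_i)\cong S^3$ would contradict non-sphericity (your phrase ``$S^3$ or $S^2\times S^1$'' conflates the multiplicity-$1$ and multiplicity-$0$ cases, but the conclusion ``multiplicity $\ge 2$'' stands). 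When all four branch pieces are solid tori, $A$ and $B$ are Seifert fibred over the disc with two exceptional fibres of multiplicity $\ge 2$, their boundary tori are $\pi_1$-injective, the intermediate $T^2\times I$ collars are indeed harmless, and $\pi_1(M)=\pi_1(A)\ast_{\mathbb{Z}^2}\pi_1(B)\supset\mathbb{Z}\oplus\mathbb{Z}$.

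The genuine gap is the reduction to that case, and the three things you wave at are not routine. First, ``a tree plumbing with single torus boundary, hence irreducible'' is false in general: tree plumbings can have reducible boundaries (a genus-$0$, weight-$0$ vertex of valence $\ge 3$ produces connected sums), and fibre complements can be reducible; irreducibility has to be extracted from the lens-space/$S^2\times S^1$ hypothesis, which is an argument, not a citation. Second, in the non-solid-torus branch of your dichotomy, ``incompressible boundary, hence done at once'' does not follow: a torus incompressible in $P_i$ need not be $\pi_1$-injective in $M$, since injectivity requires a compressing-disc-free complementary side as well, and that side contains everything else. (This case can in fact be salvaged: if $\partial P_i$ is incompressible in $P_i$ while the other branch piece at the same vertex is a solid torus of multiplicity $\ge 2$, the amalgamated-product structure of $\pi_1(A)$ still makes $\pi_1(\partial A)$ inject; but that is precisely the case analysis your write-up skips.) Third, the appeal to ``a lens space has at most two exceptional fibres over $S^2$'' does not constrain a non-solid-torus branch piece at all, because nothing forces the plumbing decomposition of $\partial P(T_L)$ to be compatible with any Seifert fibration of the lens space; what actually rules out the bad configurations is irreducibility and atoroidality of lens spaces and $S^2\times S^1$ (every separating torus compresses, so one side is a solid torus or one side lies in a ball), used together with non-sphericity and $\delta_{T_i}\neq 0$ to exclude the residual possibilities. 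So the architecture and the key computation are right, but the reduction to the Seifert picture must be proved, not cited.
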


\begin{proof}
 See Lemma 3.3 and 3.4 of~\cite{Sh85}
\end{proof}

It is time to mention the following

{\bf Fact}:   type (N) graphs have finite boundary fundamental group.
To be more precise, 
 (N1) graph is spherical, (N2) graphs are finite cyclic, and (N3) graphs are finite and non-cyclic.

This is  well known in algebraic geometry:  graphs in type (N2) correspond to resolution graphs of cyclic quotient singularities and the graphs in type (N3) correspond to resolution graphs of dihedral, tetrahedral, octahedral and icosahedral singularities (cf. \cite{Br68} Satz $2.11$). 
One can also prove this fact directly by  Lemma \ref{representation} and Lemma \ref{quotient}. 
It is easy for (N2) graphs. And for an  (N3) graph $T$, $\pi_1(T)$  is finite as it  can be realized as a finite extension of a  finite group (basically by Lemma \ref{quotient}), and  it is non-cyclic because it has a  non-cyclic quotient.

\subsubsection{Proof of Proposition \ref{main classification theorem_convex}}\label{classification}
In this subsection we are going to make use of the constraints above to prove Proposition \ref{main classification theorem_convex}. 

\begin{lemma}\label {simple branch}
 Let $T$ be a negative definite, minimal tree with no branch point.  Then $T$ is of type (N1) or (N2).  In particular, $T$ is finite cyclic. 
\end{lemma}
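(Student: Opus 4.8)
The plan is to first pin down the topology of $T$ and then constrain its self-intersection numbers, the point being that negative definiteness rules out large weights and minimality rules out the weight $-1$. Since $T$ is a tree with no branch point, every vertex has degree at most two, so $T$ is either the empty graph---which is by definition of type (N1)---or a finite linear chain $v_1 - v_2 - \cdots - v_k$ with $k \ge 1$. From now on I assume $T$ is such a chain and write $s_i$ for the self-intersection of $v_i$; the goal is to show $s_i \le -2$ for every $i$, so that with $\frac{n}{\lambda} := [-s_1,\dots,-s_k]$ the chain $T = <n,\lambda>$ is a graph of type (N2).

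The first step uses only negative definiteness: the $(i,i)$-entry of $Q_T$ equals $e_i^T Q_T e_i$, which must be negative, so $s_i \le -1$ for all $i$. The second step uses minimality to exclude $s_i=-1$. In a linear chain, a vertex $v$ with self-intersection $-1$ always admits a blow down: if $v$ is isolated we blow it down to the empty graph; if $v$ is an end vertex with unique neighbor $w$ we delete $v$ and raise the self-intersection of $w$ by one; and if $v$ is an interior vertex we delete $v$, join its two neighbors $w_1,w_2$ by an edge, and raise both their self-intersections by one. In every case the result is again a genuine graph, contradicting minimality. Hence $s_i \ne -1$, and combined with the first step $s_i \le -2$ for all $i$.

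To finish, with every $d_i := -s_i \ge 2$ the Hirzebruch--Jung continued fraction $[d_1,\dots,d_k]$ is a well-defined rational number $\frac{n}{\lambda}$ in lowest terms with $0 < \lambda < n$, so $T = <n,\lambda>$ is precisely a graph of type (N2) (and $T$ is of type (N1) in the remaining case). The last assertion, that $T$ is finite cyclic, is then immediate from the Fact recorded above that (N1) graphs are spherical and (N2) graphs are finite cyclic. Note that the finiteness of $\pi_1(T)$ was not needed as a hypothesis: it comes out of the classification.

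I do not expect a genuine obstacle; the argument is linear algebra together with elementary graph surgery. The only point requiring a little care is the blow-down in the interior-vertex case, where one must check that reconnecting the two neighbors of the $(-1)$-vertex produces an honest graph rather than one with a loop or a multiple edge---but this is automatic, since in a linear chain the two neighbors of an interior vertex are distinct and were not previously joined.
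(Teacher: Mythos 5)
Your proof is correct and follows essentially the same route as the paper's: no branch point forces $T$ to be empty or linear, negative definiteness forces each self-intersection to be negative, minimality excludes $-1$-vertices, and the conclusion that $T$ is of type (N1) or (N2), hence finite cyclic, follows from the standard fact about these graphs. The extra care you take with the interior blow-down and the continued-fraction normalization is fine but just makes explicit what the paper leaves implicit.
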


\begin{proof}
A  connected genus zero tree has no branch point, so  it is linear.
Linearity and minimality ensure no $-1$  vertices,  while being negative definite ensures that each vertex has self-intersection less than 0.
So $T$ is  
the empty graph (N1),  or  a  linear graph with all vertices having self-intersection less than $-1$, which  is  a type (N2) graph.
\end{proof}

\begin{lemma}\label{T shape}
 Let $T$ be a minimal tree with exactly one branch point $v$.
Suppose all the self-intersection of vertices in the branches are negative (satisfied if $T$ is negative definite).
Then, $\pi_1(T)$ is finite if and only if $T$ is a (N3) or (P3) graph. 
In particular, $\pi_1(T)$ is not cyclic if $\pi_1(T)$ is finite.
\end{lemma}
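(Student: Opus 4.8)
The plan is to analyze a minimal tree $T$ with a single branch point $v$ via the branches hanging off $v$. Each branch is linear (since $T$ is a genus-zero tree with only one branch point), so by Lemma~\ref{simple branch} each branch is of type (N1) or (N2), hence each nonspherical branch is finite cyclic. Let $b$ be the number of nonspherical branches at $v$. By Lemma~\ref{representation}, $\pi_1(T)$ admits a presentation that, after killing the central generator $e_v$ using the relation coming from $v$ and the commutation relations, surjects onto a quotient of the form $G_1 * \cdots * G_b/(\prod t_i = \mathrm{id})$ where each $G_i$ is the (finite cyclic) boundary group of the $i$-th nonspherical branch and $t_i$ is the image of the meridian generator of the branch vertex adjacent to $v$; simple (spherical) branches contribute trivial factors and can be ignored. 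This is the standard reduction behind Lemma~\ref{key}.

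First I would dispose of $b \leq 2$: if $b \leq 1$ then $T$ is essentially linear after absorbing spherical branches, contradicting the presence of a genuine branch point with the stated negativity (or forces a blow-down, contradicting minimality); if $b = 2$ then $T$ is again effectively a linear graph through $v$, so $\pi_1(T)$ is cyclic, but a genuine branch point with all three branches present means there is a third (necessarily spherical, i.e. type (N1) empty or a chain of $0$'s) branch — and one checks minimality forces this not to happen unless we are in a degenerate linear situation. So the interesting case is exactly $b = 3$ (and $b \geq 4$ is excluded by Lemma~\ref{quotient}(i), which makes $\pi_1(T)$ infinite). For $b = 3$, Lemma~\ref{quotient}(iii) tells us $\pi_1(T)$ is finite if and only if each $G_i$ is cyclic generated by $t_i$ and the unordered triple of orders is one of $(\mathbb{Z}/2, \mathbb{Z}/2, \mathbb{Z}/k)$, $(\mathbb{Z}/2, \mathbb{Z}/3, \mathbb{Z}/3)$, $(\mathbb{Z}/2, \mathbb{Z}/3, \mathbb{Z}/4)$, $(\mathbb{Z}/2, \mathbb{Z}/3, \mathbb{Z}/5)$.

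The key translation step is to identify, for a linear branch $<n_i, \lambda_i>$, the order of its boundary cyclic group and the element $t_i$: the boundary of the plumbing of a type (N2) chain $<n_i,\lambda_i>$ is the lens space $L(n_i, \lambda_i)$, whose fundamental group is $\mathbb{Z}/n_i$, and the meridian generator at the end vertex adjacent to $v$ maps to a generator exactly when $\gcd$ conditions hold — which is automatic here. So the "$G_i$ generated by $t_i$" condition in Lemma~\ref{quotient}(iii) is met, and the triple-of-orders condition becomes precisely $(n_1, n_2, n_3)$ being one of $(2,2,k)$, $(2,3,3)$, $(2,3,4)$, $(2,3,5)$. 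Up to reordering so that $n_1 = 2$ with branch $<2,1>$, this says $T = <y; 2,1; n_2,\lambda_2; n_3,\lambda_3>$ with $(n_2,n_3) \in \{(n,n-1)\text{-type}, (3,3),(3,4),(3,5)\}$ — wait, more carefully, $(2,n)$ for $n\geq 2$, $(3,3)$, $(3,4)$, $(3,5)$, with appropriate $\lambda_i$. Now I separate on the sign/value of the central self-intersection $-y$: negative definiteness of the whole tree $T$ forces a bound relating $y$ to the continued-fraction data; when $T$ is itself negative definite one gets exactly the (N3) graphs (this is the classical Brieskorn list, cf.\ \cite{Br68}), and the remaining finite-$\pi_1$ but non-negative-definite cases are exactly the (P3) graphs $<3-y; 2,1; n_2,n_2-\lambda_2; n_3,n_3-\lambda_3>$ obtained from (N3) graphs by the $\lambda \mapsto n-\lambda$ and $y \mapsto 3-y$ substitution, which is precisely the blow-up/blow-down relation noted after Definition~\ref{Five Types} (and which, by Lemma~\ref{order}, preserves $|\delta_T|$ and hence finiteness of $\pi_1$). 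Finally, the non-cyclic assertion is immediate from Lemma~\ref{quotient}(ii): with $b = 3$ nontrivial factors the quotient is never cyclic.

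The main obstacle I anticipate is the bookkeeping in the translation step: being careful that the meridian $t_i$ of the branch-adjacent vertex really generates the branch's boundary group $\mathbb{Z}/n_i$ (so that Lemma~\ref{quotient}(iii)'s hypothesis "$G_i$ cyclic generated by $t_i$" is satisfied and not just "$G_i$ cyclic"), and conversely that there is no extra collapsing that could make an a priori infinite group finite. This requires unwinding the presentation in Lemma~\ref{representation} along each linear branch — essentially the continued-fraction computation showing $\pi_1(<n,\lambda>) = \mathbb{Z}/n$ with the correct generator — and then checking that the relation $\prod t_i = \mathrm{id}$ at $v$ matches the hypothesis of Lemma~\ref{quotient} exactly. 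The sign analysis for $y$ (deciding negative-definite vs.\ merely finite-$\pi_1$) is then a finite, if slightly tedious, case check against the Brieskorn tables, and the passage between (N3) and (P3) is a single blow-down.
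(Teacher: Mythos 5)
The paper gives no argument of its own here (it simply cites Theorem 4.3 of \cite{Sh85}), so your write-up is necessarily an independent proof. Its ``only if'' half runs on the same Shastri-style machinery the paper imports (Lemmas \ref{representation}, \ref{key}, \ref{quotient}) and is essentially sound: minimality plus the negativity hypothesis makes every branch an (N2) chain, hence nonspherical with boundary group $\mathbb{Z}/n_i$ generated by the meridian of the vertex adjacent to $v$; killing the central generator $e_v$ exhibits the quotient $(G_1*\cdots*G_b)/(\prod t_i)$, so Lemma \ref{quotient} forces exactly three branches with $(n_1,n_2,n_3)$ a spherical triple, one leg is then forced to be $<2,1>$, and every possible central weight is covered by (N3) (central weight $\le -2$) or (P3) (central weight $\ge -1$) via the $\lambda\mapsto n-\lambda$, $y\mapsto 3-y$ substitution; non-cyclicity is Lemma \ref{quotient}(ii). (Your discussion of $b\le 2$ and spherical branches is harmless noise: under your own hypotheses all branches are nonspherical, so $b\ge 3$ automatically.)

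The genuine gap is in the ``if'' direction. Lemma \ref{quotient}(iii) is a statement about the quotient $G_1*G_2*G_3/(\prod t_i)$, i.e.\ about $\pi_1(T)/\langle\langle e_v\rangle\rangle$, not about $\pi_1(T)$ itself, so you cannot conclude ``$\pi_1(T)$ is finite if and only if the triple is spherical''. The group $\pi_1(T)$ is a central extension of that quotient by the subgroup generated by $e_v$, which is a priori infinite cyclic (it is infinite precisely when $\delta_T=0$), so finiteness for (N3) and especially (P3) graphs needs an extra argument: e.g.\ check that $\delta_T\neq 0$ for all these graphs (equivalently $-y+\sum\lambda_i/n_i\neq 0$, which does hold for spherical triples since $\sum\lambda_i/n_i$ is never an integer there), and then deduce that the central element $e_v$ has finite order, say via Lemma \ref{order} together with a transfer argument for a central cyclic subgroup of finite index. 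Your proposed shortcut for (P3) fails on two counts: (P3) graphs are \emph{not} obtained from (N3) graphs by blow-ups or blow-downs --- blow-up preserves $b_2^+$, and (N3) graphs have $b_2^+=0$ while (P3) graphs have $b_2^+=1$; the actual relation is the dual/orientation-reversal of the boundary, not graph equivalence --- and Lemma \ref{order} controls only the abelianization, so ``$\delta_T\neq 0$ hence $\pi_1$ finite'' is a non sequitur. Repairing this central-extension step would make the argument complete.
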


\begin{proof}
 See Theorem 4.3 of \cite{Sh85}. The proof is purely topological.
\end{proof}

Now, we deal with  the  case that there are more than $1$ branch points.

\begin{lemma}\label{Tech}
 Let $T$ be a negative definite, minimal tree with   $k \ge 2$  branch points.
Then, $\pi_1(T)$ is non-cyclic and infinite.
\end{lemma}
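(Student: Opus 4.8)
The plan is to leverage the two branch points and reduce to Lemma \ref{lens space analysis}. Let $v$ and $w$ be two distinct branch points of $T$; after possibly passing to a subtree along the unique path joining them, I may arrange that $v = v_0$ and $w = v_r$ are the two ends of a linear segment $v_0 - v_1 - \dots - v_r$ with $r \ge 1$, and that at $v_0$ there hang two branches $T_1, T_2$ (the ones not containing $w$) while at $v_r$ there hang two branches $T_3, T_4$ (the ones not containing $v$). Since $T$ is negative definite, every connected subgraph is negative definite, hence every $\delta$ appearing is nonzero; in particular $\delta_{T_1}\delta_{T_2} \neq 0$ and $\delta_{T_3}\delta_{T_4}\neq 0$, and by Lemma \ref{order} each branch is finite cyclic or has nonzero determinant.

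The key step is to verify the remaining hypothesis of Lemma \ref{lens space analysis}: that the boundary of the plumbing of $T_1 - v_0 - T_2$ and of $T_3 - v_r - T_4$ are each either a lens space or $\mathbb{S}^2 \times \mathbb{S}^1$. Here I would use that each of these is a negative definite star-shaped graph with at most (in fact exactly) one branch point, so by Lemma \ref{T shape} its boundary fundamental group is finite only in the (N3)/(P3) cases — but if it were one of those, the corresponding branch structure at $v_0$ (resp. $v_r$) inside $T$ would force, via Lemma \ref{key}(ii) and Lemma \ref{quotient}, that the remaining branch toward $w$ (resp. $v$) be spherical, contradicting that $w$ (resp. $v$) is itself a branch point carrying non-spherical branches. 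So instead one argues that $T_1 - v_0 - T_2$ is a linear-plus-one-branch-point configuration whose boundary is a Seifert fibered space over $S^2$ with at most two (resp. appropriately few) multiple fibers once the branches $T_i$ are themselves linear — and when they are not linear, one recurses or invokes the classification of which star-shaped graphs give lens spaces. Actually the cleanest route: since $T$ is negative definite and minimal with two branch points, delete the open segment between $v_0$ and $v_r$; each piece $T_1 \cup \{v_0\} \cup T_2$ has exactly one branch point $v_0$ whose branches $T_1, T_2$ are two non-spherical branches, so by Lemma \ref{T shape} (applied after checking self-intersections are negative) $\pi_1$ of this piece is either infinite or a (N3)/(P3) graph; we then split into cases. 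If some piece has infinite $\pi_1$, that $\pi_1$ injects suitably and we are done more directly; if both pieces are (N3)/(P3), their boundaries are the finite-quotient Seifert spaces, not lens spaces, and I would handle this by noting three non-spherical branches would then meet at $v_0$ inside $T$, again contradicting Lemma \ref{key}(ii) since the branch toward $v_r$ is non-spherical. What remains are exactly the configurations where both end-pieces have lens space (or $\mathbb{S}^2\times\mathbb{S}^1$) boundary, and Lemma \ref{lens space analysis} then yields $\mathbb{Z}\oplus\mathbb{Z} \subset \pi_1(T)$, so $\pi_1(T)$ is infinite and non-cyclic.

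The main obstacle I anticipate is the bookkeeping in the reduction: carefully choosing the subtree and the branches $T_i$ so that the hypotheses $\delta_{T_i} \neq 0$ and the lens-space condition genuinely hold, and treating the degenerate possibilities (a branch point with a spherical branch, or the path between the two branch points passing through a third branch point) without circular reasoning. In particular one must rule out the case that one of the end pieces $T_i \cup \{v_0\} \cup T_j$ is itself of type (N3)/(P3) — which would give a finite, non-lens-space boundary — and this is where Lemma \ref{key}(ii) together with the group-theoretic input of Lemma \ref{quotient} does the work: three non-spherical branches at a single vertex forces all three to be finite cyclic, but being a branch point of the \emph{whole} tree $T$ means there is an additional non-spherical branch (toward the other branch point), giving four non-spherical branches at $v_0$ and hence, by Lemma \ref{key}(ii) or directly Lemma \ref{quotient}(i), an infinite non-cyclic $\pi_1(T)$. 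In every surviving case the conclusion follows, so $\pi_1(T)$ is non-cyclic and infinite whenever $T$ is negative definite, minimal, and has at least two branch points.
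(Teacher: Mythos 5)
Your overall strategy — feeding two branch points into Lemma \ref{lens space analysis} to produce a $\mathbb{Z}\oplus\mathbb{Z}$ subgroup — is only the easy half of the argument, and the proposal does not close the cases where it breaks down. Lemma \ref{lens space analysis} requires $T$ to be \emph{exactly} of the displayed shape: a branch-free linear chain from $v_0$ to $v_r$ with two branches at each end, and, crucially, the plumbing boundaries of $T_1\,\text{--}\,v_0\,\text{--}\,T_2$ and $T_3\,\text{--}\,v_r\,\text{--}\,T_4$ must be lens spaces or $\mathbb{S}^2\times\mathbb{S}^1$, which in practice forces these pieces to be (equivalent to) linear graphs. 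When $k\ge 3$ you face a genuine dichotomy that your reduction never resolves: if you pick the two branch points so that the connecting path is branch-free, the end branches are generally non-linear and the lens-space hypothesis fails (you get Seifert or graph-manifold boundaries); if instead you pick extremal branch points so that their hanging branches are simple (hence linear), the path between them passes through other branch points and $T$ is no longer of the form required by the lemma. Your proposed fixes — ``recurse or invoke the classification of which star-shaped graphs give lens spaces'' — are not carried out, and the recursion is exactly where the content of the lemma lies.

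There is also a concrete error in the case analysis you sketch to rule out an end piece of type (N3): you claim this gives \emph{four} non-spherical branches at $v_0$, contradicting Lemma \ref{key}(ii). But a branch point typically has only three branches in $T$ ($T_1$, $T_2$, and the one toward the other branch point), and — more importantly — the branch toward the other branch point need \emph{not} be non-spherical: only simple branches are automatically non-spherical (minimality forces their self-intersections $\le -2$), whereas a non-simple branch may contain $-1$ vertices (which are branch points of $T$ but not of the branch) and can be spherical after blow-downs. Handling precisely this possibility is what drives the paper's proof: it sets up an induction on the number of branch points via Lemma \ref{observation}, whose case (c) analyzes non-minimal branches and produces an adjacent $-1$ branch vertex; the cases $k=2$ and $k=3$ are treated separately, with Lemma \ref{lens space analysis} invoked only \emph{after} forcing the relevant end pieces to be linear or equivalent to linear graphs (for $k=3$ this uses that the middle simple branch consists of $-2$ vertices and the central vertex is $-1$); and for $k\ge 4$ the induction yields two adjacent $-1$ vertices, contradicting negative definiteness. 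Without some substitute for this inductive mechanism, your argument covers essentially only the $k=2$ case in which both branch points have exactly two simple branches, and even there you should address the sub-case of a branch point with three simple branches (the paper eliminates it with Lemma \ref{observation} and Lemma \ref{T shape} rather than with the lens-space lemma, whose hypotheses that configuration does not literally satisfy).
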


\begin{proof}
The proof follows  \cite{Sh85} closely.
It is convenient to make two observations first.

{\bf Observation 1}:  For any branch point,  by the minimality assumption,  all self-intersections of vertices in simple branches are less than $-1$.
Therefore  every simple branch of $T$ is not spherical from Example \ref{non-spherical}.

{\bf Observation 2}: 

\begin{lemma}\label{observation}
 Let $T$ be a negative definite, minimal tree with   $k \ge 2$  branch points.
Suppose $\pi_1(T)$ is finite or cyclic.

Let $v$ be a branch point and $\Gamma$ a branch at $v$. 
Suppose  there are at least two non-spherical branches at $v$ other than $\Gamma$ (it is satisfied if $v$ is an extremal branch point and $\Gamma$ is the non-simple branch), then $\Gamma$ is finite cyclic and is either

(a)a negative definite minimal tree with $k-1$ branch points, OR

(b)a negative definite minimal tree with $k-2$ branch points, OR 

(c)not minimal.
 
In case (c), there exists a branch point of $T$, $v_2$, which is linked to $v$, with exactly three branches and the self-intersection of $v_2$ is $-1$. 
\end{lemma}

\begin{proof}

Suppose first that $\pi_1(T)$ is cyclic.  Since there are at least two non-spherical branches at $v$ other than $\Gamma$, $\Gamma$ is spherical by  Lemma \ref{key}(i) applied to  $v$.
If $\pi_1(T)$ is finite, then $\Gamma$ is finite cyclic by  Lemma \ref{key}(ii) applied to $v$.
Therefore in either case $\pi_1(T)$ is finite cyclic.

Moreover, if $\Gamma$ is minimal, it is either in (a) or (b).
If $\Gamma$ is not minimal, it is in (c).
In this case, the only possible $-1$ vertex that can be blown down is the vertex in $\Gamma$ linked to $v$, which we call it $v_2$.
Since $T$ is minimal, $v_2$ has self-intersection $-1$ means that it is a branch point of $T$ but it can be blown down in $\Gamma$ means that it is not a branch point of $\Gamma$.
Therefore, the result follows.  
An extremal branch point satisfies the assumption because there are at least two simple branches at $v$,  which are non-spherical by Observation 1.
\end{proof}

We are going to first establish the claim of Lemma \ref{Tech} for the cases $k=2$ and $k=3$, then prove by contradiction using induction on $k$.
Label the vertices of $T$ as $v_1, \dots, v_m$ with the corresponding self-intersection $s_1, \dots, s_m$.

First suppose $k=2$ and $v_1$, $v_2$ are the two branch points of $T$ with $\pi_1(T)$ cyclic or finite.
If one of $v_1$, $v_2$ has three simple branches, say $v_2$, we denote the non-simple branch at $v_1$ by $\gamma$.
Apply Lemma \ref{observation} to $v_1$, $\gamma$ is in (a).
However, negative definite minimal tree with exactly one branch point is not cyclic (See Lemma \ref{T shape}).
Contradiction.
Thus, both $v_1$ and $v_2$ have only two simple branches.
Let the two simple branches at $v_1$  be $T_1$ and $T_2$ and those at $v_2$ be $T_3$ and $T_4$.
Then, the assumptions of Lemma \ref{lens space analysis} for $v_0=v_1$ and $v_r=v_2$ are satisfied.
Thus, $\pi_1(T)$ contains $\mathbb{Z} \oplus \mathbb{Z}$, contradiction.

For $k=3$, let $v_2$, $v_1$ and $v_3$ be the three branch points of $T$ and suppose $T$ is finite or cyclic.
We have two of the three branch points are extremal, say $v_2$ and $v_3$.
Let $\Gamma$ be the non-simple branch at $v_2$ and $\Gamma'$ be the non-simple branch at $v_3$.
Apply Lemma \ref{observation} at $v_2$, we have $\Gamma$ is not minimal because we have shown that negative definite minimal trees with exactly one or two branch points are not finite cyclic.
Thus, we must have $v_1$ is linked to $v_2$ and $v_1$ has only one simple branch in $T$, which we denote by $T_0$.
Moreover, we have $s_1 = -1$ and by symmetry, $v_1$ is also linked to $v_3$.
Observe that, we must have all vertices in $T_0$ having self-intersection $-2$, otherwise $\pi_1(\Gamma)$ is still not cyclic.

Since $T$ is negative definite, both $s_2$ and $s_3$ are not $-1$. 
If $v_2$ or $v_3$ has three or more simple branches, then we can apply Lemma \ref{key}(ii) at $v_1$ and Lemma \ref{T shape} to the branch at $v_1$ which is minimal and having exactly one branch point to get a contradiction.
In other words, $v_2$ or $v_3$ has exactly two simple branches.
Therefore, we have $T$ is of the following form with all $T_i$ being simple branches.

$\xymatrix{
        T_1 \ar@{.}[r] & \bullet_{v_2} \ar@{-}[r] \ar@{.}[d]& \bullet^{-1}_{v_1} \ar@{-}[d] \ar@{-}[r]  & \bullet_{v_3} \ar@{.}[d] \ar@{.}[r] & T_3 \\
		      &		T_2 		&		T_0		&	T_4
} 
$

Notice that, both $\Gamma$ and $\Gamma'$ are equivalent to a linear graph because all vertices in $T_0$ have self-intersection $-2$.
Let $\gamma=\xymatrix {T_1 \ar@{-}[r] & \bullet_{v_2} \ar@{-}[r] & T_2}$.

Since $s_2 \neq -1$, we have $\delta_{\gamma} \neq 0$ and $\pi_1(\gamma)$ nontrivial.
Hence, we can apply Lemma \ref{lens space analysis} for $v_0=v_1$ and $v_r=v_3$.
This is because the $T_i$ in Lemma \ref{lens space analysis} are not assumed to be simple branches and $\Gamma'$ is equivalent to a linear graph.
Hence, we get a contradiction.
This finishes the study of $k=3$.

In general, we assume the statement is true for $n <k$ and we deal with the case with $T$ having $k \ge 4$ branch points.
Let $v_1$ be an extremal branch point and $\Gamma$ is the non-simple branch.
The induction hypothesis and Lemma \ref{observation} imply that $\Gamma$ is not minimal.
In particular, there is a branch point of $T$, say $v_2$, is linked to $v_1$ with $s_2=-1$.
If none of the three branches of $v_2$ in $T$ is simple, no matter how we blow down $\Gamma$, its minimal model has at least two branch points, contradicting to the above $k=2$ case or induction hypothesis.
Hence, $v_2$ has a simple branch.

Since $T$ is negative definite and $s_2=-1$, we must have $s_1 \le -2$.
Thus, the branch at $v_2$ containing $v_1$ is not spherical.
Let $\Gamma'$ be the non-simple branch at $v_2$ not containing $v_1$.
Applying Lemma \ref{observation} at $v_2$, we get that $\Gamma'$ is not minimal and hence there exist $v_3$, which is linked to $v_2$ with $s_3=-1$.
The existence of two adjacent vertices, $v_2$ and $v_3$, having self-intersections $-1$ contradicts to $T$ being negative definite.

\end{proof}

Finally, we can complete the proof of Proposition \ref{main classification theorem_convex} by Lemma \ref{simple branch}, Lemma \ref{T shape}, Lemma \ref{Tech} and the classification of isolated quotient surface singularities in \cite{Br68}.

In \cite{Li08} and \cite{BhOz14}, they study the filling of the lens spaces with the canonical contact structure.
These correspond to the graphs in (N2).
It is proved that the divisor filling is the maximal one among all the fillings and all other fillings can be obtained by rational blow downs of the divisor filling \cite{BhOz14}.
Therefore, divisor filling is interesting to  investigate.

\subsection{More Topological Restrictions}

\begin{lemma}\label{0-0}
 Let $T$ be a tree and $v$  a vertex of $T$.
Then, $\pi_1(T)=\pi_1(T^{(v)})$.
\end{lemma}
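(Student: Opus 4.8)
The plan is to compare directly the Hirzebruch presentations (Lemma \ref{representation}) of $\pi_1(T)$ and $\pi_1(T^{(v)})$. Label the vertices of $T$ as $v_1,\dots,v_n$ with $v=v_1$, let $q_{ij}$ be the intersection form of $T$, and recall that $T^{(v)}$ is obtained by attaching a chain $v_1 - v_0 - v_{-1}$ of two new genus-zero, self-intersection-zero vertices. Thus the intersection form of $T^{(v)}$ agrees with that of $T$ on $v_1,\dots,v_n$ (in particular the self-intersection $q_{11}=s_1$ of $v_1$ is unchanged), records $q_{1,0}=q_{0,1}=1$ and $q_{0,-1}=q_{-1,0}=1$, and has all remaining entries in the rows/columns of $v_0,v_{-1}$ equal to zero. (Throughout, the vertices are assumed to have genus zero, which is the relevant case by Lemma \ref{tree}; in general $\pi_1(T)=\pi_1(T^{(v)})$ because dual blow up does not change the oriented diffeomorphism type of the boundary, \cite{Ne81}.)

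First I would read off the new relations in $\pi_1(T^{(v)})$. The product relation at $v_{-1}$ reads $e_0=1$; the product relation at $v_0$ reads $e_{-1}e_1=1$, i.e.\ $e_{-1}=e_1^{-1}$; the product relation at $v_1$ becomes $e_0\cdot\bigl(\prod_{1\le j\le n}e_j^{q_{1j}}\bigr)=1$, which after substituting $e_0=1$ is exactly the $v_1$-relation of $T$. For $i\ge 2$ the product relation at $v_i$ is literally the one in $T$, since $q_{i,0}=q_{i,-1}=0$. Likewise, every commutation relation of $T^{(v)}$ involving $e_0$ becomes trivial once $e_0=1$, the commutation relations involving $e_{-1}$ are trivial (its only nonzero off-diagonal entry is with $v_0$), and the commutation relations among $e_1,\dots,e_n$ coincide with those of $T$.

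Then I would package this as mutually inverse homomorphisms. The inclusion of vertex sets induces $\phi\colon\pi_1(T)\to\pi_1(T^{(v)})$, $e_i\mapsto e_i$, which is well defined because by the previous paragraph every defining relation of $\pi_1(T)$ holds in $\pi_1(T^{(v)})$. Conversely the assignment $e_{-1}\mapsto e_1^{-1}$, $e_0\mapsto 1$, $e_i\mapsto e_i$ for $i\ge1$ induces $\psi\colon\pi_1(T^{(v)})\to\pi_1(T)$: the relations $e_0=1$ and $e_{-1}e_1=1$ map to trivial identities, the $v_1$-relation of $T^{(v)}$ maps to the $v_1$-relation of $T$, the $v_i$-relations ($i\ge2$) and all commutation relations are preserved. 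Finally $\psi\circ\phi=\mathrm{id}$ is immediate on generators, and $\phi\circ\psi=\mathrm{id}$ follows from the fact that $e_0=1$ and $e_{-1}=e_1^{-1}$ already hold in $\pi_1(T^{(v)})$.

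I do not anticipate a serious obstacle: the argument is bookkeeping with a fixed presentation. The only points requiring care are (i) using that $T^{(v)}$ does not alter the self-intersection of $v$, so that the $v$-row relation transforms exactly into that of $T$, and (ii) the genus-zero hypothesis needed to apply Lemma \ref{representation}, handled by the remark above.
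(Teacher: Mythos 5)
Your proposal is correct and follows essentially the same route as the paper: both compare the Hirzebruch presentations from Lemma \ref{representation}, observe that the new relations are $e_0=1$ and $e_{-1}e_1=1$ while the modified $v$-relation reduces to the original one, and conclude the groups agree. Your explicit pair of mutually inverse homomorphisms merely makes rigorous the paper's remark that adjoining $e_{-1}$ with the relation $e_{-1}e_1=1$ does nothing to the group.
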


\begin{proof}
 It is a direct computation using Lemma \ref{representation}.
Label the vertices of $T$ as $v_1,\dots,v_n$ and let $v=v_1$.
Label the two additional self-intersection $0$ vertices in $T^{(v)}$ as $v_{-1}$ and $v_0$, where $v_0$ is the one linked to $v_1$.
We compare $\pi_1(T^{(v)})$ and $\pi_1(T)$.
In terms of generators,  $\pi_1(T^{(v)})$ has two additional generators, namely $e_{-1}$ and $e_0$.
In terms of relations, there are two new relation and one of the relation in $\pi_1(T)$ is changed.
The two new relations are given by $1=e_0$ and $1=e_{-1}e_1$.
The relation $1=\prod_{1 \le j\le n}e_j^{q_{1j}}$ is changed to $1=e_0\prod_{1 \le j\le n}e_j^{q_{1j}}$.
However, we have $1=e_0$, which means the changed relation is actually unchanged.
Moreover, adding the generator $e_{-1}$ with the relation $1=e_{-1}e_1$ is doing nothing to the group so we arrive the conclusion.
\end{proof}

It is time to state the following fact. 

\begin{lemma} \label{P graph}  Type P graphs have finite boundary fundamental groups. More precisely, 
(P1) graph is spherical, (P2) and (P4) graphs are finite cyclic,   (P3) and (P5) graphs are finite and non-cyclic. 
\end{lemma}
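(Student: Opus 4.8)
\textbf{Proof proposal for Lemma \ref{P graph}.}

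The plan is to reduce the statement about type (P) graphs to the already-established facts about type (N) graphs, using the relationship between dual blow ups and ordinary blow ups together with Lemma \ref{0-0} and Lemma \ref{representation}. First I would dispose of type (P1): the graph $\xymatrix{\bullet^0 \ar@{-}[r]&\bullet^0}$ has intersection matrix $\left(\begin{smallmatrix}0&1\\1&0\end{smallmatrix}\right)$, so by Lemma \ref{representation} the boundary fundamental group is the free group on $e_1,e_2$ modulo $e_1e_2=e_2e_1$, $1=e_2$ and $1=e_1$, which is trivial; hence (P1) is spherical. For (P2), (P3), (P4), (P5), recall that each such graph is, by Definition \ref{Five Types}, a dual blow up $\overline{\Gamma^{(v)}}$ of a type (N) graph $\Gamma$ at a vertex $v$, and that $\overline{\Gamma^{(v)}}$ is equivalent (via blow ups and blow downs, see Example \ref{0-0>1}) to $\Gamma^{(v)}$, the graph obtained by attaching two genus-zero, self-intersection-zero vertices to $v$. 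Since the boundary fundamental group is invariant under blow up and blow down (this is the classical fact recalled after Figure \ref{dual blow up example}, attributed to \cite{Ne81}), we get $\pi_1(\overline{\Gamma^{(v)}}) \cong \pi_1(\Gamma^{(v)})$.

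The key step is then Lemma \ref{0-0}, which gives $\pi_1(\Gamma^{(v)}) \cong \pi_1(\Gamma)$ for any vertex $v$. Combining the two isomorphisms yields $\pi_1(\overline{\Gamma^{(v)}}) \cong \pi_1(\Gamma)$. Now I invoke the ``Fact'' established in Subsection \ref{classification} (proved there via Lemma \ref{representation} and Lemma \ref{quotient}): a type (N1) graph is spherical, type (N2) graphs are finite cyclic, and type (N3) graphs are finite and non-cyclic. Reading off the definitions: type (P2) is $\overline{\Gamma^{(v)}}$ with $\Gamma$ of type (N2), so $\pi_1$ is finite cyclic; type (P4) is $\overline{\Gamma^{(v)}}$ with $\Gamma$ of type (N2), so again finite cyclic; type (P3) is (by Definition \ref{Five Types}) the graph $<3-y;2,1;n_2,n_2-\lambda_2;n_3,n_3-\lambda_3>$ built from a type (N3) graph, and type (P5) is $\overline{\Gamma^{(v)}}$ with $\Gamma$ of type (N3), so in both cases $\pi_1$ is finite and non-cyclic. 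This gives all the claimed conclusions, and in particular finiteness of the boundary fundamental group for every type (P) graph.

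The one point that requires slightly more care — and which I expect to be the main (minor) obstacle — is type (P3): unlike (P2), (P4), (P5), it is \emph{not} literally written as a dual blow up in Definition \ref{Five Types}, but rather as a one-branch-point graph with the continued-fraction data $\lambda_i$ replaced by $n_i-\lambda_i$. So for (P3) I would instead argue directly: either verify that $<3-y;2,1;n_2,n_2-\lambda_2;n_3,n_3-\lambda_3>$ is equivalent, through blow ups and blow downs, to the dual blow up $\overline{T^{(v)}}$ of the corresponding type (N3) graph $T=<y;2,1;n_2,\lambda_2;n_3,\lambda_3>$ at its central vertex (which is the content of the parenthetical remark preceding Definition \ref{Five Types} that (P3) arises as a dual blow up), or else apply Lemma \ref{representation} together with Lemma \ref{quotient}(iii) directly to the (P3) graph: its boundary fundamental group is again a quotient $G_1*G_2*G_3/(\prod t_i = \mathrm{id})$ with the same triple $(\mathbb{Z}/2,\mathbb{Z}/n_2,\mathbb{Z}/n_3)$ of cyclic factors as the corresponding (N3) graph (the $\lambda_i \mapsto n_i-\lambda_i$ change alters the $t_i$ but not the $G_i$), hence finite by Lemma \ref{quotient}(iii) and non-cyclic by Lemma \ref{quotient}(ii). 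Either route closes the case.
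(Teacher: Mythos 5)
Your treatment of (P1), (P2), (P4) and (P5) is correct and is exactly the paper's argument: (P1) is immediate, and for the dual blown up graphs you combine the invariance of the (oriented) boundary under dual blow up with Lemma \ref{0-0} and the Fact that type (N2) graphs are finite cyclic and type (N3) graphs are finite non-cyclic. The paper's proof of Lemma \ref{P graph} is exactly this, plus one citation for (P3).

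The weak spot is (P3), and it is worth being precise about why. First, the paper does not say that a (P3) graph arises as a dual blow up of the corresponding (N3) graph, and your route (a) cannot work: blow up, blow down and dual blow up all preserve the \emph{oriented} diffeomorphism type of the boundary of the plumbing, whereas the boundary of a (P3) plumbing is the orientation reversal of the boundary of the corresponding (N3) plumbing (they are conjugate divisors, e.g.\ $E_8$ and $E_8^c$ bound $\Sigma(2,3,5)$ with opposite orientations), so a (P3) graph is not equivalent to $\overline{T^{(v)}}$ for the (N3) graph $T$. Second, route (b) is the right idea but is stated too loosely: by Lemma \ref{representation} the boundary group of a one-branch-point tree is not itself the quotient $G_1*G_2*G_3/(\prod t_i)$, but a central extension of it by the cyclic subgroup generated by the central-vertex generator; Lemma \ref{quotient}(ii),(iii) gives that this quotient is finite and non-cyclic (hence $\pi_1$ is non-cyclic), but finiteness of $\pi_1$ itself additionally requires the central cyclic subgroup to be finite, which one gets from $\delta_T\neq 0$ via Lemma \ref{order} (this is the ``finite extension of a finite group'' step in the paper's Fact for (N3), and $\delta\neq 0$ does hold for (P3) since its Seifert Euler number is the negative of the (N3) one). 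The paper itself avoids all of this for (P3) by simply invoking Lemma \ref{T shape}, whose statement already says that a minimal one-branch-point tree with negative branches has finite $\pi_1$ exactly when it is an (N3) or (P3) graph, and that such a finite $\pi_1$ is non-cyclic; citing that lemma is the cleanest way to close your remaining case.
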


\begin{proof}
Clear for the (P1) graph.   Since (N2) graph is finite cyclic, so are (P2) and (P4) graphs by  by Lemma \ref{0-0}.
(P3) and (P5)  graphs and  finite and non-cyclic by Lemmas \ref{T shape} and \ref{0-0}.

\end{proof} 

In \cite{Ne81}, it is mentioned that the dual blow up (which is called blow up there) does not change the oriented diffeomorphism type of the boundary of the plumbing.

\begin{lemma} \label{00}
 Let $T$ be a minimal tree and $v$ a vertex in $T$.
Suppose $\gamma$ is a simple branch at $v$ with $\gamma$ not equivalent to $\xymatrix{\bullet^0 \\} $ and some of the vertices having non-negative self-intersection.
Then, $T$ is equivalent to a minimal tree $T'$ obtained by replacing the branch $\gamma$ by an equivalent branch  $\xymatrix@R=1pc @C=1pc{
        \bullet_{v'} \ar@{-}[r] & \dots \ar@{-}[r]  & \bullet^{0} \ar@{-}[r] & \bullet^{0}_{x} \\
	} $.
where $v'$ is the vertex linked to $v$, $x$ is an end vertex and the self-intersection of $v$ may possibly be changed. 
\end{lemma}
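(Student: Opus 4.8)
The plan is to reduce a general simple branch $\gamma$ with some non-negative vertex to the normal form through a sequence of blow ups and blow downs, while tracking the effect on $v$. First I would record the two elementary moves on linear graphs: (1) an interior vertex with self-intersection $0$ can be ``absorbed'' by blowing up and down so as to slide the $0$ towards an end, changing the two neighboring self-intersections by $\pm 1$ (this is the content of Example \ref{0-0>1} read in reverse); and (2) any $-1$ vertex in the interior of $\gamma$, or at the end of $\gamma$ if it is not the vertex $v'$ adjacent to $v$, may be blown down, since this keeps $T$ a tree and only modifies the neighbors. Since $T$ is minimal, there is no $-1$ vertex to begin with, but such vertices will be created and destroyed during the reduction; the point is that the \emph{final} tree $T'$ must again be minimal, which is why the normal form is required to have all its non-end weights in a controlled range.

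The key step is to push all the non-negative self-intersection towards the end of the branch. Concretely, let $\gamma = \bullet_{v'}^{s_1}-\cdots-\bullet_x^{s_l}$ with some $s_i \ge 0$. If some interior $s_i \ge 0$, I would first use move (1) (blowing up on an adjacent edge and blowing down the exceptional sphere, or the variant in Example \ref{0-0>1}) to migrate a $0$ towards $x$, at the cost of altering $s_{i-1}$ (and ultimately $s_v$, the self-intersection of $v$, once the migration reaches $v'$). Iterating, $\gamma$ becomes equivalent to a branch ending in one or two $0$-vertices: more precisely, repeatedly applying $\bullet^a-\bullet^0-\bullet^b \sim \bullet^{a+1}-\bullet^0-\bullet^{b+1}$ together with the fact that $\bullet^0-\bullet^0$ is equivalent to $\bullet^1$ lets one either collapse a $\bullet^0$ (when it is adjacent to another $\bullet^0$) or transport it. The precise bookkeeping is that the sum of the self-intersections changes in a controlled way, and the process terminates because the ``defect'' measuring failure of negativity strictly decreases. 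One then checks that after this process only $v$'s weight (and not the weights of vertices in other branches, since all moves happen inside $\gamma \cup \{v\}$) has possibly changed, and the resulting $T'$ is minimal: the interior vertices of the new $\gamma$ all have self-intersection $\le -2$ except for the last two $0$-vertices at the end, and no $-1$ appears, so no blow-down is possible anywhere.

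I expect the main obstacle to be the careful combinatorial induction showing that the migration of $0$-vertices terminates and yields exactly the stated shape $\bullet_{v'}-\cdots-\bullet^0-\bullet^0_x$ rather than some other configuration (for instance ensuring we do not end up needing $v'$ itself to be a $-1$ or $0$ vertex, and handling the boundary case where $\gamma$ has length one or two). The cleanest way to organize this is an induction on $\ell + \sum_i \max(s_i+1,0)$ or a similar complexity measure: at each step one either shortens $\gamma$ by a blow-down of a freshly created $-1$, or strictly decreases the number of non-negative interior vertices, and when no interior vertex is non-negative and $\gamma$ is not yet in normal form, a final normalization using $\bullet^0-\bullet^0 \sim \bullet^1$ and Example \ref{0-0>1} produces the required two trailing zeros. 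The hypothesis that $\gamma$ is not equivalent to $\bullet^0$ guarantees the normal form has at least two vertices so that the trailing $\bullet^0-\bullet^0_x$ makes sense.
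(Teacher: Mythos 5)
Your plan is essentially the paper's argument (which in turn is Lemma 4.1 of \cite{Sh85}): bring the non-negative vertex to square $0$ by blow-ups at adjacent edges, transport the $0$ along the simple branch to its far end by elementary blow-up/blow-down moves, blow down the $-1$-vertices created along the way, and then normalize the last two vertices, using the hypothesis that $\gamma$ is not equivalent to a single $0$-vertex exactly as you do (to guarantee a vertex $w'\neq v$ adjacent to the end vertex $w$). One slip: the interior move you state, $\bullet^{a}-\bullet^{0}-\bullet^{b}\sim\bullet^{a+1}-\bullet^{0}-\bullet^{b+1}$, is false as written — blow-ups and blow-downs preserve the boundary $3$-manifold, and raising both neighbors changes it; the correct move, obtained by blowing up the edge between the $0$-vertex and the $b$-vertex and blowing down the old $0$-vertex (now a $-1$), is the transfer $\bullet^{a}-\bullet^{0}-\bullet^{b}\sim\bullet^{a+1}-\bullet^{0}-\bullet^{b-1}$, consistent with your earlier ``$\pm 1$'' description. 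With that correction your migration scheme is exactly how the paper makes the end vertex $w$ of $\gamma$ have square $0$. The bookkeeping you worry about at the end is handled in the paper by a second elementary move valid only at an end of the branch, $\bullet^{q}-\bullet^{0}\sim\bullet^{q+1}-\bullet^{0}$ (blow up a generic point of the end $0$-sphere, then blow down the old sphere): after blowing down all $-1$-vertices of $\gamma\setminus\{w,w'\}$, this move lets one adjust the square of $w'$ at will (in particular to $0$, producing the two trailing zeros, rather than via $\bullet^{0}-\bullet^{0}\sim\bullet^{1}$ from Example \ref{0-0>1}) and to remove any remaining $-1$ near the end, giving minimality of $T'$; the finer details of the induction are delegated to \cite{Sh85} in the paper as well.
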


\begin{proof}
 We first make the following observations.
If $T$ has a sub-tree of the form $\xymatrix@R=1pc @C=1pc{
        \dots \ar@{-}[r] & \bullet^{q} \ar@{-}[r] & \bullet^{0} \ar@{-}[r] & \bullet^{b} \ar@{-}[r]& \dots \\
	} $, then $T$ is equivalent to $T'$ where $T'$ is obtained by changing the sub-tree to  $\xymatrix@R=1pc @C=1pc{
        \dots \ar@{-}[r] & \bullet^{q+1} \ar@{-}[r] & \bullet^{0} \ar@{-}[r] & \bullet^{b-1} \ar@{-}[r]& \dots \\
	} $.
Also, if $T$ has a sub-tree of the form $\xymatrix@R=1pc @C=1pc{
        \dots \ar@{-}[r] & \bullet^{q} \ar@{-}[r] & \bullet^{0} \\
	} $, then $T$ is equivalent to $T'$ where $T'$ is obtained by changing the sub-tree to  $\xymatrix@R=1pc @C=1pc{
        \dots \ar@{-}[r] & \bullet^{q+1} \ar@{-}[r] & \bullet^{0} \\
	} $.

 Let $u$ be the vertex in $\gamma$ having self-intersection non-negative,
By possibly blowing up successively at edges linked to $u$, we assume that $u$ has self-intersection $0$.
By the first observation, we can make the end vertex $w$ of $\gamma$ to have self-intersection $0$.
Since $\gamma$ is not equivalent to $\xymatrix{
        \bullet_{0} \\
	} $, there is a vertex $w'$ which is different from $v$ and is linked to the end vertex $w$.
Now, we blow down all $-1$ vertices in $\gamma-\{w,w'\}$.
By the second observation and the fact that $w$ has self-intersection $0$, we obtain $T'$ as we want.
For details, see Lemma 4.1 of \cite{Sh85}.
\end{proof}

\begin{lemma}\label{negative definite spherical}
 A spherical negative definite tree is not minimal.
In other words, it is equivalent to an empty graph.
\end{lemma}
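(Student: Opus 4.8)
The plan is to establish the statement in two moves: first show that a \emph{non-empty} minimal negative definite tree is never spherical, and then run a descent by blow-downs to conclude that a spherical negative definite tree must be equivalent to the empty graph.

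For the first move, let $T$ be a non-empty minimal negative definite tree; by Lemma \ref{tree} every vertex has genus zero, so the structure lemmas of this section apply, and I would argue by the number $k$ of branch points. If $k=0$, then $T$ is linear; negative definiteness makes every self-intersection negative and minimality excludes $-1$'s, so all self-intersections are $\le -2$ and $T$ is of type (N2), whence $\delta_T \neq 1$ (Example \ref{non-spherical}) and $\pi_1(T)$ is nontrivial by Lemma \ref{order}. If $k=1$, Lemma \ref{T shape} applies (its hypothesis that the self-intersections in the branches are negative is automatic from negative definiteness), and it tells us that $\pi_1(T)$, if finite, is non-cyclic; since the trivial group is finite and cyclic, $\pi_1(T)\neq 1$. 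If $k\ge 2$, Lemma \ref{Tech} gives that $\pi_1(T)$ is infinite, so again $\pi_1(T)\neq 1$. Hence no non-empty minimal negative definite tree is spherical.

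For the descent, start with a spherical negative definite tree $T$; if $T$ is non-empty, the previous paragraph shows $T$ is not minimal, so a blow-down can be performed, producing a tree $T'$ with one fewer vertex. A blow-down leaves $b_2^+$ unchanged (it only removes a $\langle -1\rangle$ orthogonal summand from the intersection form), so $T'$ is again negative definite; and by \cite{Ne81} it does not change the oriented diffeomorphism type of the boundary of the plumbing, so $T'$ is again spherical. Iterating, and using that the vertex count strictly decreases at each step, the process stops at a minimal negative definite spherical tree, which by the first move must be empty. Therefore $T$ is equivalent to the empty graph, and in particular is not minimal unless it is already empty.

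Given the heavy machinery already in place, the argument is largely bookkeeping; the points that will need care are checking that the three cases $k=0,1,\ge 2$ are exhaustive and that the hypotheses of Lemmas \ref{T shape} and \ref{Tech} hold as stated — in particular that a $-1$ vertex with three or more incident edges is permitted in a minimal tree and causes no trouble with negative definiteness — and that the descent really bottoms out at the empty graph: a single $-1$-vertex tree is spherical and negative definite, but it is not minimal (it blows down to the empty graph), so no exceptional base case intervenes.
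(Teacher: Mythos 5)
Your argument is correct, but it is genuinely different from the paper's: the paper disposes of this lemma by citing Section 3 of \cite{Hi66}, where the classical Mumford--Hirzebruch fact (a negative definite tree with simply connected plumbing boundary blows down to nothing) is established, whereas you re-derive it internally. Your route --- (i) a non-empty minimal negative definite tree is never spherical, by the trichotomy on the number of branch points, using Example \ref{non-spherical} together with Lemma \ref{order} in the linear case, Lemma \ref{T shape} for one branch point (finite forces non-cyclic, which excludes the trivial group), and Lemma \ref{Tech} for two or more; (ii) descent by blow-downs, using that the intersection lattice of a blown-up graph is that of the original plus a $\langle -1\rangle$ summand (so negative definiteness passes to the blow-down) and that blow-down does not change the boundary of the plumbing (\cite{Ne81}), so sphericity persists and the strictly decreasing vertex count forces termination at the empty graph --- is sound and non-circular, since Lemmas \ref{simple branch}, \ref{T shape} and \ref{Tech} precede this statement in the paper and their proofs do not invoke it. One small wording point: as written you apply Lemma \ref{tree} to an arbitrary minimal negative definite tree, but that lemma needs finite boundary fundamental group to conclude genus zero; either frame step (i) as a contradiction argument (assume the tree spherical, then Lemma \ref{tree} applies) or appeal to the section's standing genus-zero convention. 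As for what each approach buys: the citation keeps the paper short and rests on a standard reference, while your derivation makes the statement self-contained modulo Lemma \ref{T shape} (itself quoted from \cite{Sh85}), in effect reproving the Hirzebruch result from the branch-point analysis already developed for Proposition \ref{main classification theorem_convex}.
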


\begin{proof}
 See Section $3$ of \cite{Hi66}
\end{proof}

\subsection{Symplectic Restrictions}\label{symplectic}
We are going to provide symplectic input to give more constraints on the trees that we are interested in.
In this subsection, $L_i$ appearing as a superscript of a vertex represents the homology class of the corresponding sphere. 
First, we recall a theorem of McDuff.

\begin{thm}\label{McDuff}(\cite{Mc90})
 Let $M$ be a closed symplectic 4-manifold. Suppose there exist an embedded symplectic sphere $C$ with positive (resp. zero) self-intersection.
Then, $M$ is symplectic rational (resp. ruled).
Moreover, if $(M,C)$ is relatively minimal and $[C]^2=0$, then there exists a symplectic deformation equivalent from $(M,C)$ to $(N,F)$, where $N$ is a symplectic sphere bundle over a closed symplectic surface and $F$ is a fibre.
\end{thm}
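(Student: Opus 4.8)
The plan is to prove this via $J$-holomorphic spheres, following McDuff. Fix an $\omega$-compatible almost complex structure $J$ for which $C$ is $J$-holomorphic; this is possible because $C$ is symplectic. Since $C$ is an embedded sphere, the adjunction formula gives $c_1([C])=[C]^2+2\geq 2$, so by automatic transversality in dimension four the moduli space $\mathcal{M}$ of unparametrized somewhere-injective $J$-holomorphic spheres in the class $[C]$ is a smooth manifold of dimension $2[C]^2$, and this holds for \textbf{every} such $J$ (which is what will eventually yield a deformation, not merely a diffeomorphism, statement). I would use the curves in $\mathcal{M}$ together with their Gromov limits to produce a ruling, respectively a rational structure, on $M$.

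\textbf{The case $[C]^2=0$ with $(M,C)$ relatively minimal.} Now $\dim\mathcal{M}=2$. By positivity of intersections any two distinct members of $\mathcal{M}$ are disjoint, so $\mathcal{M}$ foliates an open subset of $M$ by embedded symplectic spheres. First I would show the Gromov compactification $\overline{\mathcal{M}}$ contains no genuine cusp curve: if $\sum_i m_i C_i$ were one, with $[C]=\sum_i m_i[C_i]$, then positivity of intersections and $[C]^2=0$ force $[C]\cdot[C_i]=0$ and $[C_i]^2\leq 0$ for every $i$, while $2=c_1([C])=\sum_i m_i c_1([C_i])$ together with adjunction rules out the possibility that every component has square zero; hence some component is, up to passing to its underlying somewhere-injective curve, an embedded $(-1)$-sphere $E$, and $[E]\cdot[C]=0$ makes $E$ disjoint from $C$, contradicting relative minimality. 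So every element of $\overline{\mathcal{M}}$ is an embedded symplectic sphere; the set of points these cover is open (by automatic transversality the curves fill a neighborhood of each point) and closed (Gromov compactness), hence all of $M$, and distinct curves are disjoint. This exhibits $M$ as a smooth $S^2$-bundle over a closed surface $B$ with $C$ a fiber. A Moser-type argument then deforms $\omega$, through forms keeping the fibers symplectic and fixing the relevant cohomological data, to a standard symplectic form on this bundle, giving the asserted symplectic deformation equivalence of $(M,C)$ with $(N,F)$, where $N$ is one of the two $S^2$-bundles over $B$ and $F$ is a fiber.

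\textbf{The remaining cases.} If $[C]^2=0$ but $(M,C)$ is not relatively minimal, the same foliation argument, now permitting reducible fibers built from exceptional spheres, presents $M$ as a blow-up of an $S^2$-bundle, hence ruled. If $[C]^2=m>0$, I would blow up $m-1$ points lying on $C$, taking the proper transform at each stage; the self-intersection drops by one each time, so $M\#(m-1)\overline{\mathbb{CP}^2}$ contains an embedded symplectic sphere of square $1$, and one further blow-up on it produces an embedded symplectic sphere $C''$ of square $0$ together with an exceptional sphere meeting $C''$ exactly once. By the $[C]^2=0$ analysis, $M\#m\,\overline{\mathbb{CP}^2}$ is a blow-up of an $S^2$-bundle, in whose ruling $[C'']$ is the fiber class; the exceptional sphere then maps onto the base with degree one, forcing the base to be $S^2$, so $M\#m\,\overline{\mathbb{CP}^2}$ is rational. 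Since being rational (resp. ruled) is preserved under blowing down, $M$ is rational (resp. ruled), completing the proof.

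\textbf{Main obstacle.} The hard part is the relatively minimal $[C]^2=0$ case: making the Gromov-compactness covering argument rigorous, carrying out the complete bubble analysis of cusp curves (handling multiply covered components and transversality along the one-parameter families) via positivity of intersections and adjunction, and then upgrading the resulting singular foliation --- once exceptional fibers are excluded --- to a genuine locally trivial $S^2$-bundle, together with the symplectic uniqueness and deformation statement for ruled surfaces. The blow-up and blow-down bookkeeping in the other cases is comparatively routine.
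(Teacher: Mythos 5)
The paper does not prove this statement at all---it is quoted directly from McDuff \cite{Mc90}---and your sketch follows essentially the same route as McDuff's original argument: $J$-holomorphic spheres in the class $[C]$ with automatic transversality, exclusion of cusp curves in the relatively minimal square-zero case via positivity of intersections, adjunction and relative minimality (note this step also quietly uses connectedness of the Gromov limit to force a component of negative square and ultimately an embedded $(-1)$-sphere disjoint from $C$), then the fibration plus Moser-type deformation argument, with the positive-square case reduced to the square-zero case by blowing up along $C$. So the proposal is the standard proof outline, and the difficulties you flag as the ``main obstacle'' are precisely where the real work is carried out in \cite{Mc90} and its later expositions.
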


\begin{corr}\label{E}
 Let $M$ be a symplectic 4-manifold. Suppose there exist an embedded symplectic sphere $C$ with

(i) positive self-intersection, or

(ii) zero self-intersection and there exists another embedded symplectic sphere $C'$ that intersect $C$ transversally once.

Then $H_1(M)=0$, $M$ is rational and $b_2^+=1$.
\end{corr}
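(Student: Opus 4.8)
\textbf{Proof plan for Corollary \ref{E}.}
The plan is to reduce both cases to Theorem \ref{McDuff} and then extract the homological consequences. In case (i) we have an embedded symplectic sphere $C$ with $[C]^2>0$, so Theorem \ref{McDuff} immediately gives that $M$ is symplectic rational. In case (ii), where $[C]^2=0$, Theorem \ref{McDuff} gives that $M$ is symplectic ruled; the extra hypothesis of a sphere $C'$ meeting $C$ transversally once is what we will use to upgrade ``ruled'' to ``rational''. So the first step is to handle case (ii): pass to a relatively minimal model of $(M,C)$ by blowing down $-1$-spheres disjoint from $C$ (this does not affect $[C]^2$ and only decreases $b_2$, and we can track what happens to $C'$), apply the second half of Theorem \ref{McDuff} to get a symplectic deformation equivalence $(M_{\min},C)\simeq(N,F)$ with $N\to B$ a symplectic sphere bundle over a closed symplectic surface $B$ and $F$ a fibre.

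The main obstacle — and the crux of the argument — is showing $B$ must be a sphere, i.e. that the base genus is zero, so that $N$ is rational. Here is where $C'$ enters: the class $[C']$ has $[C']\cdot[C]=[C']\cdot[F]=1$, so $C'$ (or its image after the blow-downs, which still has intersection number $1$ with $F$ since $F\cdot F=0$ keeps the intersection number stable under blowing down away from $F$) is a symplectic \emph{section} of the ruling. A symplectic section of a symplectic sphere bundle over $B$ that is itself a sphere forces $B$ to be a sphere: the projection $N\to B$ restricted to the section is a symplectic-area-nonincreasing degree-one map, hence a diffeomorphism onto $B$ in the relevant sense, so $B\cong S^2$. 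Therefore $N$ is an $S^2$-bundle over $S^2$, which is rational, and hence $M$ (being a blow-up of a rational manifold) is rational. In case (i), being rational is already given directly.

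Once $M$ is known to be rational in both cases, the remaining two conclusions are standard consequences of rationality: every rational $4$-manifold is simply connected, so $H_1(M)=0$; and the intersection form of a rational $4$-manifold has $b_2^+=1$ (it is either that of $\mathbb{CP}^2\#n\overline{\mathbb{CP}^2}$ or of an $S^2$-bundle over $S^2$, both of which have exactly one positive eigenvalue). I would state these as citations to the structure theory of rational symplectic $4$-manifolds rather than reprove them.

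I expect the genuinely delicate point to be the bookkeeping in case (ii): one must make sure that after blowing down to reach relative minimality of $(M,C)$, there is still a symplectic sphere meeting $C$ transversally once (one can always choose the blown-down exceptional spheres to be disjoint from both $C$ and a neighbourhood of $C'$, or argue that $C'$ survives with its intersection number with $C$ unchanged), and that the deformation equivalence from Theorem \ref{McDuff} can be taken to carry this section data along. Modulo this, the argument is a direct appeal to McDuff's theorem plus the classification of rational surfaces.
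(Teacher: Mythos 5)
Your proposal follows essentially the route the paper intends: Corollary \ref{E} is stated without a separate proof precisely because it is meant to be read off from Theorem \ref{McDuff} exactly as you do — case (i) gives rationality at once, in case (ii) the sphere $C'$ meeting $C$ once excludes ruled surfaces over a positive-genus base, and then $H_1(M)=0$ and $b_2^+=1$ are standard facts about rational surfaces.

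Two of your sub-justifications should be repaired, though neither changes the structure. First, the reason a sphere meeting the fibre once forces the base $B$ to be a sphere is not that the projection restricted to it is ``area-nonincreasing, hence a diffeomorphism'' — neither assertion is justified; the correct (and simpler) point is that the composite $S^2\cong C'\hookrightarrow M\to M_{\min}\cong N\xrightarrow{\ p\ } B$ is a map of degree $[C']\cdot[F]=1$, and a degree-one map of closed oriented manifolds is surjective on $\pi_1$ (or on $H_1$), so $\pi_1(B)=1$ and $B\cong S^2$. Second, you cannot in general choose the exceptional spheres blown down to reach relative minimality of $(M,C)$ to be disjoint from $C'$; but this is not needed, and your alternative remark is the right fix: only homological data enters, namely that the pushed-forward class of $C'$ still pairs to $1$ with $[C]=[F]$ (exceptional classes pair trivially with $[C]$), and the composite map from the sphere $C'$ to the base still has degree one even if its image in $M_{\min}$ is no longer embedded. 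For the same reason there is no issue about the deformation equivalence of Theorem \ref{McDuff} ``carrying the section along'': it is in particular a diffeomorphism, and the argument uses only the spherical class and its intersection number with the fibre class.
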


\begin{lemma}\label{R}
Let $D$ be a symplectic divisor in a closed symplectic manifold $W$.
Then, the graph of $D$ does not have a sub-tree of the form,
\begin{displaymath}
    \xymatrix{
        \bullet^{L_1} \ar@{-}[r] & \bullet^{L_3} \ar@{-}[d] \ar@{-}[r] & \bullet^{L_4} \ar@{-}[r] & \bullet^{L_5}_x\\
                             & \bullet^{L_2} \\
	}
\end{displaymath}
with $[L_3]^2=-1$ and $[L_5]^2 \ge 0$, and all vertices having genera zero.
\end{lemma}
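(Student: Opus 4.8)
The plan is to derive a contradiction by blowing down the $(-1)$-sphere $L_3$ and examining the resulting configuration, using McDuff's theorem (Theorem \ref{McDuff}) together with Corollary \ref{E} to constrain the ambient manifold $W$. First I would observe that since $W$ is closed and contains the symplectic sphere $L_5$ with $[L_5]^2 \ge 0$, Corollary \ref{E} already applies: $W$ is rational with $b_2^+(W)=1$ and $H_1(W)=0$. The point of the lemma is that the specific local shape near the $(-1)$-vertex $L_3$ is incompatible with this. So the main step is a homological/intersection-theoretic computation after blowing down $C_3$ (the sphere representing $L_3$).

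The key steps, in order, are: (1) Blow down the symplectic $(-1)$-sphere $C_3$ to get a closed symplectic $4$-manifold $W'$ with one fewer $b_2^-$; under the blow-down map the spheres $C_1$, $C_2$ each gain self-intersection $+1$ (so become $0$-spheres, since $[L_1]^2=[L_2]^2=-1$ as they are leaves of a negative-definite-looking tree — here I would need to note or arrange that $L_1, L_2$ have self-intersection $-1$, which is forced once $C_3$ has multiplicity-one intersection with each and the tree is as drawn; if their self-intersections are not $-1$ the argument is similar but one keeps track of the shift by $+1$), and $C_1, C_2$ become disjoint from each other but each still meets the proper transform of $C_4$, which picks up self-intersection $[L_4]^2 + 1$. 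Actually the cleaner route: after blowing down $C_3$, the images $\overline{C_1}$ and $\overline{C_2}$ are two symplectic spheres which now intersect each other transversally once (they both passed through the point $C_3$ was contracted to), and $\overline{C_1}^2 = [L_1]^2+1$. (2) If $[L_1]^2 = -1$ then $\overline{C_1}$ is a $0$-sphere meeting another symplectic sphere $\overline{C_2}$ once, so by Corollary \ref{E}(ii) this is consistent, but now I would push further: the chain $\overline{C_1}, \overline{C_4}, C_5$ still sits in $W'$ with $C_5^2 = [L_5]^2 \ge 0$, and $\overline{C_1}$ together with $\overline{C_2}$ generates, via McDuff's structure theorem, a ruling of $W'$ in which $\overline{C_1}$ is a fiber class; but $\overline{C_1}$ also meets $\overline{C_4}$ which meets $C_5$, and $C_5$ has non-negative square and is disjoint from $\overline{C_1}$ and $\overline{C_2}$ — I would compute the intersection of $C_5$ with the fiber class to force $[C_5]\cdot F = 0$, hence $C_5$ is (a multiple of) a section or lies in a fiber, and then count to get a contradiction with $[L_5]^2 \ge 0$ and the adjunction/genus-zero condition. (3) Convert this into a contradiction with $b_2^+(W') = b_2^+(W) = 1$ by exhibiting, as in the proof of Lemma \ref{b_2^+}, a positive-dimensional positive subspace of $H_2(W';\mathbb{R})$ spanned by these sphere classes — e.g. the $0$-sphere $\overline{C_1}$ and the sphere $C_5$ after suitable blow-ups, or directly $\overline{C_1}$ and $\overline{C_4}$-type combinations.

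The main obstacle I expect is step (2): controlling the proper transform $\overline{C_4}$ and the sphere $C_5$ after the blow-down and squeezing out the intersection-number contradiction cleanly, since $[L_4]^2$ is unspecified and $[L_5]^2$ is only constrained to be $\ge 0$. The right tool is almost certainly the same $b_2^+$-counting trick used in Lemma \ref{b_2^+}: once $W$ (or $W'$) is rational with $b_2^+=1$, any configuration containing "too many" spheres of non-negative self-intersection, or an incompatible chain of spheres forcing two independent classes of non-negative square that pair non-negatively, is impossible. So after the blow-down I would reduce, by further blowing down any $(-1)$-spheres created among $\overline{C_1}, \overline{C_2}, \overline{C_4}$, to a minimal configuration and invoke the rational/ruled classification to get that the existence of $C_5$ with $[C_5]^2 \ge 0$ disjoint from a $0$-sphere fiber forces $[C_5]^2 = 0$ and $[C_5]$ itself a fiber class, contradicting the chain of intersections $\overline{C_1} - \overline{C_4} - C_5$ which would then require $\overline{C_4}$ to meet two distinct fibers — impossible for a genus-zero symplectic surface in a rational ruled surface unless it is a section, and a section has the wrong intersection with $C_5$. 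I would present this last paragraph carefully as it carries the actual contradiction.
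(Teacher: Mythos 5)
There is a genuine gap, and it sits exactly where you flag your "main obstacle." Your argument leans on the claim that $[L_1]^2=[L_2]^2=-1$ is "forced," but nothing in the hypotheses of Lemma \ref{R} constrains $[L_1]^2$, $[L_2]^2$ or $[L_4]^2$ at all — and in the application (Lemma \ref{0}) the spheres $L_1,L_2$ have arbitrary self-intersections. Once they are not $(-1)$-spheres, the images $\overline{C_1},\overline{C_2}$ after blowing down $C_3$ are not square-zero spheres, there is no ruling with $\overline{C_1}$ as a fiber, and steps (2)–(3) have no starting point; "keeping track of the shift by $+1$" does not rescue this, because the whole mechanism (fiber class, $b_2^+$-counting with two non-negative classes) disappears. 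Two further problems: after contracting $C_3$ the images $\overline{C_1},\overline{C_2},\overline{C_4}$ all pass through the blow-down point, so they meet pairwise at a common triple point — your intersection bookkeeping omits $\overline{C_4}$ — and your concluding "contradiction" is not one: if $C_5$ is a fiber and $\overline{C_4}$ a section, then $\overline{C_4}\cdot C_5=1$, which is precisely the configuration at hand, and a genus-zero symplectic surface meeting two distinct fibers is perfectly possible (any section or multisection does). So no contradiction is actually produced, and no amount of $b_2^+$-counting on the classes $\overline{C_1},\overline{C_2},\overline{C_4},C_5$ alone can produce one, since their squares are unconstrained.

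For comparison, the paper's proof does not blow down at all. It first blows up to arrange $[L_5]^2=0$, invokes Theorem \ref{McDuff} to identify $W$ with a blown-up Hirzebruch surface in which $[L_5]=f$ is the fiber class and $[L_3]=e_1$ is an exceptional class, and then uses the adjunction formula to pin down the shape of the remaining classes: any embedded symplectic sphere $S$ with $[S]\cdot f=1$ has $[S]=\alpha f+s+\sum\epsilon_ie_i$ with $\epsilon_i\in\{0,-1\}$ (applied to $L_4$, with $\epsilon_1=-1$), while $[L_1],[L_2]$ have $\beta=0$, coefficient $-1$ on $e_1$, and adjunction plus $[L_1]\cdot[L_4]=0$ forces each remaining coefficient $a_i$ to lie in $\{0,-1\}$ or $\{0,1\}$ according to the sign of $\epsilon_i$, so that $a_ib_i\ge 0$ termwise. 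The contradiction is then the single identity $0=[L_1]\cdot[L_2]=-1-\sum_{i\ge 2}a_ib_i\le -1$. Some replacement for this adjunction analysis is unavoidable in any proof, and it is exactly the ingredient your proposal lacks.
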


\begin{proof}
By Corollary \ref{E}, we get $H_1(W)=0$ and $W$ is rational.

Without loss of generality, we may assume $[L_5]^2=0$ (by possibly blowing up at regular points for the sphere representing $x$).
Since $[L_3]^2=-1$, $W$ is not minimal and thus symplectomorphic deformation equivalent to blown-up of a Hizerburch surface, and $[L_5]$ is the fibre class of the Hizerburch surface by Theorem \ref{McDuff}.
Let $\{f,s,e_1,\dots,e_N\}$ be a basis for $H_2(W)$ such that $f$, $s$ and $e_i$ correspond to the fibre class, section class and the exceptional classes, respectively.
Suppose $s^2=n$.
Then, we recall the first chern class of the Hirzeburch surface is $(2-n)f+2s$, thus the first chern class of $W$ is $c_1(W)=(2-n)f+2s-e_1-\dots-e_N$. 
Moreover, $e_i$ can be a prior chosen so that $[L_3]=e_1$.

Suppose there is an embedded symplectic sphere in $W$ with class $[S]=\alpha f+\beta s +a_1e_1+\dots+a_Ne_N$.
Then, adjunction formula gives $$(2-n)\beta+2\alpha +2n\beta+a_1+\dots+a_N=2\alpha \beta+\beta^2n-a_1^2-\dots-a_N^2+2.$$
Suppose $[S]f=1$.
Then $\beta=1$ and the formula reduces to 
$$a_1^2+\dots+a_N^2+a_1+\dots+a_N=0$$ and hence $a_i=0$ or $-1$ for all $i$.
Suppose on the contrary $[S]f=0$.
Then $\beta=0$ and the formula reduces to 
$$2\alpha+a_1^2+\dots+a_N^2+a_1+\dots+a_N=2.$$

Now, we want to study the homology of $L_i$ and draw contradiction.
We recall that $[L_5]=f$ and $[L_3]=e_1$.
Since $1=[L_4][L_5]=[L_4]f$, we apply the adjunction formula derived above and get
$[L_4]=\alpha f+s+\epsilon_1 e_1+\dots+\epsilon_N e_N$ for some $\alpha$, where $\epsilon_i$ equals $0$ or $-1$ for all $i$.
Since $1=[L_4][L_3]=[L_4]e_1$, $[L_4]$ is of the form $\alpha f+s-e_1+\epsilon_2 e_2+\dots+\epsilon_N e_N$.

If we write $[L_1]=\overline{\alpha} f+\beta s +a_1e_1+\dots+a_Ne_N$, then $[L_1]f=0$, $[L_1]e_1=1$ and adjunction imply
$[L_1]=\overline{\alpha} f-e_1+a_2e_2+\dots+a_Ne_N$ and
\begin{eqnarray}\label{1}
 2\overline{\alpha}+a_2^2+\dots+a_N^2+a_2+\dots+a_N=2
\end{eqnarray}
Moreover, by $[L_1][L_4]=0$, we have
\begin{eqnarray}\label{2}
 \overline{\alpha}-1-\epsilon_2a_2-\dots-\epsilon_N a_N=0
\end{eqnarray}
By equations \ref{1} and \ref{2}, we have 
\begin{eqnarray*}
 &&a_2^2+\dots+a_N^2+(1+2\epsilon_2)a_2+\dots+(1+2\epsilon_N)a_N \\
 &=& a_2(a_2+(-1)^{\epsilon_2})+\dots+a_N(a_N+(-1)^{\epsilon_N})  \\
 &=&0
\end{eqnarray*}
Therefore, $a_i=0,-1$ if $\epsilon_i=0$ and $a_i=0,1$ if $\epsilon_i=-1$, for all $i$.

Similarly, $[L_2]=\overline{\beta}f-e_1+b_2e_2+\dots+b_Ne_N$ with 
$b_i=0,-1$ if $\epsilon_i=0$ and $b_i=0,1$ if $\epsilon_i=-1$, for all $i$.

By $[L_1][L_2]=0$, we have $-1-a_2b_2-\dots-a_Nb_N=0$.
When $\epsilon_i=0$, we have both $a_i$ and $b_i$ equals $0$ or $-1$, thus $a_ib_i \ge 0$.
Similarly, if $\epsilon_i=-1$, we still have $a_ib_i \ge 0$.
Therefore, $-1-a_2b_2-\dots-a_Nb_N=0$ gives a contradiction.
\end{proof}

By the previous two Lemmas, we can now state one more basic consequence.

\begin{lemma} \label{0}
 Let $T$ be a minimal graph of an embeddable divisor with finite $\pi_1$.
Then, at any branch point $v$, no branch can be a single vertex $u$ with self-intersection zero.
\end{lemma}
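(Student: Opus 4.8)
The plan is to argue by contradiction using the symplectic constraints just established, especially Lemma \ref{R} and Corollary \ref{E}. Suppose $T$ is a minimal graph of an embeddable divisor $D$ with finite $\pi_1$, and suppose some branch point $v$ has a branch consisting of a single vertex $u$ with self-intersection zero. Since $\pi_1(T)$ is finite, Lemma \ref{tree} tells us all vertices have genus zero, so $u$ represents an embedded symplectic sphere of self-intersection $0$ inside the closed symplectic manifold $W$ in which $D$ embeds. Because $u$ is attached to the branch point $v$ (which has at least three branches), $u$ meets the sphere corresponding to $v$ transversally once; hence by Corollary \ref{E} we get $H_1(W)=0$, $W$ is rational, and $b_2^+(W)=1$.

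The heart of the argument will be to exhibit inside $T$, after suitable blow-downs in the \emph{other} branches at $v$ that do not touch $u$, a sub-configuration forbidden by Lemma \ref{R}. Concretely, $v$ has at least two branches $\gamma_1,\gamma_2$ besides $u$. First I would like to show that by minimality these branches cannot be ``too small'': a branch point has at least three edges, so each of $\gamma_1,\gamma_2$ is nonempty; and by Observation 1 in the proof of Lemma \ref{Tech} every simple branch at a branch point has all self-intersections $\le -2$ (hence is non-spherical). The idea is then to look at the vertex $v$ itself together with its neighbors: after possibly blowing up edges (which does not change the oriented diffeomorphism type of the boundary, nor $\pi_1$, and preserves embeddability by the stability remarks before Lemma \ref{stability of criterion_blow up}), I would arrange a vertex of self-intersection $-1$ adjacent to $v$ along one branch, producing exactly the five-vertex pattern
\begin{displaymath}
    \xymatrix{
        \bullet^{L_1} \ar@{-}[r] & \bullet^{L_3} \ar@{-}[d] \ar@{-}[r] & \bullet^{L_4} \ar@{-}[r] & \bullet^{L_5}_x\\
                             & \bullet^{L_2} \\
	}
\end{displaymath}
with $[L_3]^2=-1$, where the vertex $x=L_5$ can be taken to be our self-intersection-zero vertex $u$ (so $[L_5]^2=0$), the central vertex $L_3$ is the created $-1$ vertex, $L_4$ is $v$ or an intermediate vertex, and $L_1,L_2$ come from the two remaining branches at the branch point carrying $L_3$. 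This would directly contradict Lemma \ref{R}.

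The main obstacle I anticipate is the combinatorial bookkeeping needed to guarantee that a $(-1)$-vertex of the required shape (a genuine three-valent vertex with the single-zero branch hanging two steps away along a linear chain) can always be produced by blow-ups that avoid destroying the single-vertex zero branch and without creating a new branch point elsewhere — i.e., verifying that the pattern of Lemma \ref{R} literally occurs as a sub-tree. One has to be careful about the case where $v$ has exactly three branches $\{u,\gamma_1,\gamma_2\}$ with $\gamma_1,\gamma_2$ linear of small length, and about whether blowing up an edge at $v$ keeps $u$ at distance exactly two from the new $(-1)$-vertex; a clean way around this is to blow up the edge between $v$ and $u$ once, replacing $u^0$ by the chain $v^{?}-\bullet^{-1}-\bullet^{0}$, which manufactures a $(-1)$-vertex adjacent to $v$ with a zero-vertex two steps out, and then check that the two old branches $\gamma_1,\gamma_2$ at $v$ (which are non-spherical by Observation 1, so in particular nonempty) furnish the vertices $L_1$ and $L_2$. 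Once the sub-tree is identified, Lemma \ref{R} closes the argument. I would also remark that the hypothesis ``finite $\pi_1$'' is used only to invoke Lemma \ref{tree} (genus zero) and to know the branch structure is rigid under the reductions; the real engine is entirely Lemma \ref{R} and Corollary \ref{E}.
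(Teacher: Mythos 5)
Your overall frame (use Corollary \ref{E} to get a rational ambient manifold, then contradict Lemma \ref{R}) is the right one, but the decisive step fails. Blowing up at the intersection point of $v$ and $u$ lowers \emph{both} self-intersections by one (see the paper's blow-up figure), so what you actually produce is $v^{s_v-1}-\bullet^{-1}-u^{-1}$, not $v-\bullet^{-1}-\bullet^{0}$: the end vertex now has square $-1<0$, and the new $(-1)$-vertex has valence two. Lemma \ref{R} needs a \emph{trivalent} $(-1)$-vertex $L_3$ carrying two further branches $L_1,L_2$ in addition to the chain $L_4-L_5$ with $[L_5]^2\ge 0$, and no local blow-up near $v$ creates that shape: blowing up a point of $v$ only attaches a $(-1)$-leaf, and blowing up any edge only inserts a valence-two $(-1)$-vertex while destroying the non-negative square next to it. (A smaller issue: your appeal to ``Observation 1'' to call the other branches non-spherical is not available here, since $T$ is not negative definite --- it contains $u$ with square zero.) So the forbidden sub-tree is never exhibited and the contradiction never materializes.

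What is missing is an argument that the original minimal $T$ already contains a $(-1)$-vertex of the right kind, and this is where the paper's proof does genuinely different work. Since $\pi_1$ of the single-vertex branch $u$ is $\mathbb{Z}$, hence infinite, while $\pi_1(T)$ is finite, Lemma \ref{key}(ii) applied at $v$ forces some \emph{other} branch $\gamma$ at $v$ to be spherical. By Theorem \ref{McDuff}, $[u]$ is the fiber class of a blown-up Hirzebruch surface, so any class supported on $\gamma$ is orthogonal to $[u]$ and therefore has non-positive square; hence $b_2^+(\gamma)=0$, and since sphericity gives $\delta_\gamma\neq 0$ (Lemma \ref{order}), $\gamma$ is negative definite, hence non-minimal by Lemma \ref{negative definite spherical}. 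Minimality of $T$ then forces the blow-downable $(-1)$-vertex of $\gamma$ to be a branch point of $T$ sitting next to $v$, which yields exactly the configuration $L_1-\bullet^{-1}(-L_2)-v-u^{0}$ forbidden by Lemma \ref{R}. Without some version of this spherical/negative-definite step, your proposal has no source for the required trivalent $(-1)$-vertex, and the proof does not close.
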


\begin{proof}
 Suppose there is such a vertex  $u$.
Since $\pi_1(u)$ is infinite and $v$ has at least three branches, by Lemma \ref{key}(ii), there is a spherical branch at $v$, which we call $\gamma$.
By Theorem \ref{McDuff} again, the homology class $[u]$ represents a fiber class.
Suppose $b_2^+(\gamma) \neq 0$, then there exists a class $[P]$ which is a linear combination of homology classes of vertices of $\gamma$ such that $[P]^2 >0$.
Moreover, $[P][u]=0$.
A basis for blown-up Hizerburch surface is given by $\{[u],s,e_1,\dots,e_n\}$, where $s$ is a section class and $e_i$ are the exceptional classes.
Therefore, $[P][u]=0$ implies $[P]$ is a linear combination of $[u]$ and $e_i$.
As a result, we have $[P]^2 \le 0$, which is a contradiction.
Therefore, $b_2^+(\gamma)=0$.
Since $\delta_{\gamma} \neq 0$, $b_2^+(\gamma)=0$ implies $\gamma$ is negative definite (Lemma \ref{order}).
Hence, by Lemma \ref{negative definite spherical}, $\gamma$ is not minimal.
Therefore, $T$ has a sub-tree of the form,
\begin{displaymath}
    \xymatrix{
        \bullet^{L_1} \ar@{-}[r] & \bullet^{-1}_w \ar@{-}[d] \ar@{-}[r] & \bullet^{L_4}_v \ar@{-}[r] & \bullet^{0}_u\\
                             & \bullet^{L_2} \\
	}
\end{displaymath}
Contradiction to Lemma \ref{R}.
\end{proof}

\begin{lemma}
Suppose $T$ is a realizable minimal tree with $\pi_1(T)$ being finite.
Suppose also that there is a non-negative self-intersection vertex in a simple branch of $T$.
Then, $T$ is equivalent to $T'^{(v)}$ for another minimal tree $T'$.
\end{lemma}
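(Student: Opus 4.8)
The plan is to start from a minimal realizable tree $T$ with $\pi_1(T)$ finite, together with a simple branch $\gamma$ at some vertex of $T$ that contains a vertex of non-negative self-intersection, and to convert $\gamma$ into a ``standard'' tail ending in two consecutive $0$-vertices so that $T$ becomes a dual blow up $T'^{(v)}$. First I would invoke Lemma \ref{00}: since $\gamma$ is a simple branch containing a non-negative self-intersection vertex and $\gamma$ is not equivalent to the single vertex $\xymatrix{\bullet^0\\}$ (that case is excluded by Lemma \ref{0}, as the opposite end of $\gamma$ is attached to a branch point of $T$), $T$ is equivalent to a minimal tree $T''$ in which $\gamma$ has been replaced by a linear branch of the form $\xymatrix@R=1pc @C=1pc{\bullet_{v'} \ar@{-}[r] & \dots \ar@{-}[r] & \bullet^{0} \ar@{-}[r] & \bullet^{0}_{x}\\}$, with $v'$ the vertex adjacent to the attaching vertex $v$ of $T''$, $x$ an end vertex, and the self-intersection of $v$ possibly modified. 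The tail now ends in the pattern $\dots\bullet^{0}\bullet^{0}_{x}$.

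The second step is to recognize this terminal configuration $\bullet^{0}\bullet^{0}_{x}$ (plus the vertex it is attached to) as exactly the picture produced by the dual blow up construction $\Gamma^{(v)}$ of Figure \ref{dual blow up example}(b): recall $\Gamma^{(v)}$ adds two genus-zero, self-intersection-zero vertices $v_0,v_{-1}$ in a chain off the chosen vertex $v$ of $\Gamma$, and $\overline{\Gamma^{(v)}}$ is the equivalent version obtained by the blow-down sequence in Example \ref{0-0>1}. So I would define $T'$ to be the tree obtained from $T''$ by deleting the two terminal $0$-vertices $\bullet^{0}$ and $\bullet^{0}_{x}$, and checking that what remains is precisely a tree $T'$ with a distinguished vertex $v$ such that $T'' = T'^{(v)}$. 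The remaining vertex at the attaching end of the deleted pair plays the role of $v$; one must confirm its self-intersection in $T'$ is the original (pre-dual-blow-up) value, which is a bookkeeping check using that $T'^{(v)}$ does not change the self-intersection of $v$. Minimality of $T'$ follows from minimality of $T''$: blowing down a $-1$-vertex of $T'$ would, after re-attaching the standard tail, give a blow-down of $T''$, since the tail $\bullet^0\bullet^0$ cannot itself be blown down and does not interact with $-1$-vertices elsewhere; so no $-1$-vertex exists in $T'$.

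The main obstacle I anticipate is the case analysis needed to guarantee that after applying Lemma \ref{00} the resulting tree genuinely has the \emph{doubled} $0$-tail $\bullet^0\bullet^0_x$ rather than just a single $0$-vertex or a longer degenerate pattern, and that the attaching vertex $v'$ is distinct from the rest of $T$ in the way $T'^{(v)}$ requires --- i.e.\ that $\gamma$ was not so short that collapsing it produces no room for $T'$. Here one uses Lemma \ref{0} (no single $0$-vertex branch at a branch point of a realizable minimal tree with finite $\pi_1$) to rule out the degenerate short case, and the fact, recorded after Lemma \ref{tree} and in Lemma \ref{representation}, that such $T$ is a finite tree so the branch $\gamma$ is finite and the replacement in Lemma \ref{00} terminates. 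Once the terminal $\bullet^0\bullet^0_x$ is in place, identifying it as $T'^{(v)}$ is essentially Figure \ref{dual blow up example} read backwards, and Lemma \ref{0-0} ($\pi_1(T') = \pi_1(T'^{(v)})$) confirms the operation is consistent with $\pi_1(T)$ being finite. I would close by noting realizability of $T'$: since $T \sim T'^{(v)}$ and realizability is stable under equivalence for graphs, $T'^{(v)}$ is realizable; realizability of $T'$ itself then follows because a realization of $T'$ can be dual-blown-up (the dual blow up being a symplectic blow up in disguise, Example \ref{0-0>1}) to realize $T'^{(v)}$, and conversely a realization of $T'^{(v)}$ contains the $(+1)$-sphere of the dual blow up which can be blown down to realize $T'$.
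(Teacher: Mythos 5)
Your proposal is correct and takes essentially the same route as the paper, whose entire proof is the one-line observation that Lemma \ref{0} rules out the degenerate case (a branch collapsing to a single $0$-vertex) so that Lemma \ref{00} applies, and the resulting simple branch ending in two $0$-vertices exhibits $T$ as equivalent to $T'^{(v)}$. The only caveat is in your extra minimality check for $T'$: deleting the $0$--$0$ tail lowers the valence of the attaching vertex, so a $-1$ there could in principle be blown down in $T'$ while not in $T'^{(v)}$; the paper's own proof does not address this point either, and it does not change the approach.
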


\begin{proof}
 We can apply Lemma \ref{0} to ensure that the proof of Lemma \ref{00} goes through and hence the result follows.  
\end{proof}

By Corollary \ref{E}, $T$ has $b_2^+=1$ and hence $T'$ is negative definite.
Moreover, by Lemma \ref{representation}, we can see that $\pi_1(T)=\pi_1(T')$ and hence finite.
Therefore, what we need to do next is the classification minimal tree $T$ with $\pi_1(T)$ being finite and all self-intersection in simple branches being negative.
By Lemma \ref{T shape}, we just need to consider the case that there are more than $1$ branch point.

\begin{lemma}\label{Tech2}
 Let $T$ be a minimal realizable graph.
Suppose $k \ge 2$ be the number of branch points of $T$.
Suppose also all self-intersection of vertices in simple branches are less than $-1$.
Then, $\pi_1(T)$ is non-cyclic and infinite.
 
In particular, if $T$ is minimal, non-negative definite and $\pi_1(T)$ is finite, then $T$ is equivalent to a type (P) graph.
\end{lemma}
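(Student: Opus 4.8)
\textbf{Proof proposal for Lemma \ref{Tech2}.}

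The plan is to mimic the structure of the proof of Lemma \ref{Tech} (the negative definite analogue) but replace the purely topological ingredients at the branch points by the symplectic obstructions of Section \ref{symplectic}, especially Lemma \ref{R} and Lemma \ref{0}. First I would set up the two standing observations exactly as in Lemma \ref{Tech}: by minimality every simple branch has all self-intersections $\le -2$, hence (Example \ref{non-spherical}) every simple branch is non-spherical; and I would record the branch-analysis consequence of Lemma \ref{key} together with Lemma \ref{observation}, whose proof only uses minimality and the representation of $\pi_1$, so it applies verbatim here once we know $\pi_1(T)$ is finite or cyclic. The key new point is that, thanks to realizability, whenever Lemma \ref{observation}(c) forces a $(-1)$ branch vertex adjacent to another branch vertex we get a sub-configuration of the shape forbidden by Lemma \ref{R}; and Lemma \ref{0} rules out a $0$-vertex appearing as a whole branch at a branch point. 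So I would argue by contradiction: suppose $\pi_1(T)$ is finite or cyclic and $k \ge 2$.

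The induction would run on $k$, with base cases $k=2$ and $k=3$ handled essentially as in Lemma \ref{Tech}. For $k=2$: if one branch point has three simple branches, applying Lemma \ref{observation} to the other branch point produces a minimal one-branch-point subtree that is finite cyclic, contradicting Lemma \ref{T shape}; hence both branch points carry exactly two simple branches, and Lemma \ref{lens space analysis} forces $\mathbb{Z} \oplus \mathbb{Z} \hookrightarrow \pi_1(T)$, the desired contradiction. For $k=3$, with the two extremal branch points $v_2, v_3$: Lemma \ref{observation} at $v_2$ shows the non-simple branch is not minimal (since minimal one- or two-branch-point trees are not finite cyclic), forcing a $(-1)$ branch vertex $v_1$ linked to $v_2$; symmetry gives $v_1$ also linked to $v_3$, so $v_1$ has self-intersection $-1$ and two adjacent-to-it branch points — but now if the simple branch $T_0$ at $v_1$ has any vertex of self-intersection $\neq -2$, the branch $\Gamma$ at $v_2$ stays non-cyclic after blow-down, contradiction; so $T_0$ is a chain of $(-2)$'s, both non-simple branches at $v_2, v_3$ are equivalent to linear graphs, and Lemma \ref{lens space analysis} applied with $v_0 = v_1$, $v_r = v_3$ (its hypotheses being met since $s_2 \neq -1$ gives $\delta_\gamma \neq 0$) yields $\mathbb{Z} \oplus \mathbb{Z} \hookrightarrow \pi_1(T)$. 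For the inductive step $k \ge 4$: pick an extremal branch point $v_1$ with non-simple branch $\Gamma$; induction plus Lemma \ref{observation} force $\Gamma$ non-minimal, so there is a branch point $v_2$ linked to $v_1$ with $s_2 = -1$; if no branch of $v_2$ is simple then every blow-down of $\Gamma$ still has $\ge 2$ branch points, contradicting the base cases or induction; so $v_2$ has a simple branch, and since $s_2 = -1$ and $T$ is realizable with the $(-1)$-vertex $v_2$ in the specific position relative to $v_1$ and $\Gamma$, we obtain the forbidden sub-tree of Lemma \ref{R} (or, applying Lemma \ref{observation} again at $v_2$, a further $(-1)$ branch vertex $v_3$ adjacent to $v_2$, contradicting — via Lemma \ref{R} — the realizability). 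Either way we reach a contradiction, completing the induction.

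The second assertion then follows by assembling: if $T$ is minimal, non-negative-definite (so $b_2^+ = 1$ by Corollary \ref{E}) and $\pi_1(T)$ is finite, then by the first part $T$ has at most one branch point; Lemma \ref{simple branch} and Lemma \ref{T shape} identify the possibilities, and after pushing any non-negative simple-branch vertex into the dual-blow-up normal form (the preceding lemma, valid here by Lemma \ref{0}) one lands exactly in types (P1)--(P5). The main obstacle I anticipate is the inductive step: unlike the negative definite case, where negative definiteness immediately excludes two adjacent $(-1)$-vertices, here I must instead invoke Lemma \ref{R} to exclude the precise $(-1)$-branch-vertex configurations that Lemma \ref{observation}(c) keeps producing, and care is needed to check that the branch $\Gamma$ near $v_2$ really does contain the length-three tail with a non-negative or $(-1)$ end vertex demanded by the hypotheses of Lemma \ref{R} — this bookkeeping about where the non-simple branches and their $(-1)$-vertices sit, rather than any single hard estimate, is the delicate part.
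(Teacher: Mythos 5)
Your overall architecture --- rerun the induction of Lemma \ref{Tech} and replace each use of negative definiteness by symplectic input coming from realizability --- is indeed the paper's strategy, but the specific replacement you plug in fails at exactly the point where the new ingredient is needed. In Lemma \ref{Tech}, negative definiteness is invoked precisely to exclude a $(-1)$ branch point sitting next to another $(-1)$ vertex: in the $k=3$ case it gives $s_2,s_3\neq -1$ (which is what makes $\delta_\gamma\neq 0$ so that Lemma \ref{lens space analysis} applies), and in the inductive step it gives $s_1\le -2$ and rules out the two adjacent $(-1)$'s at the end. In your $k=3$ paragraph you write that the hypotheses of Lemma \ref{lens space analysis} are met ``since $s_2\neq -1$'' without any argument; but nothing in the hypotheses of Lemma \ref{Tech2} prevents a branch point from having self-intersection $-1$ (minimality only forbids $(-1)$-vertices with at most two neighbours), so this is precisely the case that must be dealt with. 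Moreover, your proposed tool for these configurations, Lemma \ref{R}, does not apply as stated: it requires a vertex $L_5$ with $[L_5]^2\ge 0$ adjacent to $L_4$, while under the standing assumption of Lemma \ref{Tech2} every simple-branch vertex has square $\le -2$ and no branch point is forced to have non-negative square, so the forbidden subtree is simply not present in $T$; Lemma \ref{0} likewise needs an honest $0$-vertex branch, which you do not have. The same unaddressed issue reappears in your $k\ge 4$ step (both in excluding $s_1=-1$, i.e.\ in showing the branch at $v_2$ containing $v_1$ is non-spherical, and in killing the adjacent pair $v_2,v_3$).

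The paper's fix is different and is the actual content of the lemma: when two adjacent $(-1)$-vertices appear (this happens in the $k=3$ analysis), one blows down one of the corresponding $(-1)$-spheres inside the closed ambient manifold provided by realizability; the image of the other becomes an embedded symplectic sphere of square $0$ meeting a neighbouring sphere transversally once, so Corollary \ref{E} (via Theorem \ref{McDuff}) yields $H_1=0$, rationality and $b_2^+=1$, after which Shastri's argument from \cite{Sh85} (which is carried out for $b_2^+=1$) goes through. In other words, the missing idea in your proposal is the blow-down step that manufactures a sphere of non-negative square before any of the results of Section \ref{symplectic} can bite; quoting Lemma \ref{R} on the unblown-down tree begs the question. (A smaller instance of the same slip occurs in your last paragraph: ``$b_2^+=1$ by Corollary \ref{E}'' again presupposes a sphere of non-negative square, which for the second assertion is only available after the reduction via the preceding lemma.)
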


\begin{proof}
 The essence of the proof is the same as in Lemma \ref{Tech} (See \cite{Sh85}).
Since we do not assume our tree $T$ satisfies $b_2^+=1$, we cannot apply the result in \cite{Sh85} directly.
Instead, we need to use Corollary \ref{E} to guarantee $b_2^+=1$ whenever it is needed in the proof.
This is required when we study the case that $T$ has $k=3$ branch points.
We remark that in that case, there are two $-1$ vertices linked to each other so that blowing down one of them gives us an embedded symplectic sphere with self-intersection $0$.
Therefore, we can apply Corollary \ref{E} in that case to finish the proof of the first assertion.
Moreover, by Lemma \ref{T shape}, the second assertion also follows.
\end{proof}

\subsection{Proof of Theorem \ref{main classification theorem}}
 Having Lemma \ref{Tech2}, we can now focus on the study of type (P) graphs.
Type (P1), (P2), (P3) are relatively easy to study and we are going to first go through it.
Then, complete classification of realizability of type (P4) and (P5) graphs are given, which in turn completes the proof of Theorem \ref{main classification theorem}.
Finally, we are going to show that many graphs in type (P5) do not have their conjugate. 

\subsubsection{Type (P1), (P2), (P3)}\label{Type (P1) to (P3)}

We start with type (P1).
By Example \ref{0-0>1}, we have $\xymatrix{ \bullet^{0} \ar@{-}[r] & \bullet^{0}\\ } $ is equivalent to $\xymatrix{ \bullet^{1}} $.
Then, by Example \ref{single vertex}, it is strongly realizable.
Moreover, it corresponds to a capping divisor of the empty graph.

Instead of answering the realizability of graphs in type (P2) and (P3) directly, we observe that graphs in type (2) and type (3) are all considered in \cite{BhOn12}.
The  (P2) graphs correspond to compactifying divisors for cyclic quotient singularities and the  (P3) graphs correspond to compactifying divisors for the dihedral, tetrahedral, octahedral and icosahedral singularities.
In particular, all  (P2) and (P3) graphs are strongly realizable.

The only less obvious correspondence between (P2), (P3) graphs and the graphs considered in \cite{BhOn12} are (P3) graphs of the form  $<y;2,1;n_2,\lambda_2;n_3,\lambda_3>$  with $y \le 1$ and $(n_2, n_3)=(2, n)$.
We denote the following graph in \cite{BhOn12} as $(c,c_1, \dots, c_k)$, where $[c,c_1,\dots,c_k]=\frac{n}{n-q}>1$ and $c,c_i \ge 2$ for all $i$.
These are the graphs of compactifying divisors of dihedral singularities used in \cite{BhOn12}.  

\begin{displaymath}
    \xymatrix{
        \bullet^{-2} \ar@{-}[r]& \bullet^{-1} \ar@{-}[r] \ar@{-}[d]& \bullet^{-c+1} \ar@{-}[r] & \bullet^{-c_1} \ar@{-}[r] & \dots \ar@{-}[r] & \bullet^{-c_k}\\
			&\bullet^{-2} \\
	}
\end{displaymath}

Observe that $(c,c_1, \dots, c_k)$ is the same as $<1;2,1;2,1;q,n-q>$ if one extends the definition to the case that $q < n-q$.

\begin{lemma}
 Every graph $(c,c_1, \dots, c_k)$  in \cite{BhOn12} with $c,c_i \ge 2$ is equivalent to a (P3) graph  $<y;2,1;2,1;n,\lambda>$ with $y \le 1$ and $0 < \lambda < n$ and vice versa.
\end{lemma}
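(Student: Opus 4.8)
The plan is to treat the statement as a fact in the continued-fraction/plumbing calculus, using only blow-ups and blow-downs of linear strings. Recall that a linear chain $\bullet^{-a_1}-\cdots-\bullet^{-a_m}$ with all $a_i\ge 2$ carries the continued fraction $[a_1,\dots,a_m]$, and that blowing down a $-1$ vertex lying between $\bullet^{-a}$ and $\bullet^{-b}$ turns the chain into the one with entries $\dots,a-1,b-1,\dots$ (with the evident conventions at the ends, as in Example \ref{0-0>1}); ordinary and dual blow-ups are the inverse moves and, by \cite{Ne81}, all of them leave the oriented boundary unchanged. The one arithmetic identity I will keep using is $[c-1,c_1,\dots,c_k]=[c,c_1,\dots,c_k]-1$, so that if $[c,c_1,\dots,c_k]=\tfrac{n}{n-q}$ then the third branch $\bullet^{-(c-1)}-\bullet^{-c_1}-\cdots-\bullet^{-c_k}$ of the graph $(c,c_1,\dots,c_k)$ has continued fraction $\tfrac{q}{n-q}$ -- this is exactly the identification recorded in the remark preceding the Lemma.

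First I would settle the case $c\ge 3$. Then $c-1\ge 2$, so $\tfrac{q}{n-q}=[c-1,c_1,\dots,c_k]>1$; writing this fraction in lowest terms forces $n/2<q<n$ and $(n,q)=1$. Hence $(c,c_1,\dots,c_k)$ is literally the star-shaped graph $<1;2,1;2,1;q,n-q>$, and it remains to check the defining conditions of type (P3): the central vertex has self-intersection $-1=-(3-y')$ with $y'=2\ge 2$, the first two branches are the $\tfrac21$ strings, the third branch realizes $\tfrac{q}{n-q}=\tfrac{n_3}{n_3-\lambda_3}$ with $n_3=q$, $\lambda_3=2q-n$, and the pair $(n_2,n_3)=(2,q)$ lies in the dihedral family $(2,m)$ permitted in type (N3); the inequalities $0<\lambda_3<n_3$ and $(n_3,\lambda_3)=1$ are immediate from $n/2<q<n$ and $(n,q)=1$. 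So in this case $(c,c_1,\dots,c_k)$ is already a (P3) graph of the asserted shape, with $y=1$.

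Next, the case $c=2$. Now $[1,c_1,\dots,c_k]=\tfrac{q}{n-q}<1$, so the third branch of $(2,c_1,\dots,c_k)$ begins with a $-1$ vertex adjacent to the (also $-1$) central vertex. I would repeatedly blow down the leading $-1$ of the third branch: each step raises the central self-intersection by $1$ and the self-intersection of the next chain vertex by $1$, and one continues precisely as long as that next vertex is a $-1$, i.e. as long as the next $c_i$ equals $2$. After $j\ge 1$ such steps one arrives at a genuine star-shaped graph $<1-j;2,1;2,1;N,\Lambda>$ whose third branch now has continued fraction $>1$ (its leading entry, the first surviving $c_i$, is $\ge 3$); following the continued fractions through the moves computes $N,\Lambda$ explicitly and yields $0<\Lambda<N$, $(N,\Lambda)=1$. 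Since $1-j\le 0\le 1$ and $(n_2,n_3)=(2,N)$ is again dihedral, this is a (P3) graph of the required form. The converse is obtained by reversing the two arguments: a (P3) graph $<y;2,1;2,1;N,\Lambda>$ with $y\le 1$ is read off directly as $(c,c_1,\dots,c_k)$ with $c\ge 3$ when $y=1$ (expand $\tfrac{N}{\Lambda}=[a_1,\dots,a_m]$ and put $c=a_1+1$, $c_i=a_{i+1}$), and when $y\le 0$ one builds the matching $(2,c_1,\dots,c_k)$ by prefixing $1-y$ leading $2$'s to that expansion, so that the $1-y$ blow-downs above recover the given graph.

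The step I expect to be the main obstacle is the bookkeeping in the $c=2$ (equivalently $y\le 0$) case: one must follow the continued fraction of the third branch through an a priori unbounded sequence of blow-downs and verify that the type-(N3)/(P3) constraints -- the inequalities $0<\lambda_3<n_3$, the coprimality $(n_3,\lambda_3)=1$, and membership of $(n_2,n_3)$ in the dihedral list -- all survive at the end, as well as isolating the handful of small or degenerate configurations (essentially $D_{n,1}$, where the third branch can disappear entirely). A softer alternative that bypasses this induction is to observe that by \cite{BhOn12} every $(c,c_1,\dots,c_k)$ is a compactifying divisor of a dihedral quotient singularity, that every (P3) graph $<y;2,1;2,1;N,\Lambda>$ is a dual blow-up of an (N3) dihedral resolution graph and hence, by \cite{Ne81}, has the corresponding dihedral link as boundary, and then to invoke uniqueness of the minimal plumbing representative in this class to conclude that the two families coincide up to equivalence; I would nonetheless prefer the continued-fraction argument since it is self-contained and in the spirit of the rest of the section.
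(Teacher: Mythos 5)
Your argument is essentially the paper's proof run in the opposite direction: the paper blows up the central vertex of $<y;2,1;2,1;n,\lambda>$ exactly $1-y$ times to exhibit it as a graph $(c,c_1,\dots)$ with $[c,c_1,\dots]=[2,2,\dots,2,d_1+1,d_2,\dots,d_k]$, whereas you blow down the leading $-1$'s of $(c,c_1,\dots,c_k)$; these are inverse descriptions of the same correspondence, and your verification of the (N3)/(P3) numerical constraints is correct. The two points you defer are in fact immediate: in the $c=2$ case the blow-downs only consume the leading $2$'s, so the surviving third branch is simply $[c_j-1,c_{j+1},\dots,c_k]$ and no further continued-fraction bookkeeping is needed, while the all-$2$ string never occurs among the graphs of \cite{BhOn12}, since $[2,\dots,2]=\frac{m+1}{m}$ corresponds to $q=1$, whose plumbing has lens-space (cyclic) boundary and is therefore cyclic rather than dihedral.
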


\begin{proof}
Observe that $<y;2,1;2,1;n,\lambda>$ is equivalent to
$$    \xymatrix@R=1pc @C=1pc{
        \bullet^{-2} \ar@{-}[r]& \bullet^{-1} \ar@{-}[r] \ar@{-}[d]& \bullet^{-1}_{v} \ar@{-}[r] & \bullet^{-2} \ar@{-}[r] & \dots \ar@{-}[r] & \bullet^{-2}  \ar@{-}[r] & \bullet^{-d_1-1}_{w} \ar@{-}[r] & \bullet^{-d_2} \ar@{-}[r] & \dots \ar@{-}[r] & \bullet^{-d_k}\\
			&\bullet^{-2} \\
	}
$$
where there are $-y$ many self-intersection $-2$ spheres between the spheres named $v$ 
and $w$ as subscript and $\frac{n}{\lambda}=[d_1,\dots,d_k]$.

Hence, it is of the form $(c,c_1, \dots, c_{-a+k-1})$ with 
$$[c,c_1,\dots,c_{-d},c_{-d+1},c_{-d+2}, \dots,c_{-d+k-1}]
=[2,2,\dots,2,d_1+1,d_2,\dots,d_k].$$
This defines a map from the set of $<y;2,1;2,1;n,\lambda>$ with $y \le 1$ and $0 < \lambda < n$ to the set of $(c,c_1, \dots, c_k)$ and $c,c_i \ge 2$.
Moreover, the inverse exists.
\end{proof}

Knowing that the graphs in type (P1) to type (P3) are realizable, as remarked before,  we can determine whether a divisor with its graph 
being in type (P1) to type (P3) is a capping divisor or not, by Proposition \ref{reduce to graph}.

We remarked that for any graph $T$ in (P2), there is a unique (N2) graph $T'$ such that the dual blow up of $T'$ at the left-end vertex is $T$.
In fact, by symmetry, there is also a unique (N2) graph $T"$ such that the dual blow up of the right-end vertex  of $T"$ is $T$.
To be more precise, $T"$ is obtained from $T'$ by rewriting the self-intersections from left to right to from right to left.

\subsubsection{Type (P4) and (P5)}\label{Type (P4) and (P5)}

Suppose a graph $\overline{T^{(v)}}$ in type (P4) or (P5) admits a realization $D$ in a closed symplectic manifold $W$.
By Theorem \ref{McDuff} again, the existence of the self-intersection $1$ sphere implies that $W$ is rational.

After preparation, now we are ready to study the realizability of type (P4) and (P5) graphs.
We recall that for a type (P4) graph, $T^{(v)}$, $v$ is not an end vertex of $T$.
We show that all (P4) graphs are realizable but we some graphs in type (P5) are not realizable. 

\begin{lemma}\label{standard realization}
  Suppose $T=<n,\lambda>= \xymatrix{ \bullet^{-d_1} \ar@{-}[r] & \bullet^{-d_2} \ar@{-}[r] & \dots \ar@{-}[r] & \bullet^{-d_k}\\ }$ 
and $v_1$ is the vertex with self-intersection $-d_j$ and $j \neq 1,k$.
Then, $\overline{T^{(v_1)}}$ and hence the (P4) graph $T^{(v_1)}$ is realizable by some symplectic divisor $D$.

On the other hand, suppose $T_2=<y;2,1;n_2,\lambda_2;n_3,\lambda_3>$ is a graph in type (N3).
Then, $T_2^{(v)}$ is realizable if $y \neq 2$.

\end{lemma}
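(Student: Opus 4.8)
The plan is to realize these graphs explicitly inside blow-ups of Hirzebruch surfaces, which by Theorem \ref{McDuff} is forced to be the ambient manifold. The key observation is that $\overline{T^{(v_1)}}$ is, up to equivalence, obtained from $T$ by increasing the self-intersection of $v_1$ by one and attaching a $+1$-sphere to it; so it suffices to realize $T$ together with this extra $+1$-sphere as a symplectic configuration in a rational surface. Since $T$ is a linear (or one-branch-point) negative-definite graph, it is the resolution graph of a cyclic (resp. dihedral/polyhedral) quotient singularity, hence algebraic; the standard way to produce it in a rational surface is to start from a normal projective compactification of the corresponding affine quotient and resolve, or more concretely to build it by hand inside $\mathbb{F}_n$ via iterated blow-ups.

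Concretely, for the first statement I would argue as follows. Write $T=<n,\lambda>$ with vertices $v_1,\dots,v_k$ of self-intersections $-d_1,\dots,-d_k$, and fix $v_j$ with $1<j<k$. The chain $v_1,\dots,v_{j-1}$ and the chain $v_{j+1},\dots,v_k$ are themselves negative-definite linear graphs, so each is (after reversing orientation) of the form $<m,\mu>$ and can be obtained by successive blow-ups at infinitely near points of a curve. Start with $\mathbb{F}_{N}$ for suitable $N$ containing a section $S$ and a fiber $F$ meeting at a point $p$; think of $S$ as the seed of $v_j$ (with self-intersection temporarily large) and $F$ as providing a $+1$-curve after one blow-down, with the two chains emanating from $p$ along $S$ and along a second curve through $p$. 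Blowing up repeatedly along these two directions produces the two linear chains attached to $v_j$ on either side, while each blow-up decreases $S^2$; choosing the number of blow-ups in each direction to match $d_1,\dots,d_{j-1}$ and $d_{j+1},\dots,d_k$, and arranging the self-intersection of the strict transform of $S$ to equal $-d_j+1$, one gets precisely $\overline{T^{(v_1)}}$: the strict transform of the original fiber plays the role of the $+1$-vertex attached to $v_j$. Positivity and transversality of intersections, and the symplectic (indeed algebraic) nature of all curves, are automatic since everything is done in the algebraic category; one then appeals to the fact that realizability of a \emph{graph} is what is asserted, so no area constraints enter.

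For the one-branch-point case $T_2=<y;2,1;n_2,\lambda_2;n_3,\lambda_3>$ with $y\neq 2$, I would do the same but now the central vertex $v$ of $T_2$ carries a third chain, and I need to be able to attach a $+1$-sphere to an arbitrary vertex of $T_2$. The point of the hypothesis $y\neq 2$ is exactly the local picture at the central vertex: $\overline{T_2^{(v)}}$ when $v$ is the central vertex requires the central self-intersection to become $-y+1$ with a $+1$-curve attached, and when $y=2$ this would create a $-1$-vertex adjacent to the configuration in a way that, after blow-down, falls afoul of Lemma \ref{R} (the forbidden sub-tree with a $-1$-branch-point and a non-negative end); for $y\neq 2$ no such obstruction arises and the blow-up construction goes through. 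I would again build $\mathbb{F}_N$ with a section, realize the two chains $<n_2,\lambda_2>$ and $<n_3,\lambda_3>$ and the short $[2]$-chain by blow-ups at infinitely near points of $S$ and of fibers/second curves through chosen points, keeping careful track of which strict transform supplies the attached $+1$-sphere for the chosen vertex $v$. The bookkeeping of continued fractions — verifying that iterated blow-ups along a chain with prescribed $d_i$'s indeed yield $[d_1,\dots,d_k]$ — is routine and I would only indicate it.

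The main obstacle is the combinatorial/algebraic bookkeeping of the blow-up sequence: one must choose the initial Hirzebruch surface and the infinitely near points so that \emph{simultaneously} (i) all three chains come out with the exact self-intersections dictated by the continued fractions, (ii) the central vertex ends up with self-intersection $-y+1$ (resp. the chosen $v$ gets the extra $+1$-neighbor), and (iii) no spurious intersections or $-1$-curves violating earlier lemmas are introduced. A clean way to organize this is to realize the dual (negative-definite) graph $T$ first — which is classical, being a quotient singularity resolution sitting in a rational surface obtained by resolving the projective cone — and then identify the exceptional $+1$-curve of the last blow-down as the dual-blow-up vertex; this reduces the problem to the known algebraic geometry of linear and star-shaped resolution graphs, with the role of $y\neq 2$ isolated to a single adjacency check. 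I expect the write-up to proceed by exhibiting the explicit sequence of blow-ups in each of the two cases and then invoking Lemma \ref{R} only to explain why $y=2$ is genuinely excluded, with the remaining realizability in the $y=2$ case (part (B)(ii) of Theorem \ref{main classification theorem}) handled separately by the case-by-case figures.
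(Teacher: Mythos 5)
Your overall strategy (iterated blow-ups of a rational surface starting from a simple seed configuration) is the same family of argument the paper uses, but as written it has two concrete problems. First, the seed you describe does not produce the required $+1$-vertex: a fiber $F$ of $\mathbb{F}_N$ has square $0$, and blow-ups only decrease self-intersections, so "the strict transform of the original fiber" can never be the attached $+1$-sphere; the "one blow-down" you invoke is never located in the construction, and if it is the blow-down of the $-1$-section of $\mathbb{F}_1$ you are simply back to two lines in $\mathbb{CP}^2$, which is what the paper actually starts with. Worse, you have the two chains "emanating from $p$", the intersection point of $S$ and $F$: any blow-up at $p$ (or at points infinitely near it) makes the first curve of a chain meet the would-be $+1$-curve as well, an adjacency that does not exist in $\overline{T^{(v_1)}}$. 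The paper's construction avoids exactly this by taking two lines $C_0,C_1\subset\mathbb{CP}^2$, blowing up $d_j$ points of $C_1$ \emph{away from} $C_0\cap C_1$ (so $C_0$ stays a $+1$-sphere meeting only $C_1$, and $C_1^2=1-d_j$), and then growing each chain by blowing up the newest exceptional sphere at points away from all existing intersections; no continued-fraction bookkeeping is needed because each self-intersection is prescribed directly by the number of points blown up on that component.

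Second, the star-shaped case, which is the substantive half of the lemma, is not actually carried out: "keeping careful track of which strict transform supplies the attached $+1$-sphere for the chosen vertex $v$" is precisely the content that has to be exhibited, since the $+1$-curve must meet the configuration transversally once, only in the component of $v$, while the central vertex still carries three branches. The paper does this by reducing to the case where all branch self-intersections are $-2$ (the general case follows by further blow-ups) and writing down explicit classes: the $+1$-sphere is $h$, the curve of $v$ is $h-e_{\cdot}-e_{\cdot}$, and every other component is a difference of exceptional classes, so $h$ meets only $C^v$. Your explanation of the hypothesis $y\neq 2$ is also misplaced: it is not an appeal to Lemma \ref{R} (indeed for $y=2$ with $v$ away from the central vertex many $T_2^{(v)}$ \emph{are} realizable, and the genuine $y=2$ obstruction when $v$ is central is Lemma \ref{realizable}, proved via Proposition \ref{combinaotric Lisca}, not Lemma \ref{R}). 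In the construction the hypothesis enters simply because the central curve arises as an exceptional sphere that must be blown up at least twice to sprout the other two branches, so its square is at most $-3$; hence the construction covers exactly $y\geq 3$, i.e.\ $y\neq 2$.
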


\begin{proof}
 We start with two algebraic lines in $\mathbb{CP}^2$ and call it $C_0$ and $C_1$.
We blow up $\mathbb{CP}^2$ at $d_j$ distinct regular points at $C_1$ away from the intersection point of $C_0$ and $C_1$.  
Label the exceptional spheres as $E_1, E_{j+1}, E_1^{2}, \dots, E_1^{d_j-1}$.
Call the proper transform of $C_0$ and $C_1$ as $C_0$ and $C_1$ again.
Moreover, we call $E_1$ and $E_{j+1}$ to be $C_2$ and $C_{j+1}$, respectively.
Then, we blow up $d_{j-1}-1$ many distinct regular points on $C_2$ that are away from the intersection points.
Denote the exceptional spheres as $E_2, E_2^{2}, E_2^{3}, \dots, E_2^{d_{j-1}-1}$.
Call the proper transform of $C_i$'s as $C_i$'s again and we call $E_2$ as $C_3$.
We keep blowing up at regular points inductively and similarly on $C_3$ up to $C_{j-1}$ and denotes $E_{j-1}$ as $C_j$.
Now,we blow up $C_j$ at $d_1-1$ many distinct regular points and call the exceptional spheres as $E_j, E_j^{2}, E_j^{3}, \dots, E_j^{d_{1}-1}$.
This time, we do not let $C_{j+1}$ to be $E_j$ (we actually defined $C_{j+1}=E_{j+1}$).
We get the second branch $C_2 \cup \dots \cup C_j$ of $C_1$ ($C_0$ is the first branch of $C_1$).
Now, we blow up similarly for $E_{j+1}=C_{j+1}$ and we can get the last branch $C_{j+1} \cup \dots \cup C_k$ of $C_1$.
This gives an embeddable symplectic divisor $D=C_0 \cup \dots \cup C_k$ that realize $\overline{T^{(v_1)}}$.

For the type (P5) graph, we also consider $\overline{T_2^{(v)}}$ instead of $T_2^{(v)}$.
We assume that $v$ is a vertex with two branches.
The case when $v$ is the vertex with three branches is similar.

We start with $\mathbb{CP}^2$ with $D$ being union of two distinct $\mathbb{CP}^1$, denoted by $C_1$ and $C_2$.
Without loss of generality, we can assume $\overline{T_2^{(v)}}$ is of the form
\begin{displaymath}
    \xymatrix@R=1pc @C=1pc{
        \bullet^{-d_k} \ar@{-}[r] & \dots \ar@{-}[r] & \bullet^{-d_1} \ar@{-}[r]& \bullet^{-y}_c \ar@{-}[r] \ar@{-}[d]& \bullet^{-b_1} \ar@{-}[r] & \dots \ar@{-}[r] & \bullet^{1-b_j}_v \ar@{-}[r] \ar@{-}[d]& \dots \ar@{-}[r] & \bullet^{-b_l}\\
				  &			&		&\bullet^{-c_1} \ar@{-}[d]                  &                           &                  & \bullet^{1}_p                           &\\
				  &			&		&\vdots \\
				  &			&		&\bullet^{-c_m}\\
	}
\end{displaymath}

Let we denote the sphere with self-intersection $-d_s$, $-b_s$ ($s \neq j$) and $-c_s$ as $C^\alpha_s$, $C^\beta_s$ ($s \neq j$) and $C^\gamma_s$, respectively.
Also, denote the sphere with self-intersection $-y$, $1-b_j$ and $1$ as $C^c$ $C^v$ and $C^p$, respectively.

It suffices to consider the case that $d_s=b_s=c_s=2$ for all $s$ because we can obtain the other cases by extra blow-ups.
It is possible to obtain $\overline{T_2^{(v)}}$ with homology of the spheres indicated below by iterative blow-ups starting from $D$ similar to that in Lemma \ref{standard realization}.
Here $h$ is the hyperplane class and $e_i$'s are the exceptional classes resulting from single blow-ups.

$[C^\alpha_s]=e_{i^{\alpha_{s-1}}_2}-e_{i^{\alpha_{s}}_2}$ for $2 \le s \le k$;

$[C^\alpha_1]=e_{i^{\alpha_1}_1}-e_{i^{\alpha_1}_2}$;

$[C^\gamma_s]=e_{i^{\gamma_{s-1}}_2}-e_{i^{\gamma_{s}}_2}$ for $2 \le s \le m$;

$[C^\gamma_1]=e_{i^{\gamma_1}_1}-e_{i^{\gamma_1}_2}$;

$[C^c]=e_{i^{\beta_1}_2}-e_{i^{\alpha_1}_1}-e_{i^{\gamma_1}_1}$;

$[C^\beta_s]=e_{i^{\beta_{s+1}}_2}-e_{i^{\beta_{s}}_2}$ for $1 \le s \le j-2$;

$[C^\beta_{j-1}]=e_{i^{\beta_{j-1}}_1}-e_{i^{\beta_{j-1}}_2}$;

$[C^v]=h-e_{i^{\beta_{j-1}}_1}-e_{i^{\beta_{j+1}}_1}$;

$[C^\beta_{j+1}]=e_{i^{\beta_{j+1}}_1}-e_{i^{\beta_{j+1}}_2}$;

$[C^\beta_s]=e_{i^{\beta_{s-1}}_2}-e_{i^{\beta_{s}}_2}$ for $j+2 \le s \le l$, and

$[C^p]=h$.

This shows the existence of a realization for the type (P5) graph.

\end{proof}

To aid the non-realizablility study of some type (P5) graphs, we recall a combinatorical argument given by Lisca (See Proposition 4.4 of \cite{Li08}).

\begin{prop}\label{combinaotric Lisca}
 Suppose $W=\mathbb{CP}^2 \# N \overline{\mathbb{CP}^2}$ equipped with a symplectic form $\omega$ coming from blown-up of the Fubini-Study form $\omega_{FS}$.
Let $D=C_0 \cup C_1 \cup \dots \cup C_k$ be a symplectic divisor with linear graph and $C_0$ corresponds to one of the two end vertices.
Suppose the self-intersection of $C_i$ is $-b_i$ for $2 \le i \le k$, $[C_0]^2=1$ and $[C_1]^2=1-b_1$, where $b_i \ge 1$ for all $i$.

Suppose $\{h,e_1,\dots,e_N\} \subset H_2(\mathbb{CP}^2\#N\overline{\mathbb{CP}^2};\mathbb{Z})$ forms an orthogonal basis with $h$ being the line class and $e_i^2=-1$.
Assume also $[C_0]=h$.

Then, $[C_1]=h-e_{i^1_1}-e_{i^1_2}-\dots-e_{i^1_{b_1}}$ and $[C_j]=e_{i^j_1}-e_{i^j_2}-\dots-e_{i^j_{b_j}}$ for $2 \le j\le k$,
where, for any $\alpha$, $e_{i^\alpha_m} \neq e_{i^\alpha_n}$ for $m \neq n$.  
\end{prop}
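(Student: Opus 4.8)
The plan is to reproduce Lisca's combinatorial argument (Proposition~4.4 of \cite{Li08}), whose only ingredients are the intersection pattern of a linear configuration, the adjunction formula for embedded symplectic spheres, and positivity of symplectic areas in the blown-up model. First I would fix, in the given basis, the identity $c_1(W)=3h-\sum_{i=1}^N e_i$, and record that since $\omega$ is obtained by blowing up $\omega_{FS}$ each exceptional class satisfies $\omega\cdot e_i>0$. Because $C_0$ is an end vertex of the linear graph, we have $[C_0]\cdot[C_1]=1$ and $[C_0]\cdot[C_j]=0$ for $2\le j\le k$; as $[C_0]=h$, this pins the $h$-coefficient of $[C_1]$ to be $1$ and the $h$-coefficient of every $[C_j]$, $j\ge 2$, to be $0$.

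Next I would substitute into adjunction, $c_1(W)\cdot[C]=2+[C]^2$, which applies because each $C_j$ is an embedded symplectic sphere. For $[C_1]=h+\sum_i a_ie_i$, adjunction together with $[C_1]^2=1-\sum_i a_i^2$ reduces to $\sum_i a_i(a_i+1)=0$; each summand is a product of consecutive integers, hence non-negative, so every $a_i\in\{0,-1\}$, and the hypothesis $[C_1]^2=1-b_1$ then forces exactly $b_1$ of the $a_i$ to equal $-1$. This gives $[C_1]=h-e_{i^1_1}-\dots-e_{i^1_{b_1}}$, the indices being automatically distinct. For $j\ge 2$, writing $[C_j]=\sum_i a_ie_i$ (no $h$-term, by the first step), the same computation yields instead $\sum_i a_i(a_i+1)=2$; since each summand is a non-negative integer lying in $\{0,2,6,\dots\}$, exactly one $a_i$ lies in $\{1,-2\}$ and all others lie in $\{0,-1\}$.

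The step I expect to be the real obstacle --- and the only one where the combinatorics of the chain does not by itself suffice (for $j\ge 3$ the constraint $[C_j]\cdot[C_{j-1}]=1$ does not exclude the value $-2$) --- is ruling out $[C_j]=-2e_\nu-\sum_{m\in I}e_m$. Here I would argue by positivity of symplectic area: such a class would have $\omega\cdot[C_j]=-2\,\omega\cdot e_\nu-\sum_{m\in I}\omega\cdot e_m<0$, contradicting that $C_j$ is symplectic. Hence the distinguished coefficient of $[C_j]$ equals $+1$, the remaining coefficients are $0$ or $-1$, and $[C_j]^2=-b_j$ forces exactly $b_j-1$ of them to be $-1$, i.e. $[C_j]=e_{i^j_1}-e_{i^j_2}-\dots-e_{i^j_{b_j}}$ with pairwise distinct indices. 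Collecting these identities over $j=1,\dots,k$ gives the statement.
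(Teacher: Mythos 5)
Your proposal is correct and follows essentially the route the paper relies on: the paper does not reprove this statement but cites Lisca's Proposition 4.4, whose argument is exactly your combination of the adjunction formula $c_1(W)\cdot[C]=2+[C]^2$ with the normalization $c_1(W)=3h-\sum_i e_i$, $\omega(e_i)>0$ (the same computation the paper carries out in the proof of Lemma \ref{R}), and your exclusion of a $-2$ coefficient by positivity of symplectic area is a valid way to finish. The only point worth making explicit is that the hypothesis ``$\omega$ comes from blowing up $\omega_{FS}$'' is what fixes the signs of the basis, i.e. $e_i$ are the exceptional classes with $\omega(e_i)>0$, which is precisely how the paper uses the proposition later.
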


\begin{rmk}
 From now on, when we write the homology of a sphere, say $C$, we might simply write $h-e.-e.-\dots-e.$ and $e.-e.-\dots-e.$ to represent the homology class of $C$.
In this case, the different $e.$'s in $[C]$ are understood to be distinct exceptional classes as in the conclusion of the Proposition \ref{combinaotric Lisca}.
\end{rmk}

\begin{lemma}\label{realizable}
 Suppose $T=<y;2,1;n_2,\lambda_2;n_3,\lambda_3>$ is a graph in type (N3).
If $v$ is the vertex with three branches, then $T^{(v)}$ is realizable if and only if $y \neq 2$.
\end{lemma}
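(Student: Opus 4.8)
The plan is to prove both directions separately. For the "if" direction, suppose $y \neq 2$; we must produce a realization of $T^{(v)}$ where $v$ is the branch vertex. As in Lemma \ref{standard realization}, we work with $\overline{T^{(v)}}$ instead, which replaces the two new zero-vertices by attaching a single $+1$-vertex to $v$ and increasing the self-intersection of $v$ by $1$. The central vertex then has self-intersection $1-y$. Since the continued fractions $[d_1,\dots],[b_1,\dots],[c_1,\dots]$ attached to the three branches are built from $-2$'s and extra blow-ups, it suffices to treat $d_s=b_s=c_s=2$ and recover the general case by further blow-ups at regular points. So I would start from $\mathbb{CP}^2$ with $D=C_1\cup C_2$ two lines, and perform iterated blow-ups at regular points along a chain as in the (P5) construction of Lemma \ref{standard realization}, arranging the homology classes: the $+1$-vertex $C^p$ gets class $h$, and the central vertex gets a class of the form $\alpha h - (\text{exceptional classes})$ chosen to have the correct self-intersection $1-y$; the three arms become chains of differences of exceptional classes. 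The condition $y\neq 2$ is what makes the arithmetic of the self-intersection of the central vertex consistent with such a decomposition — when $y\neq 2$ one can solve for the required number of exceptional classes meeting the central sphere, and positivity/distinctness of these classes can be maintained by performing the blow-ups at genuinely distinct regular points. I expect this direction to be a direct adaptation of the explicit blow-up recipe already carried out in Lemma \ref{standard realization}.

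For the "only if" direction — the main obstacle — I must show $y=2$ forces non-realizability of $T^{(v)}$ at the branch vertex. Suppose $D$ realizes $\overline{T^{(v)}}$ inside a closed symplectic $W$. The $+1$-sphere forces $W$ to be rational with $b_2^+(W)=1$ and $H_1(W)=0$ by Theorem \ref{McDuff} and Corollary \ref{E}, so $W=\mathbb{CP}^2\#N\overline{\mathbb{CP}^2}$ with a standard-form symplectic structure. I would then set up an orthogonal basis $\{h,e_1,\dots,e_N\}$ with the $+1$-sphere $C^p$ representing $h$. Now examine the central vertex $C^c$, which has self-intersection $1-y=-1$ when $y=2$, meets $C^p$ once, and meets one sphere from each of the three arms. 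Using positivity of intersections, the adjunction formula for symplectic spheres, and the Lisca-type combinatorial analysis (Proposition \ref{combinaotric Lisca}, applied arm by arm after noting each arm together with $C^p$ and $C^c$ forms a linear subconfiguration), I would derive constraints on $[C^c]$: it must meet $h$ once so it has $h$-coefficient $1$, and its self-intersection $-1$ together with the three arm-intersection conditions and the arm classes (differences of exceptionals) pins down its form. The contradiction should emerge exactly as in Lemma \ref{R} and Lemma \ref{b_2^+}: the three arms attach via exceptional classes, and the bookkeeping of which $e_i$ appear with which signs forces a quantity of the shape $-1 - \sum_i a_i b_i = 0$ with each $a_i b_i \ge 0$, which is impossible. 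The key point distinguishing $y=2$ from $y\neq 2$ is that $1-y=-1$ makes $C^c$ an exceptional-like class whose homological constraints cannot simultaneously accommodate three non-simple arms; for $y\neq 2$ the self-intersection is different and the contradiction evaporates.

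The hard part will be the combinatorial elimination in the $y=2$ case: one must carefully organize the exceptional classes appearing in the three arms and in $[C^c]$, handle the possible overlaps, and show the intersection relations among $C^c$ and the three innermost arm-spheres are jointly unsatisfiable. I would structure this as: (1) reduce to the standard rational $W$; (2) fix the basis and normalize $[C^p]=h$; (3) compute $[C^c]$ and the three innermost arm classes via adjunction plus Proposition \ref{combinaotric Lisca}; (4) run the sign/positivity argument of Lemma \ref{R} to reach a contradiction. Steps (1)–(2) are immediate from the quoted results; step (3) is a finite case analysis; step (4) is the crux and will mirror the final paragraph of the proof of Lemma \ref{R} almost verbatim, with three arms in place of two.
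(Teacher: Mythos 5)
Your proposal is correct and follows essentially the same route as the paper: the $y\neq 2$ direction is exactly the blow-up construction of Lemma \ref{standard realization}, and for $y=2$ the paper likewise normalizes the $+1$-sphere to $h$ via Theorem \ref{McDuff}, invokes Proposition \ref{combinaotric Lisca} to force $[C^c]=h-e_1-e_2$ and arm classes of the form $e_{\cdot}-e_{\cdot}-\dots$, and then derives the contradiction you describe (two of the three arm spheres must share the exceptional class $e_1$ or $e_2$, making their intersection $\le -1$ instead of $0$). Your extra remark about applying Lisca's proposition arm by arm to the linear subconfigurations is a sound way to justify the step the paper states more briefly.
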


\begin{proof}
The realizability part is already covered by Lemma \ref{standard realization} so we are going to show the other direction.
Suppose $y=2$ and, on the contrary, there were a realization of $T^{(v)}$ in a closed symplectic manifold.
Then, we have $\overline{T^{(v)}}$ is also realizable and we have the following graph.

\begin{displaymath}
    \xymatrix{
                                  &                  &                            &  \bullet^{1}_p \ar@{-}[d] \\
        \bullet^{-d_k} \ar@{-}[r] & \dots \ar@{-}[r] & \bullet^{-d_1}_{v_1} \ar@{-}[r]& \bullet^{-1}_v \ar@{-}[r] \ar@{-}[d]& \bullet^{-b_1}_{v_2} \ar@{-}[r] & \dots \ar@{-}[r] & \bullet^{-b_l}\\
				  &			&		&\bullet^{-c_1}_{v_3} \ar@{-}[d]    \\
				  &			&		&\vdots \\
				  &			&		&\bullet^{-c_m}\\
	}
\end{displaymath}

By Theorem \ref{McDuff} we can assume the positive sphere (the one with subscript $p$) has homology class $h$.
Then, the only vertex with 4 branches ($v$) has to have homology class of the form $h-e_1-e_2$, by Proposition \ref{combinaotric Lisca}.
Here, as usual, $e_1$ and $e_2$ are exceptional classes formed by blowups.

We recall that Proposition \ref{combinaotric Lisca} ensure that the vertices $v_i$ has homology of the form $e_{j_1}-e_{j_2}-\dots-e_{j_t}$ for some distinct $e_{j_s}$ $1 \le s \le t$.
To give the positive one contribution of the intersection of vertex $v_i$ with $v$, modulo symmetry, two of three vertices, $v_1$, $v_2$ and $v_3$ has homology class of the form $e_1-e.-\dots-e.$, where $e.$ are distinct exceptional classes not equal to $e_1$.
However, it contradict to the zero intersection of any pair of $v_i$, $i=1,2,3$.
\end{proof}

 Using the same line of reasoning, one can determine completely which graph is realizable and which is not and we put the results in the following.
Therefore, the proof of Theorem \ref{main classification theorem} is completed.

A vertex with subscribe $Y$ indicates that if it is $v$, then the corresponding $T^{(v)}$ is realizable.
Otherwise, the subscribe is $X$.

\begin{fig}\label{Tetrahedral}
 Tetrahedral
$$    \xymatrix{
       \bullet^{-2}_X \ar@{-}[r] & \bullet^{-2}_X \ar@{-}[r]& \bullet^{-2}_X \ar@{-}[r] \ar@{-}[d]& \bullet^{-2}_X \ar@{-}[r] & \bullet^{-2}_X \\
			        &                       &\bullet^{-2}_X \\
	}
$$

$$    \xymatrix{
       \bullet^{-2}_Y \ar@{-}[r] & \bullet^{-2}_X \ar@{-}[r]& \bullet^{-2}_X \ar@{-}[r] \ar@{-}[d]& \bullet^{-3}_Y \\
			        &                       &\bullet^{-2}_Y \\
	}
$$

$$    \xymatrix{
        \bullet^{-3}_Y \ar@{-}[r]& \bullet^{-2}_X \ar@{-}[r] \ar@{-}[d]& \bullet^{-3}_Y \\
			            &\bullet^{-2}_Y \\
	}
$$
\end{fig}

\begin{fig}\label{Octahedral}
 Octahedral
$$    \xymatrix{
       \bullet^{-2}_X \ar@{-}[r] & \bullet^{-2}_X \ar@{-}[r]& \bullet^{-2}_X \ar@{-}[r] \ar@{-}[d]& \bullet^{-2}_X \ar@{-}[r] & \bullet^{-2}_X \ar@{-}[r] & \bullet^{-2}_X \\
			        &         &              \bullet^{-2}_X \\
	}
$$

$$    \xymatrix{
        \bullet^{-3}_Y \ar@{-}[r]& \bullet^{-2}_X \ar@{-}[r] \ar@{-}[d]& \bullet^{-2}_X \ar@{-}[r] & \bullet^{-2}_X \ar@{-}[r] & \bullet^{-2}_X \\
			                              &\bullet^{-2}_Y \\
	}
$$

$$    \xymatrix{
       \bullet^{-2}_Y \ar@{-}[r] & \bullet^{-2}_X \ar@{-}[r]& \bullet^{-2}_X \ar@{-}[r] \ar@{-}[d]& \bullet^{-4}_Y \\
			        &                     &\bullet^{-2}_Y \\
	}
$$

$$    \xymatrix{
        \bullet^{-3}_Y \ar@{-}[r]& \bullet^{-2}_X \ar@{-}[r] \ar@{-}[d]& \bullet^{-4}_Y \\
			                              &\bullet^{-2}_Y \\
	}
$$
 
\end{fig}

\begin{fig}\label{Icosahedral}
 Icosahedral
$$    \xymatrix{
       \bullet^{-2}_X \ar@{-}[r] & \bullet^{-2}_X \ar@{-}[r]& \bullet^{-2}_X \ar@{-}[r] \ar@{-}[d]& \bullet^{-2}_X \ar@{-}[r] & \bullet^{-2}_X \ar@{-}[r] & \bullet^{-2}_X \ar@{-}[r] & \bullet^{-2}_X \\
			        &                       &\bullet^{-2}_X \\
	}
$$

$$    \xymatrix{
       \bullet^{-2}_X \ar@{-}[r] & \bullet^{-2}_X \ar@{-}[r]& \bullet^{-2}_X \ar@{-}[r] \ar@{-}[d]& \bullet^{-2}_X \ar@{-}[r] & \bullet^{-3}_X \\
			        &                       &\bullet^{-2}_X \\
	}
$$

$$    \xymatrix{
        \bullet^{-3}_Y \ar@{-}[r]& \bullet^{-2}_X \ar@{-}[r] \ar@{-}[d]& \bullet^{-2}_X \ar@{-}[r] & \bullet^{-2}_X \ar@{-}[r] & \bullet^{-2}_X \ar@{-}[r] & \bullet^{-2}_X \\
			                               &\bullet^{-2}_Y \\
	}
$$

$$    \xymatrix{
       \bullet^{-2}_X \ar@{-}[r] & \bullet^{-2}_X \ar@{-}[r]& \bullet^{-2}_X \ar@{-}[r] \ar@{-}[d]& \bullet^{-3}_Y \ar@{-}[r] & \bullet^{-2}_Y \\
			        &                       &\bullet^{-2}_Y \\
	}
$$

$$    \xymatrix{
        \bullet^{-3}_Y \ar@{-}[r]& \bullet^{-2}_X \ar@{-}[r] \ar@{-}[d]& \bullet^{-2}_X \ar@{-}[r] & \bullet^{-3}_Y \\
			        &                       \bullet^{-2}_Y \\
	}
$$

$$    \xymatrix{
       \bullet^{-2}_Y \ar@{-}[r] & \bullet^{-2}_X \ar@{-}[r]& \bullet^{-2}_X \ar@{-}[r] \ar@{-}[d]& \bullet^{-5}_Y\\
			        &                       &\bullet^{-2}_Y \\
	}
$$

$$    \xymatrix{
        \bullet^{-3}_Y \ar@{-}[r]& \bullet^{-2}_X \ar@{-}[r] \ar@{-}[d]& \bullet^{-3}_Y \ar@{-}[r] & \bullet^{-2}_Y \\
			                              &\bullet^{-2}_Y \\
	}
$$

$$    \xymatrix{
        \bullet^{-3}_Y \ar@{-}[r]& \bullet^{-2}_X \ar@{-}[r] \ar@{-}[d]& \bullet^{-5}_Y \\
			                              &\bullet^{-2}_Y \\
	}
$$
 
\end{fig}

\begin{fig}\label{Dihedral}
 Dihedral
 $$    \xymatrix{
        \bullet^{-2}_Y \ar@{-}[r]& \bullet^{-2}_X \ar@{-}[r] \ar@{-}[d]& \bullet^{-d_1}_N \ar@{-}[r] & \dots  \ar@{-}[r] & \bullet^{-d_k}_N \\
			        &                       \bullet^{-2}_Y \\
	}
$$
where $N=Y$ if $d_1 \ge 3$ and $N=X$ if $d_1=2$.

\end{fig}

This completes the classification of realizable graph with finite boundary fundamental group.

\subsection{Remarks on Fillings}\label{Fillings}
In this subsection, we study the fillings for a given capping divisor $D$.
First, we sketch the proof of the finiteness of fillings.
Then, we study the conjugate phenomena.
Finally, Liouville domain as a filling is considered.

\subsubsection{Finiteness}\label{Finiteness}

\begin{prop}\label{bounds}
 Suppose $D$ is a capping divisor with finite boundary fundamental group.
Then,   up to diffeomorphism,  only finitely many minimal symplectic manifolds can be compactified by $D$.
\end{prop}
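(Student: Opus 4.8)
The plan is to deduce Proposition~\ref{bounds} from the classification of capping divisors with finite boundary $\pi_1$ (Theorem~\ref{complete classification}, i.e. Theorem~\ref{main classification theorem}) together with the boundedness results of McDuff (Theorem~\ref{McDuff}, Corollary~\ref{E}) and the Lisca-type finiteness arguments already cited in the paper (\cite{Li08}, \cite{BhOn12}, \cite{St13}). The first step is to observe that, since $D$ is a capping divisor with finite boundary fundamental group, its boundary $Y=\partial P(D)$ is a rational homology sphere (indeed a spherical space form), so it is the link of a quotient singularity or one of the type~(P) boundaries classified above. Any minimal symplectic $W'$ compactified by $D$ is obtained by gluing a convex filling $Y'$ of $(Y,\xi)$ — where $\xi$ is the contact structure induced by the GS/McLean construction, which by Theorem~\ref{uniqueness-GS} is canonical — to $P(D)$ along $Y$. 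Hence it suffices to bound, up to diffeomorphism, the minimal convex fillings $Y'$ of $(Y,\xi)$.

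The second and main step is the Euler-characteristic / signature bound. Glue $P(D)$ to any minimal filling $Y'$ to get a closed symplectic $4$-manifold $W'$. Since $D$ contains a symplectic surface of non-negative (in fact, for the realizable type~(P) graphs, positive or zero-with-a-dual-sphere) self-intersection, Theorem~\ref{McDuff} and Corollary~\ref{E} force $W'$ to be rational, so $b_2^+(W')=1$ and $b_1(W')=0$; moreover $W'$ is diffeomorphic to $\mathbb{CP}^2$ blown up $N$ times. Because $Y$ is a rational homology sphere, $b_2(W')=b_2(P(D))+b_2(Y')$ and the signatures add, which pins down $b_2(Y')$ and $\sigma(Y')$ in terms of the fixed data of $D$ alone; in particular $N=b_2(Y')+b_2(P(D))-1$ is determined, so $W'$ has a single diffeomorphism type. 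Then I would run the combinatorial argument of Lisca (Proposition~\ref{combinaotric Lisca}) exactly as in \cite{Li08}, \cite{BhOn12}, \cite{St13}: the homology classes of the components $C_i$ of $D$ sit inside the fixed lattice $H_2(W';\mathbb{Z})\cong \langle h\rangle\oplus N\langle e_i\rangle$ subject to finitely many numerical constraints (adjunction, positivity of intersections with $D$, the prescribed self-intersections), so there are only finitely many possible configurations of $D$ inside $W'$ up to the action of the diffeomorphism group; the complement $Y'=W'\setminus \mathrm{Int}\,P(D)$ of each such configuration is then determined up to diffeomorphism, giving finitely many candidates.

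The third step is to assemble these pieces: minimality of $W'$ (equivalently of $Y'$, since $P(D)$ is already as embedded and the exceptional spheres, if any, can be assumed to meet $D$ in a controlled way) is used to discard infinite families coming from blow-ups inside the filling, and what remains is a finite list of diffeomorphism types of minimal $Y'$, hence of minimal $W'$ compactified by $D$. I would phrase the conclusion as: the number of such minimal symplectic manifolds is bounded by the number of orbits, under the diffeomorphism group of $\mathbb{CP}^2\#N\overline{\mathbb{CP}^2}$, of embeddings of the fixed graph $\Gamma$ of $D$ into that manifold realizing the given self-intersections, which is finite by the lattice-embedding count.

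The main obstacle I expect is controlling \emph{minimality} of the filling rather than of the closed manifold: a priori a minimal filling $Y'$ could glue to a non-minimal $W'$, or conversely exceptional spheres of a non-minimal $W'$ could be absorbed into $P(D)$ in ways that are hard to track. One must argue, as in \cite{Li08} and \cite{St13}, that any $-1$-sphere in $W'$ can be taken disjoint from or intersecting $D$ transversally in a way compatible with the graph $\Gamma$ being in the classified list (types (P1)--(P5)), so that blowing down stays within the allowed class and terminates; the rational-homology-sphere boundary and the explicit form of the type~(P) graphs (no $0$-framed vertices at branch points by Lemma~\ref{0}, etc.) are exactly what make this bookkeeping finite. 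A secondary technical point is that the contact structure $\xi$ on $Y$ must be the canonical one for the uniqueness/gluing to make sense across all fillings — this is handled by Theorem~\ref{uniqueness-GS} and Proposition~\ref{canonical contact structure_proof} — but one should note, as Example~\ref{non-standard example} warns, that $\xi$ need not be the "standard" Milnor-fillable contact structure, so the finiteness must be proved purely symplectically/combinatorially and cannot be imported wholesale from the algebro-geometric literature.
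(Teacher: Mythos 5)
Your overall strategy is the same as the paper's: glue $P(D)$ to a putative minimal filling, invoke Theorem \ref{McDuff} to conclude the closed manifold $W'$ is rational with $b_2^+(W')=1$, constrain the classes $[C_i]$ via Proposition \ref{combinaotric Lisca}, and then argue there are finitely many homological configurations whose complements exhaust the fillings. The paper in addition reduces at the outset to types (P4) and (P5), since (P1)--(P3) are already treated in \cite{BhOn12}.

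However, your second step contains a genuine error: Mayer--Vietoris and signature additivity do \emph{not} pin down $b_2(Y')$, hence do not determine $N$ or the diffeomorphism type of $W'$. They only give $b_2(W')=b_2(P(D))+b_2(Y')$ and $b_2^+(Y')=0$, i.e.\ $Y'$ is negative definite, and this is consistent with any value of $b_2(Y')$ (for example, a lens space such as $L(p^2,p-1)$ admits minimal fillings of different Euler characteristics, so $N$ genuinely varies with the filling). Bounding $N$ is precisely the nontrivial content of the finiteness statement; once it is lost, ``finitely many lattice embeddings of $\Gamma$ for each fixed $N$'' does not sum to a finite total. The paper supplies this missing engine by performing a controlled sequence of symplectic blow-downs away from the positive sphere $Q$ (the class $h$) until one reaches $(\mathbb{CP}^2,\omega_0)$ with the image of $D$ a union of two $J$-holomorphic spheres, and by tracking the homological effect of these blow-downs as in \cite{Li08}, \cite{OhOn05}, \cite{BhOn12}; it is this bookkeeping, not a Betti-number identity, that bounds the possible configurations and yields finiteness of the minimal fillings. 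Relatedly, your parenthetical ``minimality of $W'$ (equivalently of $Y'$)'' is not correct: $W'$ is rational and typically far from minimal even when the filling is minimal, which is why the argument must blow down inside $W'$ while keeping control of $D$ rather than appeal to minimality of the closed manifold.
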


\begin{proof} [Stetch of proof] We follow  the strategy   in \cite{Li08},\cite{BhOn12} and \cite{St13}.
We remark that this question is answered in \cite{BhOn12} for graphs in type (P1), (P2), (P3).
Therefore, it suffices  to consider the case that the graph of $D$ is a graph in type (P4) or (P5), which are all dual blown up graphs. 

Suppose $D$ is a capping divisor for a symplectic manifold $Y$, and let $W$ be the resulting closed manifold. 
By Theorem \ref{McDuff}, $W$ is rational since there is a positive sphere $Q$ in $D$ due to the dual blow up.
We can pick an orthonormal basis for $\{h,e_1, \dots, e_n \} $ for $H_2(W)$ such that $h^2=1$, $e_i^2=-1$ and $\omega(e_i) > 0$ for all $1 \le i \le N$.
Moreover, we can assume the positive sphere $Q$   is of class $h$  by Theorem \ref{McDuff}.

Let $C_j$ be the sphere corresponding to the vertex $v$ and suppose that its self-intersection is $1-d_j$. 
By Proposition \ref{combinaotric Lisca}, the homology of $C_j$ is $[C_j]=h-e_{i^j_1}-\dots-e_{i^j_{d_j}}$ for some $i^j_1, \dots, i^j_{d_j}$ distinct.
Moreover, we know that for the other spheres, the homology is of the form $e.-e.-\dots-e.$.

If we do iterative symplectic blow-downs away from the positive sphere $Q$, we will end up with $(\mathbb{CP}^2,\omega_0)$, and the image of $D$ under the blow-down maps can be made to be union of exactly $2$  $J$-holomorphic spheres for some $\omega_0$-tamed $J$ if the blow-down maps are carefully chosen. 

By keeping track of the homological effects of the blow-downs, one can  classify all possible $Y$  using the same reasoning as in \cite{Li08}, \cite{OhOn05} and \cite{BhOn12}.
In particular, one can obtain  finiteness.
\end{proof} 

Fixing a capping divisor $D$, we would like to investigate whether there are bounds for topological complexity among all minimal symplectic manifold that can be compactified by $D$.

The answer is no in general. 
By Donaldson's celebrated construction of Lefschetz pencil,  any closed symplectic 4 manifold can be decomposed into a disc bundle over a closed symplectic surface $\Sigma$ glued with a Stein domain.
In this case, we can view the symplectic surface $\Sigma$ as a symplectic capping divisor for the Stein domain.
One of the interesting problems in this case is to bound the topological complexity for a given genus $g$.
Some finiteness results of the topological complexity are obtained in \cite{Sm01} when the Lefschetz pencil has small genus.
However, it is proved in \cite{BaMo12} that there is no bound of the Euler number of the filling when the genus is greater than $10$.

In our setting, we allow 'reducible' symplectic capping divisor so one should hope for obtaining some finiteness results when the symplectic capping divisor has small geometric genus. 
By Proposition \ref{bounds}, bounds for diffeomorphic invariants are obtained when $D$ has finite boundary fundamental group, which is a special case of geometric genus being zero.

\subsubsection{Non-Conjugate Phenomena}\label{Non-Conjugate Phenomena}
Graphs in type (N) are  resolution graphs of distinct quotient singularities.
In particular, if $T_1$ and $T_2$ are graphs in (N), then they have different boundary fundamental groups except both $T_1$ and $T_2$ are resolution graphs of cyclic singularities (See e.g. \cite{Br68} Satz $2.11$, fourth column of the table).

When two graphs $T$ and $T^c$ admit strong realizations $D$ and $D^c$ respectively such that $D$ and $D^c$ are conjugate to each other, we say  that $T$ is conjugate to $T^c$. 
In Section \ref{Type (P1) to (P3)}, we mentioned  that each graph $T^N$ in type (N) has a conjugate graph $T^P$ in type (P1), (P2) or (P3), and vice versa.  
We are going to show that many  type (P5) graphs do not share this phenomena. 

There should be many ways to do it and we would like to use the first Chern class.
When $T$ admits a realization $D=C_1 \cup \dots \cup C_k$ inside a closed symplectic manifold $W$, the first Chern class of $W$ descends to the first Chern class $c_1^D$ for $P(D)$, where $P(D)$ is a plumbing of $D$.
Since $\partial P(D)$ is a rational homology sphere, $c_1^D$ lifts to a class uniquely in $H^2(P(D),\partial P(D), \mathbb{Q})$ by the Mayer-Vietoris sequence, which we still denote as $c_1^D$.
Then, by the Lefschetz-Poincare duality, we can identify it with a class in $H_2(P(D), \mathbb{Q})$, which is generated by $[C_1],\dots,[C_k]$.

\begin{defn}
 Keeping the notations as in the previous paragraph, we call $(c_1^T)^2+k$ the characterizing number of $T$ and denote it by $n^T$.
For $Y=W-P(D)$, we define the characterizing number of $Y$ to be $n^Y=(c_1^Y)^2+b_2(Y)$, where $c_1^Y$ and $b_2(Y)$ are the first Chern class and second Betti number of $Y$, respectively.
\end{defn}

\begin{lemma}\label{first chern class}
 Suppose $T$ is a graph in special types that admits a realization $D=C_1 \cup \dots \cup C_k$ in $W$.
Let $b=(s_1+2,\dots,s_k+2)^T$ and write $c_1^T=\sum\limits_{i=1}^k w_i[C_k]$.
Then, 

(i) $w=(w_1,\dots,w_k)^T$ satisfies $Q_Tw=b$,

(ii) if $T$ is of type (P) and $Y=W-P(D)$, then we have $n^T+n^Y=10$.

(iii) if $T^{(v)}$ is of type (P5) and $n^{T}+n^{T^{(v)}} \neq 10$, then there is no graph conjugate to $T^{(v)}$.

\end{lemma}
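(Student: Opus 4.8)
The plan is to prove Lemma \ref{first chern class} in three steps, establishing (i), (ii) and (iii) in order, with (i) feeding into (ii) and (ii) feeding into (iii).

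For part (i), I would compute the first Chern class in the plumbing $P(D)$ by the same Lefschetz-duality bookkeeping used throughout the paper for the symplectic class. The key identity is the adjunction-type formula: restricting $c_1(W)$ to each surface $C_i$ gives $c_1(W)\cdot[C_i] = s_i + 2 - 2g_i$, and since all vertices in our graphs have genus zero this is exactly $s_i+2$, the $i$-th entry of $b$. On the other hand, writing $c_1^D = \sum_j w_j[C_j]$ in $H_2(P(D);\mathbb{Q})$ and pairing with $[C_i]$ produces $\sum_j w_j ([C_j]\cdot[C_i]) = \sum_j (Q_T)_{ij} w_j$, i.e. the $i$-th entry of $Q_T w$. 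Equating the two expressions gives $Q_T w = b$. The only subtlety is to check that $c_1^D$, as the lift of $c_1(W)|_{P(D)}$ to $H^2(P(D),\partial P(D);\mathbb{Q})$ and then to $H_2(P(D);\mathbb{Q})$, genuinely pairs with $[C_i]$ the same way $c_1(W)$ does — this is where the rational homology sphere condition on $\partial P(D)$ (guaranteeing $Q_T$ invertible, so $w$ is uniquely determined) is used, exactly as in the wrapping-number discussion in Subsection \ref{wrapping numbers}.

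For part (ii), I would use that if $T$ is of type (P), then by Theorem \ref{main classification theorem} and Proposition \ref{main classification theorem_convex} any realization sits in a closed $W$ with $b_2^+(W)=1$ which is rational by Theorem \ref{McDuff}; hence $c_1(W)^2 = 10 - b_2(W)$ (the Noether-type identity $c_1^2 = 2\chi + 3\sigma = 10 - b_2$ for a rational 4-manifold, using $\chi = 2 + b_2$, $\sigma = 2 - b_2$ since $b_2^+=1$). Now decompose $W = P(D) \cup_{\partial P(D)} Y$. Since $\partial P(D)$ is a rational homology sphere, $b_2(W) = b_2(P(D)) + b_2(Y) = k + b_2(Y)$ and, rationally, $H_2(W;\mathbb{Q}) = H_2(P(D);\mathbb{Q}) \oplus H_2(Y;\mathbb{Q})$ as an orthogonal direct sum for the intersection form (Mayer-Vietoris again). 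The first Chern class splits accordingly: $c_1(W)^2 = (c_1^D)^2 + (c_1^Y)^2$, where $c_1^Y$ is the lift of $c_1(W)|_Y$ to $H^2(Y,\partial Y;\mathbb{Q})$. Therefore $10 = c_1(W)^2 + b_2(W) = ((c_1^D)^2 + k) + ((c_1^Y)^2 + b_2(Y)) = n^T + n^Y$.

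Part (iii) is then immediate and is the cleanest step: suppose $T^{(v)}$ of type (P5) had a conjugate graph $S$. By Definition \ref{Conjugate Definition_Divisor}, there are realizations $D^{(v)}$ of $T^{(v)}$ and $D_S$ of $S$ with $D^{(v)}$ a capping divisor of $P(D_S)$ and $D_S$ a filling divisor of $P(D^{(v)})$, so that the closed manifold $W$ is built by gluing $P(D^{(v)})$ to $P(D_S)$ along their common contact boundary. Then $Y := W - P(D^{(v)})$ is (diffeomorphic to) $P(D_S)$, so $n^Y = n^{S}$, and part (ii) forces $n^{T^{(v)}} + n^{S} = 10$. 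But a conjugate of $T^{(v)}$ must in particular be a filling divisor, hence (by the classification, Theorem \ref{complete classification}) equivalent to a type (N) graph, and since the characterizing number is an invariant of the plumbing one can compute $n^S$ directly from the graph $S$; comparing with the hypothesis $n^{T} + n^{T^{(v)}} \neq 10$ and noting $n^T = n^S$ whenever $S$ is forced to be the "expected" conjugate gives the contradiction. I expect the main obstacle to lie in part (ii): carefully justifying the orthogonal splitting $c_1(W)^2 = (c_1^D)^2 + (c_1^Y)^2$ over $\mathbb{Q}$ and the identification $n^Y = (c_1^Y)^2 + b_2(Y)$ requires being precise about which lift of the Chern class is used on each piece and checking that the cross terms vanish because $\partial P(D)$ is a $\mathbb{Q}$-homology sphere; the rest is linear algebra and the already-established structure theory.
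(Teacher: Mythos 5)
Your treatments of (i) and (ii) are correct and essentially identical to the paper's: adjunction gives $c_1^T[C_i]=s_i+2$ for genus-zero vertices, and the rational-homology-sphere boundary gives the orthogonal Mayer--Vietoris splitting $b_2(W)=b_2(P(D))+b_2(Y)$, $(c_1^W)^2=(c_1^T)^2+(c_1^Y)^2$, which combined with $(c_1^W)^2+b_2(W)=10$ for the rational surface $W$ yields $n^T+n^Y=10$.

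Part (iii), however, has a genuine gap. After invoking (ii) to get $n^{T^{(v)}}+n^S=10$ for a hypothetical conjugate graph $S$, you need to know $n^S=n^T$, and your justification --- ``noting $n^T=n^S$ whenever $S$ is forced to be the `expected' conjugate'' --- assumes exactly the point at issue. A priori $S$ is only some negative definite (type (N)) graph filling the same boundary, and a general type (N) graph can have any characterizing number, so no contradiction with $n^T+n^{T^{(v)}}\neq 10$ follows. The missing argument, which is the actual content of the paper's proof of (iii), is the identification of $S$ with $T$: by Lemma \ref{0-0} the boundary fundamental group of $T^{(v)}$ equals $\pi_1(T)$, which is finite and non-cyclic since $T$ is of type (N3); hence the conjugate, being negative definite with this boundary group, must by Proposition \ref{main classification theorem_convex} be equivalent to a type (N3) graph; and among type (N3) graphs the boundary fundamental group determines the graph uniquely (\cite{Br68}, Satz 2.11), so $S$ is equivalent to $T$. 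Since the characterizing number $(c_1)^2+k$ is unchanged under blow-up/blow-down (each blow-up raises $k$ by $1$ and lowers $(c_1)^2$ by $1$), this gives $n^S=n^T$, and then (ii) forces $n^T+n^{T^{(v)}}=10$, the desired contradiction. Without the fundamental-group step your argument does not close.
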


\begin{rmk}\label{non-standard contact structure}
Suppose  $T$ is a type (N3) graph and $T^{v}$ is  a dual blow up of $T$.  If $n^{T}+n^{T^{(v)}} \neq 10$ and $T^{(v)}$ is realizable, then it follows from (iii) that,   on  
the diffeomorphic boundaries of plumbings, 
the contact structure $\xi^{T^{v}}$ induced by the positive GS criterion on $T^{(v)}$ is not contactomorphic
to the canonical  contact structure $\xi^T$, which is  induced by the negative GS criterion on $T$.
In this case, we can actually  use the capping divisor $T^{(v)}$ to classify the symplectic fillings of the non-standard contact structure $\xi^{T^{v}}$ on the boundary of plumbing of $T^{(v)}$ (See also subsection \ref{Finiteness}).  
\end{rmk}

\begin{proof}
 Since the first Chern class is induced by a symplectic form, adjunction formula works in $P(D)$.
Therefore, we have $2+s_i=c_1^T[C_i]$, where $s_i$ is the self-intersection of $C_i$.
Hence, (i) follows immediately.

For (ii), since $W$ is rational, we have $(c_1^W)^2+b_2(W)=10$.
By the Mayer-Vietoris sequence, we have $H_2(P(D),\mathbb{Q}) \bigoplus H_2(Y,\mathbb{Q})= H_2(W, \mathbb{Q})$.
Thus, $b_2(W)=b_2(P(D))+b_2(Y)$ and $(c_1^W)^2=(c_1^T)^2+(c_1^Y)^2$, which proves (ii).

Finally, if the complement of plumbing of $T^{(v)}$ is  a plumbing of a symplectic divisor, say $D'$, 
then  $D'$ must be negative definite.
By Lemma \ref{0-0}, we know that $\pi_1(D')=\pi_1(T^{(v)})=\pi_1(T)$.
Among the type (N3) graphs, the boundary fundamental group uniquely characterize the graph (See \cite{Br68} Satz $2.11$, fourth column of the table).
Therefore, by the classification Theorem \ref{main classification theorem}, the graph of $D'$ must be $T$ and hence, (ii) implies (iii).
\end{proof}

\begin{eg}
Consider the resolution graph of $E_8$ singularities, which is given by
\begin{displaymath}
    \xymatrix{
        \bullet^{-2} \ar@{-}[r]& \bullet^{-2} \ar@{-}[r]& \bullet^{-2} \ar@{-}[r] \ar@{-}[d]& \bullet^{-2} \ar@{-}[r] & \bullet^{-2} \ar@{-}[r] &\bullet^{-2} \ar@{-}[r] &\bullet^{-2}\\
		               &	&\bullet^{-2} \\
	}
\end{displaymath}
By  Lemma \ref{first chern class}(i), the first Chern class is $c_1^{E_8}=0$

The following graph is a symplectic capping divisor of a plumbing of $E_8$, which we call $E_8^c$.
\begin{displaymath}
    \xymatrix{
        \bullet^{-2}_{v_2} \ar@{-}[r]& \bullet^{-1}_{v_1} \ar@{-}[r] \ar@{-}[d]& \bullet^{-3}_{v_3} \\
		               	&\bullet^{-5}_{v_4} \\
	}
\end{displaymath}
Then, by  Lemma \ref{first chern class}(i), we have $c_1^{E_8^c}=2[C^{v_1}]+[C^{v_2}]+[C^{v_3}]+[C^{v_4}]$, where $C^{v_i}$ is the sphere corresponding to $v_i$.
Direct calculation gives $(c_1^{E_8})^2=-2$ which is also predicted by Lemma \ref{first chern class}(ii).
 
\end{eg}

By a direct computation using mathematica, if $T$ in (N3) does not correspond to dihedral singularity, then there are only seven different (P5) graphs $T^{(v)}$ that satisfy $n^{T}+n^{T^{(v)}}=10$.
Moreover, by Theorem \ref{main classification theorem}, only four of them are realizable and they are given by the followings.
Therefore, these four graphs are the only exception that Lemma \ref{first chern class}(iii) cannot conclude anything among all graphs in (P5) not arising from dihedral resolution graph. 

\begin{fig}
For the following four type (N3) graphs $T$, the corresponding four type (P5) graphs $T^{(v)}$ satisfy $n^{T}+n^{T^{(v)}}=10$.

$$    \xymatrix{
       \bullet^{-2} \ar@{-}[r] & \bullet^{-2} \ar@{-}[r]& \bullet^{-7} \ar@{-}[r] \ar@{-}[d]& \bullet^{-2} \ar@{-}[r] & \bullet^{-2} \ar@{-}[r] & \bullet^{-2} \\
			        &         &              \bullet^{-2}_v \\
	}
$$

$$    \xymatrix{
        \bullet^{-3} \ar@{-}[r]& \bullet^{-4} \ar@{-}[r] \ar@{-}[d]& \bullet^{-2}_v \ar@{-}[r] & \bullet^{-2} \ar@{-}[r] & \bullet^{-2} \\
			                              &\bullet^{-2} \\
	}
$$

$$    \xymatrix{
       \bullet^{-2}_v \ar@{-}[r] & \bullet^{-2} \ar@{-}[r]& \bullet^{-8} \ar@{-}[r] \ar@{-}[d]& \bullet^{-2} \ar@{-}[r] & \bullet^{-2} \ar@{-}[r] & \bullet^{-2} \ar@{-}[r] & \bullet^{-2} \\
			        &                       &\bullet^{-2} \\
	}
$$

$$    \xymatrix{
        \bullet^{-3} \ar@{-}[r]& \bullet^{-3} \ar@{-}[r] \ar@{-}[d]& \bullet^{-3}_v \ar@{-}[r] & \bullet^{-2} \\
			                              &\bullet^{-2} \\
	}
$$
\end{fig}

\subsubsection{Liouville Domain}
It is also interesting to know when a symplectic manifold, in particular, Liouville domain, can be compactified by a symplectic capping divisor.
Affine varieties are this kind of    Liouville domains.
Thus, in some sense we can regard  such Liouville domains  as symplectic analogues of  affine varieties.

For an affine surface $X$, log Kodaira dimension can be defined for the pair $(V,D)$, where $V$ is the completion of $X$ and $V-D=X$.
Moreover, this holomorphic invariant is independent of the compactification.
When the affine surface $X$  is a homology plane (also called affine acyclic),   McLean actually showed in  \cite{McL12-2}  that the log Kodaira dimension  is also a symplectic invariant.
Therefore, among all the Louville domains,  (rational) homology planes are particularly interesting.

It is a classical question in algebraic geometry to classify all (rational) homology planes (See the last Section of \cite{Mi00} and \cite{Za98}).
A common feature for such an  affine variety  is that its completion is a rational surface.
As we have seen, symplectic $4-$manifolds that can be compactified by a capping divisor with finite boundary fundamental group also share this phenomena. 
In particular, it would be interesting to know what symplectic capping divisors can compactify a Liouville domain that is a rational homology disk but the completion is not a rational surface.

Another classical question is to determine all singularities that admits a rational homology disks smoothing.
If the resolution graph for one such singularity is $\Gamma$, then in particular, the plumbing of $\Gamma$ can be symplectically filled by the rational homology disk.
We remark that this question is completely answered using techniques ranging from smooth topology, symplectic topology and algebraic geometry (See \cite{StSzWa08}, \cite{BhSt11} and \cite{PaShSt14}).

Using the same reasoning as in Lemma \ref{first chern class}, we have the following Lemma.

\begin{lemma}\label{rational homology disks filling}
 Suppose $T$ is a realizable type (P) graph.
If $T$ can symplectic divisorial compactify a rational homology disk, then we have $n^T=10$.
\end{lemma}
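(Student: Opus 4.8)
The plan is to mimic the characterizing-number bookkeeping of Lemma \ref{first chern class}, now for a rational homology disk filling instead of a divisor filling. Suppose $T$ is a realizable type (P) graph and that $D=C_1\cup\dots\cup C_k$ is a realization of $T$ which symplectically divisorially compactifies a rational homology disk $Y$; let $W$ be the resulting closed symplectic $4$-manifold, so $W = P(D) \cup_{\partial} Y$ with $\partial P(D)$ a rational homology sphere. Since $T$ is of type (P), $D$ contains a sphere of positive self-intersection (it is a dual blow-up), so by Theorem \ref{McDuff} $W$ is rational and hence $(c_1^W)^2 + b_2(W) = 10$.

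First I would set up the Mayer--Vietoris/Poincar\'e--Lefschetz splitting exactly as in the proof of Lemma \ref{first chern class}(ii): because $\partial P(D)$ is a rational homology sphere, working with $\mathbb{Q}$-coefficients gives $H_2(P(D);\mathbb{Q}) \oplus H_2(Y;\mathbb{Q}) = H_2(W;\mathbb{Q})$, so $b_2(W) = b_2(P(D)) + b_2(Y) = k + b_2(Y)$, and the Chern classes split orthogonally, $(c_1^W)^2 = (c_1^T)^2 + (c_1^Y)^2$, where $c_1^Y$ is the lift of $c_1^W|_Y$ to $H^2(Y,\partial Y;\mathbb{Q}) \cong H_2(Y;\mathbb{Q})$. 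Hence $n^T + n^Y = 10$ where $n^Y = (c_1^Y)^2 + b_2(Y)$, just as in Lemma \ref{first chern class}(ii).

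The second step is to compute $n^Y$ for a rational homology disk. By definition a rational homology disk $Y$ satisfies $H_*(Y;\mathbb{Q}) = H_*(\mathrm{pt};\mathbb{Q})$, so $b_2(Y) = 0$. Then $c_1^Y \in H^2(Y,\partial Y;\mathbb{Q}) \cong H_2(Y;\mathbb{Q}) = 0$, so $(c_1^Y)^2 = 0$ as well, giving $n^Y = 0$. Substituting into $n^T + n^Y = 10$ yields $n^T = 10$, which is the claim.

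The one point that needs care — and which I expect to be the main (though minor) obstacle — is justifying that the intersection form of $Y$ vanishes rationally and that $c_1^Y$ is genuinely zero in $H^2(Y,\partial Y;\mathbb{Q})$, so that the orthogonal-splitting identity $(c_1^W)^2 = (c_1^T)^2 + (c_1^Y)^2$ really contributes nothing from the $Y$-side. This is where the rational homology disk hypothesis is used in an essential way: $b_2(Y)=0$ forces $H_2(Y;\mathbb{Q})=0$, hence any class pairing against it — in particular $c_1^Y$ and all its self-intersection data — is trivial; one should also note that the splitting of the cup product over $\mathbb{Q}$ requires $\partial P(D)$ (equivalently $\partial Y$) to be a rational homology sphere, which holds because $T$ has finite (or at least finite-abelianization) boundary fundamental group, exactly as invoked throughout Section~\ref{Classification of Symplectic Divisors Having Finite Boundary Fundamental Group}. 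Once these are in place the computation is immediate, and the lemma follows.
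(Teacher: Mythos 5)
Your proposal is correct and is essentially the paper's own argument: the paper proves this lemma by "the same reasoning as in Lemma \ref{first chern class}", i.e.\ the splitting $n^T+n^Y=10$ over the rational homology sphere boundary together with $n^Y=(c_1^Y)^2+b_2(Y)=0$ for a rational homology disk, exactly as you compute. The only small imprecision is your justification of rationality of $W$: not every type (P) graph is a dual blow-up containing a positive sphere (the central vertex of a (P3) graph has square $3-y\le 1$), so for (P3) one invokes \cite{BhOn12} rather than Theorem \ref{McDuff} alone, as the paper does in the proof of Theorem \ref{complete classification}.
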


By  mathematica, we find that there are only four type (P5) graphs $T^{(v)}$ that are not arising from dihedral resolution graph and satisfy $n^{T^{(v)}}=10$.
Among these four, only three of them are realizable and they are listed in the following.
In particular, it means that apart from these three, all other strongly realizable graphs in type (P5) not arising from dihedral resolution graph cannot sympelctic divisorial compactify a rational homology disk.

\begin{fig}
For the following three  (N3) graphs $T$, the corresponding three  (P5) graphs $T^{(v)}$ satisfy $n^{T^{(v)}}=10$.

 $$    \xymatrix{
       \bullet^{-2}_v \ar@{-}[r] & \bullet^{-2} \ar@{-}[r]& \bullet^{-2} \ar@{-}[r] \ar@{-}[d]& \bullet^{-3} \\
			        &                       &\bullet^{-2} \\
	}
$$

 $$    \xymatrix{
       \bullet^{-2} \ar@{-}[r] & \bullet^{-2} \ar@{-}[r]& \bullet^{-2} \ar@{-}[r] \ar@{-}[d]& \bullet^{-3}_v \\
			        &                       &\bullet^{-2} \\
	}
$$

$$    \xymatrix{
       \bullet^{-2} \ar@{-}[r] & \bullet^{-2} \ar@{-}[r]& \bullet^{-6} \ar@{-}[r] \ar@{-}[d]& \bullet^{-2} \ar@{-}[r] & \bullet^{-2} \ar@{-}[r] & \bullet^{-2} \ar@{-}[r] & \bullet^{-2}_v \\
			        &                       &\bullet^{-2} \\
	}
$$

\end{fig}

Note that since most of the graphs in type (P5) do not have conjugate, the contact structure on the boundary is non-standard.
Therefore, the consideration above is not covered by \cite{StSzWa08}, \cite{BhSt11} and \cite{PaShSt14} (See Remark \ref{non-standard contact structure}).

\end{document}